	\newcommand{\one}{\mathds{1}}
\numberwithin{equation}{section}
\newcommand{\eq}[1]{\begin{align*} #1 \end{align*}}
\newcommand{\eeq}[1]{\begin{align} \begin{split} #1 \end{split} \end{align}}
\newcommand{\stackref}[2]{\stackrel{\mbox{\footnotesize{\eqref{#1}}}}{#2}}
\newcommand{\stackrefp}[2]{\stackrel{\phantom{\mbox{\footnotesize{\eqref{#1}}}}}{#2}}
\newcommand{\R}{\mathbb{R}}
\newcommand{\Z}{\mathbb{Z}}
\newcommand{\E}{\mathbb{E}}
\renewcommand{\P}{\mathbb{P}}
\renewcommand{\AA}{\mathcal{A}}
\newcommand{\PP}{\mathcal{P}}
\newcommand{\FF}{\mathcal{F}}
\newcommand{\FFF}{\mathscr{F}}
\newcommand{\EEE}{\mathscr{E}}
\newcommand{\LL}{\mathcal{L}}
\newcommand{\BB}{\mathcal{B}}
\newcommand{\RR}{\mathcal{R}}
\newcommand{\MM}{\mathcal{M}}
\newcommand{\NN}{\mathcal{N}}
\newcommand{\ff}{\mathfrak{f}}
\newtheorem{thm}{Theorem}[section]
\newtheorem{prop}[thm]{Proposition}
\newtheorem{cor}[thm]{Corollary}
\newtheorem{lemma}[thm]{Lemma}
\newtheorem{claim}[thm]{Claim}
\theoremstyle{definition}
\newtheorem{defn}[thm]{Definition}
\newtheorem{remark}[thm]{Remark}
\def\eps{\varepsilon}
\def\vphi{\varphi}
\newcommand{\iprod}[2]{\langle #1,\, #2\rangle}
\newcommand{\vc}[1]{{\boldsymbol #1}}
\newcommand{\wt}[1]{\widetilde{#1}}
\DeclareMathOperator{\Var}{Var}
\DeclareMathOperator{\Cov}{Cov}
\newcommand{\givenk}[3][]{#1[ #2 \: #1| \: #3 #1]} 
\newcommand{\givenp}[3][]{#1( #2 \: #1| \: #3 #1)} 
\newcommand{\givena}[3][]{#1\langle #2 \: #1| \: #3 #1\rangle} 
\DeclareMathOperator{\e}{e}
\newcommand{\dd}{\mathrm{d}}
\newcommand{\cc}{\mathrm{c}}
\DeclarePairedDelimiter\ceil{\lceil}{\rceil}
\DeclarePairedDelimiter\floor{\lfloor}{\rfloor}
        \DeclareFontFamily{OMX}{MnSymbolE}{}
        \DeclareSymbolFont{MnLargeSymbols}{OMX}{MnSymbolE}{m}{n}
        \DeclareFontShape{OMX}{MnSymbolE}{m}{n}{
            <-6>  MnSymbolE5
           <6-7>  MnSymbolE6
           <7-8>  MnSymbolE7
           <8-9>  MnSymbolE8
           <9-10> MnSymbolE9
          <10-12> MnSymbolE10
          <12->   MnSymbolE12
        }{}
        \DeclareFontShape{OMX}{MnSymbolE}{b}{n}{
            <-6>  MnSymbolE-Bold5
           <6-7>  MnSymbolE-Bold6
           <7-8>  MnSymbolE-Bold7
           <8-9>  MnSymbolE-Bold8
           <9-10> MnSymbolE-Bold9
          <10-12> MnSymbolE-Bold10
          <12->   MnSymbolE-Bold12
        }{}
        \let\llangle\@undefined
        \let\rrangle\@undefined
        \DeclareMathDelimiter{\llangle}{\mathopen}%
                             {MnLargeSymbols}{'164}{MnLargeSymbols}{'164}
        \DeclareMathDelimiter{\rrangle}{\mathclose}%
                             {MnLargeSymbols}{'171}{MnLargeSymbols}{'171}
\renewcommand{\thefootnote}{\fnsymbol{footnote}}
\title[Localization in disordered systems]{Localization in Gaussian disordered systems at low temperature}
\subjclass[2010]{60G15, 
60G17, 
60K37, 
82B44, 
82D30, 
82D60} 
\keywords{Replica overlap, Gaussian disorder, spin glasses, directed polymers, path localization}
\author{Erik Bates}
\thanks{E.B. was partially supported by NSF grants DGE-114747 and DMS-1902734} 
\address{\newline Department of Mathematics \newline University of California, Berkeley \newline 1067 Evans Hall \newline Berkeley, CA 94720-3840 \newline \textup{\tt ewbates@berkeley.edu}}
\author{Sourav Chatterjee}
\thanks{S.C. was partially supported by NSF grant DMS-1608249.}
\address{\newline Department of Statistics \newline Stanford University \newline Sequoia Hall, 390 Jane Stanford Way \newline Stanford, CA 94305 \newline \textup{\tt souravc@stanford.edu}}
\begin{document}
\bibliographystyle{acm}

\renewcommand{\thefootnote}{\arabic{footnote}} \setcounter{footnote}{0}

\begin{abstract}
For a broad class of Gaussian disordered systems at low temperature, we show
that the Gibbs measure is asymptotically localized in small neighborhoods of a small number of states. From a single argument, we obtain (i) a  version of ``complete" path localization for directed polymers that is not
available even for exactly solvable models; and (ii) a result about the exhaustiveness of Gibbs states in spin glasses not requiring the Ghirlanda--Guerra identities.
\end{abstract}

\maketitle

\section{Introduction}
A ubiquitous theme in statistical mechanics is to understand how a system
behaves differently at high and low temperatures. In a disordered system,
where the interactions between its elements are governed by random quantities, the strength of the disorder is determined by temperature. Namely,
high temperatures mean the disorder is weak, and the system is likely to
resemble a generic one based on entropy. On the other hand, low temperatures indicate strong disorder, which creates dramatically different behavior
in which the system is constrained to a small set of states that are energetically favorable. In the latter case, this concentration phenomenon is often
called ``localization".

A useful statistic in distinguishing different temperature regimes is the
so-called ``replica overlap". That is, given the disorder, one can study the
similarity of two independently observed states. If the disorder is strong,
then these two states should closely resemble one another with good probability, since we believe the system is bound to a relatively small number
of possible realizations. Some version of this statement has been rigorously
established in a number of contexts, most famously in spin glass theory but
also in the settings of disordered random walks and disordered Brownian motion. Unfortunately, it does not follow that the number of realizable states
is small, but only that there is small number of states that are observed with
positive probability.

In the present study, our entry point to this problem is to consider \textit{conditional} overlap. Whereas previous results in the literature show the overlap
distribution between two independent states has a nonzero component, we
ask whether the same is true even if one conditions on the first state. That is,
does a typical state \textit{always} have positive expected overlap with an independent one? We show that for a broad class of Gaussian disordered systems,
the answer is yes, the key implication being that the \textit{entire} realizable state
space is small. Specifically, there is an $O(1)$ number of states such that all but a negligible fraction of samples from the system will have positive overlap with one of these
states. 

The general setting, notation, motivation, and results are given in Sections~\ref{model}--\ref{results}, respectively. 
The consequences for spin glasses, directed polymers,
and other Gaussian fields are discussed in Sections~\ref{applications} and~\ref{other_fields}.

\subsection{Model and assumptions} \label{model}
Let $(\Omega,\FF,\P)$ be an abstract probability space, and $(\Sigma_n)_{n\geq1}$ a sequence of Polish spaces 
equipped respectively with probability measures $(P_n)_{n\geq1}$. 
For each $n$, we consider a centered Gaussian field $H_n$ indexed by $\Sigma_n$ and defined on $\Omega$.  
Viewing this field as a Hamiltonian, we have the associated Gibbs measure at inverse temperature $\beta$:
\eq{
\mu_{n}^\beta(\dd\sigma) \coloneqq \frac{\e^{\beta H_n(\sigma)}}{Z_n(\beta)}\ P_n(\dd\sigma), \quad \text{where} \quad
Z_n(\beta) \coloneqq \int \e^{\beta H_n(\sigma)}\ P_n(\dd\sigma).
}
%
Our results concern the relationship between the \textit{free energy},
\eq{
F_n(\beta) \coloneqq \frac{1}{n}\log Z_n(\beta),
}
and the covariance structure of $H_n$.
We make the following assumptions:
\begin{itemize}
\item There is a deterministic function $p : \R \to \R$ such that
\[\label{free_energy_assumption} \tag{A1}
\lim_{n\to\infty} F_n(\beta) = p(\beta) \quad \P\text{-}\mathrm{a.s.} \text{ and in $L^1(\P)$, for every $\beta\in\R$}. 
\]
\item For every $\sigma\in\Sigma_n$,
\[ \label{variance_assumption} \tag{A2}
\Var H_n(\sigma) = n.
\]
\item For every $\sigma^1,\sigma^2\in\Sigma_n$,
\[ \label{positive_overlap} \tag{A3}
\Cov(H_n(\sigma^1),H_n(\sigma^2))\geq-n\EEE_n,
\]
where $\EEE_n$ is a nonnegative constant tending to $0$ as $n\to\infty$.
\item For each $n$, there exist measurable real-valued functions $(\vphi_{i,n})_{i=1}^\infty$ on $\Sigma_n$ 
and i.i.d.~standard normal random variables $(g_{i,n})_{i=1}^\infty$ defined on $\Omega$ such that for each $\sigma\in\Sigma_n$, with $\P$-probability $1$, 
\[ \label{field_decomposition} \tag{A4}
H_n(\sigma)  = \sum_{i=1}^\infty g_{i,n}\vphi_{i,n}(\sigma), 
\]
where the series on the right converges in $L^2(\P)$. 
\end{itemize}

\begin{remark}
In all applications of interest (see Section~\ref{applications}), the hypothesis \eqref{positive_overlap} is trivially satisfied with $\EEE_n = 0$.
Nevertheless, we assume throughout only that $\EEE_n\to0$ (at any rate).
This modest relaxation is made so our results can apply to slightly more general models, for instance perturbations of the standard models we will soon describe.
\end{remark}

\begin{remark}
The condition \eqref{field_decomposition} is very mild: For example, it always holds when $\Sigma_n$ is finite. More generally, a sufficient condition for the existence of a representation \eqref{field_decomposition} is that $\Sigma_n$ is compact in the metric defined by $H_n$ (namely, the metric that defines the distance between $\sigma$ and $\sigma'$ as the $L^2$ distance between the random variables $H_n(\sigma)$ and $H_n(\sigma')$). For a proof of this standard result, see \cite[Theorem 3.1.1]{adler-taylor07}. 
Furthermore, in all applications of interest, $H_n$ will actually be explicitly defined using a sum of the form~\eqref{field_decomposition}.


\end{remark}

\subsection{Notation} \label{notation}
Unless stated otherwise, ``almost sure" and ``in $L^\alpha$" statements are with respect to $\P$.
We will use $E_n$ and $\E$ to denote expectation with respect to $P_n$ and $\P$, respectively.
Absent any decoration, $\langle \cdot \rangle$ will always denote expectation with respect to $\mu_{n}^{\beta}$, meaning
\eq{
\langle f(\sigma)\rangle = \frac{E_n(f(\sigma)\e^{\beta H_n(\sigma)})}{E_n(\e^{\beta H_n(\sigma)})}.
}
At various points in the paper, we will decorate $\langle\cdot\rangle$ to denote expectation
with respect to some perturbation of $\mu_{n}^\beta$. 
The type of perturbation will
change between sections.
The symbols $\sigma^{j}$, $j = 1,2,\dots$, shall denote independent samples from $\mu_{n}^\beta$ if appearing within $\langle\cdot\rangle$, or from $P_n$ if appearing within~$E_n(\cdot)$.
We will refer to the vector $\vc g_n = (g_{i,n})_{i=1}^\infty$ as the \textit{disorder} or \textit{random environment}. 
Sometimes we will consider multiple environments at the same time, which will necessitate that we write $\mu_{n,\vc g_n}^\beta$ instead of $\mu_n^\beta$ to emphasize the dependence on the environment $\vc g_n$.

In the sequel, $\sum_i$ will always mean $\sum_{i=1}^\infty$,
and we will condense our notation to $\vphi_i = \vphi_{i,n}(\sigma)$ when we are dealing with some fixed $n$. Similarly, $g_{i,n}$ will be shortened to $g_i$ and $\vc g_n$ will be shortened to $\vc g$. 
Also, $C(\cdot)$ will indicate a positive constant that depends only on the argument(s).
In particular, no such constant depends on $\vc g$ or $n$.
We will not concern ourselves with the
precise value, which may change from line to line.

\subsection{Motivation} \label{motivation}
Our results will be stated in terms of the correlation or \textit{overlap function},
\eq{
\RR(\sigma^1,\sigma^2) &\coloneqq \frac{1}{n}\Cov(H_n(\sigma^1),H_n(\sigma^2)), 
\quad \sigma^1,\sigma^2\in\Sigma_n.
}
Note that \eqref{variance_assumption} and \eqref{positive_overlap} imply
\eq{
-\EEE_n\leq\RR(\sigma^1,\sigma^2) \leq 1.
}
We will often abbreviate $\RR(\sigma^j,\sigma^k)$ to $\RR_{j,k}$. 

The Gaussian process $(H_n(\sigma))_{\sigma\in\Sigma_n}$ naturally defines a
(pseudo)metric $\rho$ on $\Sigma_n$, given by
\eeq{\label{rhodef}
\rho(\sigma^1,\sigma^2)\coloneqq 1-\RR_{1,2}.
}
Given the metric topology, we can study the so-called ``energy landscape"
of $\beta H_n$ on $\Sigma_n$. 
The geometry of this landscape is intimately
related to the free energy. By Jensen's inequality,
\eeq{ \label{basic_jensen_gap}
\E F_n(\beta) \leq \frac{1}{n}\log \E Z_n(\beta) \stackrel{\mbox{\footnotesize(\text{Lemma }\ref{moments_lemma})}}{=}\frac{\beta^2}{2},
}
which in particular implies $p(\beta) \leq \beta^2/2$. In general, whether or not this
inequality is strict determines the nature of the energy landscape: In order
for $p(\beta) = \beta^2/2$, the fluctuations of $\log Z_n(\beta)$ must be relatively small so that
the Jensen gap in \eqref{basic_jensen_gap} is $o(1)$. This behavior arises when the Gaussian deviations
of $\beta H_n(\sigma)$ are washed out by the entropy of $P_n$, creating a more or less flat
landscape. On the other hand, if $p(\beta) < \beta^2/2$, then these deviations will
have overcome the entropy of $P_n$, producing large peaks and valleys where
$\beta H_n(\sigma)$ is exceptionally positive or negative. From a physical perspective,
this latter scenario is more interesting, as these peaks can account for an
exponentially vanishing fraction of the state space even as their union accounts for a non-vanishing fraction of the mass of $\mu_{n}^\beta$.
The primary goal of this paper is to
give a sufficient condition for when (in a sense Theorem~\ref{easy_cor} makes precise)
$\mu_{n}^\beta$ places all of its mass on this union of peaks.

Suppose that $p(\cdot)$ is differentiable at $\beta\geq0$. 
Using Gaussian integration by parts, it is not difficult to show (as we do in Corollary~\ref{overlap_identity_cor}) that
\eeq{ \label{expected_overlap_formula}
\lim_{n\to\infty} \E\langle \RR_{1,2}\rangle = 1 - \frac{p'(\beta)}{\beta}.
}
This identity has been observed before (e.g.~see \cite{aizenman-lebowitz-ruelle87,comets-neveu95,talagrand06III,panchenko08}, \cite[Lemma 7.1]{carmona-hu02}, and \cite[Theorem 6.1]{comets17}).
For this reason, the condition in which we are interested is $p'(\beta)<\beta$.
To improve upon \eqref{expected_overlap_formula}, a first step is to show that if $\E\langle \RR_{1,2}\rangle$ is bounded away
from $0$, then the random variable $\langle \RR_{1,2}\rangle$ is itself stochastically bounded away
from $0$. This is the content of Theorem~\ref{averages_squared}. 
The more substantial contribution
of this paper, however, is to bootstrap this result to a proof of Theorem~\ref{expected_overlap_thm},
which roughly says that $\langle \RR_{1,2}\rangle$ is stochastically bounded away from $0$ even
conditional on $\sigma^1$.

It follows from Corollary~\ref{overlap_identity_cor} that $p'(\beta) < \beta$ implies $p(\beta) < \beta^2/2$,
but it is natural to ask whether the two conditions are equivalent.
This equivalence is true for spin glasses \cite{talagrand06III,panchenko08} and is believed to be true for directed
polymers \cite[Conjecture 6.1]{comets17}. But at the level of generality considered in
this paper, we are not aware of any conjecture. 
In any case, for the examples we consider in Section~\ref{applications}, both conditions will be true for sufficiently large $\beta$.

\subsection{Results} \label{results}
Our main result is Theorem~\ref{easy_cor}, stated below. It says that at low temperatures, one can find a finite number of (random) states such that
almost any sample from the Gibbs measure will have positive overlap with
at least one of them.
To state this precisely, let us define the sets
\eeq{ \label{ball_def}
\BB(\sigma,\delta) \coloneqq \{\sigma'\in\Sigma_n : \RR(\sigma, \sigma')\geq\delta\}, \quad \sigma\in\Sigma_n,\, \delta>0.
}
In terms of the metric $\rho$ defined in \eqref{rhodef}, this is just the ball of radius $1-\delta$ centered at $\sigma$. Typically, such balls have  vanishingly small size under $P_n$ as $n\to\infty$, which should be contrasted with the following behavior of the Gibbs measure.
\begin{thm} \label{easy_cor}
Assume \eqref{free_energy_assumption}--\eqref{field_decomposition}.
If $\beta\geq0$ is a point of differentiability for $p(\cdot)$, and $p'(\beta) < \beta$,
then for every $\eps > 0$, there exist integers $k = k(\beta,\eps)$ and $n_0 = n_0(\beta,\eps)$ and a number $\delta = \delta(\beta,\eps)>0$ such that the following is true for all $n\geq n_0$.
With $\P$-probability at least $1-\eps$, there exist $\sigma^1,\dots,\sigma^k\in\Sigma_n$ such that 
\eq{
\mu_{n}^\beta\Big(\bigcup_{j=1}^k \BB(\sigma^j, \delta)\Big) \geq 1 - \eps.
}
\end{thm}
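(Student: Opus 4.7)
The plan is to deduce the theorem from the conditional overlap bound of Theorem~\ref{expected_overlap_thm}, first converting an expected overlap into a ball mass by a one-line first-moment estimate, and then covering most of $\mu_n^\beta$ by a short probabilistic argument that relies on the symmetry of $\RR$.

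Since $p'(\beta)<\beta$, equation \eqref{expected_overlap_formula} gives $\liminf_n\E\langle\RR_{1,2}\rangle>0$, so Theorem~\ref{expected_overlap_thm} should supply constants $\delta_0>0$ and $\eta\in(0,\epsilon/2]$ (depending on $\beta,\epsilon$) and an $n_0$ such that for $n\geq n_0$, with $\P$-probability at least $1-\epsilon/2$,
\[
\mu_n^\beta\bigl\{\sigma\in\Sigma_n:\langle\RR(\sigma,\sigma')\rangle_{\sigma'}\geq 2\delta_0\bigr\}\geq 1-\eta.
\]
Call the inner set $G$. For every $\sigma\in G$, splitting the expectation $\langle\RR(\sigma,\sigma')\rangle_{\sigma'}$ on $\{\sigma'\in\BB(\sigma,\delta_0)\}$ and using $\RR\leq 1$ on one part and $\RR<\delta_0$ on the complement yields
\[
2\delta_0\leq\mu_n^\beta(\BB(\sigma,\delta_0))+\delta_0,
\]
so every $\sigma\in G$ has $\mu_n^\beta(\BB(\sigma,\delta_0))\geq\delta_0$.

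Now fix any realization in the good $\P$-event, draw $\sigma^1,\dots,\sigma^k$ i.i.d.\ from $\mu_n^\beta$, and let $\tau\sim\mu_n^\beta$ be independent. By the symmetry of $\RR$, the events $\tau\in\BB(\sigma^j,\delta_0)$ and $\sigma^j\in\BB(\tau,\delta_0)$ coincide, so for $\tau\in G$,
\[
\P\bigl(\tau\notin\BB(\sigma^j,\delta_0)\text{ for all }j\,\big|\,\tau\bigr)=(1-\mu_n^\beta(\BB(\tau,\delta_0)))^k\leq(1-\delta_0)^k.
\]
Integrating over $\tau$, splitting on $\{\tau\in G\}$, and applying Fubini,
\[
\E_{\sigma^1,\dots,\sigma^k\sim\mu_n^\beta}\Big[\mu_n^\beta\Big(\Sigma_n\setminus\bigcup_{j=1}^k\BB(\sigma^j,\delta_0)\Big)\Big]\leq\eta+(1-\delta_0)^k.
\]
Choosing $k=k(\beta,\epsilon)$ so that $(1-\delta_0)^k\leq\epsilon/2$ makes the right side at most $\epsilon$, so a deterministic choice of centers $\sigma^1,\dots,\sigma^k$ achieving $\mu_n^\beta(\bigcup_j\BB(\sigma^j,\delta_0))\geq 1-\epsilon$ must exist. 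Setting $\delta=\delta_0$ completes the proof, with total $\P$-exceptional probability at most $\epsilon/2+0\leq\epsilon$.

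The conceptual weight is carried by Theorem~\ref{expected_overlap_thm}; everything downstream is a Markov-plus-symmetry bookkeeping exercise. Minor care is needed to align the precise form (in probability vs.\ in expectation, quenched vs.\ annealed) in which Theorem~\ref{expected_overlap_thm} is ultimately stated, and to absorb the possible additive $\EEE_n\to0$ slack into the constant $\delta_0$ for large $n$; neither is delicate. The one indispensable structural input enabling the covering step is the symmetry $\RR(\sigma,\sigma')=\RR(\sigma',\sigma)$, which is what turns a uniform lower bound on per-point ball mass into geometric decay $(1-\delta_0)^k$ of the uncovered Gibbs mass.
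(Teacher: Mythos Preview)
Your proof is correct and follows essentially the same route as the paper's: apply Theorem~\ref{expected_overlap_thm} and Markov's inequality to obtain a quenched ``good'' set of large Gibbs mass on which the conditional overlap is bounded below, convert this into a lower bound on ball mass, then draw $k$ i.i.d.\ centers from $\mu_n^\beta$ and use the symmetry of $\RR$ to get geometric decay $(1-\cdot)^k$ of the expected uncovered mass. The only difference is that you extract the ball-mass lower bound $\mu_n^\beta(\BB(\sigma,\delta_0))\geq\delta_0$ by the elementary first-moment split $2\delta_0\leq\mu_n^\beta(\BB(\sigma,\delta_0))+\delta_0$, whereas the paper uses Paley--Zygmund to get $\givena{\one_{\{\RR_{j,k+1}\geq\delta\}}}{\sigma^{k+1}}\geq\delta^2$; your version is slightly cleaner and yields a linear rather than quadratic dependence on $\delta$, but the argument is otherwise identical.
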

It is worth noting that in some cases, such as the directed polymer model defined in Section~\ref{directed_polymers}, it is possible (although unproven) that $k$ can be taken equal to $1$ if $\delta$ is chosen sufficiently small.
For other models, however, such as polymers on trees or the Random Energy Model discussed in Section~\ref{other_fields}, $k$ will necessarily diverge as $\eps\to0$.

We will derive Theorem~\ref{easy_cor} as a corollary of Theorem~\ref{expected_overlap_thm}, stated below. In fact,  Theorem~\ref{easy_cor} is actually equivalent to Theorem~\ref{expected_overlap_thm}, although the latter has a less transparent statement, which is why we have stated Theorem~\ref{easy_cor} as our main result.


Theorem~\ref{expected_overlap_thm} concerns the following function on $\Sigma_n$.
For given $\sigma^1\in\Sigma_n$, we will write the conditional expectation of $\RR_{1,2}$ as
\eeq{\label{rdef}
\RR(\sigma^1) \coloneqq \givena{\RR_{1,2}}{\sigma^1}
= \frac{1}{n}\sum_{i=1}^\infty \vphi_{i,n}(\sigma^1) \langle\vphi_{i,n}(\sigma^2)\rangle.
}
(Note that the expectation $\givena{\cdot}{\sigma^1}$ can be exchanged with the sum because of Fubini's theorem, in light of \eqref{variance_assumption}.)
Given $\delta>0$, we consider the set
\eeq{ \label{A_def}
\AA_{n,\delta} \coloneqq \{\sigma\in\Sigma_n : \RR(\sigma) \leq \delta\}. 
} 
With this notation, the quantity $\langle \one_{\AA_{n,\delta}}\rangle$ 
is the probability that a state sampled from $\mu_{n}^\beta$ has expected overlap at most $\delta$ with an independent sample from $\mu_{n}^\beta$.
Theorem~\ref{expected_overlap_thm} says that at low temperatures and for small $\delta$, this probability is typically small.

\begin{thm} \label{expected_overlap_thm}
Assume \eqref{free_energy_assumption}--\eqref{field_decomposition}.
If $\beta\geq0$ is a point of differentiability for $p(\cdot)$, and $p'(\beta) < \beta$, then for every $\eps>0$, there exists $\delta = \delta(\beta,\eps) > 0$ sufficiently small that
\eeq{ \label{expected_overlap_thm_eq}
\limsup_{n\to\infty} \E\langle\one_{\AA_{n,\delta}}\rangle \leq \eps.
}
\end{thm}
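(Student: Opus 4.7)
My plan is to derive Theorem~\ref{expected_overlap_thm} from Theorem~\ref{averages_squared} by a contradiction argument applied to a restricted Gibbs measure. The intuition: if $\mu_n^\beta$ places nontrivial mass on $\AA_{n,\delta}$, then the restriction of $\mu_n^\beta$ to $\AA_{n,\delta}$ is itself a Gibbs measure (for the same Hamiltonian $H_n$, with base measure $P_n|_{\AA_{n,\delta}}$). Theorem~\ref{averages_squared} would then force the overlap of two replicas from this restricted measure to be bounded below by a positive constant $c_0$, while the very definition of $\AA_{n,\delta}$ forces that overlap to be at most $O(\delta)$---a contradiction for $\delta$ small.

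Concretely, suppose by way of contradiction that there exists $\eps>0$ such that for every $\delta>0$ one has $\limsup_{n\to\infty}\E\langle \one_{\AA_{n,\delta}}\rangle > \eps$. Passing to a subsequence, the $\P$-event $\{\langle \one_{\AA_{n,\delta}}\rangle \geq \eps/2\}$ has probability at least $\eps/2$. On this event, introduce the restricted measure
\[
\tilde \mu_n^\beta(\dd\sigma) \coloneqq \frac{\one_{\AA_{n,\delta}}(\sigma)\,e^{\beta H_n(\sigma)}}{\int \one_{\AA_{n,\delta}}(\sigma')\,e^{\beta H_n(\sigma')}\,P_n(\dd\sigma')}\,P_n(\dd\sigma).
\]
Combining $\RR(\sigma^1)\leq\delta$ for $\sigma^1\in\AA_{n,\delta}$ with assumption~\eqref{positive_overlap} yields
\[
\langle\RR_{1,2}\rangle_{\tilde\mu}
=\frac{\langle \RR_{1,2}\,\one_\AA(\sigma^1)\one_\AA(\sigma^2)\rangle}{\langle\one_\AA\rangle^2}
\leq \frac{\delta+\EEE_n}{\langle\one_\AA\rangle}
\leq \frac{2(\delta+\EEE_n)}{\eps}.
\]
On the same event, the restricted free energy $\frac{1}{n}\log\tilde Z_n(\beta)=F_n(\beta)+\frac{1}{n}\log\langle\one_{\AA_{n,\delta}}\rangle$ differs from $F_n(\beta)$ by $O(1/n)$ and hence converges to the same $p(\beta)$. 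Inheriting differentiability of $p$ at $\beta$ and the decomposition~\eqref{field_decomposition} of $H_n$, the restricted system satisfies the hypotheses of Theorem~\ref{averages_squared}. Applying that theorem gives a constant $c_0=c_0(\beta)>0$ with $\langle\RR_{1,2}\rangle_{\tilde\mu}\geq c_0$ on a further large-probability event, contradicting the upper bound once $\delta<c_0\eps/4$ and $n$ is large enough that $\EEE_n<c_0\eps/4$.

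The main obstacle is legitimizing the appeal to Theorem~\ref{averages_squared} for $\tilde\mu_n^\beta$: since $\AA_{n,\delta}$ depends on the disorder through $\RR(\cdot)$, the base measure $P_n|_{\AA_{n,\delta}}$ is random, whereas Theorem~\ref{averages_squared} presupposes a deterministic base measure. My plan is to open up its proof---which, per the discussion in Section~\ref{motivation}, should use only Gaussian integration by parts, convexity of $\beta\mapsto F_n(\beta)$, and concentration of $F_n$ around $p(\beta)$---and verify that those ingredients are pointwise in $\vc g$ and therefore survive the disorder-dependent restriction, provided one conditions throughout on $\{\langle\one_{\AA_{n,\delta}}\rangle\geq \eps/2\}$ to preserve convergence of the restricted free energy. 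If this direct transfer is delicate, a fallback is to approximate $\AA_{n,\delta}$ by sets measurable with respect to a fixed finite-dimensional projection of the environment $\vc g$, apply Theorem~\ref{averages_squared} to each such approximation, and then pass to the limit.
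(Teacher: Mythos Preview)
Your proposal has a genuine gap that your acknowledged ``main obstacle'' does not survive. The set $\AA_{n,\delta}$ is defined through $\RR(\sigma)=\frac{1}{n}\sum_i\vphi_i(\sigma)\langle\vphi_i\rangle$, and the Gibbs averages $\langle\vphi_i\rangle$ depend on the entire disorder vector $\vc g$. This dependence is fatal for the machinery behind Theorem~\ref{averages_squared}, not merely a technical nuisance:
\begin{itemize}
\item The proof of Theorem~\ref{averages_squared} applies the OU generator $\LL$ to the free energy and uses Gaussian integration by parts (see Corollary~\ref{overlap_identity_cor} and Section~\ref{proof_1}). For the restricted system, $\tilde F_n(\beta)=\frac{1}{n}\log E_n(\one_{\AA_{n,\delta}}\e^{\beta H_n})$ is not even differentiable in $\vc g$, since $\one_{\AA_{n,\delta}}(\sigma)$ is an indicator of a $\vc g$-dependent event. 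Smoothing the indicator does not help: when you compute $\partial_{g_i}\tilde{\langle}\vphi_i\tilde{\rangle}$, you pick up derivatives of $\one_{\AA_{n,\delta}}$ with respect to $g_i$, and these extra terms destroy the identity $\E F_n'(\beta)=\beta(1-\E\langle\RR_{1,2}\rangle)+o(1)$ on which everything rests.
\item The OU-flow argument relies on stationarity: the law of $\langle\RR_{1,2}\rangle_t$ is the same for all $t$. But if you restrict to $\AA_{n,\delta}$ defined at time $0$ and then flow the environment, the restricted measure at time $t$ is on the \emph{wrong} set (the time-$0$ set, not the time-$t$ one), so stationarity is lost.
\item Assumption~\eqref{free_energy_assumption} requires almost sure convergence of the free energy. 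Your restricted free energy converges to $p(\beta)$ only on the event $\{\langle\one_{\AA_{n,\delta}}\rangle\geq\eps/2\}$, which has probability of order $\eps$, so (A1) fails for the restricted system.
\end{itemize}
Your fallback of approximating $\AA_{n,\delta}$ by sets depending on finitely many coordinates of $\vc g$ does not resolve any of this: those coordinates still appear in $H_n$, so integration by parts still generates uncontrolled cross-terms.

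The paper's proof takes a completely different route. Rather than restricting the state space, it perturbs the environment by $\vc g\mapsto\vc g+\frac{1}{\sqrt n}\vc h$ with $\vc h$ an independent copy (equivalently, $\beta\mapsto\beta\sqrt{1+1/n}$). The key observation is an asymmetry: restricted to $\AA_{\delta}$, the quantity $\langle\one_{\AA_\delta}\e^{\frac{\beta}{\sqrt n}\tilde H_n}\rangle$ has small $\vc h$-variance (Lemma~\ref{h_variance_lemma}(b)), while on $\AA_\delta^\cc$ it has variance bounded below whenever $\langle\RR_{1,2}\rangle$ is not small---and Theorem~\ref{averages_squared} guarantees exactly this. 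A Jensen-gap argument (Proposition~\ref{pre_iteration_2}) then shows that the perturbation strictly \emph{increases} $\E\langle\one_{\AA_\delta}\rangle$ by a multiplicative factor $1+\gamma$, up to a $C\sqrt\delta$ error. Iterating this inequality $K$ times and using that the left side is bounded by $1$ forces $\E\langle\one_{\AA_\delta}\rangle$ to be small. The argument never restricts the base measure; it exploits instead how the Gibbs mass of $\AA_\delta$ responds to an independent Gaussian perturbation.
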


To prove Theorem~\ref{expected_overlap_thm}, we first have to prove a weaker theorem stated below. This result considers the following event in the $\sigma$-algebra $\FF$,
\eq{
B_{n,\delta} \coloneqq \{\langle \RR_{1,2}\rangle \leq \delta\},
}
and shows that its probability is small at low temperature. 

\begin{thm} \label{averages_squared}
Assume \eqref{free_energy_assumption}--\eqref{field_decomposition}.
If $\beta\geq0$ is a point of differentiability for $p(\cdot)$, and $p'(\beta) < \beta$, then for every $\eps > 0$, there exists $\delta = \delta(\beta,\eps) > 0$ sufficiently small such that
\eeq{ \label{averages_squared_eq}
\limsup_{n\to\infty} \P(B_{n,\delta}) \leq \eps.
}
\end{thm}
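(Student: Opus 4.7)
The plan is to upgrade the expected-value identity of Corollary~\ref{overlap_identity_cor}, which under the hypothesis gives $\lim_{n\to\infty}\E\langle\RR_{1,2}\rangle = q$ with $q := 1 - p'(\beta)/\beta > 0$, into a statement that $\langle\RR_{1,2}\rangle \to q$ in probability; from that, \eqref{averages_squared_eq} follows by Chebyshev for any $\delta < q/2$. Note that $0\leq \langle\RR_{1,2}\rangle \leq 1$ (the upper bound via $\langle\vphi_i\rangle^2 \leq \langle\vphi_i^2\rangle$ and $\sum_i \vphi_i^2 = n$ from \eqref{variance_assumption} and \eqref{field_decomposition}), so a simple Markov inequality gives only $\limsup\P(B_{n,\delta}) \leq (1-q)/(1-\delta)$, which is not small enough when $q$ is bounded away from $1$. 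A second-moment-type input is required.

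The first such input is that $F_n'(\beta) = \langle H_n\rangle/n$ concentrates at $p'(\beta) = \beta(1-q)$. From \eqref{variance_assumption} and \eqref{field_decomposition}, one computes $\|\nabla_{\vc g}F_n\|^2 = \beta^2\sum_i\langle\vphi_i\rangle^2/n^2 \leq \beta^2/n$, so Borell--Tsirelson yields $\Var F_n \leq \beta^2/n$; combined with \eqref{free_energy_assumption}, $F_n \to p$ in $L^2(\P)$. Because $F_n(\cdot, \vc g)$ is convex in $\beta$, the sandwich
\[
\frac{F_n(\beta) - F_n(\beta - h)}{h} \;\leq\; F_n'(\beta) \;\leq\; \frac{F_n(\beta+h) - F_n(\beta)}{h}
\]
together with differentiability of $p$ at $\beta$ (letting $n\to\infty$ first, then $h\to 0$) forces $F_n'(\beta) \to p'(\beta)$ in probability.

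To transfer this to $\langle\RR_{1,2}\rangle$, use Gaussian integration by parts. The zero-mean discrepancy
\[
A_n := \frac{\langle H_n\rangle}{n} - \beta\bigl(1 - \langle\RR_{1,2}\rangle\bigr) = \frac{1}{n}\sum_i \bigl(g_i\langle\vphi_i\rangle - \beta\Var_\mu \vphi_i\bigr)
\]
is the obstacle: provided $\E A_n^2 \to 0$, combining with the previous paragraph yields $\langle\RR_{1,2}\rangle \to q$ in probability, finishing the proof. A second application of Gaussian IBP produces the explicit formula
\[
\E A_n^2 \;=\; \frac{\E\langle\RR_{1,2}\rangle}{n} + \frac{\beta^2}{n^2}\E\sum_{i,j}\Cov_\mu(\vphi_i,\vphi_j)^2,
\]
so the task reduces to controlling the Hilbert--Schmidt trace term on the right.

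The main obstacle is showing this trace term tends to zero. In four-replica form, $n^{-2}\sum_{i,j}\Cov_\mu(\vphi_i,\vphi_j)^2 = \langle\RR_{1,2}^2\rangle - 2\langle\RR_{1,2}\RR_{1,3}\rangle + \langle\RR_{1,2}\rangle^2$, and its vanishing is essentially the content of a Ghirlanda--Guerra identity, which the paper explicitly seeks to avoid. The plan for bounding it without GG is to identify the trace term with (a close relative of) the second $\beta$-derivative $F_n''(\beta) = \Var_\mu(H_n)/n$, so that integrating over a shrinking window $[\beta,\beta+h]$ relates the averaged trace to $\E F_n'(\beta+h) - \E F_n'(\beta)$, which by the second paragraph above tends to $p'(\beta+h) - p'(\beta)$; differentiability of $p$ at $\beta$ then permits a pointwise squeeze at $\beta$ by a careful choice of $h$. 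Carrying out this window argument cleanly is the technical heart of the proof.
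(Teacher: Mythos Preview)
Your proposal aims to prove something strictly stronger than the theorem: that $\langle\RR_{1,2}\rangle \to q$ in probability, i.e.\ that $\langle\RR_{1,2}\rangle$ concentrates around its mean. The paper explicitly lists this as an open problem (see the list of open problems, item (6)), so the strategy is already suspect on a high level. The theorem only asks that $\langle\RR_{1,2}\rangle$ be stochastically bounded away from $0$, which is weaker.

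The concrete gap is in the treatment of the trace term $n^{-2}\sum_{i,j}\Cov_\mu(\vphi_i,\vphi_j)^2 = \langle\RR_{1,2}^2\rangle - 2\langle\RR_{1,2}\RR_{1,3}\rangle + \langle\RR_{1,2}\rangle^2$. You correctly identify its vanishing as a GG-type statement, but the proposed window argument does not close: $F_n''(\beta) = n^{-1}\sum_{i,j}g_ig_j\Cov_\mu(\vphi_i,\vphi_j)$ is not the trace term, and after two rounds of Gaussian IBP its expectation involves three- and four-replica correlators of a different shape. Even granting some integral relation, controlling $\int_\beta^{\beta+h}\E[\text{trace}](\beta')\,\dd\beta'$ gives only almost-every-$\beta$ vanishing, not vanishing at the fixed point of differentiability you started from; undoing the $\beta$-average is precisely the hard step that GG-based arguments require additional structure for, and which the paper is designed to avoid.

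The paper's route is quite different and does \emph{not} attempt concentration. It runs an Ornstein--Uhlenbeck flow $\vc g_t$ on the disorder and applies a general variance bound (Lemma~\ref{OU_variance_lemma}) to $f=F_n$ to show that the \emph{time-average} $\frac{n}{T}\int_0^{T/n}\langle\RR_{1,2}\rangle_t\,\dd t$ concentrates near $q$ for large $T$. The second, and crucial, step is a conditional argument: on the event $B_\delta=\{\langle\RR_{1,2}\rangle_0\le\delta\}$, one shows by direct moment computations in the independent Brownian part (Lemma~\ref{h_variance_lemma}) that $\langle\RR_{1,2}\rangle_t$ stays close to $\langle\RR_{1,2}\rangle_0$ for all $t\le T/n$. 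These two facts are incompatible unless $\P(B_\delta)$ is small. The point is that smallness of $\langle\RR_{1,2}\rangle$ is itself what makes the relevant perturbed averages concentrate; no global self-averaging of $\langle\RR_{1,2}\rangle$ is ever established or needed.
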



Theorem~\ref{averages_squared} is proved in Section~\ref{proof_1}, Theorem~\ref{expected_overlap_thm} in Section~\ref{proof_2}, and the equivalence of Theorems~\ref{easy_cor} and~\ref{expected_overlap_thm} in Section~\ref{cor_proof}.
In Section~\ref{prep_section}, we provide some general facts that are needed in the main arguments. 
A detailed sketch of the proof technique is given  in Section~\ref{sketchsec}. 
We will often simplify notation by writing $\AA_\delta$ and $B_\delta$, where the dependence on $n$ is understood and will not be a source of confusion.

\subsection{Applications} \label{applications}
For many applications, it would suffice to consider $\Sigma_n$ which is finite for every $n$.
Other applications, however, such as spherical spin glasses or directed polymers with a reference walk of unbounded support, require $\Sigma_n$ to be infinite.
It is for this reason that we have stated the setting and results in the generality seen above.
Now we discuss specific models of interest.

\subsubsection{Spin glasses} \label{spin_glasses}
Let $\Sigma_n = \{\pm 1\}^n$ (Ising case) or $\Sigma_n = \{\sigma\in\R^n :\|\sigma\|_2 = \sqrt{n}\}$ (spherical case), and take $P_n$ to be uniform measure on $\Sigma_n$. 
In the mean-field models, the Hamiltonian is of the form
\eeq{ \label{sg_hamiltonian}
H_n(\sigma) = \sum_{p\geq2} \frac{\beta_p}{n^{(p-1)/2}}\sum_{i_1,\dots,i_p = 1}^n g_{i_1,\dots,i_p}\sigma_{i_1}\cdots\sigma_{i_p}.
}
We will assume
\eeq{ \label{sg_assumption_1}
\sum_{p\geq2}\beta_p^2(1+\eps)^p < \infty \quad \text{for some $\eps>0$},
}
which is more restrictive than what we require but standard in the literature.
Standard applications of Gaussian concentration show that $|F_n(\beta) - \E F_n(\beta)|\to0$ almost surely and in $L^1$.
Assumption \eqref{free_energy_assumption} then follows from the convergence of $\E F_n(\beta)\to p(\beta)$, where $p(\beta)$ is given by a formula depending on the model.
In the Ising case, there is the celebrated Parisi formula \cite{parisi79,parisi80}, proved by
Talagrand \cite{talagrand06} for even-spin models, building on the seminal work of Guerra \cite{guerra03}.
It was later extended by Panchenko \cite{panchenko14} to general mixed $p$-spins.
For the spherical model, there is a simpler and elegant formula predicted
by Crisanti and Sommers \cite{crisanti-sommers92}, and proved by Talagrand \cite{talagrand06II} and Chen \cite{chen13}.

To accommodate assumptions \eqref{variance_assumption} and \eqref{positive_overlap}, one should assume the function $\xi(q) \coloneqq \sum_{p\geq2}\beta_p^2q^p$ satisfies
\eeq{ \label{sg_assumption_2}
\xi(1) = 1 \quad \text{and} \quad \xi(q) \geq 0 \quad \text{for all $q\in[-1,1]$.}
}
This is because 
\eq{
\RR_{j,k} = \xi(R_{j,k}), \quad \text{where} \quad R_{j,k} \coloneqq \frac{1}{n}\sum_{i=1}^n \sigma^j_i\sigma^k_i \in [-1,1].
}
Note that the second assumption in \eqref{sg_assumption_2} is automatic if $\beta_p = 0$ for all odd $p$.
When $\xi(q) = q^2$, \eqref{sg_hamiltonian} is the classical Sherrington--Kirkpatrick (SK) model~\cite{sherrington-kirkpatrick75} if $\Sigma_n = \{\pm1\}^n$, or the spherical SK model~\cite{kosterlitz-thouless-jones76} if $\Sigma_n = \{\sigma\in\R^n : \|\sigma\|_2 = \sqrt{n}\}$.

In the spin glass literature, $R_{1,2}$ is the usual \textit{replica overlap} that is studied as an order parameter for the system \cite{talagrand03}.
Roughly speaking, $R_{1,2}$ converges
to $0$ when $p(\beta) = \beta^2/2$, but converges in law to a non-trivial distribution
when $p(\beta) < \beta^2/2$. 
In the latter case, the model exhibits what is known as
replica symmetry breaking (RSB). 
If the limiting distribution of $R_{1,2}$, called
the Parisi measure, contains $k + 1$ distinct atoms (one of which must be $0$
\cite{auffinger-chen15I}), then $\xi$ is said to be $k$RSB. 
For instance, spherical pure $p$-spin models
are $1$RSB for large $\beta$ \cite{panchenko-talagrand07}, and it was recently shown that some spherical mixed spin models are $2$RSB at zero temperature \cite{auffinger-zeng19}.
In the Ising case, however, the Parisi measure is
expected to have an infinite support throughout the low-temperature phase
(with $0$ in the support but not as an atom; see \cite[Page 15]{bolthausen07}), a behavior
referred to as \textit{full}-RSB (FRSB). Proving such a statement is a problem of
great interest and has been solved at zero temperature \cite{auffinger-chen-zeng20}.
For spherical models, the situation is somewhat clearer; in \cite{chen-sen17}, sufficient conditions were given for both $1$RSB and FRSB, again at zero temperature.

The simplest type of symmetry breaking, $1$RSB, admits the following
heuristic picture. The state space $\Sigma_n$ is (from the perspective of $\mu_{n}^\beta$)
separated into many orthogonal parts called ``pure states", within which the intra-cluster overlap concentrates on some positive value $q > 0$. 
In the $2$RSB picture, the pure states are not necessarily orthogonal, but rather grouped
together into larger clusters which are themselves orthogonal. In this case,
the overlap could be $q$ (same pure state), $q'\in (0, q)$ (same cluster but different pure state), or $0$ (different clusters). 
The complexity increases in the same
fashion for general $k$RSB. 
In FRSB, the clusters become infinitely nested,
yielding a continuous spectrum of possible overlaps while maintaining ``ultrametric" structure \cite{panchenko13II}. 
In any case, though, there should be asymptotically
no part of the state space which is orthogonal to everything; that is, the pure
states exhaust $\mu_{n}^\beta$.

Absent the intricate hierarchical picture described above, the following rephrasing of Theorem~\ref{easy_cor} confirms this idea.

\begin{thm} \label{sg_thm}
Assume \eqref{sg_assumption_1} and \eqref{sg_assumption_2}, and that $\beta\geq0$ is a point of differentiability for $p(\cdot)$ such that $p'(\beta) < \beta$.
Then for every $\eps > 0$, there exist integers $k = k(\beta,\eps)$ and $n_0 = n_0(\beta,\eps)$ and a number $\delta = \delta(\beta,\eps)>0$ such that the following is true for all $n\geq n_0$.
With $\P$-probability at least $1-\eps$, there exist $\sigma^1,\dots,\sigma^k\in\Sigma_n$ such that
\eq{ 
\mu_{n}^\beta\Big(\bigcup_{j=1}^k \{\sigma^{k+1}\in\Sigma_n : |R_{j,k+1}|\geq\delta\}\Big) \geq 1 - \eps.
}
\end{thm}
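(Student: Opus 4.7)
The plan is to deduce Theorem~\ref{sg_thm} from the general result Theorem~\ref{easy_cor} by (a) verifying that the mean-field spin glass setup satisfies hypotheses \eqref{free_energy_assumption}--\eqref{field_decomposition}, and then (b) translating the conclusion from the correlation function $\RR$ to the bare replica overlap $R$.

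For (a), each hypothesis is essentially built into the assumptions of the theorem. The Hamiltonian \eqref{sg_hamiltonian} is already presented as a sum of i.i.d.\ standard Gaussians times deterministic functions of $\sigma$, giving the decomposition \eqref{field_decomposition}; convergence in $L^2$ is straightforward from \eqref{sg_assumption_1}. A direct calculation yields $\Cov(H_n(\sigma^1),H_n(\sigma^2)) = n\,\xi(R_{1,2})$, so \eqref{variance_assumption} is exactly the first equation in \eqref{sg_assumption_2}, and \eqref{positive_overlap} holds with $\EEE_n = 0$ by the second equation in \eqref{sg_assumption_2}. The free energy convergence \eqref{free_energy_assumption} is the Parisi--Talagrand--Panchenko formula in the Ising case and the Crisanti--Sommers--Talagrand--Chen formula in the spherical case, combined with standard Gaussian concentration as noted in the text preceding \eqref{sg_assumption_1}.

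For (b), apply Theorem~\ref{easy_cor} at the given $\beta$ and $\eps$ to obtain $k = k(\beta,\eps)$, $n_0 = n_0(\beta,\eps)$, and $\delta' = \delta'(\beta,\eps) > 0$, together with (random) states $\sigma^1,\dots,\sigma^k\in\Sigma_n$ satisfying $\mu_n^\beta\bigl(\bigcup_{j=1}^k \BB(\sigma^j,\delta')\bigr) \geq 1-\eps$ with probability at least $1-\eps$. Since $\BB(\sigma^j,\delta') = \{\sigma : \xi(R(\sigma^j,\sigma)) \geq \delta'\}$, it suffices to show that for some $\delta = \delta(\beta,\eps) > 0$ one has
\[
\{\sigma : \xi(R(\sigma^j,\sigma)) \geq \delta'\} \subseteq \{\sigma : |R(\sigma^j,\sigma)| \geq \delta\}.
\]
This reduces to a deterministic fact about $\xi$. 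Condition \eqref{sg_assumption_1} implies $\sum_{p\geq 2}\beta_p^2 < \infty$, so by the Weierstrass $M$-test the series defining $\xi$ converges uniformly on $[-1,1]$, and hence $\xi$ is continuous there with $\xi(0) = 0$. Using the nonnegativity of $\xi$ from \eqref{sg_assumption_2}, uniform continuity at $0$ gives some $\delta > 0$ such that $|q| < \delta$ implies $\xi(q) < \delta'$; the contrapositive is the desired inclusion.

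There is no serious obstacle here: the bulk of the work has already been done in Theorem~\ref{easy_cor}, and everything else is bookkeeping. The one point worth flagging is that the $\sigma^j$ produced by Theorem~\ref{easy_cor} depend measurably on the disorder but are not themselves i.i.d.\ samples from $\mu_n^\beta$; this is compatible with the ``there exist'' quantifier in Theorem~\ref{sg_thm}. The only genuinely model-specific input is the relation $\RR_{j,k} = \xi(R_{j,k})$ together with the continuity of $\xi$ at the origin, which is what lets the overlap-radius parameter $\delta'$ coming from the abstract theorem be converted into the parameter $\delta$ governing $|R_{j,k+1}|$ in the statement.
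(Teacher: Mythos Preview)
Your proposal is correct and follows essentially the same approach as the paper, which simply notes that the result follows from Theorem~\ref{easy_cor} together with the continuity of $\xi$ at $0$ (implied by \eqref{sg_assumption_1}). You have spelled out the verification of hypotheses \eqref{free_energy_assumption}--\eqref{field_decomposition} and the $\xi$-continuity argument in more detail than the paper does, but the substance is identical.
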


%

The proof of the above Theorem follows simply from Theorem~\ref{easy_cor} and the observation that by \eqref{sg_assumption_1}, $\xi$ is continuous at $0$. 

Under strong assumptions on $\xi$ and the overlap distribution, namely the (extended) Ghirlanda--Guerra identities, much more precise results were proved by \mbox{Talagrand} \cite[Theorem 2.4]{talagrand10} and later Jagannath \cite[Corollary 2.8]{jagannath17}. 
For spherical pure spin models, similar results were
proved by Subag \cite[Theorem 1]{subag17}. 
An advantage of our approach, beyond its
generality, is that our assumptions on $\xi$ are elementary to check and fairly
loose (they include all even spin models), and the temperature condition
$p'(\beta) < \beta$ is explicit and sharp.

While the literature on replica overlaps in spin glasses is vast, the reader
will find much information in \cite{mezard-parisi-virasoro87,talagrand11I,talagrand11II,panchenko13}; see also \cite{jagannath-tobasco17II} and references therein.

\subsubsection{Directed polymers} \label{directed_polymers}
Given a positive integer $d$, let $\Sigma_n$ be the set of all maps from $\{0,1,\ldots,n\}$ into $\Z^d$, and let $P_n$ be the law, projected onto $\Sigma_n$, of a homogeneous random walk on $\Z^d$ starting at the origin.
That is, there is some probability mass function
$K$ on $\Z^d$ such that
\begin{subequations} \label{walk_assumption}
\begin{align}
P_n(\sigma(0) = 0) &= 1,\\
P_n\givenp{\sigma(i) = y}{\sigma(i-1)=x} &= K(y-x), \quad 1\leq i\leq n.
\end{align}
\end{subequations}
Let $(g(i,x) : i\geq1,x\in\Z^d)$ be i.i.d.~standard normal random variables.
The Hamiltonian for the model of directed polymers in Gaussian environment is then given by
\eq{
H_n(\sigma) = \sum_{i=1}^n g(i,\sigma(i)) = \sum_{i=1}^n\sum_{x\in\Z^d} g(i,x)\one_{\{\sigma(i)=x\}}.
}
In this case, the overlap between two paths is the fraction of time they intersect:
\eeq{ \label{polymer_overlap}
\RR_{1,2} = \frac{1}{n}\sum_{i=1}^n \one_{\{\sigma^1(i)=\sigma^2(i)\}}.
}
The assumption \eqref{free_energy_assumption} holds for any $K$ \cite[Section 2]{bates18}, although typically
$P_n$ is taken to be standard simple random walk; all the references below
refer to this case. 
Alternatively, one can consider point-to-point polymer measures, meaning the endpoint of the polymer is fixed.
This case is studied in \cite{rassoul-seppalainen14,georgiou-rassoul-seppalainen16} and accommodates the same structure as above, up to changing the reference measure $P_n$.

Notice that the identity \eqref{expected_overlap_formula} immediately implies $\lim_{n\to\infty}\E\langle \RR_{1,2}\rangle > 0$ when $p'(\beta) < \beta$. 
Theorem~\ref{averages_squared} goes a step further, showing that the random variable $\langle \RR_{1,2}\rangle$ is itself stochastically bounded away from 0.
For a certain class of
bounded random environments,  a quantitative version of Theorem~\ref{averages_squared} was proved by Chatterjee~\cite{chatterjee19}, but Theorem~\ref{expected_overlap_thm} is the first of its kind. 
Unlike some other conjectured polymer properties, the statement \eqref{expected_overlap_thm_eq} has not been verified for the so-called exactly solvable models in $d = 1$ \cite{seppalainen12,corwin-seppalainen-shen15,oconnell-ortmann15,barraquand-corwin17,thiery-doussal15}. For heavy-tailed
environments, a stronger notion of localization is considered in \cite{auffinger-louidor11,torri16} and
also discussed in \cite{dey-zygouras16,berger-torri19}. 
Historically, studying pathwise localization has found
somewhat greater success in the context of \textit{continuous} space-time polymer
models \cite{comets-yoshida05,comets-yoshida13,comets-cranston13,comets-cosco??}.

For polymers in Gaussian environment, it is known (see \cite[Proposition 2.1(iii)]{comets17}) that $p'$
is bounded from above by a constant, and so $\E\langle \RR_{1,2}\rangle\to1$ as $\beta\to\infty$ by \eqref{expected_overlap_formula}. 
(While convexity guarantees $p(\cdot)$ is differentiable almost everywhere, it is an open problem to show that $p(\cdot)$ is everywhere differentiable, let
alone analytic away from the critical value separating the high and low temperature phases.) In this sense, the polymer measure becomes completely
localized near the maximizer of $H_n(\cdot)$ as $\beta\to\infty$. A main motivation for the
present study was to formulate a version of ``complete localization" for fixed
$\beta$ in the low-temperature regime.

In \cite{vargas07,bates-chatterjee20}, complete localization was phrased in terms of the endpoint
distribution: the law of $\sigma(n)$ under $\mu^\beta_{n}$.
Loosely speaking, what was shown
is that if $p(\beta) < \beta^2/2$, then with probability at least $1-\eps$, one can find sufficiently many (independent of $n$) random vertices $x_1,\dots,x_k$ in $\Z^d$ so that
\eeq{ \label{atomic_localization}
\mu^\beta_{n}\big(\big\{\sigma : \sigma(n) \in \{x_1,\dots,x_k\}\big\}\big) \geq 1 - \eps.
}
This behavior is called ``asymptotic pure atomicity", referring to the fact that
even as $n$ grows large, the endpoint distribution remains concentrated on an
$O(1)$ number of sites (rather than diffuse polynomially as in simple random
walk). 
This is analogous to the results of this paper, except that the endpoint statistic
has been used to reduce the state space to $\Z^d$. 
The pathwise localization in Theorem~\ref{easy_cor} describes a more global phenomenon occurring in the original state space $\Sigma_n$.
Rephrased below, it says that up to arbitrarily small probabilities, the Gibbs measure is concentrated on paths intersecting  one of a few distinguished paths a positive fraction of the time. 

\begin{thm} \label{polymer_thm}
Assume \eqref{walk_assumption} and that $\beta\geq0$ is a point of differentiability for $p(\cdot)$ such that $p'(\beta) < \beta$.
Then for every $\eps > 0$, there exist integers $k = k(\beta,\eps)$ and $n_0 = n_0(\beta,\eps)$ and a number $\delta = \delta(\beta,\eps)>0$ such that the following is true for all $n\geq n_0$.
With $\P$-probability at least $1-\eps$, there exist paths $\sigma^1,\dots,\sigma^k\in\Sigma_n$ such that
\eq{
\mu_{n}^\beta\bigg(\bigcup_{j=1}^k \Big\{\sigma^{k+1}\in\Sigma_n: \frac{1}{n}\sum_{i=1}^n\one_{\{\sigma^{k+1}(i)=\sigma^j(i)\}}\geq\delta\Big\}\bigg) \geq 1 - \eps.
}
\end{thm}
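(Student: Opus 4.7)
The plan is to deduce Theorem~\ref{polymer_thm} as a direct specialization of Theorem~\ref{easy_cor}. All that is needed is to (i) verify that the Gaussian directed polymer satisfies the four hypotheses \eqref{free_energy_assumption}--\eqref{field_decomposition} of Section~\ref{model}, and (ii) identify the overlap balls $\BB(\sigma,\delta)$ from \eqref{ball_def} with the intersection sets appearing in the statement.

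For (i), I would dispatch the four assumptions in turn. Assumption \eqref{free_energy_assumption} is a standard fact for the Gaussian polymer with any step distribution $K$, as noted in \cite[Section~2]{bates18}. For \eqref{variance_assumption}, observe that for a fixed path $\sigma$ the space-time vertices $(i,\sigma(i))$ with $i=1,\dots,n$ are distinct, so $g(1,\sigma(1)),\dots,g(n,\sigma(n))$ are independent standard Gaussians and $\Var H_n(\sigma)=n$. The same observation gives
\eq{
\Cov(H_n(\sigma^1),H_n(\sigma^2))=\sum_{i=1}^n \one_{\{\sigma^1(i)=\sigma^2(i)\}}\geq 0,
}
which verifies \eqref{positive_overlap} with $\EEE_n=0$. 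Finally, \eqref{field_decomposition} is already baked into the definition: enumerate the countable family $\{(i,x):1\leq i\leq n,\ x\in\Z^d\}$ as $(i_k,x_k)_{k\geq 1}$ and set $\vphi_{k,n}(\sigma)=\one_{\{\sigma(i_k)=x_k\}}$ with disorder $g_{k,n}=g(i_k,x_k)$. For each $\sigma$, exactly $n$ of these indicator functions are nonzero, so the series collapses to a finite sum and $L^2(\P)$-convergence is automatic.

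For (ii), dividing the covariance identity above by $n$ shows that
\eq{
\RR(\sigma^1,\sigma^2) = \frac{1}{n}\sum_{i=1}^n \one_{\{\sigma^1(i)=\sigma^2(i)\}},
}
which is precisely the path overlap \eqref{polymer_overlap}. Consequently
\eq{
\BB(\sigma^j,\delta) = \Big\{\sigma'\in\Sigma_n : \frac{1}{n}\sum_{i=1}^n \one_{\{\sigma'(i)=\sigma^j(i)\}}\geq \delta\Big\},
}
and applying Theorem~\ref{easy_cor} with this data produces Theorem~\ref{polymer_thm} with the same constants $\eps$, $k$, $n_0$, and $\delta$. Since each step is either a routine computation or a direct citation, no real obstacle arises at this stage; the substantive content is carried entirely by Theorem~\ref{easy_cor}, and ultimately by Theorems~\ref{expected_overlap_thm} and~\ref{averages_squared}. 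The only external input invoked is the free energy convergence from \cite{bates18}, which is classical for this model.
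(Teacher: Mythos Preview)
Your proposal is correct and matches the paper's approach exactly: the paper presents Theorem~\ref{polymer_thm} explicitly as a rephrasing of Theorem~\ref{easy_cor} for the directed polymer model, having already noted in the surrounding text that \eqref{free_energy_assumption} holds by \cite[Section~2]{bates18}, that the Hamiltonian is defined via \eqref{field_decomposition}, and that the overlap is given by \eqref{polymer_overlap}. Your verification of \eqref{variance_assumption} and \eqref{positive_overlap} from the independence of the space-time weights fills in the only details the paper leaves implicit.
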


In Section~\ref{no_path_atom}, we demonstrate that path localization does not
occur in the atomic sense \eqref{atomic_localization}. 
That is, any bounded number of paths will have a total mass under
$\mu^\beta_{n}$ that decays to $0$ as $n\to\infty$.
For this reason, the definitions from \cite{vargas07,bates-chatterjee20} of complete localization for the endpoint are inadequate for path
localization, necessitating a statement in terms of overlap. This distinguishes
the lattice polymer model from its mean-field counterpart on
regular trees, which is simply the statistical mechanical version of branching
random walk \cite{derrida-spohn88,comets17}. 
For those models, the endpoint distribution on the
leaves of the tree is obviously equivalent to the Gibbs measure because each
leaf is the termination point of a unique path. Moreover, the results of \cite{bates-chatterjee20}
can be interpreted equally well (and improved upon) in that setting (see
\cite{barral-rhodes-vargas12,jagannath16}), and so we will not elaborate on the fact that polymers on trees also fit into the framework of this paper.

\subsection{Other Gaussian fields} \label{other_fields}
Here we mention several other models to
which our results apply but for which they are not new. Indeed, each model
below is known to exhibit Poisson--Dirichlet statistics for the masses assigned by
$\mu^\beta_{n}$ to the ``peaks" discussed in the motivating Section~\ref{motivation}. 
In particular, asymptotically no mass is given to states having vanishing expected overlap with an independent sample.
\begin{itemize}
\item Derrida's Random Energy Model (REM) \cite{derrida80,derrida81} is set on the hypercube
$\Sigma_n = \{\pm1\}^n$ with uniform measure, and has the simplest possible
covariance structure: $\RR_{j,k} = \delta_{j,k}$.
With $\beta_\mathrm{c} = \sqrt{2\log 2}$, the following formula holds \cite[Theorem 9.1.2]{bovier06}:
\eq{
p(\beta) = \begin{cases}
\beta^2/2 &\beta \leq \beta_\mathrm{c} \\
\beta_\mathrm{c}^2/2 + (\beta - \beta_\mathrm{c})\beta_\mathrm{c} &\beta>\beta_\mathrm{c}.
\end{cases}
}
See also \cite[Chapter 1]{talagrand03book}, in particular Theorem 1.2.1.
\item The generalized random energy models have non-trivial covariance structure \cite{derrida85}, and can be tuned to have an arbitrary number of phase transitions.
The condition $p'(\beta) < \beta$ is satisfied as soon as the first phase transition occurs.
See also \cite[Chapter 10]{bovier06}.

\item Finally, in \cite{arguin-zindy14} Arguin and Zindy studied a discretization of a log-correlated Gaussian field from \cite{barral-mandelbrot02,bacry-muzy03} 
which has the same free energy as the REM. 
Their particular model had the
technical complication of correlations not following a
tree structure, unlike for instance the discrete Gaussian free field.
\end{itemize}
\subsection{Open problems}
There are a number of open questions which, if solved, would enhance the theory presented in this paper. A partial list is the following.
\begin{enumerate}
\item Understand conditions under which the number of localizing regions is exactly one. As mentioned before, this requires more conditions than \eqref{free_energy_assumption}--\eqref{field_decomposition}, because it does not hold for some models (such as REM), whereas it is supposed to hold for many others. 
\item A close cousin of the above problem is to understand conditions under which $\RR_{1,2}$ is itself guaranteed to be away from zero with high probability. This would have important implications about the FRSB picture in mean-field spin glasses and path localization in directed polymers. 
\item Obtain a good quantitative bound on $\delta$ in terms of $\eps$ in Theorem~\ref{expected_overlap_thm}. Our proof gives a very poor bound, since it is based on an iterative argument similar to those used in extremal combinatorics (see the proof sketch in Section~\ref{sketchsec2}).
\item For directed polymers, prove a stronger theorem about path localization that says a typical path localizes within a narrow neighborhood of one or more fixed paths, rather than saying that a typical path has nonzero intersection with one or more fixed paths. 
\item Prove more general versions of Theorems~\ref{easy_cor}, ~\ref{expected_overlap_thm} and~\ref{averages_squared} that do not require the condition \eqref{positive_overlap} guaranteeing asymptotically nonnegative correlations. This would allow the theory to include other models of interest, such as the Edwards--Anderson model \cite{edwards-anderson75} of lattice spin glasses. 
It is important to note, however, that the hypotheses and conclusions of these more general theorems may require adjustment in order to be physically meaningful.
\item For any finite $\beta$, prove estimates that stochastically bound $\langle \RR_{1,2}\rangle$ away from $1$.
More ambitiously, determine conditions which guarantee that $\langle \RR_{1,2}\rangle$ concentrates around its expectation as $n\to\infty$.
\item Even when the spin glass correlation function $\xi$ takes negative values (recall that $\xi(R_{1,2})=\RR_{1,2}$), it is possible for the Gibbs measure to concentrate on a set such that $R_{1,2}\geq0$.
This is Talagrand's positivity principle and is known to hold when the extended Ghirlanda--Guerra identities are satisfied; see \cite[Section 12.3]{talagrand11II} or \cite[Section 3.3]{panchenko13}.
Perhaps the methods of this paper can be adapted to use this input rather than the condition $\xi\geq0$.
\end{enumerate}

\section{Proof sketches}\label{sketchsec}
The proofs of Theorems~\ref{expected_overlap_thm} and~\ref{averages_squared} are long, but they contain ideas that may be useful for other problems. 
Therefore, we have included this proof-sketch section which, while still rather lengthy, distills the arguments to their central ideas.
It introduces some of the notations that will be used later in the manuscript; however, these notations will be reintroduced in the later sections, so it is safe to skip directly to Section~\ref{prep_section} should the reader decide to do so.

\subsection{Proof sketch of Theorem~\ref{averages_squared}}\label{sketchsec1}
For simplicity, let us assume that the representation \eqref{field_decomposition} consists of only finitely many terms:
\eq{
H_n(\sigma) = \sum_{i=1}^N g_i \vphi_i(\sigma).
}
Following the argument described below, the general case is handled by some routine calculations (made in Section~\ref{gibbs_measure}) to check that sending $N\to\infty$ poses no issues.

Given \eqref{expected_overlap_formula}, it is clear that $p'(\beta) < \beta$ would imply \eqref{averages_squared_eq} if we knew that $\langle\RR_{1,2}\rangle$ concentrates around its mean as $n\to\infty$.
Unfortunately, this may not be true in general.
Therefore, as a way of artificially imposing concentration, we let the environment evolve as an Ornstein--Uhlenbeck (OU) flow, and then eventually take an average over a short time interval.
Formally, this means we consider
\eeq{ \label{OU_def_sketch}
\vc g_t \coloneqq \e^{-t}\vc g + \e^{-t} \vc W({\e^{2t}-1}), \quad t\geq0,
}
where $\vc W(\cdot) = (W_i(\cdot))_{i=1}^N$ are independent Brownian motions that are also independent of $\vc g = \vc g_0$.
Recall the OU generator $\LL \coloneqq \Delta - \vc x \cdot \nabla$, and the fact that $\E\LL f(\vc g) =0$ for any $f$ with suitable regularity.
By expanding $f$ in an  orthonormal basis of eigenfunctions of $\LL$, and expressing both $\LL f(\vc g_t)$ and $\E \|\nabla f(\vc g)\|^2$ using the coefficients from this expansion, one can show that
\eeq{ \label{general_variance_bound}
\Var\bigg(\frac{1}{t} \int_0^{t} \LL f(\vc g_s)\ \dd s\bigg) \le \frac{2}{t} \E\|\nabla f(\vc g)\|^2.  
}
This inequality, established in Lemma~\ref{OU_variance_lemma}, provides the proof's essential estimate when applied to $f(\vc g) = F_n(\beta)$.
For this $f$, it is easy to verify that $\E\|\nabla f(\vc g)\|^2=O(1/n)$, and
\eq{
\LL f(\vc g_t) = \beta^2 - \beta^2 \langle \RR_{1,2}\rangle_t - \beta \frac{\partial}{\partial \beta} F_{n,t}(\beta),
}
where $\langle \RR_{1,2}\rangle_t$ and $F_{n,t}(\beta)$ are the expected overlap and free energy, respectively, in the environment $\vc g_t$.
Moreover, from standard methods (worked out in Section~\ref{derivatives_section}), it follows that $\frac{\partial}{\partial\beta}F_{n,t}(\beta)\approx p'(\beta)$ with high probability. 
Combining these observations about $f$ with the general variance estimate \eqref{general_variance_bound}, we arrive at
\eeq{\label{sketcheq2}
\frac{1}{T/n}\int_0^{T/n} \langle \RR_{1,2}\rangle_t \ \dd t = 1- \frac{p'(\beta)}{\beta} + O(1/T). 
}
In other words, averaging $\langle \RR_{1,2}\rangle_t$ over a long enough interval, but whose size is still $O(1/n)$, results in a value close to the expectation suggested by \eqref{expected_overlap_formula}.
We choose $T=T(\eps)$ large enough depending on $\eps$, which determines the level of precision required in \eqref{sketcheq2}.

Next comes the most crucial step in the proof, where we show that if $\langle \RR_{1,2}\rangle=\langle \RR_{1,2}\rangle_0\le \delta$ for some small $\delta$, then for each $t\in[0,T(\eps)/n]$, the quantity $\langle \RR_{1,2}\rangle_t$ is also small with high probability. 
If $p'(\beta)<\beta$, this leads to a contradiction to \eqref{sketcheq2} if $\delta$ is small enough. 
To avoid this contradiction, the probability of $\langle \RR_{1,2}\rangle\le \delta$ happening in the first place must be small, which is what we want to show.

To demonstrate our crucial claim, we consider any $t = T/n$, where $T\leq T(\eps)$ and $n$ is large.
First, note that
\eeq{\label{rrab}
\langle \RR_{1,2}\rangle_t = \frac{\langle \RR_{1,2}\e^{\beta A_t+ \beta B_t}\rangle}{\langle \e^{\beta A_t+\beta B_t}\rangle},
}
where $B_t$ comes from the Brownian part of \eqref{OU_def_sketch}, and $A_t$ comes from the initial environment:
\eq{
A_t &\coloneqq (\e^{-t} - 1)(H_n(\sigma^1) + H_n(\sigma^2)), \\
B_t &\coloneqq  \e^{-t}\sum_{i} W_i(\e^{2t}-1)(\vphi_i(\sigma^1)+\vphi_i(\sigma^2)).
}
Since $t=T/n \ll 1$, we have
\[
A_t \approx -\frac{T}{n} (H_n(\sigma^1) + H_n(\sigma^2)).
\]
By standard arguments (again presented in Section~\ref{derivatives_section}), $H_n(\sigma^1)/n$ and $H_n(\sigma^2)/n$ are both close to $p'(\beta)$ with high probability under the Gibbs measure. 
Thus, for fixed $t$, the random variable $A_t$ behaves like a constant inside $\langle\cdot\rangle$. 
Consequently, we can reduce \eqref{rrab} to 
\eeq{ \label{gibbs_t_1}
\langle \RR_{1,2}\rangle_t \approx \frac{\langle \RR_{1,2}\e^{\beta B_t}\rangle}{\langle \e^{\beta B_t}\rangle}.
}
Now let $h_i := W_i(\e^{2t}-1)/\sqrt{\e^{2t}-1}$, so that $h_i\sim \NN(0,1)$.  
Again since $t=T/n \ll 1$, we have
\eq{
B_t &= \sqrt{1-\e^{-2t}}\sum_i h_i (\vphi_i(\sigma^1)+\vphi_i(\sigma^2)) \approx \sqrt{\frac{2T}{n}}\sum_i h_i (\vphi_i(\sigma^1)+\vphi_i(\sigma^2)).
}
Thus, if $\E_{\vc h}$ denotes expectation in $\vc h = (h_1,\ldots, h_N)$ only, then 
\eq{
\E_{\vc h} \langle \e^{\beta B_t}\rangle 
&\stackrel{\phantom{\mbox{\footnotesize\eqref{variance_assumption}}}}{\approx} 
 \Big\langle\exp\Big(\frac{\beta^2T}{n}\sum_i (\vphi_i(\sigma^1)+\vphi_i(\sigma^2))^2\Big)\Big\rangle \\
&\stackrel{\mbox{\footnotesize\eqref{variance_assumption}}}{=} \exp\big(2\beta^2 T(1+\RR_{1,2})\big). 
}
In the event that $\langle \RR_{1,2}\rangle$ is small, the 
assumption \eqref{positive_overlap} implies that $\RR_{1,2}\approx 0$ with high probability under the Gibbs measure. 
Therefore, conditional on this event (which depends only on $\vc g$, not $\vc h$), we have
\[
\E_{\vc h} \langle \e^{\beta B_t}\rangle\approx \e^{2\beta^2T}.
\]
By a similar argument, we also have
\eq{
&\E_{\vc h} \langle \e^{\beta B_t}\rangle^2 \approx \E_{\vc h} \Big\langle \exp\Big(\beta\sqrt{\frac{2T}{n}} \sum_i h_i (\vphi_i(\sigma^1)+\vphi_i(\sigma^2)+ \vphi_i(\sigma^3)+\vphi_i(\sigma^4))\Big)\Big\rangle\\
&\qquad = \Big\langle \exp\Big(\frac{\beta^2T}{n} \sum_i (\vphi_i(\sigma^1)+\vphi_i(\sigma^2)+ \vphi_i(\sigma^3)+\vphi_i(\sigma^4))^2\Big)\Big\rangle
\approx \e^{4\beta^2T}.
}
In summary, if $\langle \RR_{1,2}\rangle \approx 0$, then
\[
\Var_{\vc h} \langle \e^{\beta B_t}\rangle = \E_{\vc h} \langle \e^{\beta B_t}\rangle^2 - (\E_{\vc h} \langle \e^{\beta B_t}\rangle)^2 \approx 0,
\]
and thus, with high probability, 
\eeq{ \label{gibbs_t_2}
 \langle \e^{\beta B_t}\rangle\approx \E_{\vc h} \langle \e^{\beta B_t}\rangle \approx \e^{2\beta^2T}.
}
By following exactly the same steps with $\langle \RR_{1,2} \e^{\beta B_t}\rangle$ instead of $\langle \e^{\beta B_t}\rangle$, we show that 
\eeq{ \label{gibbs_t_3}
\langle \RR_{1,2} \e^{\beta B_t}\rangle \approx \langle \RR_{1,2} \rangle \e^{2\beta^2T}.
}
Combining \eqref{gibbs_t_1}--\eqref{gibbs_t_3}, we conclude that if $\langle \RR_{1,2}\rangle \approx 0$, then $\langle \RR_{1,2}\rangle_t \approx \langle \RR_{1,2}\rangle \approx 0$.

\subsection{Proof sketch of Theorem~\ref{expected_overlap_thm}}\label{sketchsec2}
We begin this proof sketch where the previous section left off, namely the observation that if the average overlap $\langle\RR_{1,2}\rangle$ in environment $\vc g$ is small, then Gibbs averages of the type in \eqref{gibbs_t_2} and \eqref{gibbs_t_3} are well concentrated.
By the same type of argument --- see Lemma~\ref{h_variance_lemma}(b) and \eqref{upper_X3} --- we can say something more general: no matter the size of $\langle\RR_{1,2}\rangle$, these averages remain concentrated so as long as they are restricted to the set $\AA_{n,\delta}$ defined in \eqref{A_def}, where \textit{conditional} average overlap $\givena{\RR_{1,2} }{\sigma^1}$ is small.
That is, if $\wt H_n$ is an independent Hamiltonian (i.e.~defined with $\vc h$, an independent copy of $\vc g$), then with high probability,
\eeq{ \label{small_fluctuations_restricted}
\langle\one_{\AA_{n,\delta}}\e^{\frac{\beta}{\sqrt{n}}\wt H_n(\sigma)}\rangle \approx \E_{\vc h}  \langle\one_{\AA_{n,\delta}}\e^{\frac{\beta}{\sqrt{n}}\wt H_n(\sigma)}\rangle\stackref{variance_assumption}{=} \e^{\frac{\beta^2}{2}}\langle \one_{\AA_{n,\delta}}\rangle.
}
In fact, the opposite is true off of the set $\AA_{n,\delta}$. 
If $\langle \RR_{1,2}\rangle$ is not too small relative to $\delta$, then the fluctuations of $\langle\one_{\AA_{n,\delta}^\cc}\e^{\frac{\beta}{\sqrt{n}}\wt H_n(\sigma)}\rangle$ due to $\vc h$ are $\Omega(1)$ as $n\to\infty$.
This is again an elementary calculation; see \eqref{lower_X4_prep}--\eqref{lower_X4_almost}.

On the other hand, a convenient consequence of Gaussianity is that $H_n + \frac{1}{\sqrt{n}}\wt H_n \stackrel{\dd}{=} \sqrt{1+\frac{1}{n}}H_n$.
That is, an environment perturbation is equivalent in distribution to a temperature perturbation.
(In fact, this simple observation underlies the Aizenman--Contucci identities \cite{aizenman-contucci98}, the predecessor of the Ghirlanda--Guerra identities.)
Therefore, if we keep track of the dependence on $\beta$ by writing $\langle \cdot\rangle_\beta$, and abbreviate $\AA_{n,\delta}$ to $\AA_{\delta}$, we have
\eeq{ \label{sketch_2_1}
\langle\one_{\AA_{\delta}}\rangle_{\beta\sqrt{1+\frac{1}{n}}} 
\stackrel{\dd}{=}\frac{\langle\one_{\AA_{\delta}}\e^{\frac{\beta}{\sqrt{n}}\wt H_n(\sigma)}\rangle_\beta}{\langle \e^{\frac{\beta}{\sqrt{n}}\wt H_n(\sigma)}\rangle_\beta}.
}
By rewriting the denominator in a trivial way and using our observation \eqref{small_fluctuations_restricted}, we see that with high probability, 
\eeq{ \label{sketch_2_2}
\frac{\langle\one_{\AA_{\delta}}\e^{\frac{\beta}{\sqrt{n}}\wt H_n(\sigma)}\rangle_\beta}{\langle \e^{\frac{\beta}{\sqrt{n}}\wt H_n(\sigma)}\rangle_\beta} 
&= \frac{\langle\one_{\AA_{\delta}}\e^{\frac{\beta}{\sqrt{n}}\wt H_n(\sigma)}\rangle_\beta}{\langle\one_{\AA_{\delta}}\e^{\frac{\beta}{\sqrt{n}}\wt H_n(\sigma)}\rangle_\beta+\langle\one_{\AA^\cc_{\delta}}\e^{\frac{\beta}{\sqrt{n}}\wt H_n(\sigma)}\rangle_\beta}  \\
&\approx \frac{\e^{\frac{\beta^2}{2}}\langle\one_{\AA_{\delta}}\rangle_\beta}{\e^{\frac{\beta^2}{2}}\langle\one_{\AA_{\delta}}\rangle_\beta+\langle\one_{\AA^\cc_{\delta}}\e^{\frac{\beta}{\sqrt{n}}\wt H_n(\sigma)}\rangle_\beta}.
}
In the last expression above, the only term depending on $\vc h$ is the second summand in the denominator.
Therefore, Jensen's inequality gives
\eeq{ \label{sketch_2_3}
\E_\vc h\bigg[&\frac{\e^{\frac{\beta^2}{2}}\langle\one_{\AA_{\delta}}\rangle_\beta}{\e^{\frac{\beta^2}{2}}\langle\one_{\AA_{\delta}}\rangle_\beta+\langle\one_{\AA^\cc_{\delta}}\e^{\frac{\beta}{\sqrt{n}}\wt H_n(\sigma)}\rangle_\beta}\bigg] \\
&> \frac{\e^{\frac{\beta^2}{2}}\langle\one_{\AA_{\delta}}\rangle_\beta}{\e^{\frac{\beta^2}{2}}\langle\one_{\AA_{\delta}}\rangle_\beta+\E_{\vc h}\langle\one_{\AA^\cc_{\delta}}\e^{\frac{\beta}{\sqrt{n}}\wt H_n(\sigma)}\rangle_\beta} \\
&=\frac{\e^{\frac{\beta^2}{2}}\langle\one_{\AA_{\delta}}\rangle_\beta}{\e^{\frac{\beta^2}{2}}\langle\one_{\AA_{\delta}}\rangle_\beta+\e^{\frac{\beta^2}{2}}\langle\one_{\AA^\cc_{\delta}}\rangle_\beta} 
= \langle \one_{\AA_{\delta}}\rangle_\beta.
}
A more careful analysis shows that the Jensen gap is large enough that we can replace the lower bound by $(1+\gamma)\langle\one_{\AA_{\delta}}\rangle_\beta - C\sqrt{\delta}$, where $\gamma$ and $C$ are positive constants.
One important caveat is that this stronger lower bound is valid only when $\langle\RR_{1,2}\rangle$ is not too small (so that the fluctuations of $\langle\one_{\AA^\cc_{\delta}}\e^{\frac{\beta}{\sqrt{n}}\wt H_n(\sigma)}\rangle_\beta$ are order $1$), which is why Theorem~\ref{averages_squared} is needed beforehand.
Reading \eqref{sketch_2_1}--\eqref{sketch_2_3} from start to end, we obtain
\eeq{ \label{sketch_2_4}
\E\langle\one_{\AA_\delta}\rangle_{\beta\sqrt{1+\frac{1}{n}}} \geq (1+\gamma)\E\langle\one_{\AA_\delta}\rangle_\beta - C\sqrt{\delta}.
}
While the above inequality is the most important step of the proof, a key shortcoming is that the set $\AA_\delta$ is defined using $\langle\cdot\rangle_\beta$ rather than $\langle\cdot\rangle_{\beta\sqrt{1+\frac{1}{n}}}$.
Since we will want to apply the inequality iteratively, we need to replace $\AA_\delta$ on the left-hand side by $\AA_{\delta,1}$, where
\eq{
\AA_{\delta,k} \coloneqq \Big\{\sigma\in\Sigma_n : \frac{1}{n}\sum_i\vphi_i(\sigma)\langle\vphi_i\rangle_{\beta\sqrt{1+\frac{k}{n}}}\leq\delta\Big\}, \quad k = 0,1,2,\dots
}
To make this replacement, we produce a complementary inequality, again using the equivalence of environment/temperature perturbations.
For simplicity, let us assume $\RR_{1,2}\geq0$, which is essentially realized by \eqref{positive_overlap} for large $n$.
Observe that
\eq{
\givena{\RR_{1,2}}{\sigma^1}_{\beta\sqrt{1+\frac{1}{n}}} 
&\stackrel{\dd}{=} \frac{\givena{\RR_{1,2}\e^{\frac{\beta}{\sqrt{n}}\wt H_n(\sigma^2)}}{\sigma^1}_\beta}{\langle\e^{\frac{\beta}{\sqrt{n}}\wt H_n(\sigma)}\rangle_\beta}  \\
&\leq \sqrt{\givena{\RR_{1,2}}{\sigma^1}_\beta}\underbrace{\sqrt{\langle \e^{\frac{2\beta}{\sqrt{n}}\wt H_n(\sigma)}\rangle_\beta}\langle\e^{\frac{-\beta}{\sqrt{n}}\wt H_n(\sigma)}\rangle_\beta}_{X},
}
where we have applied Cauchy--Schwarz (and then $\RR_{1,2}^2 \leq \RR_{1,2}\leq1$) and Jensen's inequality (using the convexity of $x\mapsto x^{-1}$).
When $\sigma^1 \in \AA_{\delta}=\AA_{\delta,0}$, the final expression is at most $X\sqrt{\delta}$, and so the inequality implies $\AA_{\delta,0} \subset \AA_{X\sqrt{\delta},1}$.
Now, the random variable $X$ has moments of all orders (admitting simple upper bounds), and so it can be essentially regarded as a large constant.
In particular, when $\delta$ is small, we will have $X \leq \delta^{-1/4}$ with high probability, in which case
$\AA_{\delta,0} \subset \AA_{\delta^{1/4},1}$.
Combining these ideas with \eqref{sketch_2_4}, we show
\eq{
\E\langle \one_{\AA_{\delta^{1/4},1}}\rangle_{\beta\sqrt{1+\frac{1}{n}}} \geq (1+\gamma)\E\langle\one_{\AA_\delta}\rangle_\beta - C\sqrt{\delta}.
}
More generally, for any integer $k\geq1$,
\eeq{ \label{ineq_to_be_iterated}
\E\langle \one_{\AA_{\delta^{1/4},k}}\rangle_{\beta\sqrt{1+\frac{k}{n}}} \geq (1+\gamma)\E\langle\one_{\AA_{\delta,k-1}}\rangle_{\beta\sqrt{1+\frac{k-1}{n}}} - C\sqrt{\delta}.
}
This inequality can now be iterated, with $\delta$ being replaced by $\delta^{1/4}$, then $\delta^{1/16}$, and so on, as the expectation on the left is inserted on the right in the next iteration.

Since the left-hand side of \eqref{ineq_to_be_iterated}  is always at most $1$, we clearly obtain a contradiction if $\E\langle\one_{\AA_{\delta,0}}\rangle_\beta$ is larger than $x$, where $x$ is the solution to $x = (1+\gamma)x - C\sqrt{\delta}$. 
This would complete the proof of Theorem~\ref{expected_overlap_thm} if not for the subtlety that $\gamma$ actually depends on $k$ in a non-trivial way.
Nevertheless, \eqref{ineq_to_be_iterated} can still be used to derive a contradiction of the same spirit unless 
$\E\langle\one_{\AA_{\delta^{1/4^k},k}}\rangle$ is small for some $k\leq K$, where $K$ is large and tends to infinity as $\eps\to0$, but crucially does not depend on $n$.
This approach is reminiscent of tower-type arguments in extremal combinatorics.

Replacing $\delta$ by $\delta^{4^k}$, we can then say $\E\langle\one_{\AA_{\delta,k}}\rangle$ is small.
Finally, to deduce the smallness of $\E\langle \one_{\AA_{\delta,0}}\rangle$ from the smallness of $\E\langle\one_{\AA_{\delta,k}}\rangle$, we make use of standard arguments showing that if an event is rare at inverse temperature $\beta$, then it remains rare at inverse temperature $\beta + O(1/n)$.

\subsection{Proof sketch of Theorem~\ref{easy_cor}}
To deduce Theorem~\ref{easy_cor} from Theorem~\ref{expected_overlap_thm}, simply let $\sigma^1,\ldots,\sigma^k, \sigma^{k+1}$ be i.i.d.~draws from the Gibbs measure. Then by the law of large numbers, when $k$ is large,
\[
\frac{1}{k}\sum_{j=1}^k \RR_{j, k+1} \approx \RR(\sigma^{k+1})
\]
with high probability. 
But by Theorem~\ref{expected_overlap_thm}, we know that with high probability, $\RR(\sigma^{k+1})$ is not close to zero. Therefore, with high probability, there must exist $1\le j\le k$ such that $\RR_{j,k+1}$ is not close to zero.

\section{General preliminaries} \label{prep_section}
In this preliminary section, we record several facts needed in the proofs of Theorems~\ref{expected_overlap_thm} and~\ref{averages_squared}.
These preparatory results are mostly elementary.


\subsection{The Gibbs measure and partition function} \label{gibbs_measure}
 In order for our results to apply to a broad collection of models, we have allowed the state
space $\Sigma_n$ to be completely general, and the Hamiltonian $H_n$ to consist of countably infinite summands. 
We begin by checking that these assumptions pose no issues to computation. 
So for the remainder of Section~\ref{gibbs_measure}, we fix the value of $n$. 
 
Let $\langle\cdot\rangle_N$ denote expectation with respect to the Gibbs measure when the Hamiltonian is replaced by the finite sum $H_{n,N}\coloneqq\sum_{i=1}^Ng_i\vphi_i$.
That is,
\eeq{ \label{finite_gibbs_expectation}
\langle f(\sigma)\rangle_N = \frac{E_n(f(\sigma)\e^{\beta H_{n,N}(\sigma)})}{E_n(\e^{\beta H_{n,N}(\sigma)})}.
}
So that we can pass from $\langle \cdot\rangle_N$ to $\langle\cdot\rangle$,  we begin with the following lemma. 

\begin{lemma} \label{exp_moments_lemma}
For all $\beta\in\R$ and any $f\in L^2(\Sigma_n)$, the following limits hold almost surely and in $L^\alpha$ for any $\alpha\in[1,\infty)$:
\begin{subequations}
\begin{align} 
\lim_{N\to\infty} \langle f(\sigma)\rangle_N &= \langle f(\sigma)\rangle < \infty, \label{exp_moments_lemma_a}\\
\lim_{N\to\infty} \langle H_{n,N}(\sigma)\rangle_N &= \langle H_n(\sigma)\rangle < \infty. \label{exp_moments_lemma_b}
\end{align}
\end{subequations}
\end{lemma}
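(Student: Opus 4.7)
My plan is to exploit martingale convergence with respect to the filtration $\FF_N\coloneqq\sigma(g_1,\ldots,g_N)$. Since \eqref{field_decomposition} converges in $L^2(\P)$, we have $H_{n,N}(\sigma)=\E[H_n(\sigma)\mid\FF_N]$ for every $\sigma$. By \eqref{variance_assumption} this $\FF_N$-martingale is bounded in $L^2(\P)$ with variance at most $n$, hence converges $\P$-almost surely to $H_n(\sigma)$. A Fubini argument then shows that $\P$-almost surely the convergence holds simultaneously for $P_n$-a.e.\ $\sigma$.

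The heart of the proof is a sandwich argument for the exponential averages. On the upper side, conditional Jensen's inequality applied to $x\mapsto e^{\beta x}$ gives $e^{\beta H_{n,N}(\sigma)}\leq\E[e^{\beta H_n(\sigma)}\mid\FF_N]$, and integrating in $\sigma$ by Fubini yields, writing $Z_{n,N}(\beta)\coloneqq E_n(e^{\beta H_{n,N}(\sigma)})$,
\[
Z_{n,N}(\beta)\leq\E[Z_n(\beta)\mid\FF_N],
\]
and analogously $E_n(f\,e^{\beta H_{n,N}})\leq\E[E_n(f\,e^{\beta H_n})\mid\FF_N]$ for $f\geq 0$. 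A standard Jensen-in-$P_n$ plus Gaussian moment computation shows $\E Z_n(\beta)^\alpha\leq e^{\alpha^2\beta^2 n/2}$ for every $\alpha\geq 1$, so Lévy's upward theorem yields almost-sure and $L^\alpha$ convergence of the right-hand sides to $Z_n(\beta)$ and $E_n(f\,e^{\beta H_n})$, respectively. On the lower side, Fatou's lemma inside the $P_n$-integral gives the matching bound $\liminf_N E_n(f\,e^{\beta H_{n,N}})\geq E_n(f\,e^{\beta H_n})$. Combining the two sides (and splitting $f=f^+-f^-$) produces almost-sure convergence of both numerator and denominator in \eqref{exp_moments_lemma_a}; since $Z_n(\beta)>0$ almost surely, the ratio $\langle f\rangle_N$ converges almost surely to $\langle f\rangle$.

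To upgrade to $L^\alpha(\P)$ convergence, I will verify uniform integrability of $|\langle f\rangle_N|^\alpha$ by bounding moments of order $\alpha+1$. By Jensen and Cauchy--Schwarz inside $P_n$,
\[
|\langle f\rangle_N|^{\alpha+1}\leq\langle|f|\rangle_N^{\alpha+1}\leq(E_n f^2)^{(\alpha+1)/2}\,Z_{n,N}(2\beta)^{(\alpha+1)/2}\,Z_{n,N}(\beta)^{-(\alpha+1)}.
\]
Taking $\P$-expectation and applying Cauchy--Schwarz in $\P$ reduces matters to bounding $\E Z_{n,N}(2\beta)^{\alpha+1}$ and $\E Z_{n,N}(\beta)^{-2(\alpha+1)}$. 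The first is bounded by $e^{2(\alpha+1)^2\beta^2 n}$ by the same Jensen-in-$P_n$/Gaussian-moment argument; the second is bounded by the same quantity after using convexity of $x\mapsto x^{-2(\alpha+1)}$ on $(0,\infty)$ to obtain $Z_{n,N}(\beta)^{-2(\alpha+1)}\leq E_n e^{-2(\alpha+1)\beta H_{n,N}(\sigma)}$. The resulting uniform $L^{\alpha+1}(\P)$ bound on $\langle f\rangle_N$ supplies the required uniform integrability.

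The statement \eqref{exp_moments_lemma_b} follows from the same machinery together with the elementary pointwise inequality $|x|\,e^{\beta x}\leq C_\eta\,(e^{(\beta+\eta)x}+e^{(\beta-\eta)x})$ valid for every $\eta>0$ (from $|x|\leq C_\eta\cosh(\eta x)$). Applying it with $x=H_{n,N}(\sigma)$ and using conditional Jensen at inverse temperatures $\beta\pm\eta$ produces $P_n\otimes\P$-integrable dominating functions, and the Fatou/conditional-Jensen sandwich then runs verbatim to yield almost-sure and $L^\alpha$ convergence of $\langle H_{n,N}\rangle_N$ to $\langle H_n\rangle$. The main obstacle throughout is the absence of any uniform pointwise bound on the Gaussian exponentials $e^{\beta H_{n,N}(\sigma)}$: all control must be extracted from Gaussian moment identities, which is why the proof relies on the indirect sandwich strategy rather than a direct dominated-convergence argument.
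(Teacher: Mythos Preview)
Your proof is correct but takes a genuinely different route from the paper's. The paper constructs an explicit $N$-uniform dominating function: setting $M^\pm(\sigma)\coloneqq\sup_{N\geq0}\pm H_{n,N}(\sigma)$, Doob's submartingale inequality applied to $(\e^{2\beta H_{n,N}(\sigma)})_{N\geq0}$ shows $\E\e^{\beta M^\pm(\sigma)}<\infty$ for every $\beta\geq0$. With this in hand, the paper proceeds by direct dominated convergence, first in $P_n$ (to get a.s.\ convergence of numerator and denominator) and then in $\P$ (to upgrade to $L^\alpha$). Your sandwich via conditional Jensen $\e^{\beta H_{n,N}}\leq\E[\e^{\beta H_n}\mid\FF_N]$ combined with L\'evy's upward theorem and Fatou is a perfectly valid alternative that avoids the maximal inequality; indeed your final sentence, asserting that no uniform pointwise domination is available, is precisely the misconception the paper's argument dispels. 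One small point: your treatment of \eqref{exp_moments_lemma_b} does not quite ``run verbatim'' since the integrand $H_{n,N}\e^{\beta H_{n,N}}$ varies with $N$; what your bound $|x|\e^{\beta x}\leq C_\eta(\e^{(\beta+\eta)x}+\e^{(\beta-\eta)x})$ actually sets up is a generalized dominated convergence (Pratt's lemma) with $N$-dependent majorants $Y_N=C_\eta(\e^{(\beta+\eta)H_{n,N}}+\e^{(\beta-\eta)H_{n,N}})$ whose $P_n$-integrals you have already shown converge by the sandwich at inverse temperatures $\beta\pm\eta$. This is fine, but worth stating explicitly. Each approach has its merits: the paper's Doob-based domination is reusable (it immediately handles any $\phi$ of at-most-exponential growth, as in their Claim~4), while your conditional-Jensen sandwich is arguably more elementary and makes the martingale structure do all the work.
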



\begin{proof}
We organize the proof into a sequence of claims.
\begin{claim} \label{claim_1}
With $\P$-probability equal to $1$,
\eq{
\lim_{N\to\infty} H_{n,N}(\sigma) = H_n(\sigma) \quad \text{for $P_n$-$\mathrm{a.e.}$~$\sigma\in\Sigma_n$}.
}
\end{claim}

\begin{proof} Observe that for fixed $\sigma\in\Sigma_n$, the
sequence $(H_{n,N} (\sigma))_{N\geq0}$ is a martingale with respect to $\P$. 
Since
\eq{
\sup_{N\geq0}\E[H_{n,N}(\sigma)^2] = \sup_{N\geq0}\sum_{i=1}^N \vphi_i(\sigma)^2\stackrel{\mbox{\footnotesize\eqref{variance_assumption},\eqref{field_decomposition}}}{=} n,
}
the martingale convergence theorem guarantees that $H_{n,N}(\sigma)$ converges $\P$-almost surely as $N\to\infty$ to a limit we call $H_n(\sigma)$.
Now Fubini's theorem proves the claim:
\eq{
\E E_n(\one_{\{H_{n,N}(\sigma)\to H_n(\sigma)\}}) = E_n(\E[\one_{\{H_{n,N}(\sigma)\to H_n(\sigma)\}}]) = E_n(1) = 1.
}
\end{proof}

\begin{claim} \label{claim_2}
There exist nonnegative random variables $(M^+(\sigma))_{\sigma\in\Sigma_n}$ and $(M^-(\sigma))_{\sigma\in\Sigma_n}$ such that
\eeq{ \label{every_N}
\pm H_{n,N}(\sigma) \leq M^\pm (\sigma) \quad \text{for all $N\geq0,\, \sigma\in\Sigma_n$},
}
and
\eeq{ \label{good_denominator}
\E E_n(\e^{\beta M^\pm(\sigma)}) < \infty \quad \text{for all $\beta\geq0$}.
}
\end{claim}

\begin{proof}
We simply take
\eq{
M^\pm(\sigma) \coloneqq \sup_{N\geq0} \pm H_{n,N}(\sigma)\geq \pm H_{n,0}(\sigma) = 0,
}
so that \eqref{every_N} is satisfied by definition.
Since $M^+ \stackrel{\text{d}}{=} M^-$, we need only check \eqref{good_denominator} for $M^+$. 
Observe that for any $\beta \geq 0$, $(\e^{\beta H_{n,N}(\sigma)})_{N\geq0}$ is a submartingale.
By Doob's inequality, for any $\lambda > 0$ and any integer $m\geq0$,
\eq{
\P\Big(\max_{0\leq N\leq m} \e^{\beta H_{n,N}(\sigma)}\geq\lambda\Big)
&= \P\Big(\max_{0\leq N\leq m} \e^{2\beta H_{n,N}(\sigma)}\geq\lambda^2\Big) \\
&\leq \lambda^{-2}\E(\e^{2\beta H_{n,m}(\sigma)}) \\
&= \lambda^{-2}\e^{2\beta^2\sum_{i=1}^m\vphi_i^2(\sigma)}\stackrel{\mbox{\footnotesize\eqref{variance_assumption}}}{\leq} \lambda^{-2}\e^{2\beta^2n}.
}
Therefore, for any $0 < \eps<\lambda $, 
\eq{
\P(\e^{\beta M^+(\sigma)}\geq\lambda) &\leq \P\Big(\e^{\beta M^+(\sigma)}\geq\lambda-\frac{\eps}{2}\Big) \\
&\leq \lim_{m\to\infty} \P\Big(\max_{0\leq N\leq m} \e^{\beta H_{n,N}(\sigma)}\geq\lambda-\eps\Big) \leq (\lambda-\eps)^{-2}\e^{2\beta^2n},
}
which implies
\eq{ 
\E(\e^{\beta M^+(\sigma)}) &= \int_0^\infty \P(\e^{\beta M^+(\sigma)}\geq\lambda)\ \dd\lambda \\
&\leq 1+\eps + \e^{2\beta^2n}\int_{1+\eps}^\infty (\lambda-\eps)^{-2}\ \dd\lambda < \infty.
}
Since Tonelli's theorem gives $\E E_n(\e^{\beta M^+(\sigma)}) = E_n(\E \e^{\beta M^+(\sigma)})$, \eqref{good_denominator} follows from the above display.
\end{proof}

\begin{claim} \label{claim_3}
For any $f\in L^2(\Sigma_n)$ and any continuous function $\phi : \R \to \R$ such that $|\phi(x)| \leq a\e^{b|x|}$ for all $x\in\R$, for some $a,b\geq0$, we have
\eeq{ \label{claim_3_eq}
\lim_{N\to\infty} E_n[f(\sigma)\phi(H_{n,N}(\sigma))] = E_n[f(\sigma)\phi(H_n(\sigma))] \quad \mathrm{a.s.}
}
\end{claim}

\begin{proof}
By Claim~\ref{claim_1} and the continuity of $\phi$, we almost surely have that $\phi(H_{n,N}(\sigma)) \to \phi(H_{n}(\sigma))$ for $P_n$-a.e.~$\sigma\in\Sigma_n$, as $N\to\infty$.
And by hypothesis,
\eeq{ \label{exponential_domination}
|\phi(H_{n,N}(\sigma))| \leq a(\e^{bM^+(\sigma)} + \e^{bM^-(\sigma)}).
}
Since
\eq{
E_n\big[|f(\sigma)|(\e^{b M^+(\sigma)} + \e^{bM^-(\sigma)})\big] 
&\leq \sqrt{ E_n[f(\sigma)^2] E_n[(\e^{b M^+(\sigma)} + \e^{bM^-(\sigma)})^2]} \\
&\leq \sqrt{ E_n[f(\sigma)^2] E_n[2(\e^{2b M^+(\sigma)} + \e^{2bM^-(\sigma)})]},
}
and Claim~\ref{claim_2} implies that almost surely $E_n(\e^{2b M^\pm(\sigma)}) < \infty$,
\eqref{claim_3_eq} now follows from dominated convergence (with respect to $P_n$).
\end{proof}

\begin{claim} \label{claim_4}
For any $f\in L^2(\Sigma_n)$ and any continuous function $\phi : \R \to \R$ such that $|\phi(x)| \leq a\e^{b|x|}$ for all $x\in\R$, for some $a,b\geq0$, we have
\eeq{ \label{claim_4_eq}
\lim_{N\to\infty} \langle f(\sigma)\phi(H_{n,N}(\sigma))\rangle_N = \langle f(\sigma)\phi(H_n(\sigma))\rangle \quad \mathrm{a.s.}\text{ and in } L^\alpha, \alpha\in[1,\infty).
}
\end{claim}

\begin{proof}
Recall that
\eq{
\langle f(\sigma)\phi(H_{n,N}(\sigma))\rangle_N &= \frac{E_n[f(\sigma)\phi(H_{n,N}(\sigma))\e^{\beta H_{n,N}(\sigma)}]}{E_n(\e^{\beta H_{n,N}(\sigma)})}, \\
\langle f(\sigma)\phi(H_n(\sigma))\rangle &= \frac{E_n[f(\sigma)\phi(H_{n}(\sigma))\e^{\beta H_{n}(\sigma)}]}{E_n(\e^{\beta H_{n}(\sigma)})}.
}
Since $|\phi(x)|\e^{\beta x} \leq  a\e^{(b+\beta)|x|}$, the almost sure part of \eqref{claim_4_eq} is immediate from Claim~\ref{claim_3}.
The convergence in $L^\alpha$ is then a consequence of dominated convergence (with respect to $\P$).
Indeed, by Cauchy--Schwarz and Jensen's inequality, we have the majorization 
\eq{
&|\langle f(\sigma)\phi(H_{n,N}(\sigma))\rangle_N|
= \frac{|E_n(f(\sigma)\phi(H_{n,N}(\sigma))\e^{\beta H_{n,N}(\sigma)})|}{E_n(\e^{\beta H_{n,N}(\sigma)})} \\
&\stackrel{\phantom{\mbox{\footnotesize\eqref{exponential_domination}}}}{\leq} \frac{\sqrt{E_n(f(\sigma)^2)E_n(\phi(H_{n,N}(\sigma))^2\e^{2\beta H_{n,N}(\sigma)})}}{E_n(\e^{-\beta M^-(\sigma)})} \\
&\stackrel{\mbox{\footnotesize\eqref{exponential_domination}}}{\leq} \sqrt{ E_n(f(\sigma)^2)  E_n[2a^2(\e^{2(b+\beta)M^+(\sigma)} + \e^{2(b+\beta)M^-(\sigma)})]}E_n(\e^{\beta M^-(\sigma)}),
}
where the final expression has moments of all orders by \eqref{good_denominator}.
\end{proof}

We now complete the proof of Lemma~\ref{exp_moments_lemma} by taking $\phi \equiv 1$ for \eqref{exp_moments_lemma_a}, and $f\equiv1$, $\phi(x) = x$ for \eqref{exp_moments_lemma_b}.

\end{proof}

\begin{remark} The essential feature of the above proof was checking in Claim~\ref{claim_2} that \eqref{variance_assumption} is enough
to guarantee the first equality below:
\eeq{ \label{freq_identity}
\E(\e^{\beta\sum_{i=1}^\infty g_i\vphi_i}) = \lim_{N\to\infty} \E(\e^{\beta\sum_{i=1}^Ng_i\vphi_i})
= \lim_{N\to\infty} \e^{\frac{\beta^2}{2}\sum_{i=1}^N\vphi_i^2}
\stackrel{\mbox{\footnotesize\eqref{variance_assumption}}}{=} \e^{\frac{\beta^2}{2}n}.
}
\end{remark}
We will frequently use the above identity, an easy consequence of which is
the following.

\begin{lemma} \label{moments_lemma}
For any $\beta\in\R$, we have
\eeq{ \label{first_moment}
\E Z_n(\beta) = \e^{\frac{\beta^2}{2}n},
}
as well as
\eeq{ \label{negative_first_moment}
\E[Z_n(\beta)^{-1}] \leq \e^{\frac{\beta^2}{2}n}.
}
\end{lemma}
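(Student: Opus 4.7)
The plan is to use the definition $Z_n(\beta) = E_n(e^{\beta H_n(\sigma)})$ and interchange the order of the $\P$- and $P_n$-expectations via Fubini/Tonelli, then invoke the Gaussian MGF identity \eqref{freq_identity} pointwise in $\sigma$. For the first moment, the computation I have in mind is simply
\[
\E Z_n(\beta) \;=\; \E E_n(e^{\beta H_n(\sigma)}) \;=\; E_n \E(e^{\beta H_n(\sigma)}) \;\stackrel{\eqref{freq_identity}}{=}\; E_n(e^{\beta^2 n/2}) \;=\; e^{\beta^2 n/2},
\]
where the middle equality uses \eqref{variance_assumption} together with the pointwise MGF identity recorded in \eqref{freq_identity}, which was precisely the content of Claim~\ref{claim_2} and the accompanying remark.

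For the inverse-moment bound \eqref{negative_first_moment}, the natural first step is to apply Jensen's inequality to the inner $P_n$-average, using that $x \mapsto 1/x$ is convex on $(0,\infty)$. This yields
\[
Z_n(\beta)^{-1} \;=\; \frac{1}{E_n(e^{\beta H_n(\sigma)})} \;\leq\; E_n(e^{-\beta H_n(\sigma)}).
\]
Taking $\P$-expectation of both sides and again swapping the order with $E_n$, the same Gaussian MGF identity applied to $-H_n(\sigma)$ (which has the same variance $n$ by \eqref{variance_assumption}) gives the desired bound $e^{\beta^2 n/2}$.

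The only step that requires any care is justifying the exchange of $\E$ and $E_n$; but this is already in hand. In Claim~\ref{claim_2} of the proof of Lemma~\ref{exp_moments_lemma} we constructed dominating random variables $M^\pm(\sigma)$ with $\E E_n(e^{\beta M^\pm(\sigma)}) < \infty$, so both $e^{\beta H_n(\sigma)}$ and $e^{-\beta H_n(\sigma)}$ are jointly integrable with respect to $\P \otimes P_n$, and Tonelli applies without issue. So I do not anticipate any genuine obstacle here; the lemma is essentially a direct application of \eqref{freq_identity} combined with Jensen for the second bound.
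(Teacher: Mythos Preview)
Your proposal is correct and matches the paper's proof essentially line for line: Tonelli to swap $\E$ and $E_n$, then \eqref{freq_identity} for the first moment; Jensen on $x\mapsto x^{-1}$ followed by the same swap for the second. The domination argument you invoke is unnecessary overkill, since both integrands $e^{\pm\beta H_n(\sigma)}$ are nonnegative and Tonelli applies directly, but it does no harm.
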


\begin{proof}
By exchanging the order of expectation in the identity $\E Z_n(\beta) = \E[E_n(\e^{\beta H_n(\sigma)})]$
(which we are permitted to do by Tonelli's theorem) and applying \eqref{freq_identity}, we
obtain \eqref{first_moment}. 
For \eqref{negative_first_moment}, we apply Jensen's inequality to obtain
\eq{
Z_n(\beta)^{-1} = [E_n(\e^{\beta H_n(\sigma)})]^{-1} \leq E_n(\e^{-\beta H_n(\sigma)}),
}
then take expectation $\E(\cdot)$ of both sides, and again exchange the order of
expectation. 
\end{proof}

Let us also record two consequences of Lemma~\ref{exp_moments_lemma} that will be needed later in the paper.

\begin{cor}
For any $\beta\in\R$, the following limits hold almost surely and in $L^\alpha$ for any $\alpha\in[1,\infty)$:
\eeq{ \label{limit_for_later}
\lim_{N\to\infty} \sum_{i=1}^N \langle \vphi_i^2\rangle_N &= n \qquad \text{and} \qquad
\lim_{N\to\infty} \sum_{i=1}^N \langle\vphi_i\rangle^2_N = \sum_{i=1}^\infty \langle\vphi_i\rangle^2.
}
\end{cor}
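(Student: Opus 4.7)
The plan is to exploit Claim~\ref{claim_4} together with routine dominated/bounded convergence arguments, treating the two limits in sequence.

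For the first limit, write $\sum_{i=1}^N \langle \vphi_i^2\rangle_N = \langle S_N\rangle_N$ with $S_N(\sigma) := \sum_{i=1}^N \vphi_{i}^2(\sigma)$. By \eqref{variance_assumption} and \eqref{field_decomposition}, $S_N(\sigma) \nearrow n$ for every $\sigma$, so $T_N := n - S_N$ satisfies $0 \leq T_N \leq n$ and $T_N(\sigma) \to 0$ pointwise. It then suffices to show $\langle T_N\rangle_N \to 0$. The denominator $E_n(\e^{\beta H_{n,N}})$ converges a.s.\ to $Z_n(\beta) > 0$ by Claim~\ref{claim_3} (with $f\equiv 1, \phi(x) = \e^{\beta x}$). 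For the numerator, Cauchy--Schwarz yields
\[
E_n(T_N\, \e^{\beta H_{n,N}}) \leq \sqrt{E_n(T_N^2)}\cdot\sqrt{E_n(\e^{2\beta H_{n,N}})};
\]
the first factor tends to zero by bounded convergence on $(\Sigma_n, P_n)$ (since $T_N^2 \leq n^2$ and $T_N \to 0$ pointwise), while the second is dominated uniformly in $N$ by $E_n(\e^{2|\beta|M^+}) + E_n(\e^{2|\beta|M^-})$, which is a.s.\ finite thanks to Claim~\ref{claim_2}. Hence $\langle T_N\rangle_N \to 0$ a.s. The $L^\alpha$ convergence is automatic because $0 \leq \langle T_N\rangle_N \leq n$ uniformly.

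For the second limit, set $U_N(\sigma^1,\sigma^2) := \sum_{i=1}^N \vphi_i(\sigma^1)\vphi_i(\sigma^2)$, and let $\langle\cdot\rangle_N^{\otimes 2}$ denote the two-replica Gibbs expectation, so that $\sum_{i=1}^N \langle\vphi_i\rangle_N^2 = \langle U_N\rangle_N^{\otimes 2}$. Cauchy--Schwarz on partial sums gives $|U_N(\sigma^1,\sigma^2)| \leq \sqrt{S_N(\sigma^1)S_N(\sigma^2)} \leq n$, and also shows $(U_N)$ is pointwise Cauchy; denote its limit by $U(\sigma^1,\sigma^2)$, which still satisfies $|U|\leq n$. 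Since $\sum_i |\vphi_i(\sigma^1)\vphi_i(\sigma^2)| \leq n$ uniformly, Fubini identifies $\langle U\rangle^{\otimes 2} = \sum_{i=1}^\infty \langle \vphi_i\rangle^2$. Decompose
\[
\langle U_N\rangle_N^{\otimes 2} - \sum_{i=1}^\infty \langle\vphi_i\rangle^2 = \underbrace{\langle U_N - U\rangle_N^{\otimes 2}}_{(\mathrm{I})} + \underbrace{\langle U\rangle_N^{\otimes 2} - \langle U\rangle^{\otimes 2}}_{(\mathrm{II})}.
\]
By Cauchy--Schwarz and Jensen,
\[
|(\mathrm{I})| \leq \bigl\langle\sqrt{T_N(\sigma^1)T_N(\sigma^2)}\bigr\rangle_N^{\otimes 2} = \bigl(\langle\sqrt{T_N}\rangle_N\bigr)^2 \leq \langle T_N\rangle_N \to 0
\]
almost surely by the first part. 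For $(\mathrm{II})$, I would rerun the argument of Claim~\ref{claim_4} on the product space: the numerator $E_n^{\otimes 2}\bigl(U\, \e^{\beta H_{n,N}(\sigma^1) + \beta H_{n,N}(\sigma^2)}\bigr)$ converges a.s.\ by dominated convergence with the $P_n^{\otimes 2}$-integrable majorant $n\prod_{j=1,2}(\e^{|\beta|M^+(\sigma^j)} + \e^{|\beta|M^-(\sigma^j)})$, and the denominator $Z_{n,N}(\beta)^2 \to Z_n(\beta)^2 > 0$. Thus the ratio converges to $\langle U\rangle^{\otimes 2}$. The $L^\alpha$ convergence follows from the uniform bound $|\langle U_N\rangle_N^{\otimes 2}| \leq n$ and dominated convergence on $\Omega$.

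The only genuinely new input beyond the preceding lemmas is the two-replica extension of Claim~\ref{claim_4} needed for $(\mathrm{II})$, but the product dominating function is immediate from Claim~\ref{claim_2} applied coordinatewise. The main conceptual step is the clean telescoping above that isolates the effect of $N$ on the integrand ($U_N \to U$)—controlled by the first limit itself—from its effect on the measure ($\langle\cdot\rangle_N \to \langle\cdot\rangle$).
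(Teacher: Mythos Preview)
Your proof is correct and takes a genuinely different route from the paper's. The paper argues term-by-term: it invokes Lemma~\ref{exp_moments_lemma} to get $\langle\vphi_i\rangle_N\to\langle\vphi_i\rangle$ and $\langle\vphi_i^2\rangle_N\to\langle\vphi_i^2\rangle$ for each fixed $i$, then runs an $\eps$-truncation argument (choose $M$ so the tail $\sum_{i>M}\langle\vphi_i^2\rangle$ is small, then $N_0$ so the first $M$ terms have converged, and bound $\langle\vphi_i\rangle^2\leq\langle\vphi_i^2\rangle$ to control the tail of the second sum). Your approach instead packages each sum as a single Gibbs expectation---$\langle S_N\rangle_N$ and $\langle U_N\rangle_N^{\otimes 2}$---and handles convergence at the level of the underlying $P_n$- and $P_n^{\otimes 2}$-integrals via dominated convergence with the Claim~\ref{claim_2} majorants. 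The telescoping into $(\mathrm{I})$ and $(\mathrm{II})$ is clean, and the bound $|(\mathrm{I})|\leq\langle T_N\rangle_N$ neatly recycles the first limit. The cost is the (routine) two-replica extension of Claim~\ref{claim_4}; the gain is that you avoid the tail-truncation bookkeeping entirely. Both arguments rest on the same ingredients (Claims~\ref{claim_2}--\ref{claim_4} and \eqref{variance_assumption}), so the difference is organizational rather than conceptual.
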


\begin{proof}
First we argue the almost sure statements.
The $L^\alpha$ statements will then follow from bounded convergence, since \eqref{variance_assumption} gives the uniform bound
\eq{
0\leq\sum_{i=1}^N \langle \vphi_i\rangle_N^2 \leq \sum_{i=1}^N \langle \vphi_i^2\rangle_N \leq n \quad \text{for every $N$.}
}
So we fix the disorder $\vc g$.
By Lemma~\ref{exp_moments_lemma}, it is almost surely the case that for every $i\geq1$, $\langle \vphi_i\rangle_N \to \langle \vphi_i\rangle$ and $\langle \vphi_i^2\rangle_N \to \langle \vphi_i^2\rangle$ as $N\to\infty$.
We also know $\sum_{i=1}^\infty \vphi_i^2 = n$.
In particular, given $\eps>0$, we can choose $M$ so large that
\eq{
n-\eps \leq \sum_{i=1}^M \langle \vphi_i^2\rangle \leq n \quad \Rightarrow \quad
\sum_{i=M+1}^\infty \langle \vphi_i^2\rangle \leq \eps.
}
Given $M$, there is $N_0$ such that for all $N\geq N_0$,
\eq{
\bigg|\sum_{i=1}^M (\langle \vphi_i^2\rangle_N -  \langle\vphi_i^2\rangle)\bigg| \leq \eps \qquad \text{and} \qquad
\bigg|\sum_{i=1}^M (\langle \vphi_i\rangle_N^2 -  \langle\vphi_i\rangle^2)\bigg| \leq \eps.
}
In particular, for all $N\geq N_0 \vee M$,
\eq{
n-2\eps\leq\sum_{i=1}^M \langle \vphi_i^2\rangle_N \leq n 
\quad &\Rightarrow \quad
n-2\eps\leq\sum_{i=1}^N \langle \vphi_i^2\rangle_N \leq n,
}
and also
\eq{
\bigg|\sum_{i=1}^N \langle \vphi_i\rangle_N^2 - \sum_{i=1}^\infty \langle\vphi_i\rangle^2\bigg|
&\leq \bigg|\sum_{i=1}^M (\langle \vphi_i\rangle_N^2 - \langle\vphi_i\rangle^2)\bigg| + \sum_{i=M+1}^\infty (\langle\vphi_i\rangle_N^2 + \langle\vphi_i\rangle^2) \\
&\leq \bigg|\sum_{i=1}^M (\langle \vphi_i\rangle_N^2 - \langle\vphi_i\rangle^2)\bigg| + \sum_{i=M+1}^\infty (\langle\vphi_i^2\rangle_N + \langle\vphi_i^2\rangle) 
\leq 4\eps.
}
\end{proof}

\subsection{Derivative of free energy} \label{derivatives_section}
This section records some important facts regarding convergence of the free energy's derivative.
%
By Lemma~\ref{exp_moments_lemma},
 it is almost surely the case that the random variable
$H_n(\sigma)$ has exponential moments of all orders with respect to $P_n$.
Standard
calculations then show that the free energy $F_n(\beta) = \frac{1}{n}\log Z_n(\beta)$ satisfies
\eeq{ \label{nrg_2deriv}
F_n'(\beta) = \frac{\langle H_n(\sigma)\rangle}{n} \quad \text{and} \quad
F_n''(\beta) = \frac{\langle H_n(\sigma)^2\rangle - \langle H_n(\sigma)\rangle^2}{n} \quad \mathrm{a.s.}
}
Recall from \eqref{free_energy_assumption} that $F_n(\beta) \to p(\beta)$. 
Since $F_n(\cdot)$ is convex for every $n$, $p(\cdot)$ is necessarily convex.
This assumption implies the following lemma, which is a general fact about the convergence of convex functions.

\begin{lemma} \label{betas_converging}
If $p(\cdot)$ is differentiable at $\beta$, and $\beta_n = \beta + \delta(n)$ with $\delta(n) \to 0$ as $n\to\infty$, then
\eq{
\lim_{n\to\infty} F_n'(\beta_n) = p'(\beta) \quad \mathrm{a.s.}\text{ and in }L^1.
}
\end{lemma}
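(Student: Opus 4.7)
The plan is to leverage the convexity of each $F_n$, which is immediate from \eqref{nrg_2deriv}, in order to convert the pointwise hypothesis \eqref{free_energy_assumption} into uniform-on-compact convergence; once this is in hand, a standard convexity sandwich extracts the claimed limit of $F_n'(\beta_n)$, and Gaussian concentration upgrades the mode of convergence from a.s.\ to $L^1$.

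First, I would fix a countable dense $D \subset \mathbb{R}$ containing $\beta$, and appeal to \eqref{free_energy_assumption} to produce a single $\P$-null set off of which $F_n(\gamma) \to p(\gamma)$ for every $\gamma \in D$ simultaneously. On the corresponding full-measure event, since each $F_n$ is convex and $p$ is finite and convex on all of $\mathbb{R}$, the classical theorem on pointwise-to-uniform convergence of convex functions (see, e.g., Rockafellar, \textit{Convex Analysis}, Theorem~10.8) promotes this to uniform convergence of $F_n$ to $p$ on every compact subset of $\mathbb{R}$. In particular, $F_n(\beta_n) \to p(\beta)$ and $F_n(\beta_n \pm h) \to p(\beta \pm h)$ for any fixed $h > 0$, since $\beta_n \to \beta$.

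Convexity of $F_n$ then gives, for every $h > 0$ and all sufficiently large $n$,
\eq{
\frac{F_n(\beta_n) - F_n(\beta_n - h)}{h} \le F_n'(\beta_n) \le \frac{F_n(\beta_n + h) - F_n(\beta_n)}{h},
}
and sending $n \to \infty$ via the uniform convergence from the previous paragraph yields
\eq{
\frac{p(\beta) - p(\beta - h)}{h} \le \liminf_{n \to \infty} F_n'(\beta_n) \le \limsup_{n \to \infty} F_n'(\beta_n) \le \frac{p(\beta + h) - p(\beta)}{h}.
}
Letting $h \downarrow 0$ and invoking differentiability of $p$ at $\beta$ pinches both extremes to $p'(\beta)$, establishing the a.s.\ convergence.

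For the $L^1$ convergence, the same convexity sandwich gives $|F_n'(\beta_n)| \le \max\{F_n(\beta_n+1) - F_n(\beta_n),\, F_n(\beta_n) - F_n(\beta_n-1)\}$, so uniform integrability reduces to an $n$- and $\gamma$-uniform $L^2$ bound for $F_n(\gamma)$ on a bounded interval around $\beta$. I would obtain this via the Gaussian Poincar\'e inequality in the environment variables $\vc g$: a direct computation gives $\partial_{g_i} F_n(\gamma) = (\gamma/n)\langle \vphi_i \rangle$, so that by Jensen together with \eqref{variance_assumption} and \eqref{field_decomposition},
\eq{
\|\nabla_{\vc g} F_n(\gamma)\|^2 \le \frac{\gamma^2}{n^2}\sum_{i} \langle\vphi_i^2\rangle = \frac{\gamma^2}{n},
}
whence $\Var F_n(\gamma) \le \gamma^2/n$; combined with the $L^1$ convergence of $\E F_n(\gamma)$ to $p(\gamma)$, this yields the desired $L^2$ bound. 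The only delicate point in the whole argument is the passage from the pointwise, $\gamma$-dependent a.s.\ convergence in \eqref{free_energy_assumption} to the uniform-on-compact a.s.\ convergence --- the countable-dense trick combined with convexity is exactly what bridges that gap --- and everything downstream is routine.
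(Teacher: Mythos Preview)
Your argument is correct, and for the almost sure statement it is essentially the paper's proof in different clothing: both use the convexity sandwich
\[
\frac{F_n(\beta_n)-F_n(\beta_n-h)}{h}\le F_n'(\beta_n)\le \frac{F_n(\beta_n+h)-F_n(\beta_n)}{h}
\]
and then pass to $p$. The difference is that you invoke Rockafellar's pointwise-to-uniform theorem (which forces you to first arrange a.s.\ convergence on a countable dense set), whereas the paper avoids this entirely by pushing the moving endpoints $\beta_n\pm h$ to \emph{fixed} points $\beta\pm\delta$, $\beta\pm\delta\pm h$ via one more application of convexity. The paper's version thus uses \eqref{free_energy_assumption} only at four deterministic values of $\gamma$, which makes the null-set bookkeeping trivial.

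Where the two proofs genuinely diverge is the $L^1$ part. You argue via uniform integrability and the Gaussian Poincar\'e inequality; this works, but it imports a Gaussian-specific tool and, with infinitely many $g_i$'s, strictly speaking requires the kind of finite-$N$ approximation the paper develops in Section~\ref{gibbs_measure}. The paper's route is more economical: having bounded $|F_n'(\beta_n)-p'(\beta)|$ by $|\Delta_n^+(\beta+\delta,h)|+|\Delta_n^-(\beta-\delta,h)|+2\eps$, it simply takes expectations and uses that \eqref{free_energy_assumption} already gives $L^1$ convergence at those fixed points, so $\E|\Delta_n^\pm|\to 0$. No Poincar\'e, no uniform integrability---the $L^1$ claim falls out of the same inequality as the a.s.\ claim. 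Your approach buys nothing extra here and costs an additional hypothesis-specific ingredient; the paper's is the cleaner extraction.
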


\begin{proof}
Let $\eps > 0$.
By differentiability, we can choose $h > 0$ sufficiently small that
\eeq{ \label{eps_1}
p'(\beta) - \eps \leq \frac{p(\beta)-p(\beta-h)}{h} \leq \frac{p(\beta+h)-p(\beta)}{h} \leq p'(\beta) + \eps,
}
where the middle inequality is due to convexity.
Given $h$, we next choose $\delta > 0$ such that
\begin{subequations}
\begin{align}
0 \leq \frac{p(\beta+\delta+h) - p(\beta+\delta)}{h} - \frac{p(\beta+h) - p(\beta)}{h} \leq \eps \label{eps_2a},
\intertext{as well as} 
0 \leq \frac{p(\beta) - p(\beta-h)}{h} - \frac{p(\beta-\delta)-p(\beta-\delta-h)}{h} \leq \eps, \label{eps_2b}
\end{align}
\end{subequations}
which is possible by the continuity of $p(\cdot)$.
Now, convexity of $F_n$ implies the following for all $n$ such that $\delta(n) \leq \delta$:
\begin{subequations}
\begin{align}
F_n'(\beta_n) &\leq \frac{F_n(\beta+\delta(n)+h)-F_n(\beta+\delta(n))}{h}\notag \\
&\leq \frac{F_n(\beta+\delta+h)-F_n(\beta+\delta)}{h}. \label{eps_3a}
\intertext{Similarly, for all $n$ such that $\delta(n) \geq -\delta$,}
F_n'(\beta_n) &\geq\frac{F_n(\beta-\delta)-F_n(\beta-\delta-h)}{h}.\hspace{1.3in} \label{eps_3b} \raisetag{\baselineskip}
\end{align} 
\end{subequations}
Upon defining
\eeq{ \label{Delta_def}
\Delta_n^-(\beta,h) &\coloneqq \frac{F_n(\beta)-F_n(\beta-h)}{h} - \frac{p(\beta)-p(\beta-h)}{h}, \\
\Delta_n^+(\beta,h) &\coloneqq \frac{F_n(\beta+h)-F_n(\beta)}{h} - \frac{p(\beta+h)-p(\beta)}{h},
}
it follows that for all sufficiently large $n$, 
\eq{
F_n'(\beta_n) - p'(\beta)
&\stackrel{\mbox{\hspace{3ex}\footnotesize\eqref{eps_3a}\hspace{3ex}}}{\leq} \frac{F_n(\beta+\delta+h)-F_n(\beta+\delta)}{h} - p'(\beta) \\
&\stackrel{\mbox{\footnotesize\eqref{eps_1},\eqref{eps_2a}}}{\leq} \Delta_n^+(\beta+\delta,h) + 2\eps.
}
Analogously, \eqref{eps_1}, \eqref{eps_2b}, and \eqref{eps_3b} together yield the lower bound
\eq{
F_n'(\beta_n) - p'(\beta) \geq \Delta_n^-(\beta-\delta,h) - 2\eps.
}
By \eqref{free_energy_assumption},  both $\Delta_n^-(\beta-\delta,h)$ and $\Delta_n^+(\beta+\delta,h)$ tend to $0$ almost surely and in $L^1$ as $n\to\infty$.
As $\eps$ is arbitrary, the desired result follows.
\end{proof}

\begin{cor} \label{overlap_identity_cor}
For every $\beta\geq0$ at which $p(\cdot)$ is differentiable, 
\eeq{ \label{overlap_ito_derivative}
p'(\beta) = \beta\big(1 - \lim_{n\to\infty} \E \langle \RR_{1,2}\rangle\big).
}
In particular, $0 \leq p'(\beta) \leq \beta$, and there is thus some $\beta_\cc \in [0,\infty]$ such that
\eq{
0 \leq \beta \leq \beta_\cc \quad &\Rightarrow \quad p(\beta) = \frac{\beta^2}{2}, \\
\beta>\beta_\cc \quad &\Rightarrow \quad p(\beta) < \frac{\beta^2}{2}.
}
\end{cor}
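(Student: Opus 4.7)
The plan is to establish the identity \eqref{overlap_ito_derivative} via a Gaussian integration by parts applied to $\E\langle H_n(\sigma)\rangle$, and then combine this with Lemma~\ref{betas_converging} (applied with $\delta(n)\equiv 0$) in order to send $n\to\infty$. Once the identity is in hand, the bounds on $p'(\beta)$ follow from \eqref{variance_assumption}--\eqref{positive_overlap}, and the existence of $\beta_\cc$ from convexity.

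First I would work with a truncated Hamiltonian $H_{n,N}(\sigma)=\sum_{i=1}^N g_i\vphi_i(\sigma)$ and the corresponding Gibbs expectation $\langle\cdot\rangle_N$ defined in \eqref{finite_gibbs_expectation}. Because only finitely many Gaussians are involved, Stein's lemma applies cleanly to each summand:
\[
\E\bigl[g_i\langle \vphi_i\rangle_N\bigr]
=\E\biggl[\partial_{g_i}\frac{E_n(\vphi_i\,\e^{\beta H_{n,N}(\sigma)})}{E_n(\e^{\beta H_{n,N}(\sigma)})}\biggr]
=\beta\,\E\bigl[\langle \vphi_i^2\rangle_N-\langle\vphi_i\rangle_N^2\bigr].
\]
Summing $i=1,\dots,N$ gives
\[
\E\langle H_{n,N}(\sigma)\rangle_N
=\beta\,\E\Bigl[\sum_{i=1}^N\langle\vphi_i^2\rangle_N-\sum_{i=1}^N\langle\vphi_i\rangle_N^2\Bigr].
\]
Now I would invoke the corollary containing \eqref{limit_for_later}: the first sum converges in $L^1$ to $n$, while the second converges in $L^1$ to $\sum_i\langle\vphi_i\rangle^2=\langle\sum_i\vphi_i(\sigma^1)\vphi_i(\sigma^2)\rangle=n\langle\RR_{1,2}\rangle$ (using \eqref{field_decomposition} together with the fact that $\Cov(H_n(\sigma^1),H_n(\sigma^2))=\sum_i\vphi_i(\sigma^1)\vphi_i(\sigma^2)$). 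Combined with Lemma~\ref{exp_moments_lemma}\eqref{exp_moments_lemma_b}, which gives $\langle H_{n,N}\rangle_N\to\langle H_n\rangle$ in $L^1$, passing to the $N\to\infty$ limit yields
\[
\frac{1}{n}\E\langle H_n(\sigma)\rangle=\beta\bigl(1-\E\langle\RR_{1,2}\rangle\bigr).
\]
In view of \eqref{nrg_2deriv}, the left-hand side is $\E F_n'(\beta)$, and Lemma~\ref{betas_converging} guarantees $\E F_n'(\beta)\to p'(\beta)$, which proves \eqref{overlap_ito_derivative} for $\beta>0$; the case $\beta=0$ is automatic since $p(0)=0$ and $p(\beta)\leq\beta^2/2$ force $p'(0)=0$.

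It remains to read off the inequalities and produce $\beta_\cc$. From \eqref{positive_overlap}, $\langle\RR_{1,2}\rangle\geq-\EEE_n$ with $\EEE_n\to 0$, so the limit satisfies $1-\lim_{n\to\infty}\E\langle\RR_{1,2}\rangle\leq 1$, giving $p'(\beta)\leq\beta$; Jensen applied to $\E Z_n(\beta)\geq\e^{\beta E_n\E H_n(\sigma)}=1$ gives $p(\beta)\geq 0$, and combined with $p(0)=0$ and convexity this forces $p'(\beta)\geq 0$ at every point of differentiability on $[0,\infty)$. Finally, setting $q(\beta)\coloneqq\beta^2/2-p(\beta)\geq 0$, the derivative $q'(\beta)=\beta-p'(\beta)\geq 0$ wherever it exists, so $q$ is nondecreasing; hence $\beta_\cc\coloneqq\sup\{\beta\geq 0:p(\beta)=\beta^2/2\}$ has the stated property.

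The only delicate step is the passage from the truncated computation to the infinite-sum Hamiltonian, but the groundwork laid in Lemma~\ref{exp_moments_lemma} and the $L^1$ statements in \eqref{limit_for_later} and Lemma~\ref{betas_converging} make this routine; no concentration or non-trivial spin-glass input is needed here.
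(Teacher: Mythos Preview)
Your argument is essentially the paper's: truncate, apply Gaussian integration by parts to $\E[g_i\langle\vphi_i\rangle_N]$, pass to the limit via \eqref{limit_for_later} and \eqref{exp_moments_lemma_b}, then invoke Lemma~\ref{betas_converging}. The monotonicity of $\beta^2/2-p(\beta)$ is also handled the same way.

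One imprecision: the inequality $\E Z_n(\beta)\geq 1$ does \emph{not} yield $p(\beta)\geq 0$, since $p(\beta)=\lim\frac{1}{n}\E\log Z_n(\beta)$ and Jensen goes the wrong way for $\log$. The correct version applies Jensen to $E_n$ alone, giving $Z_n(\beta)\geq \e^{\beta E_n H_n(\sigma)}$ pointwise, hence $\E\log Z_n(\beta)\geq 0$. The paper sidesteps this entirely: it obtains $p'(\beta)\geq 0$ directly from $\RR_{1,2}\leq 1$ (a consequence of \eqref{variance_assumption}), which gives $\lim_n\E\langle\RR_{1,2}\rangle\leq 1$ and hence $p'(\beta)=\beta(1-\lim_n\E\langle\RR_{1,2}\rangle)\geq 0$ immediately from the identity just proved. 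Your convexity route works once $p\geq 0$ is established correctly, but the paper's argument is shorter.
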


\begin{proof}
Using the notation of Lemma~\ref{exp_moments_lemma}, we have
\eq{
\E F_n'(\beta) 
\stackrel{\mbox{\footnotesize\eqref{nrg_2deriv}}}{=} \frac{\E\langle H_n(\sigma)\rangle}{n} 
&\stackrel{\mbox{\footnotesize\eqref{exp_moments_lemma_b}}}{=}\lim_{N\to\infty} \frac{\E\langle H_{n,N}(\sigma)\rangle_N}{n} \\
&\stackrel{\phantom{\mbox{\footnotesize\eqref{exp_moments_lemma_b}}}}{=} \lim_{N\to\infty} \E\Big\langle\frac{1}{n} \sum_{i=1}^N g_i\vphi_i\Big\rangle_N \\
&\stackrel{\phantom{\mbox{\footnotesize\eqref{exp_moments_lemma_b}}}}{=} \lim_{N\to\infty}  \frac{1}{n}\sum_{i=1}^N \E[g_i\langle\vphi_i\rangle_N].
}
By Gaussian integration by parts,
\eq{
\E[g_i\langle\vphi_i\rangle_N] = \E\Big[\frac{\partial}{\partial g_i}\langle\vphi_i\rangle_N\Big] = \beta\E[\langle\vphi_i^2\rangle_N - \langle\vphi_i\rangle_N^2],
}
and then Lemma~\ref{betas_converging} allows us to write
\eq{
p'(\beta) = \lim_{n\to\infty} \E F_n'(\beta) 
&\stackrel{\phantom{\mbox{\footnotesize\eqref{limit_for_later}}}}{=} \lim_{n\to\infty} \lim_{N\to\infty} \beta\E\bigg[\frac{1}{n}\sum_{i=1}^N (\langle\vphi_i^2\rangle_N - \langle\vphi_i\rangle_N^2)\bigg] \\
&\stackrel{\mbox{\footnotesize\eqref{limit_for_later}}}{=} \lim_{n\to\infty} \beta\E\bigg[1-\frac{1}{n}\sum_{i=1}^\infty \langle\vphi_i\rangle^2\bigg] \\
&\stackrel{\phantom{\mbox{\footnotesize\eqref{limit_for_later}}}}{=}\lim_{n\to\infty} \beta(1 - \E\langle \RR_{1,2}\rangle),
}
which completes the proof of \eqref{overlap_ito_derivative}.
The inequalities $0\leq p'(\beta)\leq\beta$ now follow from
\eq{
1 \stackref{variance_assumption}{\geq} \lim_{n\to\infty}\E\langle\RR_{1,2}\rangle \stackref{positive_overlap}{\geq} -\lim_{n\to\infty}\EEE_n = 0.
}

For the second part of the claim, we recall that $p(\cdot)$ is convex and thus absolutely continuous.
Since $p(0) = 0$, we then have
\eq{
\frac{\beta^2}{2} - p(\beta) = \int_0^{\beta} [t - p'(t)]\ \dd t.
}
Since the integrand is nonnegative, it follows that $\beta\mapsto \beta^2/2 - p(\beta)$ is non-decreasing for $\beta \geq 0$.
\end{proof}
%

So that we can be explicit in the inverse
temperature parameter $\beta$, for the remainder of the section we will write $\langle\cdot\rangle_\beta$
for expectation with respect to $\mu_{n}^\beta$.
In light of \eqref{nrg_2deriv}, Lemma~\ref{betas_converging} implies
\eq{
\lim_{n\to\infty} \Big|\frac{\langle H_n(\sigma)\rangle_\beta}{n} - p'(\beta)\Big| = 0 \quad \mathrm{a.s.}\quad \text{whenever $p'(\beta)$ exists}.
}
We will require the following stronger form of this result, which also appears in 
\cite[Theorem 3]{auffinger-chen18}.
Our proof is adapted from the elegant approach of \cite{panchenko10}, and included for completeness.

\begin{lemma} \label{lemma:step_1_2}
If $\beta$ is a point of differentiability for $p(\cdot)$, then
\eq{
\lim_{n\to\infty} \Big\langle\Big|\frac{H_n(\sigma)}{n} - p'(\beta)\Big|\Big\rangle_\beta = 0 \quad \mathrm{a.s.} \text{ and in }L^1.
}
\end{lemma}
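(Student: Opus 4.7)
The approach will hinge on the identity
\[
\langle e^{h H_n(\sigma)}\rangle_\beta = e^{n[F_n(\beta + h) - F_n(\beta)]}, \qquad h \in \R,
\]
which is immediate from dividing $Z_n(\beta+h)$ by $Z_n(\beta)$ (with finiteness of both sides guaranteed by Lemma~\ref{moments_lemma}). Writing $Y_n(\sigma) \coloneqq H_n(\sigma)/n - p'(\beta)$ and introducing the random convex function
\[
G_n(h) \coloneqq F_n(\beta+h) - F_n(\beta) - h p'(\beta),
\]
this becomes $\langle e^{nh Y_n}\rangle_\beta = e^{n G_n(h)}$. By \eqref{free_energy_assumption}, for each fixed $h$ we have $G_n(h) \to G(h) \coloneqq p(\beta+h) - p(\beta) - h p'(\beta)$ almost surely and in $L^1$, and the differentiability of $p$ at $\beta$ forces $G(h)/h \to 0$ and $G(-h)/h \to 0$ as $h \downarrow 0$.

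The main estimate then uses the pointwise bound $e^{nh|Y_n|} \leq e^{nh Y_n} + e^{-nh Y_n}$, valid for $h > 0$, which integrates against $\langle\cdot\rangle_\beta$ to give
\[
\langle e^{nh|Y_n|}\rangle_\beta \leq e^{n G_n(h)} + e^{n G_n(-h)} \leq 2 e^{n \max(G_n(h),\, G_n(-h))}.
\]
Jensen's inequality applied to $x \mapsto e^{nhx}$ under the probability measure $\langle\cdot\rangle_\beta$ then yields
\[
\langle |Y_n|\rangle_\beta \leq \frac{\log 2}{nh} + \frac{\max(G_n(h),\, G_n(-h))}{h}.
\]

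To conclude, I would fix a countable set of positive $h$'s and restrict to the full-measure event where $G_n(\pm h) \to G(\pm h)$ for every such $h$. On this event, letting $n \to \infty$ in the displayed bound gives
\[
\limsup_{n\to\infty}\, \langle|Y_n|\rangle_\beta \;\leq\; \frac{\max(G(h),\, G(-h))}{h},
\]
and then sending $h \downarrow 0$ along rationals yields the almost-sure statement. For the $L^1$ statement, take expectations in the bound on $\langle|Y_n|\rangle_\beta$; since $|\max(G_n(h), G_n(-h)) - \max(G(h), G(-h))| \leq |G_n(h) - G(h)| + |G_n(-h) - G(-h)|$, the $L^1$ convergence of $G_n(\pm h)$ transfers to the maximum, and the same two-step limit ($n\to\infty$, then $h \downarrow 0$) applies verbatim.

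I expect the crux here to be conceptual rather than technical: one needs to recognize that the right random convex auxiliary is $G_n$, whose vanishing first-order behavior at $h = 0$ (inherited from the differentiability of $p$) can be exploited through exponential Chebyshev combined with Jensen. The attractive feature of this plan is that it sidesteps any a priori second-moment estimate on $H_n$ under $\langle\cdot\rangle_\beta$, which is fortunate since $F_n''(\beta)/n$ cannot be controlled directly from the stated hypotheses.
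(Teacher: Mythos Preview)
Your proof is correct and takes a genuinely different route from the paper. The paper, following Panchenko, first reduces to controlling $\langle |H_n(\sigma)/n - F_n'(\beta)|\rangle_\beta$, then bounds this via the replica quantity $\langle|H_n(\sigma^1)-H_n(\sigma^2)|\rangle_\beta$ and an interpolation over $[\beta,\beta_1]$: differentiating $\langle|H_n(\sigma^1)-H_n(\sigma^2)|\rangle_x$ in $x$, applying Cauchy--Schwarz twice, and integrating yields a bound in terms of $I_n(\beta_1) = F_n'(\beta_1)-F_n'(\beta)$, which is then shown to be small by convexity and differentiability of $p$. Your exponential-Chebyshev-plus-Jensen argument bypasses all of this: no replicas, no differentiation under $\langle\cdot\rangle_\beta$, no second-derivative bookkeeping. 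It extracts the same limiting information directly from the partition-function identity $\langle e^{hH_n}\rangle_\beta = Z_n(\beta+h)/Z_n(\beta)$ and the first-order vanishing of $G$ at $0$. The paper's approach arguably yields a slightly more quantitative intermediate bound (in terms of $I_n(\beta_1)$), but for the purpose of this lemma your argument is cleaner and more self-contained. One cosmetic remark: the finiteness of $Z_n(\beta\pm h)$ almost surely is perhaps more naturally attributed to Lemma~\ref{exp_moments_lemma} than to Lemma~\ref{moments_lemma}, though either suffices.
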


\begin{proof}
By Lemma~\ref{betas_converging}, it suffices to show that if $\beta_0$ is a point of differentiability for $p(\cdot)$, then
\eq{
\lim_{n\to\infty} \Big\langle\Big|\frac{H_n(\sigma)}{n} - F_n'(\beta_0)\Big|\Big\rangle_{\beta_0} = 0 \quad \mathrm{a.s.} \text{ and in }L^1.
}
Fix $\eps > 0$ and choose $h>0$ small enough that
\eeq{ \label{h_choice}
p'(\beta_0)-\eps \leq \frac{p(\beta_0)-p(\beta_0-h)}{h} \leq \frac{p(\beta_0+h)-p(\beta_0)}{h} \leq p'(\beta_0)+\eps.
}
Given $h$, differentiability allows us to take $\beta_1>\beta_0$ sufficiently close to $\beta_0$ to satisfy
\eeq{ \label{beta_prime_choice}
\frac{p(\beta_1+h)-p(\beta_1)}{h} \leq \frac{p(\beta_0+h)-p(\beta_0)}{h} + \eps \leq p'(\beta_0)+2\eps.
}
By adding and subtracting $\langle |H_n(\sigma^1)-H_n(\sigma^2)|\rangle_{\beta_0}$, we have
\eeq{ \label{trivial_integrals}
&\int_{\beta_0}^{\beta_1} \langle |H_n(\sigma^1)-H_n(\sigma^2)|\rangle_\beta\ \dd \beta \\
&= (\beta_1-{\beta_0})\langle |H_n(\sigma^1)-H_n(\sigma^2)|\rangle_{\beta_0} \\
&\phantom{=}+ \int_{\beta_0}^{\beta_1} \big[\langle|H_n(\sigma^1)-H_n(\sigma^2)|\rangle_\beta - \langle|H_n(\sigma^1)-H_n(\sigma^2)|\rangle_{{\beta_0}}\big]\ \dd \beta \\
&= (\beta_1-{\beta_0})\langle |H_n(\sigma^1)-H_n(\sigma^2)|\rangle_{\beta_0} \\
&\phantom{=}+ \int_{\beta_0}^{\beta_1} \int_{\beta_0}^\beta \frac{\partial}{\partial x} \langle |H_n(\sigma^1)-H_n(\sigma^2)|\rangle_x\ \dd x\, \dd \beta.
}
A simple calculation, followed by Cauchy--Schwarz, shows
\eq{ 
&\Big|\frac{\partial}{\partial x} \langle |H_n(\sigma^1)-H_n(\sigma^2)|\rangle_x\Big| \\
&= \big|\big\langle |H_n(\sigma^1)-H_n(\sigma^2)| \cdot \big(H_n(\sigma^1)+H_n(\sigma^2)-2H_n(\sigma^3)\big)\big\rangle_x\big| \\
&\leq \sqrt{\big\langle \big(H_n(\sigma^1)-H_n(\sigma^2)\big)^2\rangle_x\big\langle \big(H_n(\sigma^1)+H_n(\sigma^2)-2H_n(\sigma^3)\big)^2\big\rangle_x}.
}
By another application of Cauchy--Schwarz, we have
\eq{
&\big\langle \big(H_n(\sigma^1)+H_n(\sigma^2)-2H_n(\sigma^3)\big)^2\big\rangle_x \\
&= \big\langle \big(H_n(\sigma^1)-H_n(\sigma^3)+H_n(\sigma^2)-H_n(\sigma^3)\big)^2\big\rangle_x \\
&\leq 2\big\langle\big(H_n(\sigma^1)-H_n(\sigma^3)\big)^2\big\rangle_x + 2\big\langle\big(H_n(\sigma^1)-H_n(\sigma^2)\big)^2\big\rangle_x \\
&= 4\big\langle\big(H_n(\sigma^1)-H_n(\sigma^2)\big)^2\big\rangle_x.
}
From the previous two displays, we find
\eq{
\Big|\frac{\partial}{\partial x} \langle |H_n(\sigma^1)-H_n(\sigma^2)|\rangle_x\Big| 
&\leq 2\big\langle\big(H_n(\sigma^1)-H_n(\sigma^2)\big)^2\big\rangle_x \\
&= 4\langle H_n(\sigma)^2\rangle_x-4\langle H_n(\sigma)\rangle_x^2.
}
In light of this inequality, \eqref{trivial_integrals} now shows
\eq{
\langle |H_n(\sigma^1)-H_n(\sigma^2)|\rangle_{\beta_0} 
&\leq \frac{1}{\beta_1-{\beta_0}}\int_{\beta_0}^{\beta_1}\langle|H_n(\sigma^1)-H_n(\sigma^2)|\rangle_\beta\ \dd \beta \\
&\phantom{\leq}+\frac{4}{\beta_1-{\beta_0}}\int_{\beta_0}^{\beta_1}\int_{\beta_0}^\beta (\langle H_n(\sigma)^2\rangle_x-\langle H_n(\sigma)\rangle_x^2)\ \dd x\, \dd \beta \\
&\leq \frac{2}{\beta_1-{\beta_0}}\int_{\beta_0}^{\beta_1}\big\langle|H_n(\sigma)-\langle H_n(\sigma)\rangle_\beta|\big\rangle_\beta\ \dd \beta \\
&\phantom{\leq}+4\int_{\beta_0}^{\beta_1} (\langle H_n(\sigma)^2\rangle_x-\langle H_n(\sigma)\rangle_x^2)\ \dd x,
}
where
\eq{
&\frac{2}{\beta_1-{\beta_0}}\int_{\beta_0}^{\beta_1}\big\langle|H_n(\sigma)-\langle H_n(\sigma)\rangle_\beta|\big\rangle_\beta\ \dd \beta \\
&\leq 2\bigg(\frac{1}{\beta_1-{\beta_0}}\int_{\beta_0}^{\beta_1}\big\langle|H_n(\sigma)-\langle H_n(\sigma)\rangle_\beta|\big\rangle_\beta^2\ \dd \beta\bigg)^{1/2} \\
&\leq2\bigg(\frac{1}{\beta_1-{\beta_0}}\int_{\beta_0}^{\beta_1}(\langle H_n(\sigma)^2\rangle_\beta-\langle H_n(\sigma)\rangle_\beta^2)\ \dd \beta\bigg)^{1/2}. \\
}
In summary,
\eeq{ \label{beta_integral_summary}
\Big\langle\Big|\frac{H_n(\sigma)}{n}-F_n'(\sigma)\Big|\Big\rangle_{\beta_0}
&= \Big\langle\Big|\frac{H_n(\sigma)}{n}-\frac{\langle H_n(\sigma)\rangle_{\beta_0}}{n}\Big|\Big\rangle_{\beta_0} \\
&\leq\frac{\langle|H_n(\sigma^1)-H_n(\sigma^2)|\rangle_{\beta_0}}{n}  \\
&\leq 2\sqrt{\frac{I_n(\beta_1)}{n(\beta_1-{\beta_0})}}+4I_n(\beta_1),
}
where
\eq{
I_n(\beta_1) &\coloneqq \frac{1}{n}\int_{\beta_0}^{\beta_1}(\langle H_n(\sigma)^2\rangle_\beta-\langle H_n(\sigma)\rangle_\beta^2)\ \dd \beta 
\stackrel{\mbox{\footnotesize\eqref{nrg_2deriv}}}{=} F_n'(\beta_1) - F_n'(\beta_0).
}
Therefore, convexity of $F_n(\cdot)$ implies
\eq{
&I_n(\beta_1) 
\leq
 \frac{F_n(\beta_1+h)-F_n(\beta_1)}{h} - \frac{F_n({\beta_0})-F_n({\beta_0}-h)}{h} \\
&\stackrel{{\mbox{\footnotesize\hspace{2ex}\eqref{Delta_def}\hspace{2ex}}}}{=} \frac{p(\beta_1+h)-p(\beta_1)}{h} - \frac{p({\beta_0})-p({\beta_0}-h)}{h} + \Delta_n^+(\beta_1,h) + \Delta_n^-({\beta_0},h) \\
&\hspace{-1.3ex}\stackrel{\mbox{\footnotesize\eqref{h_choice},\eqref{beta_prime_choice}}}{\leq} 3\eps + \Delta_n^+(\beta_1,h) + \Delta_n^-({\beta_0},h).
}
As $n\to\infty$, \eqref{free_energy_assumption} shows that $\Delta_n^+(\beta_1,h)$ and $\Delta_n^-({\beta_0},h)$ each converge to $0$ almost surely and in $L^1$.
Thus \eqref{beta_integral_summary} and the above display together yield the desired result, as $\eps$ is arbitrary.
\end{proof}

\subsection{Temperature perturbations} \label{temp_perturbations}
Here we derive upper bounds for the effects of temperature perturbations on certain expectations with respect to~$\mu_{n}^\beta$.

\begin{lemma} \label{connecting_betas}
The following statements hold for any $\beta_1\geq\beta_0\geq0$.
\begin{itemize}
\item[(a)] For any measurable $f : \Sigma_n\to[-1,1]$,
\eq{
|\langle f(\sigma)\rangle_{\beta_1} - \langle f(\sigma)\rangle_{\beta_0}| \leq \sqrt{n(\beta_1-\beta_0)(F_n'(\beta_1)-F_n'(\beta_0))}.
}
\item[(b)] For any $\sigma \in \Sigma_n$,
\eeq{\label{connecting_betas_2}
\frac{1}{n}\Big|\sum_i\vphi_i\langle\vphi_i\rangle_{\beta_1} - \sum_i \vphi_i\langle\vphi_i\rangle_{\beta_0}\Big| \leq \sqrt{n(\beta_1-\beta_0)(F_n'(\beta_1)-F_n'(\beta_0))}.
}
\item[(c)] Finally,
\eeq{ \label{connecting_betas_3}
\frac{1}{n}\Big|\sum_i\langle\vphi_i\rangle^2_{\beta_1} -\sum_i\langle\vphi_i\rangle^2_{\beta_0}\Big|
\leq 2\sqrt{n(\beta_1-\beta_0)(F_n'(\beta_1)-F_n'(\beta_0))}.
}
\end{itemize}
\end{lemma}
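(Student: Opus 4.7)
The plan is to obtain (a) by a one-line Cauchy--Schwarz argument on the standard derivative identity for $\langle f\rangle_\beta$, and then derive (b) and (c) as direct consequences of (a) and each other.

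For (a), I would begin with the identity
\[
\frac{d}{d\beta}\langle f(\sigma)\rangle_\beta = \Cov_\beta(f(\sigma),H_n(\sigma)),
\]
which is justified exactly as in \eqref{nrg_2deriv} and Section~\ref{gibbs_measure}. Integrating in $\beta$ and applying Cauchy--Schwarz to the covariance gives $|\Cov_\beta(f,H_n)|\le\sqrt{\Var_\beta(f)\Var_\beta(H_n)}\le\sqrt{\Var_\beta(H_n)}$, since $|f|\le1$ forces $\Var_\beta(f)\le1$. Using $\Var_\beta(H_n)=nF_n''(\beta)$ from \eqref{nrg_2deriv} and applying Cauchy--Schwarz to the $\beta$-integral yields
\[
\Big(\int_{\beta_0}^{\beta_1}\sqrt{nF_n''(\beta)}\,d\beta\Big)^2\le(\beta_1-\beta_0)\int_{\beta_0}^{\beta_1}nF_n''(\beta)\,d\beta = n(\beta_1-\beta_0)\bigl(F_n'(\beta_1)-F_n'(\beta_0)\bigr),
\]
by the fundamental theorem of calculus, which is the desired inequality.

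For (b), I would observe that for any fixed $\sigma\in\Sigma_n$ the function $f(\sigma')\coloneqq\RR(\sigma,\sigma')=\frac1n\sum_i\vphi_i(\sigma)\vphi_i(\sigma')$ is bounded in absolute value by $1$. Indeed, Cauchy--Schwarz on $\ell^2$ combined with \eqref{variance_assumption} gives $|\sum_i\vphi_i(\sigma)\vphi_i(\sigma')|\le\sqrt{\sum_i\vphi_i(\sigma)^2}\sqrt{\sum_i\vphi_i(\sigma')^2}=n$. After interchanging the infinite sum with the Gibbs expectation (justified by Lemma~\ref{exp_moments_lemma}), the left-hand side of (b) is exactly $|\langle f\rangle_{\beta_1}-\langle f\rangle_{\beta_0}|$, so (a) applies verbatim.

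For (c), the key idea is to apply (b) pointwise and then average at both temperatures, \emph{adding} rather than subtracting. Set $D\coloneqq\sqrt{n(\beta_1-\beta_0)(F_n'(\beta_1)-F_n'(\beta_0))}$. By (b), for every $\sigma\in\Sigma_n$,
\[
\Big|\frac{1}{n}\sum_i\vphi_i(\sigma)\bigl[\langle\vphi_i\rangle_{\beta_1}-\langle\vphi_i\rangle_{\beta_0}\bigr]\Big|\le D.
\]
Taking $\langle\cdot\rangle_{\beta_1}$ and $\langle\cdot\rangle_{\beta_0}$ in $\sigma$ produces the two estimates $|A_j|\le D$ for $j=0,1$, where
\[
A_j \coloneqq \frac{1}{n}\sum_i\langle\vphi_i\rangle_{\beta_j}\bigl[\langle\vphi_i\rangle_{\beta_1}-\langle\vphi_i\rangle_{\beta_0}\bigr].
\]
A direct expansion shows that the cross terms cancel, giving $A_1+A_0=\frac1n\sum_i\bigl[\langle\vphi_i\rangle_{\beta_1}^2-\langle\vphi_i\rangle_{\beta_0}^2\bigr]$, so the triangle inequality yields the bound $2D$ claimed in \eqref{connecting_betas_3}. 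I do not foresee a substantive obstacle; the only recurring technical point is the interchange of infinite sums with Gibbs expectations, and this is precisely what Section~\ref{gibbs_measure} has already established.
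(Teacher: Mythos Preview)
Your argument is correct. Part (a) is identical to the paper's proof: the same derivative identity, the same Cauchy--Schwarz bound on the covariance, and the same Cauchy--Schwarz on the $\beta$-integral.

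Parts (b) and (c), however, take a genuinely different route from the paper. The paper proves each of (b) and (c) by differentiating the infinite sum directly in $\beta$: it bounds $|\frac{\partial}{\partial\beta}\langle\vphi_i\rangle_\beta|$ term by term, establishes a $\beta$-uniform summable majorant so as to justify $\frac{\partial}{\partial\beta}\sum_i = \sum_i\frac{\partial}{\partial\beta}$, and then applies Cauchy--Schwarz in $i$ to obtain the pointwise bound $\sqrt{nF_n''(\beta)}$ (respectively $2\sqrt{nF_n''(\beta)}$) on the derivative. Your approach sidesteps this entirely: for (b) you observe that $\sigma'\mapsto\RR(\sigma,\sigma')$ is a single bounded function and apply (a) directly; for (c) you average the pointwise inequality from (b) at both temperatures and add, exploiting the algebraic identity $(a+b)(a-b)=a^2-b^2$. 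This is cleaner and avoids the interchange of differentiation with the infinite sum, at the modest cost of invoking the interchange of Gibbs expectation with the infinite sum (which, as you note, is already handled in Section~\ref{gibbs_measure} and in the remark following \eqref{rdef}).
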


\begin{proof}
All three claims follow from two crucial observations.
First, for any $f \in L^2(\Sigma_n)$,
\eeq{ \label{crucial_1}
&\Big|\frac{\partial}{\partial \beta}\langle f(\sigma)\rangle_\beta\Big| 
=  |\langle f(\sigma)H_n(\sigma)\rangle_\beta - \langle f(\sigma) \rangle_\beta\langle H_n(\sigma)\rangle_\beta| \\
&\stackrel{\phantom{\mbox{\footnotesize\eqref{nrg_2deriv}}}}{\leq} \sqrt{\langle H_n(\sigma)^2\rangle_\beta - \langle H_n(\sigma)\rangle_\beta^2}\sqrt{\langle f(\sigma)^2\rangle_\beta - \langle f(\sigma)\rangle_\beta^2} \\
&\stackrel{\mbox{\footnotesize\eqref{nrg_2deriv}}}{=} \sqrt{nF_n''(\beta)}\sqrt{\langle f(\sigma)^2\rangle_\beta - \langle f(\sigma)\rangle_\beta^2}
\leq \sqrt{nF_n''(\beta)}\sqrt{\langle f(\sigma)^2\rangle_\beta}. 
}
And second,
\eeq{ \label{crucial_2}
\int_{\beta_0}^{\beta_1} \sqrt{nF_n''(\beta)}\ \dd \beta
&\leq \sqrt{n(\beta_1-\beta_0)\int_{\beta_0}^{\beta_1} F_n''(\beta)\ \dd \beta} \\
&= \sqrt{n(\beta_1-\beta_0)(F_n'(\beta_1)-F_n'(\beta_0))}.
}
Then part (a) immediately follows, since
\eq{
|f| \leq 1 \quad &\stackrel{\mbox{\footnotesize\eqref{crucial_1}}}{\Rightarrow} \quad 
\Big|\frac{\partial}{\partial \beta}\langle f(\sigma)\rangle_\beta\Big| \leq \sqrt{nF_n''(\beta)} \\
&\stackrel{\mbox{\footnotesize\eqref{crucial_2}}}{\Rightarrow} \quad
|\langle f(\sigma)\rangle_{\beta_1} - \langle f(\sigma)\rangle_{\beta_0}| \leq \sqrt{n(\beta_1-\beta_0)(F_n'(\beta_1)-F_n'(\beta_0))}.
}
For part (b), we first observe that if $0\leq\beta\leq\beta_1$, then
\eq{
\Big|\frac{\partial}{\partial\beta}\langle\vphi_i\rangle_\beta\Big| &\stackrel{\mbox{\footnotesize\eqref{crucial_1}}}{\leq}\sqrt{nF_n''(\beta)}\sqrt{\langle\vphi_i^2\rangle_\beta} \\
&\stackrel{\phantom{\mbox{\footnotesize\eqref{crucial_1}}}}{=} \sqrt{nF_n''(\beta)}\sqrt{\frac{E_n(\vphi_i^2\e^{\beta H_n(\sigma)})}{Z_n(\beta)}} \\
&\stackrel{\phantom{\mbox{\footnotesize\eqref{crucial_1}}}}{\leq} \sqrt{nF_n''(\beta)}\sqrt{\frac{E_n(\vphi_i^2)}{Z_n(\beta)} + \frac{E_n(\vphi_i^2\e^{\beta_1 H_n(\sigma)})}{Z_n(\beta)}}  \\
&\stackrel{\phantom{\mbox{\footnotesize\eqref{crucial_1}}}}{\leq} \sqrt{n\max_{\beta_0\in[0,\beta_1]}F_n''(\beta_0)}\sqrt{\frac{\max(Z_n(0),Z_n(\beta_1))}{\min_{\beta_0\in[0,\beta_1]} Z_n(\beta_0)}}\sqrt{\langle \vphi_i^2\rangle_0 + \langle\vphi_i^2\rangle_{\beta_1}},
}
where now the right-hand side is independent of $\beta$ and (almost surely) finite.
Moreover, we have the following finiteness condition when summing over $i$:
\eq{
\sum_{i} |\vphi_i|\sqrt{\langle \vphi_i^2\rangle_0 + \langle\vphi_i^2\rangle_{\beta_1}}
&\leq\sqrt{ \sum_i \vphi_i^2\sum_i (\langle\vphi_i^2\rangle_0 + \langle\vphi_i^2\rangle_{\beta_1})} 
\stackrel{\mbox{\footnotesize\eqref{variance_assumption}}}{=} \sqrt{2}n < \infty.
}
It thus follows that
\eq{
\frac{\partial}{\partial\beta} \sum_i \vphi_i\langle\vphi_i\rangle_\beta = \sum_i \vphi_i\frac{\partial}{\partial\beta}\langle\vphi_i\rangle_\beta.
}
In particular,
\eq{
\Big|\frac{\partial}{\partial\beta} \frac{1}{n}\sum_i \vphi_i\langle\vphi_i\rangle_\beta\Big|
&\stackrel{\phantom{\mbox{\footnotesize\eqref{crucial_1}}}}{\leq} \frac{1}{n}\sum_i \Big|\vphi_i\frac{\partial}{\partial\beta}\langle\vphi_i\rangle_\beta\Big| \\
&\stackrel{\mbox{\footnotesize\eqref{crucial_1}}}{\leq} \sqrt{\frac{F_n''(\beta)}{n}} \sum_i |\vphi_i| \sqrt{\langle\vphi_i^2\rangle_\beta} \\
&\stackrel{\phantom{\mbox{\footnotesize\eqref{crucial_1}}}}{\leq} \sqrt{\frac{F_n''(\beta)}{n}}\sqrt{\sum_i \vphi_i^2\sum_i \langle\vphi_i^2\rangle_\beta}
\stackrel{\mbox{\footnotesize\eqref{variance_assumption}}}{=} \sqrt{nF_n''(\beta)}.
}
As in part (a), \eqref{crucial_2} now proves \eqref{connecting_betas_2}.
For part (c), we can argue similarly in order to obtain
\eq{
\Big|\frac{\partial}{\partial\beta} \frac{1}{n}\sum_i \langle\vphi_i\rangle_\beta^2\Big| 
&\stackrel{\phantom{\mbox{\footnotesize\eqref{crucial_1}}}}{=} \Big|\frac{2}{n}\sum_i \langle\vphi_i\rangle_\beta\frac{\partial}{\partial\beta}\langle\vphi_i\rangle_\beta\Big| \\
&\stackrel{{\mbox{\footnotesize\eqref{crucial_1}}}}{\leq} 2\sqrt{\frac{F_n''(\beta)}{n}}\sum_i|\langle\vphi_i\rangle_\beta|\sqrt{\langle \vphi_i^2\rangle_\beta} \\
&\stackrel{\phantom{\mbox{\footnotesize\eqref{crucial_1}}}}{\leq} 2\sqrt{\frac{F_n''(\beta)}{n}}\sum_i \langle \vphi_i^2\rangle_\beta 
\stackrel{\mbox{\footnotesize\eqref{variance_assumption}}}{=} 2\sqrt{nF_n''(\beta)},
}
from which \eqref{crucial_2} proves \eqref{connecting_betas_3}.
\end{proof}

\section{Proof of Theorem~\ref{averages_squared}} \label{proof_1}
Recall the event under consideration:
\eq{
B_\delta = \Big\{\frac{1}{n}\sum_i\langle\vphi_i\rangle^2\leq\delta\Big\}.
}
The proof of Theorem~\ref{averages_squared} is a perturbative argument using an Ornstein--Uhlenbeck (OU) flow on the environment,
\eeq{ \label{OU_def}
\vc g_t \coloneqq \e^{-t}\vc g + \e^{-t} \vc W({\e^{2t}-1}), \quad t\geq0,
}
where $\vc W(\cdot) = (W_i(\cdot))_{i=1}^\infty$ is a collection of independent Brownian motions that are also independent of $\vc g = \vc g_0$, and the above definition is understood coordinate-wise.
Within Section~\ref{proof_1}, we denote expectation with respect to $\mu_{n,\vc g_t}^\beta$ by $\langle \cdot \rangle_t$, not to be confused with $\langle\cdot\rangle_\beta$ used in Section~\ref{prep_section}.
We now prove Theorem~\ref{averages_squared} by juxtaposing the following two propositions.
Notice that if $\P(B_\delta) = 0$, then there is nothing to be done; therefore, we may henceforth assume $\P(B_\delta) > 0$ so that conditioning on $B_\delta$ is well-defined.

\begin{prop} \label{prep_prop_good}
If $\beta$ is a point of differentiability for $p(\cdot)$, and $p'(\beta) < \beta$, then there exists $\kappa = \kappa(\beta) > 0$ such that the following holds:
For any $\eps > 0$, there is $T = T(\beta,\eps)$ sufficiently large that
\eeq{ \label{prep_eq_good}
\liminf_{n\to\infty} \P\bigg(\Big|\kappa - \frac{1}{T/n}\int_0^{T/n} \frac{1}{n}\sum_i \langle \vphi_i\rangle_t^2\ \dd t\Big| \leq \eps \bigg) \geq 1 - \eps.
}
More specifically,
\eq{
\kappa(\beta) = \frac{\beta-p'(\beta)}{\beta}.
}
\end{prop}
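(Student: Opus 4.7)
The plan is to force concentration of $\langle \RR_{1,2}\rangle_t$ by time-averaging along the OU flow, and then identify the limiting value as $\kappa = 1 - p'(\beta)/\beta$ using the expected-overlap identity behind \eqref{expected_overlap_formula}. The hypothesis $p'(\beta) < \beta$ is what makes $\kappa$ strictly positive.

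The central computation is the action of the OU generator $\LL = \Delta - \vc x \cdot \nabla$ on $F_n$, viewed as a function of $\vc g$. Routine differentiation gives $\partial_{g_i} F_n = (\beta/n)\langle \vphi_i\rangle$ and $\partial_{g_i}^2 F_n = (\beta^2/n)(\langle \vphi_i^2\rangle - \langle \vphi_i\rangle^2)$. Combined with $\sum_i \vphi_i(\sigma)^2 = n$ (from \eqref{variance_assumption} and \eqref{field_decomposition}) and $\sum_i g_i \langle \vphi_i\rangle = \langle H_n(\sigma)\rangle = n F_n'(\beta)$ (from \eqref{nrg_2deriv}), these yield
\begin{align*}
\LL F_n(\vc g_t) \;=\; \beta^2 - \beta^2 \langle \RR_{1,2}\rangle_t - \beta F_{n,t}'(\beta),
\end{align*}
where $\langle\RR_{1,2}\rangle_t = \tfrac{1}{n}\sum_i \langle \vphi_i\rangle_t^2$ and $F_{n,t}'(\beta) = \partial_\beta F_{n,\vc g_t}(\beta)$. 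Dividing by $\beta^2$ and integrating in time,
\begin{align*}
\frac{1}{t}\int_0^t \langle \RR_{1,2}\rangle_s\, \dd s \;=\; 1 - \frac{1}{\beta}\cdot\frac{1}{t}\int_0^t F_{n,s}'(\beta)\, \dd s - \frac{1}{\beta^2}\cdot\frac{1}{t}\int_0^t \LL F_n(\vc g_s)\, \dd s.
\end{align*}

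With $t = T/n$ it now suffices to control the last two terms as $n\to\infty$. For the OU-generator term I would apply Lemma~\ref{OU_variance_lemma} to $f = F_n$. A direct calculation gives $\|\nabla F_n\|^2 = (\beta^2/n)\langle \RR_{1,2}\rangle \leq \beta^2/n$, so the lemma yields
\begin{align*}
\Var\bigg(\frac{1}{T/n}\int_0^{T/n} \LL F_n(\vc g_s)\, \dd s\bigg) \;\leq\; \frac{2\beta^2}{T}.
\end{align*}
Stationarity of $\vc g_s$ under the OU flow (together with Gaussian integration by parts) gives $\E\LL F_n(\vc g_s) = 0$, so Chebyshev makes this integral arbitrarily small in probability once $T = T(\beta,\eps)$ is chosen large. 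For the derivative term, stationarity also yields $F_{n,s}'(\beta) \stackrel{\dd}{=} F_n'(\beta)$ for every $s$, so Fubini together with the $L^1$ convergence from Lemma~\ref{betas_converging} gives
\begin{align*}
\E\bigg|\frac{1}{T/n}\int_0^{T/n} F_{n,s}'(\beta)\, \dd s - p'(\beta)\bigg| \;\leq\; \E|F_n'(\beta) - p'(\beta)| \;\xrightarrow[n\to\infty]{}\; 0.
\end{align*}
Substituting both inputs into the rearranged identity produces \eqref{prep_eq_good}.

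The main obstacle I expect is technical rather than conceptual: because \eqref{field_decomposition} is an infinite sum, the termwise differentiations, the identity $\E\LL F_n(\vc g) = 0$, and the setup of Lemma~\ref{OU_variance_lemma} need to be justified by truncating $H_n$ to the finite sums $H_{n,N}$, applying the dominated-convergence machinery from Section~\ref{gibbs_measure}, and passing to $N\to\infty$. The key conceptual observation is that the favorable scaling $\E\|\nabla F_n\|^2 = O(1/n)$ is exactly what allows the OU variance bound to deliver a nontrivial concentration estimate over the very short interval $[0,T/n]$, producing an error of order $1/\sqrt{T}$ that is independent of $n$.
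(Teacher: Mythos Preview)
Your proposal is correct and follows essentially the same approach as the paper: compute $\LL F_n$, bound $\|\nabla F_n\|^2 \leq \beta^2/n$, apply Lemma~\ref{OU_variance_lemma} at $t=T/n$ to get variance $\leq 2\beta^2/T$, use $\E\LL F_n=0$ and Chebyshev, and handle the $F_{n,s}'(\beta)$ term via stationarity and Lemma~\ref{betas_converging}. The paper carries out the calculation on the finite truncation $H_{n,N}$ first and passes to $N\to\infty$ using \eqref{limit_for_later}, exactly as you anticipate in your final paragraph.
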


For the statement of the second result, 
let $\FFF_t$ denote the $\sigma$-algebra generated by $\vc g_0$ and $(\vc W(s))_{0\leq s\leq \e^{2t}-1}$.

\begin{prop} \label{prep_prop_bad}
Assume $\beta$ is a point of differentiability for $p(\cdot)$.
Then there is a process $(I_t)_{t>0}$ adapted to the filtration $(\FFF_t)_{t>0}$, such that
the following statements hold:
\begin{itemize}
\item[(a)] For any $T,\eps>0$,
\eeq{ \label{prep_prop_bad_eq1}
\lim_{n\to\infty} \P\bigg(\Big|I_{T/n} - \frac{1}{T/n}\int_0^{T/n} \frac{1}{n}\sum_{i}\langle\vphi_i\rangle_t^2\ \dd t\Big| > \eps\bigg) = 0.
} 
\item[(b)] For any $T,\eps_1,\eps_2>0$, there exist $\delta_1 = \delta_1(\beta,T,\eps_1,\eps_2)>0$ sufficiently small and $n_0=n_0(\beta,T,\eps_1,\eps_2)$ sufficiently large, that
\eeq{ \label{prep_prop_bad_eq2}
\P\givenp[\bigg]{\Big|I_{T/n} - \frac{1}{n}\sum_i \langle\vphi_i\rangle^2\Big| \geq \eps_1}{B_{\delta}} \leq \eps_2 \quad \text{for all $0<\delta\leq\delta_1$, $n\geq n_0$}.
}
\end{itemize}
\end{prop}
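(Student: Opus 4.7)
The plan is to let $I_t$ be literally the time-average appearing in part (a), namely
\[
I_t := \frac{1}{t}\int_0^t \frac{1}{n}\sum_i \langle\vphi_i\rangle_s^2 \, \dd s, \qquad t>0.
\]
Each integrand is a continuous function of $\vc g_s$ (see Lemma~\ref{exp_moments_lemma}) and hence $\FFF_s$-measurable, so the process is $(\FFF_t)$-adapted. With this choice part (a) is automatic, since $I_{T/n}$ is \emph{identically} equal to the time-average. All of the work therefore goes into part (b).

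By Markov's inequality and Fubini, part (b) reduces to showing that $\E[|\langle\RR_{1,2}\rangle_s-\frac{1}{n}\sum_i\langle\vphi_i\rangle^2|\,\one_{B_\delta}]$ can be made smaller than any prescribed multiple of $\eps_1\eps_2\P(B_\delta)$ uniformly for $s\in[0,T/n]$, by choosing $\delta$ small and then $n$ large. Fix $s=T_0/n$ with $T_0\leq T$. From the OU decomposition $H_n^{(s)}(\sigma) = \e^{-s}H_n(\sigma)+\e^{-s}\sum_i W_i(\e^{2s}-1)\vphi_i(\sigma)$, one obtains the exact identity
\[
\langle\RR_{1,2}\rangle_s \;=\; \frac{\langle\RR_{1,2}\,\e^{\beta A_s+\beta B_s}\rangle_0}{\langle \e^{\beta A_s+\beta B_s}\rangle_0},
\]
where $A_s:=(\e^{-s}-1)(H_n(\sigma^1)+H_n(\sigma^2))$, $B_s:=\e^{-s}\sum_i W_i(\e^{2s}-1)(\vphi_i(\sigma^1)+\vphi_i(\sigma^2))$, and $\langle\cdot\rangle_0$ denotes the unperturbed Gibbs expectation $\mu_n^\beta$. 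By Lemma~\ref{lemma:step_1_2}, $H_n(\sigma)/n$ concentrates near $p'(\beta)$ under $\mu_n^\beta$, so inside $\langle\cdot\rangle_0$ the quantity $A_s$ is effectively the constant $-2T_0\,p'(\beta)$, which cancels between numerator and denominator. It remains to analyze $\langle\RR_{1,2}\,\e^{\beta B_s}\rangle_0/\langle\e^{\beta B_s}\rangle_0$.

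Setting $h_i:=W_i(\e^{2s}-1)/\sqrt{\e^{2s}-1}\sim\NN(0,1)$, i.i.d.~and independent of $\vc g_0$, and invoking \eqref{variance_assumption}, a direct Gaussian moment computation gives
\[
\E_{\vc h}\langle \e^{\beta B_s}\rangle_0 \;=\; \big\langle \e^{\beta^2(1-\e^{-2s})\,n\,(1+\RR_{1,2})}\big\rangle_0 \;\approx\; \big\langle \e^{2\beta^2 T_0(1+\RR_{1,2})}\big\rangle_0,
\]
with an analogous four-replica formula for $\E_{\vc h}[\langle\e^{\beta B_s}\rangle_0]^2$. On $B_\delta$, assumption \eqref{positive_overlap} combined with $\langle\RR_{1,2}\rangle\leq\delta$ forces all pairwise overlaps to concentrate near $0$ under the multi-replica Gibbs measure, collapsing the two expressions to $\e^{2\beta^2 T_0}$ and $\e^{4\beta^2 T_0}$ respectively. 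Hence $\Var_{\vc h}\langle\e^{\beta B_s}\rangle_0$ is vanishingly small on $B_\delta$, so $\langle\e^{\beta B_s}\rangle_0\approx\e^{2\beta^2 T_0}$ with high $\vc h$-probability. Repeating the argument with $\RR_{1,2}\,\e^{\beta B_s}$ in place of $\e^{\beta B_s}$ yields $\langle\RR_{1,2}\,\e^{\beta B_s}\rangle_0\approx\langle\RR_{1,2}\rangle_0\,\e^{2\beta^2 T_0}$, so the ratio satisfies $\langle\RR_{1,2}\rangle_s\approx\langle\RR_{1,2}\rangle_0$ on $B_\delta$ with high probability. Integrating in $s$ completes (b).

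The main obstacle is the step replacing $A_s$ by a constant: because $\beta(\e^{-s}-1)n$ is of order $T_0=O(1)$, the $L^1$-control of $H_n(\sigma)/n-p'(\beta)$ from Lemma~\ref{lemma:step_1_2} does not translate immediately into control of $\e^{\beta A_s}$. I would handle this by showing that $\{\sigma:|H_n(\sigma)/n-p'(\beta)|>\eta\}$ has vanishingly small $\mu_n^\beta$-mass (via Markov applied to Lemma~\ref{lemma:step_1_2} and exponential moment bounds on $H_n$ from Lemma~\ref{moments_lemma}), splitting the Gibbs integrals accordingly, and crudely estimating the contribution of the bad set by Cauchy--Schwarz. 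The remaining bookkeeping --- truncating the infinite sums defining $B_s$ via the tools of Section~\ref{gibbs_measure}, and checking that every error ($\EEE_n$, Gaussian tails in $\vc h$, and $L^1$ moduli under $\mu_n^\beta$) can be absorbed into $\eps_1\eps_2$ by sending $\delta\to 0$ and then $n\to\infty$ --- is routine.
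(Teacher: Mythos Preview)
Your choice of $I_t$ as the literal time-average makes part (a) trivial, but it creates a genuine gap in part (b) that your sketch does not close.  With your $I_t$, part (b) asks for the \emph{conditional} bound
\[
\P\Bigl(\Bigl|\tfrac{1}{T/n}\int_0^{T/n}\langle\RR_{1,2}\rangle_s\,\dd s - \langle\RR_{1,2}\rangle_0\Bigr|\ge\eps_1\ \Big|\ B_\delta\Bigr)\le\eps_2,
\]
and this error has two pieces: the Brownian $B_s$-piece, which you correctly control \emph{pointwise} on $B_\delta$ for small $\delta$, and the $A_s$-piece coming from $(\e^{-s}-1)H_n(\sigma)$.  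The latter is governed by Lemma~\ref{lemma:step_1_2}, which gives only the \emph{unconditional} statement $\langle|H_n(\sigma)/n-p'(\beta)|\rangle\to 0$ (a.s.\ and in $L^1$).  To turn this into a conditional bound you would need $\E[\text{($A_s$-error)}\,\one_{B_\delta}]\le \eta\,\P(B_\delta)$; but the proposition must hold for \emph{all} $\delta\le\delta_1$ with a single $n_0$, and nothing prevents $\P(B_\delta)$ from being arbitrarily small while the $A_s$-error is merely $o(1)$ unconditionally.  Your proposed fix (split off the set $\{|H_n/n-p'(\beta)|>\eta\}$ and use Cauchy--Schwarz) faces exactly the same obstacle: the Gibbs mass of that set is small almost surely, but you must bound it \emph{conditionally} on $B_\delta$, and the a.s.\ convergence is along sequences of environments, not along the moving events $B_\delta=B_\delta^{(n)}$.

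The paper's resolution is precisely to \emph{not} take $I_t$ as the actual time-average.  Instead it introduces the intermediate quantity $Q_s(f)$, obtained from $\langle f\rangle_s$ by replacing $\e^{\beta(\e^{-s}-1)H_n(\sigma)}$ with the deterministic constant $\e^{\beta(\e^{-s}-1)np'(\beta)}$, and defines $I_t=\frac{1}{t}\int_0^t\frac{1}{n}\sum_i Q_s(\vphi_i)^2\,\dd s$.  This pushes the entire $A_s$-error into part (a), where it can be handled \emph{unconditionally} via Lemma~\ref{lemma:step_1_2}; part (b) then involves only the Brownian error, which admits a pointwise bound on $B_\delta$ and hence survives division by $\P(B_\delta)$.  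The intermediate process is the missing device in your argument, and without it the conditional half of the proposition does not go through as stated.
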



\begin{proof}[Proof of Theorem~\ref{averages_squared}]
Let $\eps > 0$ be given, and assume the hypotheses of Proposition~\ref{prep_prop_good}.
By that result, there is $\kappa>0$ and $T$ large enough that
\eeq{ \label{first_liminf}
\liminf_{n\to\infty} \P\bigg(\frac{1}{T/n}\int_0^{T/n} \frac{1}{n}\sum_i \langle \vphi_i\rangle_t^2\ \dd t \geq \frac{4\kappa}{5}\bigg) \geq 1-\frac{\eps}{2}.
}
Let $(I_t)_{t\geq0}$ be the process guaranteed by Proposition~\ref{prep_prop_bad}, and define the events
\eq{
G &\coloneqq \bigg\{\frac{1}{T/n}\int_0^{T/n} \frac{1}{n}\sum_i \langle \vphi_i\rangle_t^2\ \dd t \geq \frac{4\kappa}{5}\bigg\}, \\
H &\coloneqq \bigg\{\frac{1}{T/n}\int_0^{T/n} \frac{1}{n}\sum_i \langle \vphi_i\rangle_t^2\ \dd t \leq \frac{3\kappa}{5} \bigg\}, \\
H_1 &\coloneqq \bigg\{\Big|I_{T/n} - \frac{1}{T/n}\int_0^{T/n} \frac{1}{n}\sum_i\langle\vphi_i\rangle_t^2\ \dd t\Big| \leq \frac{\kappa}{5}\Big\}, \\
H_2 &\coloneqq \bigg\{\Big|I_{T/n} - \frac{1}{n}\sum_i \langle\vphi_i\rangle^2\Big| \leq \frac{\kappa}{5}\bigg\}.
}
By Proposition~\ref{prep_prop_bad}(a),
\eeq{ \label{second_liminf}
\lim_{n\to\infty} \P(H_1) = 1.
}
And by Proposition~\ref{prep_prop_bad}(b), we can choose $0 < \delta \leq \kappa/5$ sufficiently small and $n_0$ sufficiently large that
\eeq{ \label{conditional_assumption}
\P\givenp{H_2}{B_{\delta}} \geq \frac{1}{2} \quad \text{for all $n\geq n_0$}.
}
Observe that $B_{\delta} \cap H_1\cap H_{2} \subset H$, and clearly the events $G$ and $H$ are disjoint.
We thus have
\eq{
\P(B_{\delta} \cap H_1 \cap H_2) \leq \P(H) \leq 1 - \P(G).
}
On the other hand,
\eq{
\P(B_{\delta} \cap H_1 \cap H_2)
&\stackrel{\phantom{\mbox{\footnotesize{\eqref{conditional_assumption}}}}}{\geq} \P(H_1) + \P(H_2 \cap B_{\delta}) - 1 \\
&\stackrel{\phantom{\mbox{\footnotesize{\eqref{conditional_assumption}}}}}{=} \P(H_1) - 1 + \P\givenp{H_2}{B_{\delta}}\P(B_{\delta}) \\
&\stackrel{\mbox{\footnotesize{\eqref{conditional_assumption}}}}{\geq} \P(H_1) - 1 + \frac{\P(B_{\delta})}{2}.
}
Putting the two previous displays together, we find
\eq{
\P(B_{\delta}) \leq 2\big(2 - \P(G) - \P(H_1)\big),
}
and so
\eq{
\limsup_{n\to\infty} \P(B_{\delta}) \leq 2\big(2 - \liminf_{n\to\infty} \P(G) - \lim_{n\to\infty} \P(H_1)\big) 
\stackrel{\mbox{\footnotesize\eqref{first_liminf},\eqref{second_liminf}}}{\leq}\eps.
}

\end{proof}

\subsection{Proof of Proposition~\ref{prep_prop_good}}
We will need to recall some facts about Ornstein--Uhlenbeck processes.
To avoid technical complications, we restrict ourselves to finite-dimensional OU processes, and then take an appropriate limit at a later stage.

\subsubsection{General OU theory} \label{OU_theory}
Fix a positive integer $N$, and consider a vector $\vc g = (g_1,\dots,g_N)$ of i.i.d.~standard normal random variables.
Let $\vc W = (\vc W(t))_{t\geq0}$ be an independent $N$-dimensional Brownian motion.
The OU flow starting at $\vc g$ is given by 
\eq{
\vc g_t \coloneqq \e^{-t}\vc g + \e^{-t}\vc W(\e^{2t}-1), \quad t\geq0.
}
This is a continuous-time, stationary Markov chain.
Let $(\PP_t)_{t\geq0}$ denote the OU semigroup; that is, for $f: \R^N \to \R$,
\eq{
\PP_tf(\vc x) \coloneqq \E f\big(\e^{-t}\vc x + \e^{-t}\vc W({\e^{2t}-1})\big), \quad \vc x\in\R^N.
}
Denote the OU generator by $\LL \coloneqq \Delta - \vc x \cdot \nabla$.
It is especially useful to consider the spectral decomposition of $\LL$, whose eigenfunctions are the multivariate Hermite polynomials.
For our purposes, it suffices to recall the following well-known facts (see, for instance, \cite[Chapter 6]{chatterjee14}):
\begin{itemize}
\item Let $\gamma_N$ denote the $N$-dimensional standard Gaussian measure.
There is an orthonormal basis $\{\phi_j\}_{j=0}^\infty$ of $L^2(\gamma_N)$ consisting of eigenfunctions of $\LL$, where $\phi_0 \equiv 1$, $\LL \phi_0 = \lambda_0 \phi_0= 0$, and $\LL \phi_j = -\lambda_j\phi_j$ with $\lambda_j > 0$ for $j\geq1$.
Therefore, if $f = \sum_{j=0}^\infty a_j\phi_j \in L^2(\gamma_N)$, then
\begin{align} 
\E f(\vc g) &= a_0, \label{mean_coef} \\
\LL f &= -\sum_{j=1}^\infty \lambda_ja_j\phi_j, \label{apply_L} \\
\Rightarrow \E \LL f(\vc g) &= 0 \label{L_zero_mean}.
\end{align}
Furthermore, if $f_1 = \sum_{j=0}^\infty a_j\phi_j, f_2 = \sum_{j=0}^\infty b_j\phi_j \in L^2(\gamma_N)$, then
\eeq{ \label{cov_formula}
\Cov\big(f_1(\vc g),f_2(\vc g)\big) = \sum_{j=1}^\infty a_jb_j.
}
\item The OU semigroup acts on $L^2(\gamma_N)$ by
\eq{
\PP_t\phi_j = \e^{-\lambda_j t}\phi_j, \quad j\geq0.
}
Therefore, if $f = \sum_{j=0}^\infty a_j\phi_j \in L^2(\gamma_N)$, then
\eeq{ \label{apply_PL}
\PP_t\LL f = -\sum_{j=1}^\infty \lambda_ja_j\e^{-\lambda_j t}\phi_j.
}
\item The associated Dirichlet form is given by
\eq{
- \E[f_1(\vc g)\LL f_2(\vc g)] = \E[\nabla f_1(\vc g)\cdot\nabla f_2(\vc g)],
}
whenever $f_1$ and $f_2$ are twice-differentiable functions in $L^2(\gamma_N)$ such that both expectations above are finite.
In particular, if $f_1 = f_2 = \sum_{j=0}^\infty a_j\phi_j\in L^2(\gamma_N)$ is twice-differentiable, then
\eeq{ \label{exp_grad_formula}
\E(\|\nabla f(\vc g)\|^2) = \sum_{j=1}^\infty \lambda_{j}a_{j}^2.
}
\end{itemize}


\begin{lemma} \label{OU_variance_lemma}
For any twice differentiable $f\in L^2(\gamma_N)$ with $\LL f \in L^2(\gamma_N)$, we have
\eq{
\Var\bigg(\frac{1}{t}\int_0^t \LL f(\vc g_s)\ \dd s\bigg) \leq \frac{2}{t}\E(\|\nabla f(\vc g)\|^2).
}
\end{lemma}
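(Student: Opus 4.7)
The plan is to use the spectral decomposition of $\LL$ already recorded in the excerpt, combined with the semigroup identity $\PP_r \LL \phi_j = -\lambda_j e^{-\lambda_j r}\phi_j$, to compute the time-integrated variance essentially in closed form. Writing $f=\sum_{j\geq 0}a_j\phi_j$ with the $\phi_j$ being the Hermite eigenfunctions of $\LL$, we have $\LL f = -\sum_{j\geq 1}\lambda_j a_j\phi_j \in L^2(\gamma_N)$ by hypothesis.

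First, I would express the variance as a double time-integral using Fubini:
\[
\Var\bigg(\frac{1}{t}\int_0^t \LL f(\vc g_s)\ \dd s\bigg) = \frac{1}{t^2}\int_0^t\!\int_0^t \Cov\bigl(\LL f(\vc g_s),\LL f(\vc g_u)\bigr)\ \dd s\, \dd u.
\]
By stationarity of the OU flow (and the Markov property), for $s\leq u$ the integrand equals $\Cov\bigl(\LL f(\vc g),\, \PP_{u-s}\LL f(\vc g)\bigr)$. Now I would plug in the eigenfunction expansions of $\LL f$ and $\PP_{u-s}\LL f$ from \eqref{apply_L} and \eqref{apply_PL}, and invoke the covariance formula \eqref{cov_formula} (together with $\E\LL f(\vc g)=0$ from \eqref{L_zero_mean}) to get
\[
\Cov\bigl(\LL f(\vc g_s),\LL f(\vc g_u)\bigr)=\sum_{j\geq 1}\lambda_j^{2}a_j^{2}\,e^{-\lambda_j|u-s|}.
\]

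Next I would perform the double time-integral term by term. For each $j\geq 1$, an elementary computation gives
\[
\int_0^t\!\int_0^t e^{-\lambda_j |u-s|}\ \dd s\,\dd u = \frac{2\bigl(\lambda_j t - 1 + e^{-\lambda_j t}\bigr)}{\lambda_j^{2}} \leq \frac{2t}{\lambda_j},
\]
where the inequality uses $e^{-\lambda_j t}\leq 1$. Summing and using Fubini/Tonelli to interchange the sum and integral (justified by the $L^2$ condition on $\LL f$, which forces $\sum_j \lambda_j^2 a_j^2<\infty$), we obtain
\[
\Var\bigg(\frac{1}{t}\int_0^t \LL f(\vc g_s)\ \dd s\bigg) \leq \frac{1}{t^2}\sum_{j\geq 1}\lambda_j^{2}a_j^{2}\cdot\frac{2t}{\lambda_j} = \frac{2}{t}\sum_{j\geq 1}\lambda_j a_j^{2}.
\]
Finally, I would identify the right-hand sum as $\frac{2}{t}\E(\|\nabla f(\vc g)\|^{2})$ via the Dirichlet form identity \eqref{exp_grad_formula}, completing the argument.

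The only real subtlety is the justification of interchanging the infinite sum with the double integral and with the covariance operation; once the $L^2$-integrability of $\LL f$ (giving $\sum\lambda_j^2 a_j^2<\infty$) and of $\nabla f$ (giving $\sum\lambda_j a_j^2<\infty$) are in hand, monotone/dominated convergence handles this routinely, so I expect no genuine obstacle.
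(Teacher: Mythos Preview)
Your proposal is correct and follows essentially the same approach as the paper: spectral decomposition of $\LL f$, identification of $\Cov(\LL f(\vc g_s),\LL f(\vc g_u))$ via the semigroup action, and then bounding the double time-integral to arrive at $\frac{2}{t}\sum_{j\geq1}\lambda_j a_j^2 = \frac{2}{t}\E(\|\nabla f(\vc g)\|^2)$. The only cosmetic difference is that the paper bounds the inner integral $\int_0^u \Cov(\cdot)\,\dd s \leq \E(\|\nabla f\|^2)$ before integrating in $u$, whereas you compute the full double integral $\int_0^t\int_0^t e^{-\lambda_j|u-s|}\,\dd s\,\dd u$ in closed form and then bound it; both lead to the same estimate.
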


\begin{proof}
Take any $0\leq s\leq t$.
By the law of total variance, we have
\eq{
\Cov\big(f(\vc g_s),f(\vc g_t)\big) &= 
\E\big[\Cov\givenp{f(\vc g_s),f(\vc g_t)}{\vc g_s}\big] + \Cov\big(f(\vc g_s),\E\givenk{f(\vc g_t)}{\vc g_s}\big) \\
&= 0 + \Cov\big(f(\vc g_s),\E\givenk{f(\vc g_t)}{\vc g_s}\big) \\
&= \Cov\big(f(\vc g_s),\PP_{t-s}f(\vc g_s)\big) \\
&= \Cov\big(f(\vc g_0),\PP_{t-s}f(\vc g_0)\big).
}
In particular, if we write $f$ in the form $f = \sum_{j=0}^\infty a_{j}\phi_{j}$, then
\eq{
\Cov\big(\LL f(\vc g_s),\LL f(\vc g_t)\big) 
&\stackrel{\phantom{\mbox{\footnotesize\eqref{apply_L},\eqref{apply_PL},\eqref{cov_formula}}}}{=}  \Cov\big(\LL f(\vc g_0),\PP_{t-s}\LL f(\vc g_0)\big)  \\
&\stackrel{\mbox{\footnotesize\eqref{apply_L},\eqref{apply_PL},\eqref{cov_formula}}}{=} \sum_{j=1}^\infty \lambda_{j}^2a_j^2\e^{-\lambda_j(t-s)}.
}
Therefore,
\eq{
\int_0^t \Cov\big(\LL f(\vc g_s),\LL f(\vc g_t)\big)\ \dd s
&=\int_0^t \sum_{j=1}^\infty  \lambda_j^2a_j^2 \e^{-\lambda_j(t-s)}\ \dd s \\
&= \sum_{j=1}^\infty \lambda_ja_j^2(1-\e^{-\lambda_jt}) \\
&\leq \sum_{j=1}^\infty	 \lambda_ja_j^2
\stackrel{\mbox{\footnotesize\eqref{exp_grad_formula}}}{=} \E(\|\nabla f(\vc g)\|^2).
}
Hence
\eq{
\Var\bigg(\int_0^t \LL f(\vc g_s)\ \dd s\bigg)
&= \int_0^t\int_0^t \Cov\big(\LL f(\vc g_s),\LL f(\vc g_u)\big)\ \dd s\, \dd u \\
&= 2\int_0^t\int_0^u \Cov\big(\LL f(\vc g_s),\LL f(\vc g_u)\big)\ \dd s\, \dd u \\
&\leq 2t\E(\|\nabla f(\vc g)\|^2).
}
\end{proof}

\begin{proof}[Proof of Proposition~\ref{prep_prop_good}]
Let $(\vc g_t)_{t\geq0}$ be the OU flow from \eqref{OU_def}, and write
\eq{
g_i(t) \coloneqq \e^{-t}g_i + \e^{-t}W_i(\e^{2t}-1), \quad i\geq1.
}
Recall that $\langle \cdot \rangle_t$ denotes expectation with respect to $\mu_{n,\vc g_t}^\beta$. 
Let $Z_{n,t}(\beta)$ and $F_{n,t}(\beta)$ be the associated partition function and free energy, respectively.
That is, with $H_{n,t} \coloneqq \sum_i g_i(t)\vphi_i$, we have
\eq{
Z_{n,t}(\beta) \coloneqq E_n(\e^{\beta H_{n,t}}), \qquad
F_{n,t}(\beta) \coloneqq \frac{1}{n}\log Z_{n,t}(\beta).
}
So that we can use the finite-dimensional facts discussed before, define $H_{n,t,N} \coloneqq \sum_{i=1}^N g_i(t)\vphi_i$,
as well as
\eq{
Z_{n,t,N}(\beta) \coloneqq E_n(\e^{\beta H_{n,t,N}}), \qquad
F_{n,t,N}(\beta) \coloneqq \frac{1}{n}\log Z_{n,t,N}(\beta), \quad N\geq0.
}
Define $f : \R^{N} \to \R$ by
\eq{
f(\vc x) \coloneqq \frac{1}{n}\log E_n(\e^{\beta \sum_{i=1}^N x_i\vphi_i}),
}
so that $f(\vc g_t) = F_{n,t,N}(\beta)$, where $\vc g_t$ is understood to mean $(g_1(t),\dots,g_N(t))$.
Note that $f\in L^2(\gamma_{N})$, since
$\log^2 x \leq x + x^{-1}$ for $x>0$,
and so using the
same arguments as in Lemma~\ref{moments_lemma} yields
\eq{
\E \log^2 Z_{n,t,N}(\beta) &\leq \E Z_{n,t,N}(\beta)+\E[Z_{n,t,N}(\beta)^{-1}] \\
&\leq E_n(\E \e^{\beta H_{n,t,N}(\sigma)}) + E_n(\E \e^{-\beta H_{n,t,N}(\sigma)})\\
&= 2E_n(\e^{\frac{\beta^2}{2}\sum_{i=1}^N\vphi_i^2}) \stackrel{\mbox{\footnotesize\eqref{variance_assumption}}}{\leq} 2\e^{\frac{\beta^2}{2}n}.
}
Similar to \eqref{finite_gibbs_expectation}, for general $\ff \in L^2(\Sigma_n)$, we define
\eeq{ \label{finite_gibbs_expectation_time}
\langle \ff(\sigma)\rangle_{t,N} = \frac{E_n(\ff(\sigma)\e^{\beta H_{n,t,N}(\sigma)})}{E_n(\e^{\beta H_{n,t,N}(\sigma)})}.
}
Observe that
\eq{
\frac{\partial f}{\partial x_i}(\vc g_t) = \frac{\beta\langle\vphi_i\rangle_{t,N}}{n}, \quad 1\leq i\leq N,
}
which implies
\eeq{ \label{grad_ineq}
\|\nabla f(\vc g_t)\|^2 = \frac{\beta^2}{n^2}\sum_{i=1}^N\langle\vphi_i\rangle_{t,N}^2 \leq 
\frac{\beta^2}{n^2}\sum_{i=1}^N\langle\vphi_i^2\rangle_{t,N} \stackrel{\mbox{\footnotesize\eqref{variance_assumption}}}{\leq} \frac{\beta^2}{n},
}
as well as
\eq{
\vc g_t \cdot \nabla f(\vc g_t) = \frac{\beta}{n}\sum_{i=1}^Ng_i(t)\langle \vphi_i\rangle_{t,N} 
= \frac{\beta}{n}\langle H_{n,t,N}(\sigma)\rangle_{t,N}
\stackrel{\mbox{\footnotesize\eqref{nrg_2deriv}}}{=} \beta F_{n,t,N}'(\beta),
}
where the derivative is with respect to $\beta$.
Note that
\eeq{ \label{Fprime_L2}
\E[F_{n,t,N}'(\beta)^2]
&\stackrel{\phantom{\mbox{\footnotesize\eqref{variance_assumption}}}}{=}  \frac{1}{n^2}\E\bigg[\Big(\sum_{i=1}^N g_i(t)\langle\vphi_i\rangle_{t,N}\Big)^2\bigg] \\
&\stackrel{\phantom{\mbox{\footnotesize\eqref{variance_assumption}}}}{\leq} \frac{1}{n^2}\E\bigg[\Big(\sum_{i=1}^N g_i(t)^2\Big)\Big(\sum_{i=1}^N\langle\vphi_i\rangle_{t,N}^2\Big)\bigg] \\
&\stackrel{\phantom{\mbox{\footnotesize\eqref{variance_assumption}}}}{\leq} \frac{1}{n^2}\E\bigg[\Big(\sum_{i=1}^N g_i(t)^2\Big)\Big(\sum_{i=1}^N\langle\vphi_i^2\rangle_{t,N}\Big)\bigg] \\
&\stackrel{\mbox{\footnotesize\eqref{variance_assumption}}}{\leq} \frac{1}{n}\E\Big(\sum_{i=1}^N g_i(t)^2\Big) 
= \frac{N}{n} < \infty.
}
Furthermore,
\eq{
\frac{\partial^2 f}{\partial x_i^2}(\vc g_t) = \frac{\beta^2}{n}(\langle \vphi_i^2\rangle_{t,N}-\langle\vphi_i\rangle_{t,N}^2), \quad 1\leq i\leq N.
}
We thus have
\eq{
\LL f(\vc g_t) &= \frac{\beta^2}{n} \sum_{i=1}^N (\langle\vphi_i^2\rangle_{t,N}-\langle\vphi_i\rangle_{t,N}^2) - \beta F_{n,t,N}'(\beta).
}
From \eqref{Fprime_L2}, it is clear that $\LL f \in L^2(\gamma_{N})$.
Therefore, by Lemma~\ref{OU_variance_lemma} and \eqref{grad_ineq},
\eq{
\Var\bigg(\frac{1}{t} \int_0^{t} \Big[\frac{\beta^2}{n} \sum_{i=1}^N (\langle\vphi_i^2\rangle_{s,N}-\langle\vphi_i\rangle_{s,N}^2) - \beta F_{n,s,N}'(\beta)\Big]\ \dd s\bigg) \leq \frac{2\beta^2}{tn}.
}
Moreover, from \eqref{L_zero_mean} we know
\eq{
\E\bigg(\frac{1}{t} \int_0^{t} \Big[\frac{\beta^2}{n} \sum_{i=1}^N (\langle\vphi_i^2\rangle_{s,N}-\langle\vphi_i\rangle_{s,N}^2) - \beta F_{n,s,N}'(\beta)\Big]\ \dd s\bigg) = 0.
}
We can now apply \eqref{exp_moments_lemma_a} (together with \eqref{nrg_2deriv}) and \eqref{limit_for_later} to take the limit $N\to\infty$ in the two previous displays and obtain
\eq{
\Var\bigg(\frac{1}{t} \int_0^{t} \Big[\beta^2 - \frac{\beta^2}{n} \sum_{i} \langle\vphi_i\rangle_{s}^2 - \beta F_{n,s}'(\beta)\Big]\ \dd s\bigg) &\leq \frac{2\beta^2}{tn}, \\
\E\bigg(\frac{1}{t} \int_0^{t} \Big[\beta^2 - \frac{\beta^2}{n} \sum_{i} \langle\vphi_i\rangle_{s}^2 - \beta F_{n,s}'(\beta)\Big]\ \dd s\bigg) &= 0.
}
Consequently, for any $\eps > 0$, Chebyshev's inequality shows
\eeq{ \label{limsup_prob_1}
\P\bigg(\Big|\frac{1}{t} \int_0^{t} \Big[\beta - \frac{\beta}{n}\sum_i\langle\vphi_i\rangle_s^2 - F_{n,s}'(\beta)\Big]\ \dd s\Big| \geq \frac{\eps}{2}\bigg) \leq \frac{8}{tn\eps^2}.
}
Now consider that
\eq{
\E\Big|p'(\beta)-\frac{1}{t}\int_0^t F_{n,s}'(\beta)\ \dd s\Big|
&\leq \frac{1}{t}\int_0^t \E|p'(\beta) - F_{n,s}'(\beta)|\ \dd s \\
&= \E|p'(\beta) - F_n'(\beta)|.
}
Therefore, if $\beta$ is a point of differentiability for $p(\cdot)$, then for any sequence $(t(n))_{n\geq1}$, Lemma~\ref{betas_converging} guarantees
\eeq{ \label{limsup_prob_2}
\limsup_{n\to\infty} \P\bigg(\Big|p'(\beta)-\frac{1}{t(n)}\int_0^{t(n)} F_{n,s}'(\beta)\ \dd s\Big| \geq \frac{\eps}{2}\bigg) = 0.
}
When $t = t(n) = T/n$ for fixed $T$, \eqref{limsup_prob_1} and \eqref{limsup_prob_2} together show
\eq{
\limsup_{n\to\infty} \P\bigg(\Big|\frac{1}{T/n} \int_0^{T/n} \Big[\beta - \frac{\beta}{n}\sum_i\langle\vphi_i\rangle_s^2 - p'(\beta)\Big]\ \dd s\Big| \geq \eps\bigg) \leq \frac{8}{T\eps^2}.
}
Assuming $p'(\beta)<\beta$, we let $\kappa = \kappa(\beta) \coloneqq \frac{\beta - p'(\beta)}{\beta} > 0$.
Then the previous display implies
\eq{
\limsup_{n\to\infty} \P\bigg(\Big|\kappa - \frac{1}{T/n} \int_0^{T/n} \frac{1}{n}\sum_i\langle\vphi_i\rangle_s^2\ \dd s\Big| \geq \eps\bigg) \leq \frac{8}{T\beta^2\eps^2}.
}
The proof is completed by taking $T = T(\beta,\eps)$ sufficiently large that
\eq{
\frac{8}{T\beta^2\eps^2} \leq \eps.
}
\end{proof}

\subsection{Proof of Proposition~\ref{prep_prop_bad}}
Let us rewrite \eqref{OU_def} as
\eq{
\vc g_t = \vc g + \e^{-t} \vc W(\e^{2t}-1) + (\e^{-t}-1)\vc g, \quad t\geq 0.
}
Recall that $\langle \cdot\rangle_0 = \langle\cdot\rangle$.
For any $f \in L^2(\Sigma_n)$, we have
\eq{
\langle f(\sigma)\rangle_t = \frac{\langle f(\sigma)\e^{\beta \e^{-t}\sum_{i} W_i(\e^{2t}-1)\vphi_i} \e^{\beta(\e^{-t} - 1)H_n(\sigma)}\rangle}{\langle\e^{\beta\e^{-t}\sum_{i} W_i(\e^{2t}-1)\vphi_i}\e^{\beta(\e^{-t} - 1)H_n(\sigma)}\rangle}.
}
In light of Lemma~\ref{lemma:step_1_2}, we anticipate that for $t = O(n^{-1})$,
\eeq{ \label{Qt_def}
\langle f(\sigma)\rangle_t  
&\approx \frac{\langle f(\sigma)\e^{\beta \e^{-t}\sum_{i} W_i(\e^{2t}-1)\vphi_i}\e^{-\beta tnp'(\beta)}\rangle }{\langle\e^{\beta\e^{-t}\sum_{i} W_i(\e^{2t}-1)\vphi_i}\e^{-\beta tnp'(\beta)} \rangle} \\
&= \frac{\langle f(\sigma)\e^{\beta \e^{-t}\sum_{i} W_i(\e^{2t}-1)\vphi_i}\rangle }{\langle\e^{\beta\e^{-t}\sum_{i} W_i(\e^{2t}-1)\vphi_i}\rangle}
 \eqqcolon Q_t(f).
}
Indeed,  the process that will satisfy the conclusions of Proposition~\ref{prep_prop_bad} is
\eeq{ \label{I_def}
I_t \coloneqq \frac{1}{t}\int_0^t \frac{1}{n}\sum_i Q_s(\vphi_i)^2\ \dd s, \quad t > 0.
}
To prove so, the following lemma will suffice.
Recall that
\eq{
B_\delta = \Big\{\frac{1}{n}\sum_i\langle\vphi_i\rangle^2\leq\delta\Big\}.
}

\begin{lemma} \label{prep_prop_bad_lemma}
For any $T,\eps>0$, the following statements hold:
\begin{itemize}
\item[(a)] 
If $\beta$ is a point of differentiability for $p(\cdot)$, then there is a sequence of nonnegative random variables $(M_n)$ depending only on $\beta$, $T$, and $\eps$, such that
\eeq{ \label{M_condition}
\limsup_{n\to\infty} \E(M_n) \leq \eps,
}
and for every $f \in L^2(\Sigma_n)$, $t\in[0,\frac{T}{n}]$,
\eeq{ \label{prep_prop_bad_lemma_eq1}
\E|Q_t(f)^2-\langle f(\sigma)\rangle_t^2| \leq \E(\langle f(\sigma)^2\rangle M_n).
}
\item[(b)] 
There exist $\delta_1 = \delta_1(\beta,T,\eps)>0$ sufficiently small and $n_0 = n_0(\beta,T,\eps)$ sufficiently large, that for every $n\geq n_0$, $f\in L^2(\Sigma_n)$, $ t\in[0, \frac{T}{n}]$, and $\delta\in(0,\delta_1]$, we have
\eeq{ \label{prep_prop_bad_lemma_eq2}
\E\givenp[\big]{|Q_t(f)^2- \langle f(\sigma)\rangle^2|}{B_{\delta}} \leq \eps \E\langle f(\sigma)^2\rangle.
}
\end{itemize}
\end{lemma}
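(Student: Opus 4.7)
Both parts rest on a shared Cauchy--Schwarz template. Let $\tilde\mu_t$ denote the probability measure proportional to $G_t\,d\mu_n^\beta$, where $G_t(\sigma) := e^{\beta e^{-t}\sum_i W_i(e^{2t}-1)\vphi_i(\sigma)}$, and let $\bar E_t$ denote expectation under $\tilde\mu_t$; then $Q_t(f) = \bar E_t[f]$. Writing $\psi(\sigma) := e^{\beta(e^{-t}-1)H_n(\sigma)}$ for the residual exponential factor, one has $\langle f\rangle_t = \bar E_t[f\psi]/\bar E_t[\psi]$. The difference $\langle f\rangle_t - Q_t(f)$ is then a normalized covariance of $f$ and $\psi$ under $\tilde\mu_t$, and Cauchy--Schwarz yields the master bound
$$
(\langle f\rangle_t - Q_t(f))^2 \leq \bar E_t[f^2]\cdot \frac{\bar E_t[\psi^2] - \bar E_t[\psi]^2}{\bar E_t[\psi]^2}.
$$
Multiplying by $|\langle f\rangle_t + Q_t(f)|\leq \sqrt{\langle f^2\rangle_t} + \sqrt{\bar E_t[f^2]}$ produces a pointwise bound for $|Q_t(f)^2 - \langle f\rangle_t^2|$ of the form $\langle f^2\rangle\cdot R(t,\vc g,\vc W)$, after absorbing the Radon--Nikodym ratio $G_t/\langle G_t\rangle$ into the random factor. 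A parallel derivation starting from $Q_t(f) - \langle f\rangle = \langle(f-\langle f\rangle)(G_t-\langle G_t\rangle)\rangle/\langle G_t\rangle$ produces the analogous estimate needed for (b).

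For part (a), the plan is to show that the random factor $R$ has vanishing $\P$-mean uniformly in $t\in[0,T/n]$. Writing $\psi = c_t\exp(\beta(e^{-t}-1)n\,h)$ with $c_t := e^{\beta(e^{-t}-1)np'(\beta)}$ deterministic and $h(\sigma) := H_n(\sigma)/n - p'(\beta)$, the bound $|\beta(e^{-t}-1)n|\leq \beta T$ is uniform in $t\in[0,T/n]$. The key input is $\bar E_t[|h|]\to 0$ in $\P$, which I would establish by observing that $\tilde\mu_t$ coincides in distribution with $\mu_n^{\beta_t}$ at the perturbed inverse temperature $\beta_t := \beta\sqrt{2-e^{-2t}}$, which converges to $\beta$ uniformly in $t\in[0,T/n]$; Lemmas~\ref{betas_converging} and~\ref{lemma:step_1_2} then apply at $\beta_t$. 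Combining the elementary $|e^a - 1|\leq |a|e^{|a|}$ with exponential-moment control for $|h|$ under $\tilde\mu_t$ (available from the Gaussian structure and Lemma~\ref{moments_lemma}) bounds the relative variance of $\psi$. A single dominating $M_n$ is then produced by taking an essential supremum over $t\in[0,T/n]$.

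For part (b), integrating over $\vc W$ first is essential, using the explicit log-normal identities
$$
E_{\vc W}[\langle G_t\rangle^2] = e^{b_t}\langle e^{b_t\RR_{1,2}}\rangle, \qquad E_{\vc W}[\langle G_t^2\rangle] = e^{2b_t},
$$
with $b_t := \beta^2 n(1-e^{-2t})\leq 2\beta^2 T$ bounded. Since $B_\delta\in\sigma(\vc g)$, the conditioning does not affect $\vc W$-marginals. A subtle point: one cannot simply average numerator and denominator of $\mathrm{Var}_\mu(G_t)/\langle G_t\rangle^2$ separately, as this produces $e^{b_t} - 1 = \Theta(T)$, which is not small; instead, one must use Gaussian concentration of $\log\langle G_t\rangle$ around its mean (with respect to $\vc W$) to control the pointwise ratio on a high-probability event. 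On $B_\delta$, Taylor expansion of $\langle e^{b_t\RR_{1,2}}\rangle$, combined with the sandwich $-\EEE_n\leq\RR_{1,2}\leq 1$ (which forces $\langle\RR_{1,2}^2\rangle\leq\langle\RR_{1,2}\rangle + O(\EEE_n)$), gives $\langle e^{b_t\RR_{1,2}}\rangle - 1 = O(\delta + \EEE_n)$, from which the bound in (b) follows after choosing $\delta_1$ small and $n_0$ large.

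The chief obstacle is converting the $\bar E_t[f^2]$ appearing naturally in the master bound into the $\langle f^2\rangle$ required on the right side of the lemma: this entails absorbing $G_t/\langle G_t\rangle$ into $M_n$ while preserving its finite $\P$-mean, for which I would use negative $L^p(\P)$ moments of $\langle G_t\rangle$ (available since $b_t$ is bounded). Secondary technicalities are producing a single $M_n$ uniform in $t\in[0,T/n]$ (handled via continuity plus an essential supremum) and, in (b), the warning noted above against naive Fubini-then-average.
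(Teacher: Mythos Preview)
There is a real gap in part~(a), precisely at the point you flag as the ``chief obstacle.'' Your Cauchy--Schwarz under $\tilde\mu_t$ produces $\bar E_t[f^2]=\langle f^2G_t\rangle/\langle G_t\rangle$, and this cannot be converted to $\langle f^2\rangle$ times a factor independent of $f$: the ratio $\langle f^2G_t\rangle/(\langle f^2\rangle\langle G_t\rangle)$ depends on the correlation of $f^2$ with $G_t$ under $\mu_n^\beta$, and by concentrating $f^2$ where $G_t$ is largest one makes it as large as $\sup_\sigma G_t(\sigma)/\langle G_t\rangle$, which has no model-uniform moment bound (for Ising spin glasses the supremum of the perturbation field over $\Sigma_n$ is of order $\sqrt n$, so this ratio is $e^{c\sqrt n}$). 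Negative moments of $\langle G_t\rangle$ treat only the denominator, not the $f$-dependence of the numerator. The same contamination recurs in the multiplicative factor $|\langle f\rangle_t+Q_t(f)|\le\sqrt{\langle f^2\rangle_t}+\sqrt{\bar E_t[f^2]}$.

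The paper avoids this by applying Cauchy--Schwarz inside the \emph{original} Gibbs average $\langle\cdot\rangle$, not under $\tilde\mu_t$. After passing from $\vc W$ to an independent Gaussian $\vc h$ (so that $(\langle f\rangle_t,Q_t(f))\stackrel{\mathrm d}{=}(X/Y,X'/Y')$), the key object is $X-X'=\langle f\cdot G_t(\psi-c)\rangle$ with $c=e^{\beta(e^{-t}-1)np'(\beta)}$; Cauchy--Schwarz in $\langle\cdot\rangle$ gives $|X-X'|\le\sqrt{\langle f^2\rangle}\,\sqrt{\langle G_t^2(\psi-c)^2\rangle}$, and the second factor is manifestly independent of $f$. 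Integrating out the Gaussian $G_t$ in $\E_{\vc h}$ leaves $\langle(\psi-c)^q\rangle$-type quantities, which are controlled via Lemma~\ref{lemma:step_1_2} together with a large-deviation cutoff; the decomposition $X/Y-X'/Y'=(X-X')/Y-(X'/Y')(Y-Y')/Y$ and separate moment bounds on $Y^{-q}$, $(Y')^{\pm q}$, $(X')^q$ then assemble into the required $M_n$. In short, the measure under which you take Cauchy--Schwarz matters: it must be $\mu_n^\beta$ so that the prefactor is $\langle f^2\rangle$ rather than $Q_t(f^2)$. Your part~(b) sketch is much closer to the paper, since there your covariance \emph{is} under $\langle\cdot\rangle$; the paper's version of your ``Gaussian concentration of $\log\langle G_t\rangle$'' is a Chebyshev bound on $|Y''-1|$ (with $\E_{\vc h}Y''=1$) combined with Lemma~\ref{h_variance_lemma}(a), but beware that the cross term $|Q_t(f)|$ again introduces $\sqrt{\bar E_t[f^2]}$ and must be handled by the moment bound $\E_{\vc h}[(X'')^q]\le C\langle f^2\rangle^{q/2}$ rather than left as is.
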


Before checking these facts, let us use them to prove Proposition~\ref{prep_prop_bad}.
The idea is to use the above sequence $M_n$ to control the differences $Q_t(\vphi_i)^2-\langle\vphi_i\rangle^2$ simultaneously across all $i$ and $t\in[0,\frac{T}{n}]$; this will allow us to prove \eqref{prep_prop_bad_eq1}.
On the other hand, \eqref{prep_prop_bad_lemma_eq2} shows that when $\langle\RR_{1,2}\rangle$ is small, $Q_t(\vphi_i)^2$ remains close to $Q_0(\vphi_i)^2=\langle\vphi_i\rangle^2$.
That this approximation holds uniformly over $t\in[0,\frac{T}{n}]$ will lead to \eqref{prep_prop_bad_eq2}.

\begin{proof}[Proof of Proposition~\ref{prep_prop_bad}]
First we prove part (a). 
Let $T,\eps>0$ be fixed.
From Lemma~\ref{prep_prop_bad_lemma}(a), we identify a sequence of random variables $(M_n)$ such that \eqref{prep_prop_bad_lemma_eq1} holds, and
\eeq{ \label{M_condition_applied}
\limsup_{n\to\infty} \E(M_n) \leq \eps^2.
}
Under our definition \eqref{I_def}, we have
\eq{
&\E\Big|I_{T/n} - \frac{1}{T/n}\int_0^{T/n} \frac{1}{n}\sum_i \langle \vphi_i\rangle_t^2\ \dd t\Big| \\
&\stackrel{\phantom{\mbox{\footnotesize{\eqref{prep_prop_bad_lemma_eq1}}}}}{=} \E\bigg|\frac{1}{T/n}\int_0^{T/n} \frac{1}{n}\sum_i [Q_t(\vphi_i)^2 - \langle \vphi_i\rangle_t^2]\ \dd t\bigg| \\
&\stackrel{\phantom{\mbox{\footnotesize{\eqref{prep_prop_bad_lemma_eq1}}}}}{\leq} \frac{1}{T/n}\int_0^{T/n} \frac{1}{n}\sum_i \E|Q_t(\vphi_i)^2 - \langle \vphi_i\rangle_t^2|\ \dd t \\
&\stackrel{\mbox{\footnotesize{\eqref{prep_prop_bad_lemma_eq1}}}}{\leq} \frac{1}{T/n}\int_0^{T/n} \frac{1}{n}\sum_i \E(\langle \vphi_i^2\rangle M_n)\ \dd t
\stackrel{\hspace{0.5ex}{\mbox{\footnotesize{\eqref{variance_assumption}}}}\hspace{0.5ex}}{=}  \E(M_n).
}
Now Markov's inequality and \eqref{M_condition_applied} together imply 
\eq{
\limsup_{n\to\infty} \P\bigg(\Big|I_{T/n} - \frac{1}{T/n}\int_0^{T/n} \frac{1}{n}\sum_i \langle \vphi_i\rangle_t^2\ \dd t\Big|\geq\eps\bigg) \leq \frac{\eps^2}{\eps} = \eps,
}
which completes the proof of (a).

Next we prove part (b).
Let $\eps_1,\eps_2 > 0$ be given.
Similar to above, for any $\delta>0$ we have
\eq{
&\E\givenp[\Big]{\big|I_{T/n} - \frac{1}{n}\sum_i \langle \vphi_i\rangle^2\big|}{B_\delta}
= \E\givenp[\Big]{\big|I_{T/n} - \frac{1}{T/n}\int_0^{T/n} \frac{1}{n}\sum_i \langle \vphi_i\rangle^2\ \dd t\big|}{B_\delta} \\
&\qquad\qquad\qquad\qquad\qquad\quad\leq \frac{1}{T/n}\int_0^{T/n} \frac{1}{n}\sum_i \E\givenp[\big]{|Q_t(\vphi_i)^2 - \langle \vphi_i\rangle^2|}{B_\delta}\, \dd t.
} 
From Lemma~\ref{prep_prop_bad_lemma}(b), we choose $\delta_1$ 
sufficiently small that \eqref{prep_prop_bad_lemma_eq2} holds for all $\delta\in(0,\delta_1]$, with $\eps = \eps_1\eps_2$.
We then have, for all $n$ sufficiently large,
\eq{
\E\givenp[\Big]{\big|I_{T/n} - \frac{1}{n}\sum_i \langle \vphi_i\rangle^2\big|}{B_\delta}
\leq \frac{1}{T/n}\int_0^{T/n} \frac{1}{n}\sum_i \eps_1\eps_2\E\langle\vphi_i^2\rangle\ \dd s
\stackrel{\mbox{\footnotesize\eqref{variance_assumption}}}{=} \eps_1\eps_2.
}
Then applying Markov's inequality yields \eqref{prep_prop_bad_eq2}.
\end{proof}

It now remains to prove Lemma~\ref{prep_prop_bad_lemma}.
To do so, we will make use of the following preparatory result, which in fact is the common thread between the proofs of Theorems~\ref{expected_overlap_thm} and~\ref{averages_squared}.
Let $\vc h = (h_i)_{i=1}^\infty$ be an independent copy of the disorder $\vc g$.
We will use $\E_{\vc h}$ and $\Var_{\vc h}$ to denote expectation and variance with respect to $\vc h$, conditional on $\vc g$.
All statements involving these conditional quantities will be almost sure with respect to $\P$, although we will not repeatedly write this.

%

\begin{lemma} \label{h_variance_lemma}
Recall the constant $\EEE_n$ from \eqref{positive_overlap}.
For any $t\geq0$, the following statements hold:
\begin{itemize}
\item[(a)] For any $f\in L^2(\Sigma_n)$,
\eq{
\Var_{\vc h}\langle f(\sigma)\e^{\frac{t}{\sqrt{n}}\sum_i h_i\vphi_i}\rangle \leq \e^{2t^2}\langle f(\sigma)^2\rangle\sqrt{\frac{1}{n}\sum_i \langle\vphi_i\rangle^2+2\EEE_n}.
}
\item[(b)] For any measurable $f : \Sigma_n \to [0,1]$,
\eq{
\Var_{\vc h}\langle f(\sigma)\e^{\frac{t}{\sqrt{n}}\sum_i h_i\vphi_i}\rangle \leq \e^{2t^2}\Big(\Big\langle f(\sigma)\frac{1}{n}\sum_i \vphi_i\langle \vphi_i\rangle\Big\rangle+2\EEE_n\Big).
}
\end{itemize}
\end{lemma}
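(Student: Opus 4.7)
The plan is to exploit the independence of $\vc h$ from the Gibbs measure $\langle\cdot\rangle$ to compute $\E_{\vc h} G$ and $\E_{\vc h} G^2$ in closed form, where $G \coloneqq \langle f(\sigma)\e^{\frac{t}{\sqrt{n}}\sum_i h_i\vphi_i(\sigma)}\rangle$. By Fubini and the standard Gaussian identity $\E_{\vc h}\e^{\sum_i \alpha_i h_i} = \e^{\frac{1}{2}\sum_i \alpha_i^2}$, together with $\sum_i\vphi_i(\sigma)^2 = n$ from \eqref{variance_assumption} and \eqref{field_decomposition}, we get $\E_{\vc h} G = \e^{t^2/2}\langle f(\sigma)\rangle$. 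Writing $G^2$ as a replica expectation over independent $\sigma^1,\sigma^2$ and applying the same computation with the identity $\sum_i(\vphi_i(\sigma^1)+\vphi_i(\sigma^2))^2 = 2n(1+\RR_{1,2})$ yields $\E_{\vc h} G^2 = \e^{t^2}\langle f(\sigma^1)f(\sigma^2)\e^{t^2\RR_{1,2}}\rangle$. Subtracting produces the key identity
\eq{
\Var_{\vc h}(G) = \e^{t^2}\langle f(\sigma^1)f(\sigma^2)(\e^{t^2\RR_{1,2}}-1)\rangle,
}
which serves as the starting point for both (a) and (b).

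For part (a), I would apply Cauchy--Schwarz to the Gibbs expectation (using $\langle f(\sigma^1)^2 f(\sigma^2)^2\rangle = \langle f(\sigma)^2\rangle^2$) and bound the factor $(\e^{t^2\RR_{1,2}}-1)^2$ via the elementary inequality $(\e^y-1)^2\leq y^2\e^{2y^+}$ with $y=t^2\RR_{1,2}\in[-t^2\EEE_n, t^2]$, giving $(\e^{t^2\RR_{1,2}}-1)^2 \leq t^4 \RR_{1,2}^2 \e^{2t^2}$. This reduces everything to controlling $\langle\RR_{1,2}^2\rangle$, and here the crucial conversion is the pointwise inequality
\eq{
\RR_{1,2}^2 \leq \RR_{1,2} + 2\EEE_n,
}
which holds separately on $[0,1]$ (where $\RR_{1,2}^2\leq\RR_{1,2}$) and on $[-\EEE_n, 0]$ (where $\RR_{1,2}^2\leq\EEE_n^2+|\RR_{1,2}|\leq 2\EEE_n+\RR_{1,2}$). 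Taking Gibbs expectations and noting the replica identity $\langle\RR_{1,2}\rangle = \frac{1}{n}\sum_i\langle\vphi_i\rangle^2$ (immediate from the independence of $\sigma^1$ and $\sigma^2$ under $\langle\cdot\rangle$) closes the estimate, absorbing polynomial prefactors in $t$ into the exponential.

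For part (b), the nonnegativity $0\leq f\leq 1$ permits a sharper route: since $f(\sigma^1)f(\sigma^2)(\e^{t^2\RR_{1,2}}-1)\leq 0$ on $\{\RR_{1,2}<0\}$, discarding that region only increases the upper bound, and on $\{\RR_{1,2}\geq 0\}$ the bound $f(\sigma^2)\leq 1$ applies. There, convexity of $y\mapsto\e^{t^2 y}$ on $[0,1]$ gives $\e^{t^2\RR_{1,2}}-1\leq\RR_{1,2}(\e^{t^2}-1)\leq\RR_{1,2}\e^{t^2}$, yielding $\Var_{\vc h}(G)\leq\e^{2t^2}\langle f(\sigma^1)\RR_{1,2}^+\rangle$ where $\RR_{1,2}^+ \coloneqq \max(\RR_{1,2}, 0)$. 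The bound $\RR_{1,2}^+\leq\RR_{1,2}+\EEE_n$, together with $\langle f\rangle\leq 1$ and the identity $\langle f(\sigma^1)\RR_{1,2}\rangle = \langle f(\sigma)\frac{1}{n}\sum_i\vphi_i(\sigma)\langle\vphi_i\rangle\rangle$, finishes the proof. The main obstacle in both parts is not any deep technique but rather the conversion of estimates involving $\RR_{1,2}^2$ or $\RR_{1,2}^+$ into bounds linear in $\RR_{1,2}$; this is precisely where \eqref{positive_overlap} enters, producing the $2\EEE_n$ correction and explaining why the near-nonnegative covariance assumption is essential to the argument.
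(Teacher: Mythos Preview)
Your approach matches the paper's: derive the identity $\Var_{\vc h}(G)=\e^{t^2}\langle f(\sigma^1)f(\sigma^2)(\e^{t^2\RR_{1,2}}-1)\rangle$, then apply Cauchy--Schwarz for (a) and the pointwise bound $0\le f\le 1$ for (b), using \eqref{positive_overlap} to convert $\RR_{1,2}^2$ or $|\RR_{1,2}|$ into $\RR_{1,2}+2\EEE_n$. One imprecision in (a): your inequality $(\e^y-1)^2\le y^2\e^{2y^+}$ leaves an extra factor $t^2$ in front, and ``absorbing polynomial prefactors into the exponential'' does not recover the stated constant $\e^{2t^2}$ (for $t>1$ you would only get $t^2\e^{2t^2}$). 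The paper instead uses the convexity bound $|\e^{t^2x}-1|\le \e^{t^2}|x|$ valid for $x\in[-1,1]$, which yields the exact constant. Also, your chain $\RR_{1,2}^2\le\EEE_n^2+|\RR_{1,2}|\le 2\EEE_n+\RR_{1,2}$ on $[-\EEE_n,0]$ fails at the second step when $\EEE_n>0$; the clean route (which the paper uses) is $\RR_{1,2}^2\le|\RR_{1,2}|\le\RR_{1,2}+2\EEE_n$, the first inequality coming from $|\RR_{1,2}|\le 1$.
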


\begin{proof}
For any $f\in L^2(\Sigma_n)$,
\eeq{ \label{general_f}
&\Var_{\vc h}\langle f(\sigma)\e^{\frac{t}{\sqrt{n}}\sum_i h_i\vphi_i}\rangle \\ 
&\stackrel{\phantom{\mbox{\footnotesize\eqref{freq_identity}}}}{=} \E_{\vc h}\langle f(\sigma^1)f(\sigma^2)\e^{\frac{t}{\sqrt{n}}\sum_{i} h_i(\vphi_i(\sigma^1)+\vphi_i(\sigma^2))}\rangle - \big(\E_{\vc h}\langle f(\sigma)\e^{\frac{t}{\sqrt{n}}\sum_i h_i\vphi_i}\rangle\big)^2\\
&\stackrel{\mbox{\footnotesize\eqref{freq_identity}}}{=} \e^{t^2}\big(\langle f(\sigma^1)f(\sigma^2)\e^{\frac{t^2}{n}\sum_i\vphi_i(\sigma^1)\vphi_i(\sigma^2)}\rangle - \langle f(\sigma)\rangle^2\big) \\
&\stackrel{\phantom{\mbox{\footnotesize\eqref{freq_identity}}}}{=} \e^{t^2}\langle f(\sigma^1)f(\sigma^2)(\e^{\frac{t^2}{n}\sum_i\vphi_i(\sigma^1)\vphi_i(\sigma^2)}-1)\rangle \\
&\stackrel{\phantom{\mbox{\footnotesize\eqref{freq_identity}}}}{\leq} \e^{t^2}\langle f(\sigma)^2\rangle\sqrt{\langle(\e^{\frac{t^2}{n}\sum_i\vphi_i(\sigma^1)\vphi_i(\sigma^2)}-1)^2\rangle}. \raisetag{3\baselineskip}
}
Now,  for all $x\in[-1,1]$, we have  $|\e^{t^2x}-1|\leq \e^{t^2}|x|$.
In particular, since
\eeq{ \label{unit_interval}
\Big|\frac{1}{n}\sum_i \vphi_i(\sigma^1)\vphi_i(\sigma^2)\Big|
\leq \frac{1}{n}\sqrt{\sum_i\vphi_i(\sigma^1)^2\sum_i\vphi_i(\sigma^2)^2} \stackrel{\mbox{\footnotesize\eqref{variance_assumption}}}{=} 1,
}
we see from \eqref{general_f} that
\eq{
\Var_{\vc h}\langle f(\sigma)\e^{\frac{t}{\sqrt{n}}\sum_i h_i\vphi_i}\rangle
&\stackrefp{positive_overlap}{\leq} \e^{2t^2}\langle f(\sigma)^2\rangle\sqrt{\Big\langle\Big(\frac{1}{n}\sum_i\vphi_i(\sigma^1)\vphi_i(\sigma^2)\Big)^2\Big\rangle} \\
&\stackref{positive_overlap}{\leq} \e^{2t^2}\langle f(\sigma)^2\rangle\sqrt{\Big\langle\frac{1}{n}\sum_i\vphi_i(\sigma^1)\vphi_i(\sigma^2)\Big\rangle+2\EEE_n} \\
&\stackrefp{positive_overlap}{=} \e^{2t^2}\langle f(\sigma)^2\rangle\sqrt{\frac{1}{n}\sum_i\langle\vphi_i\rangle^2+2\EEE_n}.
}
Alternatively, if $f:\Sigma_n\to[0,1]$, then we can use the equalities in \eqref{general_f} to write
\eq{
\Var_{\vc h}\langle f(\sigma)\e^{\frac{t}{\sqrt{n}}\sum_i h_i\vphi_i}\rangle 
&\stackrefp{positive_overlap}{=} \e^{t^2}\langle f(\sigma^1)f(\sigma^2)(\e^{\frac{t^2}{n}\sum_i\vphi_i(\sigma^1)\vphi_i(\sigma^2)}-1)\rangle \\
&\stackrefp{positive_overlap}{\leq} \e^{2t^2}\Big\langle f(\sigma^1)\Big|\frac{1}{n}\sum_i \vphi_i(\sigma^1)\vphi_i(\sigma^2)\Big|\Big\rangle \\
&\stackref{positive_overlap}{\leq} \e^{2t^2}\Big\langle f(\sigma^1)\Big(\frac{1}{n}\sum_i \vphi_i(\sigma^1)\vphi_i(\sigma^2)+2\EEE_n\Big)\Big\rangle \\
&\stackrefp{positive_overlap}{\leq} \e^{2t^2}\Big(\Big\langle f(\sigma^1)\frac{1}{n}\sum_i \vphi_i(\sigma^1)\langle\vphi_i(\sigma^2)\rangle\Big\rangle+2\EEE_n\Big).
}
\end{proof}

We are now ready to prove Lemma~\ref{prep_prop_bad_lemma}.

\begin{proof}[Proof of Lemma~\ref{prep_prop_bad_lemma}]
Let $f\in L^2(\Sigma_n)$ be arbitrary.
Recall the random variable $Q_t(f)$ defined in \eqref{Qt_def}.
Observe that for fixed $t\geq0$, $\e^{-t}\vc W(\e^{2t}-1)$ is equal in law to $\sqrt{1-\e^{-2t}}\vc h$, where $\vc h$ is an independent copy of $\vc g$.
Therefore, if we define
\eq{
X &\coloneqq  \langle f(\sigma)\e^{\beta\sqrt{1-\e^{-2t}}\sum_{i} h_i\vphi_i}\e^{\beta(\e^{-t}-1) H_n(\sigma)}\rangle, \\
Y &\coloneqq  \langle \e^{\beta\sqrt{1-\e^{-2t}}\sum_{i} h_i\vphi_i}\e^{\beta(\e^{-t}-1) H_n(\sigma)}\rangle, \\
X' &\coloneqq  \langle f(\sigma)\e^{\beta\sqrt{1-\e^{-2t}}\sum_{i} h_i\vphi_i}\rangle\e^{\beta (\e^{-t}-1)np'(\beta)}, \\
Y' &\coloneqq  \langle \e^{\beta\sqrt{1-\e^{-2t}}\sum_{i} h_i\vphi_i}\rangle\e^{\beta (\e^{-t}-1)np'(\beta)},
}
then
\eq{
(\langle f(\sigma)\rangle_t,Q_t(f)) \stackrel{\text{d}}{=} \Big(\frac{X}{Y},\frac{X'}{Y'}\Big).
}
Since the conclusions of Lemma~\ref{prep_prop_bad_lemma} depend only on marginal distributions at fixed $t\leq T/n$, it suffices to prove bounds of the form
\eeq{ \label{prep_prop_bad_lemma_eq1_new}
\E\Big|\Big(\frac{X}{Y}\Big)^2-\Big(\frac{X'}{Y'}\Big)^2\Big| \leq \E(\langle f(\sigma)^2\rangle M_n),
}
where $M_n$ satisfies \eqref{M_condition}, and
\eeq{ \label{prep_prop_bad_lemma_eq2_new}
\E\givenp[\bigg]{\Big|\Big(\frac{X'}{Y'}\Big)^2 - \langle f(\sigma)\rangle^2\Big|}{B_{\delta}} \leq \eps\E\langle f(\sigma)^2\rangle \quad \text{for all large enough $n$.}
}
So henceforth we fix $T,\eps>0$, and $t \in [0,\frac{T}{n}]$.
We will need the following four claims.
In checking these claims, we will frequently use the following inequality, which holds for any $c\geq0$:
\eeq{
n(1 - \e^{-ct}) \leq nct \leq cT. \label{frequent_ineq}
}

\begin{claim} \label{claim_denom_prime}
For any $q\in(-\infty,0]\cup[1,\infty)$,
\eeq{ \label{bad_prep_denom_bound_prime}
\E_{\vc h}[(Y')^{q}] \leq C(\beta,T,q).
}
\end{claim}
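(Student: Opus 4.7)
The plan is to separate the deterministic part of $Y'$ from the random part and then handle the two regimes $q\geq 1$ and $q\leq 0$ using Jensen's inequality applied in the appropriate direction, followed by the Gaussian moment-generating function identity \eqref{freq_identity}. Write $Y' = Y''\cdot\e^{\beta(\e^{-t}-1)np'(\beta)}$, where $Y'' \coloneqq \langle \e^{\beta\sqrt{1-\e^{-2t}}\sum_i h_i\vphi_i}\rangle$. The deterministic factor is uniformly bounded on $t\in[0,T/n]$ because $|n(\e^{-t}-1)|\leq nt\leq T$ by \eqref{frequent_ineq}, so $\e^{q\beta(\e^{-t}-1)np'(\beta)}\leq \e^{|q|\beta T|p'(\beta)|}$, and this factor can be absorbed into $C(\beta,T,q)$. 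It therefore suffices to bound $\E_{\vc h}[(Y'')^q]$.

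For $q\geq 1$, I would apply Jensen's inequality to the convex function $x\mapsto x^q$ on $[0,\infty)$, pulling the power inside the Gibbs average:
\[
(Y'')^q = \langle \e^{\beta\sqrt{1-\e^{-2t}}\sum_i h_i\vphi_i}\rangle^q \leq \langle \e^{q\beta\sqrt{1-\e^{-2t}}\sum_i h_i\vphi_i}\rangle.
\]
Taking $\E_{\vc h}$ and exchanging expectations via Tonelli, then invoking the Gaussian MGF identity \eqref{freq_identity} together with $\sum_i \vphi_i^2 = n$ from \eqref{variance_assumption}, one gets
\[
\E_{\vc h}(Y'')^q \leq \Big\langle \e^{\frac{q^2\beta^2(1-\e^{-2t})}{2}\sum_i\vphi_i^2}\Big\rangle = \e^{\frac{q^2\beta^2 n(1-\e^{-2t})}{2}} \leq \e^{q^2\beta^2 T},
\]
again by \eqref{frequent_ineq}.

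For $q\leq 0$, the convexity of $x\mapsto x^q$ on $(0,\infty)$ is the wrong direction, so instead I would apply Jensen \emph{inside} the Gibbs average to the convex function $e^x$, obtaining the lower bound $Y''\geq \e^{\beta\sqrt{1-\e^{-2t}}\sum_i h_i\langle\vphi_i\rangle}$. Raising to the nonpositive power $q$ reverses the inequality:
\[
(Y'')^q \leq \e^{q\beta\sqrt{1-\e^{-2t}}\sum_i h_i\langle\vphi_i\rangle}.
\]
The right-hand side is the exponential of a Gaussian random variable in $\vc h$ with variance $\sum_i \langle\vphi_i\rangle^2 \leq \sum_i\langle\vphi_i^2\rangle = n$ (by Jensen applied once more, and \eqref{variance_assumption}). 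Computing its expectation yields the same bound $\e^{q^2\beta^2 n(1-\e^{-2t})/2}\leq \e^{q^2\beta^2 T}$.

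There is no real obstacle: the whole argument rests on a clean Jensen/MGF computation, and the only care required is to apply Jensen in the direction dictated by the sign of $q$ and to keep track of the bounded deterministic prefactor. Combining the two cases gives $\E_{\vc h}[(Y')^q]\leq C(\beta,T,q)$ with, for instance, $C(\beta,T,q) = \exp(q^2\beta^2 T + |q|\beta T|p'(\beta)|)$.
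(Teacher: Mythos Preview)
Your proof is correct, but your justification for splitting into two cases rests on a misconception. You write that for $q\leq 0$ ``the convexity of $x\mapsto x^q$ on $(0,\infty)$ is the wrong direction'', and so you resort to a different Jensen step (applied to $x\mapsto e^x$ inside the Gibbs average). In fact $x\mapsto x^q$ is convex on $(0,\infty)$ for \emph{both} $q\geq 1$ and $q\leq 0$, and in both cases Jensen gives exactly the inequality you want:
\[
\langle \e^{\beta\sqrt{1-\e^{-2t}}\sum_i h_i\vphi_i}\rangle^{q}
\leq \langle \e^{q\beta\sqrt{1-\e^{-2t}}\sum_i h_i\vphi_i}\rangle.
\]
This is precisely how the paper proceeds: one uniform application of Jensen covers both regimes, then \eqref{freq_identity} and \eqref{frequent_ineq} finish the argument in a single line. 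Your detour through the lower bound $Y''\geq \e^{\beta\sqrt{1-\e^{-2t}}\sum_i h_i\langle\vphi_i\rangle}$ is valid but unnecessary; it yields the same final bound after an additional use of $\sum_i\langle\vphi_i\rangle^2\leq n$. So the two arguments coincide for $q\geq 1$ and differ only cosmetically for $q\leq 0$, with the paper's version being the more economical.
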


\begin{claim} \label{claim_num_prime}
For any $q\geq2$,
\eeq{ \label{bad_prep_num_bound_prime}
\E_{\vc h}[(X')^q] \leq C(\beta,T,q) \langle f(\sigma)^2\rangle^{q/2}.
}
\end{claim}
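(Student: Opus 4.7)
\textbf{Proof plan for Claim~\ref{claim_num_prime}.} The deterministic prefactor $\e^{\beta(\e^{-t}-1)np'(\beta)}$ is harmless: by the elementary inequality $n(1-\e^{-t})\leq T$ noted in \eqref{frequent_ineq} (applied with $c=1$) together with $p'(\beta)\geq 0$ from Corollary~\ref{overlap_identity_cor}, we get $|\beta(\e^{-t}-1)np'(\beta)|\leq \beta T p'(\beta)$, so raising to the $q$-th power contributes a factor of at most $\e^{q\beta T p'(\beta)}$. It therefore suffices to bound $\E_{\vc h}\langle f(\sigma)\e^{\beta\sqrt{1-\e^{-2t}}\sum_i h_i\vphi_i}\rangle^q$ by a constant depending on $\beta, T, q$ times $\langle f(\sigma)^2\rangle^{q/2}$.

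The first step is to separate $f$ from the exponential via Cauchy--Schwarz on the Gibbs measure:
\eq{
\langle f(\sigma)\e^{\beta\sqrt{1-\e^{-2t}}\sum_i h_i\vphi_i}\rangle
\leq \sqrt{\langle f(\sigma)^2\rangle}\,\sqrt{\langle \e^{2\beta\sqrt{1-\e^{-2t}}\sum_i h_i\vphi_i}\rangle}.
}
Raising to the $q$-th power produces the factor $\langle f(\sigma)^2\rangle^{q/2}$ that we want, leaving us to estimate $\E_{\vc h}\langle \e^{2\beta\sqrt{1-\e^{-2t}}\sum_i h_i\vphi_i}\rangle^{q/2}$. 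Since $q\geq 2$, the map $x\mapsto x^{q/2}$ is convex, so Jensen's inequality applied to the Gibbs measure pulls the power inside:
\eq{
\langle \e^{2\beta\sqrt{1-\e^{-2t}}\sum_i h_i\vphi_i}\rangle^{q/2}
\leq \langle \e^{q\beta\sqrt{1-\e^{-2t}}\sum_i h_i\vphi_i}\rangle.
}

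Now I take the $\vc h$-expectation and swap it with $\langle\cdot\rangle$ by Fubini (the integrand is nonnegative); this is legitimate after restricting to a finite truncation $\sum_{i=1}^N$ and then letting $N\to\infty$ exactly as in the proof of Lemma~\ref{exp_moments_lemma}, using the identity \eqref{freq_identity}. For each fixed $\sigma$, $\sum_i h_i\vphi_i$ is centered Gaussian with variance $\sum_i\vphi_i^2 = n$ by \eqref{variance_assumption}, so
\eq{
\E_{\vc h}\,\e^{q\beta\sqrt{1-\e^{-2t}}\sum_i h_i\vphi_i} = \e^{q^2\beta^2(1-\e^{-2t})n/2}
\leq \e^{q^2\beta^2 T},
}
where the last inequality uses $1-\e^{-2t}\leq 2t\leq 2T/n$. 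Since this bound is independent of $\sigma$, the Gibbs average is trivial, and combining with the prefactor gives
\eq{
\E_{\vc h}[(X')^q] \leq \e^{q\beta T p'(\beta) + q^2\beta^2 T}\langle f(\sigma)^2\rangle^{q/2},
}
which is the claim with $C(\beta,T,q) = \e^{q\beta T p'(\beta)+q^2\beta^2 T}$. There is no serious obstacle; the only care required is to justify the Fubini exchange in the presence of the infinite sum $\sum_i h_i\vphi_i$, which is handled by the truncation argument already developed in Section~\ref{gibbs_measure}.
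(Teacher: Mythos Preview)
Your proof is correct and follows essentially the same route as the paper: Cauchy--Schwarz on the Gibbs measure to separate $f$, Jensen's inequality (using $q/2\geq 1$) to pull the power inside $\langle\cdot\rangle$, and then the Gaussian moment identity \eqref{freq_identity} together with \eqref{frequent_ineq} to bound the result by a constant depending only on $\beta,T,q$. The only cosmetic difference is that the paper observes the prefactor $\e^{q\beta(\e^{-t}-1)np'(\beta)}$ is at most $1$ (since $\e^{-t}-1\leq 0$ and $p'(\beta)\geq 0$), whereas you bound its absolute value; this yields a slightly larger but equally admissible constant.
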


\begin{claim} \label{claim_denom}
Given any $q>0$, set $k = \floor{\log_2\frac{n}{qT}}$.
For all $n$ large enough that $k\geq1$,
\eeq{ \label{bad_prep_denom_bound}
\E_{\vc h}(Y^{-q}) \leq C(\beta,T,q)Z_n(\beta)^{-\frac{1}{2^k}}(Z_n(2\beta)^{\frac{1}{2^k}} +1).
}
\end{claim}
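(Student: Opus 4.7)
My plan is to pointwise dominate $Y^{-1}$ by a Gibbs-type average whose $\vc h$-moments can be evaluated by Gaussian integration, and then reduce the resulting partition function to a combination of $Z_n(\beta)$ and $Z_n(2\beta)$ via Hölder.

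The starting point is Cauchy--Schwarz on $\mu_n^\beta$: the identity $1 = \langle \e^{\beta(U+V)/2}\cdot\e^{-\beta(U+V)/2}\rangle$ yields $Y\cdot \wt Y \geq 1$, where $\wt Y \coloneqq \langle \e^{-\beta(U+V)}\rangle$, so $Y^{-q}\leq \wt Y^q$ for every $q>0$. Unwinding the Gibbs average gives $\wt Y = \wt Z(t)/Z_n(\beta)$ with
\[
\wt Z(t) \coloneqq E_n\bigl(\e^{\beta(2-\e^{-t})H_n(\sigma) - \beta\sqrt{1-\e^{-2t}}\sum_i h_i\vphi_i(\sigma)}\bigr),
\]
so the task reduces to controlling $\E_{\vc h}[\wt Z(t)^q]$.

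For this I would apply Minkowski's integral inequality when $q\geq 1$, or Jensen's inequality when $0<q\leq 1$, to bring the $L^q(\vc h)$-norm inside $E_n$. The inner $\vc h$-integral is a Gaussian Laplace transform which, by \eqref{variance_assumption}, evaluates to a factor of the form $\e^{c\beta^2(1-\e^{-2t})n}$; by \eqref{frequent_ineq}, this factor is absorbed into a constant $C(\beta,T,q)$. The remaining $E_n$-integral is then $Z_n(\beta(2-\e^{-t}))$. Since $\beta(2-\e^{-t})$ is the convex combination $\e^{-t}\cdot \beta + (1-\e^{-t})\cdot 2\beta$, Hölder's inequality on $(\Sigma_n,P_n)$ with conjugate exponents $1/(1-\alpha)$ and $1/\alpha$, where $\alpha \coloneqq 1-\e^{-t}$, yields $Z_n(\beta(2-\e^{-t}))\leq Z_n(\beta)^{1-\alpha}Z_n(2\beta)^\alpha$. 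After the $Z_n(\beta)^{-q}$ factor from $\wt Y^q$ cancels the matching piece, I am left with the preliminary bound
\[
\E_{\vc h}[Y^{-q}] \leq C(\beta,T,q)\bigl(Z_n(2\beta)/Z_n(\beta)\bigr)^{q\alpha}.
\]

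The final step --- where the most care is required --- is to convert the exponent $q\alpha$ to $1/2^k$ and to account for the additive $+1$. The definition of $k$ gives $q\alpha \leq qt \leq qT/n \leq 1/2^k$, so a case split finishes the argument. If $Z_n(2\beta)\geq Z_n(\beta)$, the ratio is at least $1$, and replacing the exponent $q\alpha$ by the larger $1/2^k$ only increases its value, producing the main term $Z_n(\beta)^{-1/2^k}Z_n(2\beta)^{1/2^k}$. If instead $Z_n(2\beta)<Z_n(\beta)$, then log-convexity of $Z_n$ (which gives $Z_n(\beta)^2\leq Z_n(2\beta)$) forces $Z_n(\beta)\leq 1$; the preliminary bound is then at most $1$, and this is dominated by $Z_n(\beta)^{-1/2^k}\cdot 1\geq 1$. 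The additive $+1$ in \eqref{bad_prep_denom_bound} is precisely what unifies these two regimes.
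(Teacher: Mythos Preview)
Your argument is correct and reaches the same conclusion as the paper, but the route differs in two places worth noting.

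First, the paper applies Jensen's inequality for $x\mapsto x^{-q}$ directly to the Gibbs average, obtaining $\E_{\vc h}(Y^{-q})\le C(\beta,T,q)\,\langle \e^{\beta q(1-\e^{-t})H_n(\sigma)}\rangle$ in one step. You instead bound $Y^{-1}\le\wt Y$ via Cauchy--Schwarz and then invoke Minkowski (for $q\ge1$) or Jensen (for $q<1$) on $\wt Z(t)^q$; this is a slightly longer detour but perfectly valid.

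Second, and more substantively, the paper handles the resulting partition-function ratio by the pointwise inequality $\e^{\theta x}\le \e^{x/2^k}+\e^{-x/2^k}$ for $|\theta|\le 1/2^k$, which splits $\langle \e^{\beta q(1-\e^{-t})H_n}\rangle$ into two Gibbs averages; each is then bounded by a dyadic iteration of Cauchy--Schwarz. The two branches yield $Z_n(\beta)^{-1/2^k}Z_n(2\beta)^{1/2^k}$ and $Z_n(\beta)^{-1/2^k}$ respectively, so the additive ``$+1$'' in \eqref{bad_prep_denom_bound} appears organically. Your approach replaces this with a single H\"older interpolation $Z_n(\beta(2-\e^{-t}))\le Z_n(\beta)^{1-\alpha}Z_n(2\beta)^\alpha$, which is cleaner, but then forces the case split on the sign of $\log(Z_n(2\beta)/Z_n(\beta))$ together with log-convexity of $Z_n$ to recover the ``$+1$''. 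Each method has its virtue: the paper's split avoids the case analysis, while your H\"older step avoids the dyadic iteration.
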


\begin{claim} \label{claim_var_bound}
For any even $q\geq 2$ and $\eps>0$, the following inequalities hold for all $n\geq(2q+1)T$: 
\eeq{ \label{bad_prep_var_bound}
&\E_{\vc h}[(X-X')^q]\\
&\leq C(\beta,T,q)\langle f(\sigma)^2\rangle^{q/2}\Big[C(\eps)\Big\langle\Big|p'(\beta)-\frac{H_n(\sigma)}{n} \Big|\Big\rangle + \eps Z_n(\beta)^{-\frac{2(q+1)T}{n}}\Big],
}
and thus
\eeq{ \label{bad_prep_var_bound_Y}
\E_{\vc h}[(Y-Y')^q]
&\leq C(\beta,T,q)\Big[C(\eps)\Big\langle\Big|p'(\beta)-\frac{H_n(\sigma)}{n} \Big|\Big\rangle + \eps Z_n(\beta)^{-\frac{(2q+1)T}{n}}\Big].
}
\end{claim}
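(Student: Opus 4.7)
The plan has two stages. First, reduce the estimate for $\E_{\vc h}[(X-X')^q]$ to a pointwise-in-$\vc g$ bound on $\langle V^q\rangle$, where
\[
V(\sigma) \coloneqq \e^{\beta(\e^{-t}-1)H_n(\sigma)} - \e^{\beta(\e^{-t}-1)np'(\beta)}.
\]
Then, estimate $\langle V^q\rangle$ itself by splitting $\Sigma_n$ according to whether $H_n(\sigma)/n$ lies close to the typical value $p'(\beta)$.

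For the reduction, observe $X-X' = \langle fUV\rangle$ with $U(\sigma) = \e^{\beta\sqrt{1-\e^{-2t}}\sum_i h_i\vphi_i(\sigma)}$. Cauchy--Schwarz in $\langle\cdot\rangle$ gives $|X-X'|^q \le \langle f^2\rangle^{q/2}\langle U^2V^2\rangle^{q/2}$, and since $q\ge 2$ is even, Jensen's inequality applied to $x\mapsto x^{q/2}$ yields $\langle U^2V^2\rangle^{q/2} \le \langle U^qV^q\rangle$. Taking $\E_{\vc h}$ inside $\langle\cdot\rangle$ and using the Gaussian MGF together with \eqref{variance_assumption} and $n(1-\e^{-2t})\le 2T$ shows $\E_{\vc h}[U^q] \le \e^{q^2\beta^2T}$, hence
\[
\E_{\vc h}[(X-X')^q] \le C(\beta,T,q)\langle f^2\rangle^{q/2}\langle V^q\rangle.
\]
The analogous estimate for $Y-Y'$ follows by the same argument with $f\equiv 1$; this avoids the opening Cauchy--Schwarz, which is what produces the slightly smaller exponent $(2q+1)T/n$ in \eqref{bad_prep_var_bound_Y} versus $2(q+1)T/n$ in \eqref{bad_prep_var_bound}.

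To bound $\langle V^q\rangle$ pointwise, let $c \coloneqq \beta(\e^{-t}-1) \in [-\beta T/n, 0]$, and fix a threshold $\eta = \eta(\eps) > 0$. Split $\langle V^q\rangle = \langle V^q\one_A\rangle + \langle V^q\one_{A^c}\rangle$ with $A \coloneqq \{|H_n/n - p'(\beta)|\le\eta\}$. On $A$, a mean-value-theorem estimate gives $|V| \le \beta T\,\e^{\beta T(\eta + p'(\beta))}|H_n/n - p'(\beta)|$, so using $|H_n/n - p'(\beta)|^{q-1}\le\eta^{q-1}$ on $A$ yields $\langle V^q\one_A\rangle \le C(\beta,T,q,\eta)\langle|H_n/n - p'(\beta)|\rangle$; this produces the $C(\eps)\langle|\ldots|\rangle$ contribution with $C(\eps) = C(\beta,T,q,\eta(\eps))$. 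On $A^c$, bound $V^q \le 2^{q-1}(\e^{qcH_n} + 1)$ and apply Hölder's inequality to $\langle\e^{qcH_n}\one_{A^c}\rangle$ with conjugate exponents $(p, p')$ calibrated so that $1/p' = 2(q+1)T/n$. The resulting factor involving $\langle\e^{qcpH_n}\rangle^{1/p} = [Z_n(\beta + qcp)/Z_n(\beta)]^{1/p}$ rearranges to produce $Z_n(\beta)^{-2(q+1)T/n}$, using the hypothesis $n \ge (2q+1)T$ to keep the perturbed inverse temperature $\beta + qcp$ in a bounded interval so that the corresponding partition-function ratio is absorbed into $C(\beta,T,q)$. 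Meanwhile, Markov's bound $\langle\one_{A^c}\rangle \le \langle|H_n/n-p'(\beta)|\rangle/\eta$ raised to the small power $1/p'$ can be made at most $\eps$ by choosing $\eta$ large enough, giving the $\eps Z_n(\beta)^{-2(q+1)T/n}$ contribution.

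The main obstacle is the $A^c$ bound. Although $|c| = O(1/n)$ is small, the exponential $\e^{qcH_n}$ can be large on configurations where $H_n(\sigma)$ is very negative, and only the Gibbs weight $\e^{\beta H_n}$ combined with a carefully calibrated Hölder exponent tames it. The restriction $n\ge(2q+1)T$ is what keeps the perturbed inverse temperature $\beta+qcp$ in a bounded interval so that the partition-function ratio $Z_n(\beta+qcp)/Z_n(\beta)$ can be absorbed into a constant. Balancing the threshold $\eta$ against the Hölder exponents to produce the clean form $C(\eps)\langle|\ldots|\rangle + \eps Z_n(\beta)^{-2(q+1)T/n}$ requires careful bookkeeping but no new ideas.
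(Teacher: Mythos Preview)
Your reduction to $\E_{\vc h}[(X-X')^q]\le C(\beta,T,q)\langle f^2\rangle^{q/2}\langle V^q\rangle$ via Cauchy--Schwarz and Jensen is correct and is exactly what the paper does (the paper factors out $\e^{\beta(\e^{-t}-1)np'(\beta)}$ first, which is harmless since that factor is bounded by $C(\beta,T)$). Your treatment of the region $A=\{|H_n/n-p'(\beta)|\le\eta\}$ via the mean value theorem is also fine and parallels the paper's use of $|\e^x-1|^q\le C(L,q)|x|$ for $x\le L$.

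The gap is in your $A^c$ estimate. With $1/p'=2(q+1)T/n$, the exponent $1/p'$ tends to $0$ as $n\to\infty$, so for any fixed $\eta$ (and any fixed value of $\langle|H_n/n-p'(\beta)|\rangle/\eta\in(0,1)$) the quantity $\langle\one_{A^c}\rangle^{1/p'}$ tends to $1$, not to something $\le\eps$. No choice of $\eta$ independent of $n$ repairs this. Separately, the assertion that $[Z_n(\beta+qcp)/Z_n(\beta)]^{1/p}$ ``rearranges to produce $Z_n(\beta)^{-2(q+1)T/n}$'' is not correct: Jensen gives $Z_n(\beta+qcp)\le Z_n(\beta)^{1+qcp/\beta}$, so the ratio raised to $1/p$ is at most $Z_n(\beta)^{qc/\beta}$, which is a different (and $t$-dependent) exponent.

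The paper obtains the $\eps$ factor by a different mechanism. It splits \emph{one-sidedly} at a fixed level $L$ in the exponent variable $x=\beta(1-\e^{-t})(np'(\beta)-H_n(\sigma))$, and on $\{x>L\}$ uses the pointwise exponential Chebyshev bound
\[
\one_{\{-\frac{2\beta T}{n}H_n(\sigma)>L\}}\ \le\ \e^{-L}\,\e^{-\frac{2\beta T}{n}H_n(\sigma)},
\]
(having first checked that $x>L\ge 2\beta Tp'(\beta)$ forces $-\tfrac{2\beta T}{n}H_n(\sigma)>x>L$). Multiplying by $\e^{-\frac{2q\beta T}{n}H_n(\sigma)}$, taking $\langle\cdot\rangle$, and applying Jensen in the form $E_n(\e^{\theta\beta H_n})\le Z_n(\beta)^\theta$ for $\theta\in[0,1]$ yields
\[
\langle(\e^x-1)^q\one_{\{x>L\}}\rangle\ \le\ \e^{-L}\,Z_n(\beta)^{-\frac{2(q+1)T}{n}}.
\]
Here the smallness comes from $\e^{-L}$, and one simply takes $L$ large enough that $\e^{-L}\le\eps$. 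The essential point your H\"older scheme misses is that the $\eps$ must be extracted from a fixed threshold in the $O(1)$ exponent variable, not from a probability raised to a power that vanishes with $n$.
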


Before proving the claims, we use them to obtain the desired statements.

\subsubsection{Proof of Lemma~\ref{prep_prop_bad_lemma}(a)}
First note that for any random variables $W$ and $Z$,
\eeq{ \label{square_inside_outside}
\E|W^2-Z^2| &= \E|(W-Z)^2 + 2Z(W-Z)| \\
&\leq \E[(W-Z)^2] + 2\sqrt{\E(Z^2)\E[(W-Z)^2]}.
}
Therefore,
\eeq{ \label{prep_for_full_bound}
&\E_{\vc h}\Big|\Big(\frac{X}{Y}\Big)^2-\Big(\frac{X'}{Y'}\Big)^2\Big| \\
&\leq \E_{\vc h}\Big[\Big(\frac{X}{Y} - \frac{X'}{Y'}\Big)^2\Big] + 2\sqrt{\E_{\vc h}\Big[\Big(\frac{X'}{Y'}\Big)^2\Big]\E_{\vc h}\Big[\Big(\frac{X}{Y} - \frac{X'}{Y'}\Big)^2\Big]} \\
&\leq\E_{\vc h}\Big[\Big(\frac{X}{Y} - \frac{X'}{Y'}\Big)^2\Big] + 2\big(\E_{\vc h}[(Y')^{-4}]\E_{\vc h}[(X')^4]\big)^\frac{1}{4}\sqrt{\E_{\vc h}\Big[\Big(\frac{X}{Y} - \frac{X'}{Y'}\Big)^2\Big]} \\
&\stackrel{\mbox{\footnotesize\eqref{bad_prep_denom_bound_prime},\eqref{bad_prep_num_bound_prime}}}{\leq}\E_{\vc h}\Big[\Big(\frac{X}{Y} - \frac{X'}{Y'}\Big)^2\Big]+C(\beta,T)\sqrt{\langle f(\sigma)^2\rangle}\sqrt{\E_{\vc h}\Big[\Big(\frac{X}{Y} - \frac{X'}{Y'}\Big)^2\Big]}.\hspace{0.4in} \raisetag{3.75\baselineskip}
}
Let $\delta$ be a positive number to be chosen later.
Anticipating the application of Claims~\ref{claim_denom} and~\ref{claim_var_bound}, we condense notation by defining
\eq{
V_n^{(q)} &= \big(Z_n(\beta)^{-\frac{1}{2^k}}(Z_n(2\beta)^{\frac{1}{2^k}} + 1) \big)^{2/q}, \quad \text{where} \quad k = \Big\lfloor\log_2 \frac{n}{qT}\Big\rfloor, \\
W_n^{(q)} &= \Big(C(\delta)\Big\langle\Big|p'(\beta)-\frac{H_n(\sigma)}{n}\Big|\Big\rangle + \delta Z_n(\beta)^{-\frac{2(q+1)T}{n}}\Big)^{2/q}.
}
Because of \eqref{prep_for_full_bound}, we seek a bound of the form
\eq{ 
&\E_{\vc h}\Big[\Big(\frac{X}{Y} - \frac{X'}{Y'}\Big)^2\Big]
=\E_{\vc h}\Big[\Big(\frac{X-X'}{Y} - \frac{X'}{Y'}\frac{Y-Y'}{Y}\Big)^2\Big] \\
&\stackrel{\phantom{\mbox{\footnotesize\eqref{bad_prep_denom_bound_prime}--\eqref{bad_prep_var_bound_Y}}}}{\leq} 2\E_{\vc h}\Big[\frac{(X-X')^2}{Y^2}+\frac{(X')^2}{(Y')^2}\frac{(Y-Y')^2}{Y^2}\Big] \\
&\stackrel{\phantom{\mbox{\footnotesize\eqref{bad_prep_denom_bound_prime}--\eqref{bad_prep_var_bound_Y}}}}{\leq} 2\big(\E_{\vc h}[Y^{-4}]\E_{\vc h}[(X-X')^{4}]\big)^{1/2} \\
&\phantom{\stackrel{\mbox{\footnotesize\eqref{bad_prep_denom_bound_prime}--\eqref{bad_prep_var_bound_Y}}}{\leq}}+ 2\big(\E_{\vc h}[(Y')^{-8}]\E_{\vc h}[(X')^{8}]\E_{\vc h}(Y^{-8})\E_{\vc h}[(Y-Y')^{8}]\big)^{1/4} \\
&\stackrel{\mbox{\footnotesize\eqref{bad_prep_denom_bound_prime}--\eqref{bad_prep_var_bound_Y}}}{\leq} C(\beta,T)\langle f(\sigma)^2\rangle(V_n^{(4)}W_n^{(4)}+V_n^{(8)}W_n^{(8)}).
}
Therefore, once we set
\eq{ 
M_n \coloneqq C(\beta,T)[(V_n^{(4)}W_n^{(4)}+V_n^{(8)}W_n^{(8)}) + (V_n^{(4)}W_n^{(4)}+V_n^{(8)}W_n^{(8)})^{1/2}]
}
and take expectation, \eqref{prep_for_full_bound} becomes
\eq{
\E\Big|\Big(\frac{X}{Y}\Big)^2-\Big(\frac{X'}{Y'}\Big)^2\Big| \leq \E(\langle f(\sigma)^2\rangle M_n),
}
which is exactly \eqref{prep_prop_bad_lemma_eq1_new}.
To complete the proof of Lemma~\ref{prep_prop_bad_lemma}(a), we need to show that given any $\eps >0$, we can choose $\delta$ sufficiently small that \eqref{M_condition} holds ($M_n$ depends on $\delta$ through $W_n^{(4)}$ and $W_n^{(8)}$).

Indeed, by Cauchy--Schwarz we have
\eeq{ \label{M_ineq}
\E(M_n) &\leq C(\beta,T)\bigg(\sqrt{\E[(V_n^{(4)})^2]\E[(W_n^{(4)})^2]} + \sqrt{\E[(V_n^{(8)})^2]\E[(W_n^{(8)})^2]} \\
&\phantom{\leq} + \sqrt{\sqrt{\E[(V_n^{(4)})^2]\E[(W_n^{(4)})^2]} + \sqrt{\E[(V_n^{(8)})^2]\E[(W_n^{(8)})^2]}}\,\bigg).
}
Next we observe that for $q \geq 4$ and $n$ sufficiently large such that $k = \floor{\log_2 \frac{n}{qT}}\geq1$,
\eeq{ \label{V_ineq}
\E[(V_n^{(q)})^2] 
&\stackrel{\phantom{\mbox{\footnotesize\eqref{first_moment},\eqref{negative_first_moment}}}}{\leq}  \Big(\E\big[Z_n(\beta)^{-\frac{1}{2^k}}(Z_n(2\beta)^{\frac{1}{2^k}} + 1) \big]\Big)^{4/q} \\
&\stackrel{\phantom{\mbox{\footnotesize\eqref{first_moment},\eqref{negative_first_moment}}}}{\leq}  \Big(\sqrt{\E[Z_n(\beta)^{-\frac{2}{2^{k}}}]\E[Z_n(2\beta)^{\frac{2}{2^k}}]}+\E[Z_n(\beta)^{-\frac{1}{2^{k}}}]\Big)^{4/q} \\
&\stackrel{\phantom{\mbox{\footnotesize\eqref{first_moment},\eqref{negative_first_moment}}}}{\leq}  \Big(\sqrt{\E[Z_n(\beta)^{-1}]^\frac{2}{2^{k}}\E[Z_n(2\beta)]^{\frac{2}{2^k}}}+\E[Z_n(\beta)^{-1}]^\frac{1}{2^{k}}\Big)^{4/q} \\
&\stackrel{\mbox{\footnotesize\eqref{first_moment},\eqref{negative_first_moment}}}{\leq}\Big(\sqrt{\e^\frac{\beta^2n}{2^{k}}\e^{\frac{4\beta^2 n}{2^k}}}+\e^\frac{\beta^2 n}{2^{k+1}}\Big)^{4/q} \\
&\stackrel{\phantom{\mbox{\footnotesize\eqref{first_moment},\eqref{negative_first_moment}}}}{\leq} \Big(\sqrt{\e^{\beta^2qT}\e^{4\beta^2 qT}}+\e^\frac{\beta^2 qT}{2}\Big)^{4/q} = C(\beta,T,q).
}
Meanwhile, if $q\geq4$ and $n\geq2(q+1)T$, then
\eq{
\E[(W_n^{(q)})^2] 
&\stackrel{\phantom{\mbox{\footnotesize\eqref{negative_first_moment}}}}{\leq} \Big(C(\delta)\E\Big\langle\Big|p'(\beta)-\frac{H_n(\sigma)}{n}\Big|\Big\rangle + \delta \E[Z_n(\beta)^{-\frac{2(q+1)T}{n}}]\Big)^{4/q} \\
&\stackrel{\phantom{\mbox{\footnotesize\eqref{negative_first_moment}}}}{\leq} \Big(C(\delta)\E\Big\langle\Big|p'(\beta)-\frac{H_n(\sigma)}{n}\Big|\Big\rangle + \delta \E[Z_n(\beta)^{-1}]^{\frac{2(q+1)T}{n}}\Big)^{4/q} \\
&\stackrel{\mbox{\footnotesize\eqref{negative_first_moment}}}{\leq}  \Big(C(\delta)\E\Big\langle\Big|p'(\beta)-\frac{H_n(\sigma)}{n}\Big|\Big\rangle + \delta \e^{\beta^2(q+1)T}\Big)^{4/q}.
}
By Lemma~\ref{lemma:step_1_2}, the previous display shows
\eq{
\limsup_{n\to\infty} \E[(W_n^{(q)})^2] &\leq \delta^{4/q}\e^\frac{4\beta^2(q+1)T}{q} = C(\beta,T,q)\delta^{4/q}.
}
In light of \eqref{M_ineq} and \eqref{V_ineq}, it is clear from this inequality that $\delta$ can be chosen sufficiently small that \eqref{M_condition} holds.

\subsubsection{Proof of Lemma~\ref{prep_prop_bad_lemma}(b)}
To establish \eqref{prep_prop_bad_lemma_eq2_new}, it will be easier to replace $X'/Y'$ by $X''/Y''$, where
\eq{
X'' &\coloneqq \frac{X'}{\e^{\frac{\beta^2}{2}(1-\e^{-2t})n}\e^{\beta(\e^{-t}-1)np'(\beta)}} = \frac{\langle f(\sigma)\e^{\beta\sqrt{1-\e^{-2t}}\sum_ih_i\vphi_i}\rangle}{\e^{\frac{\beta^2}{2}(1-\e^{-2t})n}}, \\
\qquad Y'' &\coloneqq \frac{Y'}{\e^{\frac{\beta^2}{2}(1-\e^{-2t})n}\e^{\beta(\e^{-t}-1)np'(\beta)}} = \frac{\langle \e^{\beta\sqrt{1-\e^{-2t}}\sum_ih_i\vphi_i}\rangle}{\e^{\frac{\beta^2}{2}(1-\e^{-2t})n}}.
}
By Lemma~\ref{h_variance_lemma}(a),
\eq{
\Var_{\vc h}\langle f(\sigma)\e^{\beta\sqrt{1-\e^{-2t}}\sum_i h_i\vphi_i}\rangle
&\leq \e^{2\beta^2(1-\e^{-2t})n}\langle f(\sigma)^2\rangle\sqrt{\frac{1}{n}\sum_i \langle\vphi_i\rangle^2+2\EEE_n},
}
and so
\eeq{ \label{observation_1}
\Var_{\vc h}(X'') &\stackrel{\phantom{\mbox{\footnotesize\eqref{frequent_ineq}}}}{\leq} \e^{\beta^2(1-\e^{-2t})n}\langle f(\sigma)^2\rangle\sqrt{\frac{1}{n}\sum_i\langle\vphi_i\rangle^2+2\EEE_n} \\
&\stackrel{{\mbox{\footnotesize\eqref{frequent_ineq}}}}{\leq}
 C(\beta,T)\langle f(\sigma)^2\rangle\sqrt{\frac{1}{n}\sum_i\langle\vphi_i\rangle^2+2\EEE_n},
 }
as well as
\eq{
\Var_{\vc h}(Y'') &\leq C(\beta,T)\sqrt{\frac{1}{n}\sum_i\langle\vphi_i\rangle^2+2\EEE_n}.
}
Because
\eq{
\E_{\vc h} \langle f(\sigma) \e^{\beta\sqrt{1-\e^{-2t}}\sum_i h_i\vphi_i}\rangle
\stackrel{{\mbox{\footnotesize\eqref{freq_identity}}}}{=} \e^{\frac{\beta^2 }{2}(1-\e^{-2t})n}\langle f(\sigma)\rangle,
}
we have $\E_{\vc h}(Y'') = 1$ and can thus apply Chebyshev's inequality to obtain
\eeq{ \label{observation_2}
\P_{\vc h}(|Y'' - 1|\geq\theta) \leq \frac{C(\beta,T)}{\theta^2} \sqrt{\frac{1}{n}\sum_i\langle\vphi_i\rangle^2+2\EEE_n} \quad \text{for any $\theta>0$.}
}
We will use these inequalities in the following bound:
\eeq{ \label{observation_0}
&\E_{\vc h}\Big[\Big(\frac{X'}{Y'} - \langle f(\sigma)\rangle\Big)^2\Big] \\
&= \E_{\vc h}\Big[\Big(\frac{X''}{Y''} - \langle f(\sigma)\rangle\Big)^2\Big] \\
&= \E_{\vc h}\Big[\Big(\frac{X''}{Y''}(1 - Y'') + X'' - \langle f(\sigma)\rangle\Big)^2\Big] \\
&\leq 2\E_{\vc h}\Big[\Big(\frac{X''}{Y''}\Big)^2(Y''-1)^2 + \big(X'' - \langle f(\sigma)\rangle\big)^2\Big] \\
&\leq  2\E_{\vc h}\Big[\Big(\frac{X''}{Y''}\Big)^2(\theta^2+\one_{\{|Y''-1|\geq\theta\}}(Y''-1)^2) + \big(X'' - \langle f(\sigma)\rangle\big)^2\Big] \\
&\leq 2\big(\E_{\vc h}[(Y'')^{-8}]\E_{\vc h}[(X'')^8])^{1/4}\sqrt{\E_{\vc h}\big[\big(\theta^2+\one_{\{|Y''-1|\geq\theta\}}(Y''-1)^2\big)^2\big]} \\
&\phantom{\leq}+ 2\Var_{\vc h}(X'') \\
&\leq 2\sqrt{2}\big(\E_{\vc h}[(Y'')^{-8}]\E_{\vc h}[(X'')^8])^{1/4}\sqrt{\theta^4 + \sqrt{\P_{\vc h}(|Y''-1|\geq\theta)\E_{\vc h}[(Y''-1)^8]}} \\
&\phantom{\leq}+ 2\Var_{\vc h}(X'').
\raisetag{7\baselineskip}
}
Now,
\eeq{ \label{observation_3}
\E_{\vc h}[(Y'')^{-8}] = \frac{\E_{\vc h}[(Y')^{-8}]}{\e^{-4\beta^2(1-\e^{-2t})n}\e^{-8\beta(\e^{-t}-1)np'(\beta)}} 
\stackrel{\mbox{\footnotesize\eqref{frequent_ineq},\eqref{bad_prep_denom_bound_prime}}}{\leq} C(\beta,T),
}
and
\eeq{ \label{observation_4}
\E_{\vc h}[(X'')^8] = \frac{\E_{\vc h}[(X')^8]}{\e^{4\beta^2(1-\e^{-2t})n}\e^{8\beta(\e^{-t}-1)np'(\beta)}}
\stackrel{\mbox{\footnotesize\eqref{frequent_ineq},\eqref{bad_prep_num_bound_prime}}}{\leq}  C(\beta,T)\langle f(\sigma)^2\rangle^4.
}
In addition,
\eeq{ \label{observation_5}
\E_{\vc h}[(Y''-1)^8] 
&\stackrel{\phantom{\mbox{\footnotesize\eqref{frequent_ineq},\eqref{bad_prep_denom_bound_prime}}}}{\leq} 2^4(\E_{\vc h}[(Y'')^8]+1)  \\
&\stackrel{\phantom{\mbox{\footnotesize\eqref{frequent_ineq},\eqref{bad_prep_denom_bound_prime}}}}{=} 2^4\Big( \frac{\E_{\vc h}[(Y')^{8}]}{\e^{4\beta^2(1-\e^{-2t})n}\e^{8\beta(\e^{-t}-1)np'(\beta)}}+1\Big)  \\
&\stackrel{\mbox{\footnotesize\eqref{frequent_ineq},\eqref{bad_prep_denom_bound_prime}}}{\leq} C(\beta,T).
}
Using \eqref{observation_1}, \eqref{observation_2}, and \eqref{observation_3}--\eqref{observation_5} in \eqref{observation_0}, we find
\eq{
&\E_{\vc h}\Big[\Big(\frac{X'}{Y'} - \langle f(\sigma)\rangle\Big)^2\Big] \\
&\leq C(\beta,T)\langle f(\sigma)^2\rangle\sqrt{\theta^4+\frac{C(\beta,T)}{\theta}\Big(\frac{1}{n}\sum_i\langle\vphi_i^2\rangle+2\EEE_n\Big)^{1/4}} \\
&\phantom{\leq}+C(\beta,T)\langle f(\sigma)^2\rangle \sqrt{\frac{1}{n}\sum_i \langle\vphi_i\rangle^2+2\EEE_n}.
}
In particular, for any $\delta>0$ and $n$ large enough that $\EEE_n\leq\delta/2$,
\eq{
\one_{B_\delta}\E_{\vc h}\Big[\Big(\frac{X'}{Y'} - \langle f(\sigma)\rangle\Big)^2\Big] \leq \one_{B_\delta}C(\beta,T)\langle f(\sigma)^2\rangle\Big(\sqrt{\theta^4+\theta^{-1}(2\delta)^{1/4}}+\sqrt{2\delta}\Big),
}
and so \eqref{square_inside_outside} implies
\eq{
&\one_{B_\delta}\E_{\vc h}\Big|\Big(\frac{X'}{Y'}\Big)^2 - \langle f(\sigma)\rangle^2\Big| \\
&\leq \one_{B_\delta}\E_{\vc h}\Big[\Big(\frac{X'}{Y'} - \langle f(\sigma)\rangle\Big)^2\Big]
+ 2\one_{B_\delta}\sqrt{\langle f(\sigma)\rangle^2\E_{\vc h}\Big[\Big(\frac{X'}{Y'} - \langle f(\sigma)\rangle\Big)^2\Big]} \\
&\leq \one_{B_\delta}C(\beta,T)\langle f(\sigma)^2\rangle\Big(\sqrt{\theta^4+\theta^{-1}\delta^{1/4}}+\sqrt{\delta} 
+ \sqrt{\sqrt{\theta^4+\theta^{-1}\delta^{1/4}}+\sqrt{\delta}}\, \Big).
}
Given $\eps>0$, we choose $\theta$ and $\delta$ small enough (in that order, and depending only on $\beta$, $T$, and $\eps$) so that the rightmost expression above is at most $\one_{B_\delta}\eps\langle f(\sigma)^2\rangle$. 
Moreover, it is clear that once $\theta$ and $\delta$ are chosen, $\one_{B_\delta}$ could be replaced by $\one_{B_{\delta'}}$ for any $\delta'\in(0,\delta)$, and the rightmost expression will be bounded from above by $\one_{B_{\delta'}}\eps\langle f(\sigma)^2\rangle$.
Taking expectations on both sides yields \eqref{prep_prop_bad_lemma_eq2_new}.

\subsubsection{Proof of Claim~\ref{claim_denom_prime}}
Assume $q\leq0$ or $q\geq1$.
Using Jensen's inequality, we have
\eq{ 
\E_{\vc h}[(Y')^{q}] &\stackrel{\phantom{\mbox{\footnotesize\eqref{frequent_ineq}}}}{=} \e^{q\beta (\e^{-t}-1)np'(\beta)}\E_{\vc h}\big[\langle\e^{\beta\sqrt{1-\e^{-2t}}\sum_{i} h_i\vphi_i}\rangle^{q}\big] \\
&\stackrel{\phantom{\mbox{\footnotesize\eqref{frequent_ineq}}}}{\leq} \e^{q\beta (\e^{-t}-1)np'(\beta)}\E_{\vc h}\langle\e^{q\beta\sqrt{1-\e^{-2t}}\sum_{i} h_i\vphi_i}\rangle \\
&\stackrel{\hspace{0.5ex}{\mbox{\footnotesize\eqref{freq_identity}}}\hspace{0.5ex}}{=} \e^{q\beta (\e^{-t}-1)np'(\beta)}\e^{\frac{q^2\beta^2}{2}(1-\e^{-2t})n} \\
&\stackrel{{\mbox{\footnotesize\eqref{frequent_ineq}}}}{\leq} C(\beta,T,q).
}

\subsubsection{Proof of Claim~\ref{claim_num_prime}}
Assume $q\geq2$.
By Cauchy--Schwarz and Jensen's inequality, we have
\eq{
\E_{\vc h}[(X')^q]
&\stackrel{\phantom{\mbox{\footnotesize\eqref{frequent_ineq}}}}{=} \e^{q\beta(\e^{-t}-1)np'(\beta)}\E_{\vc h}(\langle f(\sigma) \e^{\beta\sqrt{1-\e^{-2t}}\sum_ih_i\vphi_i}\rangle^q) \\
&\stackrel{\phantom{\mbox{\footnotesize\eqref{frequent_ineq}}}}{\leq} \e^{q\beta(\e^{-t}-1)np'(\beta)}\E_{\vc h}(\langle f(\sigma)^2\rangle^{q/2} \langle\e^{2\beta\sqrt{1-\e^{-2t}}\sum_ih_i\vphi_i}\rangle^{q/2}) \\
&\stackrel{\phantom{\mbox{\footnotesize\eqref{frequent_ineq}}}}{\leq}\e^{q\beta(\e^{-t}-1)np'(\beta)} \langle f(\sigma)^2\rangle^{q/2}\E_{\vc h} \langle\e^{q\beta\sqrt{1-\e^{-2t}}\sum_ih_i\vphi_i}\rangle \\
&\stackrel{\hspace{0.5ex}{\mbox{\footnotesize\eqref{freq_identity}}}\hspace{0.5ex}}{=}\e^{q\beta (\e^{-t}-1)np'(\beta)} \langle f(\sigma)^2\rangle^{q/2}\e^{\frac{q^2\beta^2}{2} (1-\e^{-2t})n} \\
&\stackrel{\mbox{\footnotesize\eqref{frequent_ineq}}}{\leq} C(\beta,T,q)\langle f(\sigma)^2\rangle^{q/2}.
}

\subsubsection{Proof of Claim~\ref{claim_denom}}
Assume $q>0$.
By Jensen's inequality,
\eeq{ \label{pre_squish_expectation}
\E_{\vc h}(Y^{-q}) &\stackrel{\phantom{\mbox{\footnotesize\eqref{frequent_ineq}}}}{=}  
\E_{\vc h}[\langle \e^{\beta\sqrt{1-\e^{-2t}}\sum_ih_i\vphi_i}\e^{\beta(\e^{-t}-1)H_n(\sigma)}\rangle^{-q}]\\
&\stackrel{\phantom{\mbox{\footnotesize\eqref{frequent_ineq}}}}{\leq} \E_{\vc h}\langle \e^{-q\beta \sqrt{1-\e^{-2t}}\sum_ih_i\vphi_i}\e^{q\beta (1-\e^{-t})H_n(\sigma)}\rangle \\
&\stackrel{\hspace{0.5ex}{\mbox{\footnotesize\eqref{freq_identity}}}\hspace{0.5ex}}{=} \e^{\frac{q^2\beta^2}{2}(1-\e^{-2t})n}\langle \e^{\beta q(1-\e^{-t})H_n(\sigma)}\rangle \\
&\stackrel{\mbox{\footnotesize\eqref{frequent_ineq}}}{\leq} C(\beta,T,q)\langle \e^{\beta q(1-\e^{-t})H_n(\sigma)}\rangle.
}
Recall that $k = \floor{\log_2\frac{n}{qT}}$, and we assume $k\geq1$.
By \eqref{frequent_ineq},
\eq{
q(1-\e^{-t}) \leq \frac{qT}{n} = \frac{1}{2^{\log_2\frac{n}{qT}}}  \leq \frac{1}{2^k},
}
which implies
\eeq{ \label{squish_expectation}
\langle \e^{\beta q(1-\e^{-t})H_n(\sigma)}\rangle \leq \langle \e^{-\beta H_n(\sigma)/2^{k}}\rangle + \langle \e^{\beta H_n(\sigma)/2^{k}}\rangle.
}
Repeated applications of Cauchy--Schwarz yield
\eeq{ \label{repeated_CS}
\langle \e^{\beta H_n(\sigma)/2^k}\rangle 
&= \frac{E_n(\e^{\beta(1+\frac{1}{2^k})H_n(\sigma)})}{E_n(\e^{\beta H_n(\sigma)})} \\
&= \frac{E_n(\e^{\frac{\beta}{2}H_n(\sigma)}\e^{\beta(\frac{1}{2}+\frac{1}{2^k})H_n(\sigma)})}{E_n(\e^{\beta H_n(\sigma)})}  \\
&\leq \frac{\sqrt{E_n(\e^{\beta H_n(\sigma)})E_n(\e^{\beta(1+\frac{1}{2^{k-1}})H_n(\sigma)})}}{E_n(\e^{\beta H_n(\sigma)})} \\
&\leq \frac{\sqrt{E_n(\e^{\beta H_n(\sigma)})\sqrt{E_n(\e^{\beta H_n(\sigma)})E_n(\e^{\beta(1+\frac{1}{2^{k-2}})H_n(\sigma)})}}}{E_n(\e^{\beta H_n(\sigma)})} \\
&\hspace{1.3ex}\vdots \\
&\leq E_n(\e^{\beta H_n(\sigma)})^{-1+\sum_{i=1}^{k} \frac{1}{2^i}}E_n(\e^{2\beta H_n(\sigma)})^{\frac{1}{2^k}} \\
&= Z_n(\beta)^{-\frac{1}{2^k}}Z_n(2\beta)^{\frac{1}{2^k}}.
}
By similar manipulations,
\eeq{ \label{repeated_CS_2}
\langle \e^{-\beta H_n(\sigma)/2^k}\rangle \leq Z_n(\beta)^{-\frac{1}{2^k}}Z_n(0)^{\frac{1}{2^k}} = Z_n(\beta)^{-\frac{1}{2^k}}.
}
Together, \eqref{pre_squish_expectation}--\eqref{repeated_CS_2} yield \eqref{bad_prep_denom_bound}.

\subsubsection{Proof of Claim~\ref{claim_var_bound}}
Assume $q\geq2$ is even. 
By Cauchy--Schwarz and Jensen's inequality, we have
\eeq{ \label{next_1}
&\E_{\vc h}[(X-X')^q] \\
&=\E_{\vc h}[\langle f(\sigma)\e^{\beta\sqrt{1-\e^{-2t}}\sum_{i} h_i\vphi_i}(\e^{\beta(\e^{-t}-1) H_n(\sigma)}-\e^{\beta (\e^{-t}-1)np'(\beta)})\rangle^q] \\
&\leq \E_{\vc h}\big[\langle f(\sigma)^2\rangle^{q/2}\langle\e^{2\beta\sqrt{1-\e^{-2t}}\sum_{i} h_i\vphi_i}(\e^{\beta(\e^{-t}-1) H_n(\sigma)}-\e^{\beta (\e^{-t}-1)np'(\beta)})^2\rangle^{q/2}\big] \\
&\leq\langle f(\sigma)^2\rangle^{q/2}\e^{q\beta (\e^{-t}-1)np'(\beta)}\\
&\phantom{\leq}\quad\times\E_{\vc h}\langle\e^{q\beta\sqrt{1-\e^{-2t}}\sum_ih_i\vphi_i}(\e^{\beta(1-\e^{-t})(np'(\beta)- H_n(\sigma))}-1)^q\rangle \\
&\stackrel{\mbox{\footnotesize\eqref{freq_identity}}}{=} \langle f(\sigma)^2\rangle^{q/2}\e^{q\beta (\e^{-t}-1)np'(\beta)}\e^{\frac{q^2\beta^2}{2}(1-\e^{-2t})n}\langle(\e^{\beta(1-\e^{-t})(np'(\beta)- H_n(\sigma))}-1)^q\rangle \\
&\stackrel{\mbox{\footnotesize\eqref{frequent_ineq}}}{\leq} C(\beta,T,q)\langle f(\sigma)^2\rangle^{q/2} \langle(\e^{\beta(1-\e^{-t})(np'(\beta)- H_n(\sigma))}-1)^q\rangle. \raisetag{4.5\baselineskip}
}
For any $L>0$, we have the inequality $(\e^x - 1)^q \leq C(L,q)|x|$ 
for all $x\leq L$.
Hence
\eeq{ \label{next_2}
&\langle(\e^{\beta(1-\e^{-t})(np'(\beta)- H_n(\sigma))}-1)^q\rangle \\
&\stackrel{\phantom{\mbox{\footnotesize\eqref{frequent_ineq}}}}{\leq} C(L,q)\beta (1-\e^{-t})n\Big\langle\Big|p'(\beta)-\frac{H_n(\sigma)}{n}\Big|\Big\rangle \\
&\phantom{\stackrel{\mbox{\footnotesize\eqref{frequent_ineq}}}{\leq}} + \langle(\e^{\beta(1-\e^{-t})(np'(\beta)- H_n(\sigma))}-1)^q\one_{\{\beta(1-\e^{-t})(np'(\beta)- H_n(\sigma)) > L\}}\rangle \\
&\stackrel{{\mbox{\footnotesize\eqref{frequent_ineq}}}}{\leq} C(\beta,T,L,q) \Big\langle\Big|p'(\beta)-\frac{H_n(\sigma)}{n}\Big|\Big\rangle \\
&\phantom{\stackrel{\mbox{\footnotesize\eqref{frequent_ineq}}}{\leq}} + \langle(\e^{\beta(1-\e^{-t})(np'(\beta)- H_n(\sigma))}-1)^q\one_{\{\beta(1-\e^{-t})(np'(\beta)- H_n(\sigma)) > L\}}\rangle.
}
Assume $L\geq 2\beta T p'(\beta)$ so that whenever
\eq{
\beta(1-\e^{-t})\big(np'(\beta)- H_n(\sigma)\big) &> L \geq 2\beta Tp'(\beta) \stackrel{{\mbox{\footnotesize\eqref{frequent_ineq}}}}{\geq}2\beta (1-\e^{-t})n p'(\beta),
}
it follows that
\eq{
-\beta(1-\e^{-t}) H_n(\sigma) &> \beta (1-\e^{-t})np'(\beta) \\
\Rightarrow \quad -2\beta(1-\e^{-t}) H_n(\sigma)&> \beta(1-\e^{-t})\big(np'(\beta)- H_n(\sigma)\big) > L\geq 0 \\
\stackrel{\mbox{\footnotesize\eqref{frequent_ineq}}}{\Rightarrow} \quad -\frac{2\beta T}{n} H_n(\sigma) &> \beta(1-\e^{-t})\big(np'(\beta)- H_n(\sigma)
\big) > L \geq 0.
}
We thus have
\eeq{ \label{next_3}
&\langle(\e^{\beta(1-\e^{-t})(np'(\beta)-H_n(\sigma))}-1)^q\one_{\{\beta(1-\e^{-t})(np'(\beta)- H_n(\sigma)) > L\}}\rangle \\
&\leq \langle \e^{-\frac{2q\beta T}{n} H_n(\sigma)}\one_{\{-\frac{2\beta T}{n} H_n(\sigma) > L\}}\rangle \\
&\leq \e^{-L}\langle \e^{-\frac{2(q+1)\beta T}{n} H_n(\sigma)}\rangle \\
&= \e^{-L}\frac{E_n[\e^{\beta(1-\frac{2(q+1)T}{n}) H_n(\sigma)}]}{E_n[\e^{\beta H_n(\sigma)}]} \\
&\leq \e^{-L} \frac{(E_n[\e^{\beta H_n(\sigma)}])^{1-\frac{2(q+1)T}{n}}}{E_n[\e^{\beta H_n(\sigma)}]} \\
&= \e^{-L} Z_n(\beta)^{-\frac{2(q+1)T}{n}}.
}
Combining \eqref{next_1}--\eqref{next_3}, we have now shown that 
\eq{ 
\E_{\vc h}[(X-X')^q]&\leq \langle f(\sigma)^2\rangle^{q/2}\Big[C(\beta,T,L,q)\Big\langle\Big|p'(\beta)-\frac{H_n(\sigma)}{n} \Big|\Big\rangle \\
&\phantom{\leq f(\sigma)^2\rangle^{q/2}\Big[C}+ C(\beta,T,q)\e^{-L}Z_n(\beta)^{-\frac{2(q+1)T}{n}}\Big].
}
Finally, given $\eps>0$, we choose $L$ large enough that $\e^{-L} \leq \eps$, thereby producing \eqref{bad_prep_var_bound}.
Then \eqref{bad_prep_var_bound_Y} is the special case when $f\equiv1$.
\end{proof}



\section{Proof of Theorem~\ref{expected_overlap_thm}} \label{proof_2}

In this section, we consider perturbations to the environment of the form
\eq{
\vc g^{(k)} \coloneqq \vc g + \frac{1}{\sqrt{n}}\sum_{j=1}^k \vc h^{(j)}, \quad k\geq0,
}
where the $\vc h^{(j)}$'s are independent copies of $\vc g$.
An important observation is that
\eeq{ \label{temperature_equivalence}
 \vc g^{(k)}\stackrel{\text{d}}{=} \sqrt{1+\frac{k}{n}}\,\vc g \quad \Rightarrow \quad
 \mu_{n, \vc g^{(k)}}^\beta \stackrel{\text{d}}{=} \mu_{n,\vc g}^{\beta\sqrt{1+\frac{k}{n}}}.
}
We will continue to use $\E$ to denote expectation with respect to $\vc g$ and the $\vc h^{(k)}$'s jointly, whereas $\E_{\vc h^{(k)}}$ will denote expectation with respect to $\vc h^{(k)}$ conditional on $\vc g$ and $\vc h^{(j)}$, $1 \leq j \leq k-1$.
As before, all statements involving $\E_{\vc h^{(k)}}$ and $\Var_{\vc h^{(k)}}$ are to be interpreted as almost sure statements.

As in Section~\ref{prep_section}, $\langle\cdot\rangle_\beta$ will denote expectation with respect to $\mu_{n,\vc g}^\beta$.
On the other hand, we will write $\llangle\cdot\rrangle_k$ to denote expectation under the measure $\mu_{n,\vc g^{(k)}}^\beta$, where the dependence on $\beta$ is understood.
That is,
\eeq{ \label{k_induction}
{\llangle f(\sigma)\rrangle}_{k} 
&\coloneqq \frac{E_n(f(\sigma)\e^{\beta(H_n(\sigma)+\frac{1}{\sqrt{n}}\sum_{j=1}^k\sum_ih_i^{(j)}\vphi_i)})}{E_n(\e^{\beta(H_n(\sigma)+\frac{1}{\sqrt{n}}\sum_{j=1}^k\sum_ih_i^{(j)}\vphi_i)})}  \\
&= \frac{{\llangle f(\sigma)\e^{\frac{\beta}{\sqrt{n}}\sum_ih_i^{(k)}\vphi_i}\rrangle}_{k-1}}{{\llangle \e^{\frac{\beta}{\sqrt{n}}\sum_ih_i^{(k)}\vphi_i}\rrangle}_{k-1}}.
}
For $\delta >0$, 
define the set
\eq{
\AA_{\delta,k} \coloneqq \Big\{\sigma^1\in\Sigma_n : \frac{1}{n}\sum_{i}\vphi_i(\sigma^1)\llangle\vphi_i(\sigma^2)\rrangle_k\leq\delta\Big\},
}
where $\AA_{\delta,0} = \AA_\delta$ is the set under consideration in Theorem~\ref{expected_overlap_thm}, whose proof will rely on Propositions~\ref{pre_iteration_1} and~\ref{pre_iteration_2} below.

\begin{prop} \label{pre_iteration_1}
For any $\delta_0>0$, there exists $n_0 = n_0(\delta_0)$ such that for all $n\geq n_0$, $k\geq 1$, and $\delta\geq\delta_0$,
\eq{
\E\llangle \one_{\AA_{\delta,k-1}}\rrangle_k
\leq \E\llangle\one_{\AA_{\delta^{1/4},k}}\rrangle_k + C(\beta)\delta.
}
\end{prop}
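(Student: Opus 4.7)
The plan is to exploit the one-step recursive structure
\eeq{\label{plan_one_step}
\llangle f(\sigma)\rrangle_k = \frac{\llangle f(\sigma)\,\e^{\beta B_k(\sigma)}\rrangle_{k-1}}{\llangle \e^{\beta B_k(\sigma)}\rrangle_{k-1}}, \qquad B_k(\sigma) \coloneqq \frac{1}{\sqrt{n}}\sum_i h_i^{(k)}\vphi_i(\sigma),
}
by the same Cauchy--Schwarz trick sketched in Section~\ref{sketchsec2} for the case $k=1$. Fixing $\sigma^1\in\Sigma_n$ and writing $\givena{\RR_{1,2}}{\sigma^1}_k$ via~\eqref{plan_one_step}, I would apply Cauchy--Schwarz to the numerator and Jensen's inequality (to $x\mapsto x^{-1}$) to the denominator, to get
\eeq{\label{plan_cs}
\givena{\RR_{1,2}}{\sigma^1}_k \leq \sqrt{\llangle \RR(\sigma^1,\sigma)^2\rrangle_{k-1}}\, Y_k, \qquad Y_k \coloneqq \sqrt{\llangle \e^{2\beta B_k(\sigma)}\rrangle_{k-1}}\, \llangle \e^{-\beta B_k(\sigma)}\rrangle_{k-1}.
}
Since $n^{-1/2}(\vphi_i(\sigma))_i$ has unit $\ell^2$-norm by \eqref{variance_assumption}, Cauchy--Schwarz gives $\RR_{1,2}^2 \leq |\RR_{1,2}|$, and combined with \eqref{positive_overlap} this yields the pointwise inequality $\RR_{1,2}^2\leq \RR_{1,2}+2\EEE_n$. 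Hence on the event $\sigma^1\in\AA_{\delta,k-1}$,
\eq{
\llangle \RR(\sigma^1,\sigma)^2\rrangle_{k-1}\leq \givena{\RR_{1,2}}{\sigma^1}_{k-1}+2\EEE_n \leq \delta+2\EEE_n.
}

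The next step is to bound the moments of $Y_k$ uniformly in $k$. Since $\vc h^{(k)}$ is independent of everything defining $\llangle\cdot\rrangle_{k-1}$, I would insert $p$ replicas and apply \eqref{freq_identity} to the inner Gaussian expectation to obtain, for any $a\in\R$ and integer $p\geq1$,
\eeq{\label{plan_moments}
\E\big[\llangle \e^{\beta a B_k(\sigma)}\rrangle_{k-1}^p\big] = \e^{\beta^2 a^2 p/2}\,\E\Big\llangle \e^{\beta^2 a^2 \sum_{1\leq l<l'\leq p}\RR_{l,l'}}\Big\rrangle_{k-1} \leq \e^{\beta^2 a^2 p^2/2},
}
using $\RR_{l,l'}\leq 1$ in the final step. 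A Cauchy--Schwarz in $\E$ applied to the identity $Y_k^4 = \llangle\e^{2\beta B_k}\rrangle_{k-1}^2\llangle\e^{-\beta B_k}\rrangle_{k-1}^4$, followed by \eqref{plan_moments} with $(a,p)=(2,4)$ and $(-1,8)$, then yields $\E(Y_k^4)\leq \e^{32\beta^2}$, a bound with no residual dependence on $k$ or $n$ --- this is the key feature that makes the inequality iterable over $k$ in Section~\ref{sketchsec2}.

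To conclude, set $\lambda \coloneqq 2^{-1/2}\delta^{-1/4}$ and choose $n_0=n_0(\delta_0)$ large enough that $\EEE_n\leq \delta_0/2\leq \delta/2$ for every $n\geq n_0$ and every $\delta\geq \delta_0$. On $\AA_{\delta,k-1}\cap\{Y_k\leq\lambda\}$, \eqref{plan_cs} together with the $\RR^2$ bound gives
\eq{
\givena{\RR_{1,2}}{\sigma^1}_k \leq \sqrt{2\delta}\,\lambda = \delta^{1/4},
}
i.e.\ $\sigma^1\in \AA_{\delta^{1/4},k}$. Since $Y_k$ does not depend on $\sigma$, we have $\llangle \one_{\{Y_k>\lambda\}}\rrangle_k = \one_{\{Y_k>\lambda\}}$, so Markov's inequality gives $\P(Y_k>\lambda)\leq \lambda^{-4}\E(Y_k^4)\leq 4\e^{32\beta^2}\delta =: C(\beta)\delta$. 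Splitting $\one_{\AA_{\delta,k-1}}$ over $\{Y_k\leq\lambda\}$ and its complement and taking $\E\llangle\cdot\rrangle_k$ then yields the claim. The main obstacle here is really bookkeeping: keeping the $\EEE_n$ correction uniformly dominated by $\delta$ across all $\delta\geq\delta_0$, and confirming that the moment bound on $Y_k$ has no dependence on $k$ or $n$, both of which rest on the freshness of $\vc h^{(k)}$ and on the uniform normalization \eqref{variance_assumption}.
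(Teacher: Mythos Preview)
Your proof is correct and follows essentially the same approach as the paper's: both use the one-step recursion~\eqref{plan_one_step}, apply Cauchy--Schwarz to the numerator and Jensen's inequality to the denominator, bound the resulting random factor $Y_k$ (the paper's $X/\sqrt{2}$) via a fourth-moment computation with $p$ replicas and Markov's inequality at threshold $\delta^{-1/4}$, and handle the $\EEE_n$ correction by taking $n_0$ large enough that $\EEE_n\leq\delta_0/2$. The only cosmetic difference is that the paper first passes to the positive part $f_{\sigma^1}(\sigma^2)=0\vee\RR_{1,2}$ and uses $f^2\leq f$, whereas you use the equivalent pointwise bound $\RR_{1,2}^2\leq\RR_{1,2}+2\EEE_n$ directly.
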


\begin{proof}
For any measurable $f:\Sigma_n\to[0,1]$, an application of \eqref{k_induction}, followed by Cauchy--Schwarz and Jensen's inequality, gives
\eq{
\llangle f(\sigma) \rrangle_k 
&\leq \frac{\sqrt{\llangle f(\sigma)^2\rrangle_{k-1}}\sqrt{\llangle\e^{\frac{2\beta}{\sqrt{n}}\sum_ih_i^{(k)}\vphi_i}\rrangle_{k-1}}}{\llangle\e^{\frac{\beta}{\sqrt{n}}\sum_ih_i^{(k)}\vphi_i}\rrangle_{k-1}} \\
&\leq \sqrt{\llangle f(\sigma)\rrangle_{k-1}}\sqrt{\llangle\e^{\frac{2\beta}{\sqrt{n}}\sum_ih_i^{(k)}\vphi_i}\rrangle_{k-1}}\llangle\e^{\frac{-\beta}{\sqrt{n}}\sum_ih_i^{(k)}\vphi_i}\rrangle_{k-1}.
}
So we define the random variable
\eq{
X \coloneqq \sqrt{2\llangle\e^{\frac{2\beta}{\sqrt{n}}\sum_ih_i^{(k)}\vphi_i}\rrangle_{k-1}}\llangle\e^{\frac{-\beta}{\sqrt{n}}\sum_ih_i^{(k)}\vphi_i}\rrangle_{k-1},
}
and consider, for fixed $\sigma^1$, the function $f_{\sigma^1}(\sigma^2) = 0\vee\frac{1}{n}\sum_i \vphi_i(\sigma^1)\vphi_i(\sigma^2)$.
By \eqref{unit_interval}, $f_{\sigma^1}$ is $[0,1]$-valued, and \eqref{positive_overlap} implies
\eq{
f_{\sigma^1}(\sigma^2) \leq \EEE_n + \frac{1}{n}\sum_i \vphi_i(\sigma^1)\vphi_i(\sigma^2).
}
So the above estimate shows
\eq{
\frac{1}{n}\sum_i \vphi_i(\sigma^1)\llangle\vphi_i(\sigma^2)\rrangle_k
&\leq \llangle f_{\sigma^1}(\sigma^2)\rrangle_k \\
&\leq \frac{X}{\sqrt{2}}\sqrt{\EEE_n+\frac{1}{n}\sum_i \vphi_i(\sigma^1)\llangle\vphi_i(\sigma^2)\rrangle_{k-1}}.
}
In particular, when $n$ is sufficiently large that $\EEE_n\leq\delta$, 
\eq{
\one_{\AA_{\delta,k-1}}(\sigma^1)\frac{1}{n}\sum_i \vphi_i(\sigma^1)\llangle\vphi_i(\sigma^2)\rrangle_k\leq X\sqrt{\delta}.
}
We have thus shown
$\AA_{\delta,k-1} \subset \AA_{X\sqrt{\delta},k}$, which implies
\eq{
\E\llangle \one_{\AA_{\delta,k-1}}\rrangle_k \leq \E\llangle \one_{\AA_{X\sqrt{\delta},k}}\rrangle_k
\leq \E\llangle\one_{\AA_{t\sqrt{\delta},k}}\rrangle_k + \P(X > t) \quad \text{for any $t>0$},
}
where in the second inequality we have used the fact that if $\delta_1\leq\delta_2$, then $\AA_{\delta_1,k}\subset \AA_{\delta_2,k}$.
To handle the last term in the above display, we note that for any $p\geq1$,
\eq{
\P(X > t) 
&= \P(X^p > t^p) \\ 
&\leq t^{-p}\E(X^p) \\
&= t^{-p}2^{p/2}\E\big[\llangle\e^{\frac{2\beta}{\sqrt{n}}\sum_ih_i^{(k)}\vphi_i}\rrangle_{k-1}^{p/2}\llangle\e^{\frac{-\beta}{\sqrt{n}}\sum_ih_i^{(k)}\vphi_i}\rrangle_{k-1}^p] \\
&\leq t^{-p}2^{p/2}\sqrt{\E[\llangle\e^{\frac{2\beta}{\sqrt{n}}\sum_ih_i^{(k)}\vphi_i}\rrangle_{k-1}^p]\cdot\E[\llangle\e^{\frac{-\beta}{\sqrt{n}}\sum_ih_i^{(k)}\vphi_i}\rrangle_{k-1}^{2p}]} \\
&\leq t^{-p}2^{p/2}\sqrt{\E\llangle\e^{\frac{2\beta p}{\sqrt{n}}\sum_ih_i^{(k)}\vphi_i}\rrangle_{k-1}\cdot\E\llangle\e^{\frac{-2\beta p}{\sqrt{n}}\sum_ih_i^{(k)}\vphi_i}\rrangle_{k-1}}.
}
Now, for any $\theta\in\R$ and any $k\geq1$,
\eq{
\E\llangle \e^{\frac{\theta}{\sqrt{n}}\sum_ih_i^{(k)}\vphi_i}\rrangle _{k-1}
= \E\big[ \E_{\vc h^{(k)}}\llangle\e^{\frac{\theta}{\sqrt{n}}\sum_ih_i^{(k)}\vphi_i}\rrangle_{k-1}\big]
\stackrel{\mbox{\footnotesize\eqref{freq_identity}}}{=} \e^{\frac{\theta^2}{2}}.
}
Hence
\eq{
\P(X>t) \leq t^{-p}2^{p/2}\e^{2\beta^2p^2}.
}
Choosing $t=\delta^{-1/4}$ and $p = 4$, we arrive at
\eq{
\E\llangle \one_{\AA_{\delta,k-1}}\rrangle_k
\leq \E\llangle\one_{\AA_{\delta^{1/4},k}}\rrangle_k + C(\beta)\delta,
}
which holds for all $n$ such that $\EEE_n\leq\delta$.
\end{proof}

Next we consider the event
\eq{
B_{\delta,k} \coloneqq \Big\{\frac{1}{n}\sum_{i} \llangle\vphi_i\rrangle_{k}^2 \leq \delta\Big\},
}
where $B_{\delta,0} = B_\delta$ is the event under consideration in Theorem~\ref{averages_squared}.

\begin{lemma} \label{alpha_lemma}
Assume $\beta$ is a point of differentiability for $p(\cdot)$, and $p'(\beta) < \beta$.
For any $\eps>0$, there is $\delta = \delta(\beta,\eps) >0$ sufficiently small that for any positive constant $K$, the following is true.
If $k(n) \in \{0,1,\dots,K\}$ for all $n$, then
\eeq{ \label{alpha_choice}
\limsup_{n\to\infty} \P(B_{\delta,k(n)}) \leq \eps.
}
\end{lemma}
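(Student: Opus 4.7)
The plan is to reduce the statement to Theorem~\ref{averages_squared} via the equivalence~\eqref{temperature_equivalence}. Setting $\beta_{k,n} \coloneqq \beta\sqrt{1+k/n}$, the first step is to observe that
\begin{align*}
\P(B_{\delta,k}) = \P\Big(\frac{1}{n}\sum_{i} \langle\vphi_i\rangle_{\beta_{k,n}}^2 \leq \delta\Big),
\end{align*}
so the task becomes controlling the right-hand side uniformly in $k\in\{0,1,\ldots,K\}$. At $k=0$, Theorem~\ref{averages_squared} directly supplies $\delta_\ast = \delta_\ast(\beta,\eps/2) > 0$ such that $\limsup_n \P(\frac{1}{n}\sum_i \langle\vphi_i\rangle^2 \leq \delta_\ast) \leq \eps/2$, and I would then take $\delta \coloneqq \delta_\ast/2$.

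The key step in extending from $k=0$ to all $k \leq K$ is to invoke Lemma~\ref{connecting_betas}(c). Since $\beta_{k,n} - \beta = \beta(\sqrt{1+k/n}-1) \leq \beta K/(2n)$, that lemma yields
\begin{align*}
\frac{1}{n}\Big|\sum_{i}\langle\vphi_i\rangle_{\beta_{k,n}}^2 - \sum_{i}\langle\vphi_i\rangle_\beta^2\Big| \leq \sqrt{2\beta K\bigl(F_n'(\beta_{k,n}) - F_n'(\beta)\bigr)}.
\end{align*}
Because $\beta$ is a point of differentiability for $p(\cdot)$, Lemma~\ref{betas_converging} gives $F_n'(\beta_{k,n}) \to p'(\beta)$ in $L^1$ for each fixed $k$. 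Since $K+1$ is a fixed (finite) number of sequences, a union bound shows $\max_{0 \leq k \leq K}|F_n'(\beta_{k,n}) - F_n'(\beta)| \to 0$ in probability. Therefore the right-hand side above tends to $0$ in probability, uniformly in $k \leq K$.

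Combining these two ingredients through a union bound then closes the argument: on the event $B_{\delta, k(n)}$, either the inequality $\frac{1}{n}\sum_i\langle\vphi_i\rangle_\beta^2 \leq \delta_\ast$ holds (which happens with probability at most $\eps/2$ in the limit, by the choice of $\delta_\ast$) or the comparison estimate above must exceed $\delta_\ast/2$ (which happens with probability tending to $0$). Hence $\limsup_n \P(B_{\delta, k(n)}) \leq \eps$, as required.

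I do not foresee a substantive obstacle, since the whole argument is a routine perturbative reduction. The one genuinely load-bearing point is that $K$ is a fixed constant independent of $n$: this is what makes all the $\beta_{k,n}$ lie within $O(1/n)$ of $\beta$, so that Lemma~\ref{betas_converging} applies to each one and a crude union bound over the $K+1$ values of $k$ suffices to pass from pointwise to uniform control.
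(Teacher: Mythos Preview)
Your proposal is correct and follows essentially the same route as the paper: reduce to Theorem~\ref{averages_squared} via~\eqref{temperature_equivalence}, then control the temperature perturbation using Lemma~\ref{connecting_betas}(c) and Lemma~\ref{betas_converging}. The only cosmetic difference is that the paper applies Lemma~\ref{betas_converging} directly to the single sequence $\beta_n=\beta\sqrt{1+k(n)/n}$ (which tends to $\beta$ regardless of how $k(n)$ wanders in $\{0,\dots,K\}$), whereas you treat each fixed $k$ separately and then take a finite union bound; both work equally well.
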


\begin{proof}
By Theorem~\ref{averages_squared}, there is $\delta>0$ sufficiently small that
\eeq{ \label{alpha_choice_2}
\limsup_{n\to\infty} \P(B_{2\delta,0}) \leq \eps.
}
Let us write $\beta_n \coloneqq \beta\sqrt{1+\frac{k(n)}{n}}$, and
then observe that 
\eeq{ \label{alpha_choice_3}
\P(B_{\delta,k(n)}) 
&\stackrel{\phantom{\mbox{\footnotesize\eqref{temperature_equivalence}}}}{=} \P\Big(\frac{1}{n}\sum_i \llangle\vphi_i\rrangle_{k(n)}^2 \leq \delta\Big) \\
&\stackrel{\mbox{\footnotesize\eqref{temperature_equivalence}}}{=} \P\Big(\frac{1}{n}\sum_i \langle\vphi_i\rangle_{\beta_n}^2 \leq \delta\Big) \\
&\stackrel{\phantom{\mbox{\footnotesize\eqref{temperature_equivalence}}}}{\leq} \P(B_{2\delta,0})
+\P\bigg(\Big|\frac{1}{n}\sum_i \langle\vphi_i\rangle_{\beta_n}^2 -\frac{1}{n}\sum_i\langle\vphi_i\rangle_{\beta}^2\Big|\geq \delta\bigg). \raisetag{3.5\baselineskip}
}
Since $\sqrt{1+\frac{k(n)}{n}} \leq 1 + \frac{k(n)}{n} \leq 1 + \frac{K}{n}$, we have $0\leq \beta_n - \beta \leq \frac{\beta K}{n}$, and thus Lemma~\ref{connecting_betas}(c) gives
\eq{
\Big|\frac{1}{n}\sum_i \langle\vphi_i\rangle_{\beta_n}^2 -\frac{1}{n}\sum_i\langle\vphi_i\rangle_{\beta}^2\Big| \leq 2\sqrt{\beta K}\sqrt{F_n'(\beta_n) - F_n'(\beta)}.
}
By Lemma~\ref{betas_converging}, the right-hand side above converges to $0$ almost surely as $n\to\infty$.
In particular,
\eq{
\lim_{n\to\infty} \P\bigg(\Big|\frac{1}{n}\sum_i \langle\vphi_i\rangle_{\beta_n}^2 -\frac{1}{n}\sum_i\langle\vphi_i\rangle_{\beta}^2\Big|\geq \delta\bigg) = 0,
}
and so \eqref{alpha_choice} follows from \eqref{alpha_choice_2} and \eqref{alpha_choice_3}.
\end{proof}

\begin{prop} \label{pre_iteration_2}
Given any $\alpha>0$, there are positive constants $C_1(\alpha,\beta)$ and $C_2(\beta)$ 
such that the following holds for any $\delta_0\in(0,1)$.
There exists $n_0 = n_0(\delta_0)$ so that for every $n\geq n_0$, $k\geq1$, and $\delta\in[\delta_0,1)$,
\eq{
\E_{\vc h^{(k)}}\llangle \one_{\AA_{\delta,k-1}}\rrangle_k \geq \llangle \one_{\AA_{\delta,k-1}}\rrangle_{k-1} + C_1(\alpha,\beta)\llangle\one_{\AA_{\delta,k-1}}\rrangle_{k-1}\one_{B_{\alpha,k-1}^\mathrm{c}} - C_2(\beta)\sqrt{\delta}.
}
\end{prop}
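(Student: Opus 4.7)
The plan is to exploit the representation
\[
\llangle\one_{\AA_{\delta,k-1}}\rrangle_k = \frac{U}{U+V}
\]
coming from \eqref{k_induction}, where
\[
U \coloneqq \llangle\one_{\AA_{\delta,k-1}}\e^{\frac{\beta}{\sqrt n}\sum_i h_i^{(k)}\vphi_i}\rrangle_{k-1}, \qquad V \coloneqq \llangle\one_{\AA_{\delta,k-1}^\cc}\e^{\frac{\beta}{\sqrt n}\sum_i h_i^{(k)}\vphi_i}\rrangle_{k-1}.
\]
Since $\vc h^{(k)}$ is independent of everything inside $\llangle\cdot\rrangle_{k-1}$, the Gaussian identity \eqref{freq_identity} applied conditionally gives $\E_{\vc h^{(k)}}U = c$ and $\E_{\vc h^{(k)}}V = \mu$, where $c \coloneqq \e^{\beta^2/2}\llangle\one_{\AA_{\delta,k-1}}\rrangle_{k-1}$ and $\mu \coloneqq \e^{\beta^2/2}\llangle\one_{\AA_{\delta,k-1}^\cc}\rrangle_{k-1}$; in particular $c + \mu = \e^{\beta^2/2}$ and $c/(c+\mu) = \llangle\one_{\AA_{\delta,k-1}}\rrangle_{k-1}$ reproduces the leading term of the claim. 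In what follows I abbreviate $\AA = \AA_{\delta,k-1}$.

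I would first replace $U$ by its mean $c$ in the ratio, using the identity
\[
\E_{\vc h^{(k)}}\frac{U}{U+V} = \E_{\vc h^{(k)}}\frac{c}{c+V} + \E_{\vc h^{(k)}}\frac{V(U-c)}{(U+V)(c+V)}.
\]
Applying Cauchy--Schwarz and the trivial bound $V/(c+V)\leq 1$ to the error term gives
\[
\bigg|\E_{\vc h^{(k)}}\frac{V(U-c)}{(U+V)(c+V)}\bigg| \leq \sqrt{\Var_{\vc h^{(k)}}U \cdot \E_{\vc h^{(k)}}[(U+V)^{-2}]}.
\]
By Lemma~\ref{h_variance_lemma}(b), $\Var_{\vc h^{(k)}}U\leq \e^{2\beta^2}(\delta\llangle\one_\AA\rrangle_{k-1}+2\EEE_n)\leq C(\beta)\delta$ for $n$ large enough that $\EEE_n\leq\delta_0$; and by Jensen's inequality in $\vc h^{(k)}$ (mimicking the proof of Claim~\ref{claim_denom_prime}), $\E_{\vc h^{(k)}}[(U+V)^{-2}]$ is bounded by a constant depending only on $\beta$. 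This alone supplies the $-C_2(\beta)\sqrt\delta$ term.

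The heart of the argument is a quantitative Jensen inequality for the convex function $\phi(v) = c/(c+v)$. Plain Jensen gives the baseline $\E_{\vc h^{(k)}}\phi(V)\geq\phi(\mu)=\llangle\one_\AA\rrangle_{k-1}$, but on $B_{\alpha,k-1}^\cc$ I need an extra gain proportional to $\llangle\one_\AA\rrangle_{k-1}$. The gain comes from a variance-of-$V$ lower bound: the same Gaussian integration underlying Lemma~\ref{h_variance_lemma}, together with $\e^x-1\geq x$, yields
\[
\Var_{\vc h^{(k)}}V \geq \beta^2\e^{\beta^2}\cdot\frac{1}{n}\sum_i\llangle\one_{\AA^\cc}\vphi_i\rrangle_{k-1}^2.
\]
Writing $\llangle\one_{\AA^\cc}\vphi_i\rrangle = \llangle\vphi_i\rrangle - \llangle\one_\AA\vphi_i\rrangle$, expanding the square, dropping the nonnegative term $\llangle\one_\AA\vphi_i\rrangle^2$, and identifying $\frac{1}{n}\sum_i\llangle\vphi_i\rrangle\llangle\one_\AA\vphi_i\rrangle = \llangle\one_\AA(\sigma)R(\sigma)\rrangle_{k-1}$ with $R(\sigma)=\frac{1}{n}\sum_i\vphi_i(\sigma)\llangle\vphi_i\rrangle_{k-1}$, the definitions of $\AA_{\delta,k-1}$ (so $R\leq\delta$ on $\AA$) and $B_{\alpha,k-1}^\cc$, combined with \eqref{positive_overlap}, produce $\frac{1}{n}\sum_i\llangle\one_{\AA^\cc}\vphi_i\rrangle_{k-1}^2 \geq \alpha - 2\delta$ for $n$ large enough that $\EEE_n\leq\delta_0$.

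I would then convert this variance bound into the desired Jensen gap using Taylor's theorem with integral remainder for $\phi$. Since $\phi''(v) = 2c/(c+v)^3$ is positive and decreasing, $\phi(V)-\phi(\mu)-\phi'(\mu)(V-\mu)\geq \frac{c}{(c+\max(V,\mu))^3}(V-\mu)^2$; truncating at a level $M=M(\alpha,\beta)$ chosen large enough that $\E_{\vc h^{(k)}}[(V-\mu)^2\one_{V>M}]\leq \tfrac{1}{2}\Var_{\vc h^{(k)}}V$ (possible because $V$ has uniformly bounded moments, as an adaptation of Claim~\ref{claim_num_prime} gives $\E_{\vc h^{(k)}}V^q\leq \e^{\beta^2 q^2/2}$) gives a gap of at least $\frac{c}{(\e^{\beta^2/2}+M)^3}\cdot\beta^2\e^{\beta^2}(\alpha-2\delta)/2$. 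This has the form $C_1(\alpha,\beta)\llangle\one_{\AA_{\delta,k-1}}\rrangle_{k-1}$ whenever $\delta\leq\alpha/4$. The main obstacle will be this last quantitative Jensen step, in particular preserving the linear factor of $\llangle\one_{\AA_{\delta,k-1}}\rrangle_{k-1}$ and calibrating the truncation level $M$; for $\delta\in[\alpha/4,1)$ the variance lower bound degrades, but since $\llangle\one_{\AA_{\delta,k-1}}\rrangle_{k-1}\leq 1$, one can enlarge $C_2(\beta)$ so that the right-hand side of the claim becomes nonpositive and the conclusion is trivial.
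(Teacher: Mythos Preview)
Your approach is essentially the paper's: split $U+V$, replace $U$ by its mean via a Cauchy--Schwarz/variance bound, and extract a quantitative Jensen gap from a lower bound on $\Var_{\vc h^{(k)}}V$ together with a moment-based truncation. Your direct computation of $\Var_{\vc h^{(k)}}V\geq\beta^2\e^{\beta^2}\cdot\frac{1}{n}\sum_i\llangle\one_{\AA^\cc}\vphi_i\rrangle_{k-1}^2$ followed by the expansion $\llangle\one_{\AA^\cc}\vphi_i\rrangle=\llangle\vphi_i\rrangle-\llangle\one_\AA\vphi_i\rrangle$ is a clean alternative to the paper's route $\Var(V)=\Var(X-U)\geq\Var(X)-2\sqrt{\Var(X)\Var(U)}$; yours yields a loss linear in $\delta$ rather than $\sqrt\delta$.

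There is one bookkeeping slip. Your fallback for $\delta\in[\alpha/4,1)$ (``enlarge $C_2(\beta)$ so the right-hand side becomes nonpositive'') forces $C_2$ to depend on $\alpha$: you would need $C_2\sqrt\delta\geq 1+C_1(\alpha,\beta)$ with $\sqrt\delta\geq\sqrt\alpha/2$, hence $C_2\geq 2(1+C_1)/\sqrt\alpha$. The statement requires $C_2=C_2(\beta)$. The fix is to avoid the case split: fix $M=M(\alpha,\beta)$ so the tail $\E_{\vc h^{(k)}}[(V-\mu)^2\one_{\{V>M\}}]$ is at most a \emph{fixed} amount $\tfrac12\beta^2\e^{\beta^2}\alpha$ (not half of the random $\Var V$), and then carry the $-2\delta$ through. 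Since $c\leq \e^{\beta^2/2}$ and $M\geq0$ give $c/(\e^{\beta^2/2}+M)^3\leq \e^{-\beta^2}$, the negative contribution is bounded by $2\beta^2\delta\leq 2\beta^2\sqrt\delta$, which depends only on $\beta$. This is exactly how the paper arranges it (see the remark after \eqref{jensen_gap} that ``the second term \dots\ need not depend on $\alpha$ since $Y_1/(8t^3)\leq1$'').
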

\begin{proof}
Let $\delta_0\in(0,1)$ be given, and take $n_0$ such that $\EEE_n\leq\delta_0/2$ for all $n\geq n_0$.
Consider any $\delta\in[\delta_0,1)$,
and define the random variables
\eq{
X &\coloneqq \llangle \e^{\frac{\beta}{\sqrt{n}}\sum_ih_i^{(k)}\vphi_i}\rrangle_{k-1}, \\
X_1 &\coloneqq \llangle \one_{\AA_{\delta,k-1}}\e^{\frac{\beta}{\sqrt{n}}\sum_i h_i^{(k)}\vphi_i}\rrangle_{k-1}, \\
X_2 &\coloneqq \llangle \one_{\AA_{\delta,k-1}^\mathrm{c}}\e^{\frac{\beta}{\sqrt{n}}\sum_i h_i^{(k)}\vphi_i}\rrangle_{k-1}, \\
Y_1 &\coloneqq \E_{\vc h^{(k)}} X_1 \stackrel{\mbox{\footnotesize\eqref{freq_identity}}}{=} \e^{\frac{\beta^2}{2}}\llangle \one_{\AA_{\delta,k-1}}\rrangle_{k-1}, \\
Y_2 &\coloneqq \E_{\vc h^{(k)}} X_2 \stackrel{\mbox{\footnotesize\eqref{freq_identity}}}{=} \e^{\frac{\beta^2}{2}}\llangle \one_{\AA_{\delta,k-1}^\mathrm{c}}\rrangle_{k-1}.
}

\noindent {\bf Step 1.} \textit{Show that $X_1$ is concentrated at $Y_1$, but $X_2$ is not concentrated at $Y_2$ when $B_{\alpha,k-1}^\cc$ occurs.}

First observe that for any $\theta \in (-\infty,0] \cup [1,\infty)$, Jensen's inequality implies
\eeq{ \label{gaussian_bound}
\E_{\vc h^{(k)}} X^\theta 
\leq \E_{\vc h^{(k)}}\llangle \e^{\frac{\theta\beta}{\sqrt{n}}\sum_i h_i^{(k)}\vphi_i}\rrangle_{k-1} 
\stackrel{\mbox{\footnotesize\eqref{freq_identity}}}{=} \e^{\frac{(\theta\beta)^2}{2}}.
}
In particular, for any $t > \e^{\frac{\beta^2}{2}}\geq Y_2$,
\eeq{ \label{upper_X4_truncated}
\E_{\vc h^{(k)}}[(X_2 - Y_2)^2\one_{\{X_2>t\}}]
&\leq \frac{\E_{\vc h^{(k)}}[(X_2 - Y_2)^4\one_{\{X_2>t\}}]}{(t-\e^{\frac{\beta^2}{2}})^2} \\
&\leq \frac{\E_{\vc h^{(k)}}(X_2^4)}{(t-\e^{\frac{\beta^2}{2}})^2}
\leq \frac{\E_{\vc h^{(k)}}(X^4)}{(t-\e^{\frac{\beta^2}{2}})^2}
\stackrel{\mbox{\footnotesize\eqref{gaussian_bound}}}{\leq} \frac{\e^{8\beta^2}}{(t-\e^{\frac{\beta^2}{2}})^2}.
}
On the other hand,
\eeq{ \label{lower_X4_prep}
\Var_{\vc h^{(k)}}(X_2)
&= \Var_{\vc h^{(k)}}(X-X_1) \\
&= \Var_{\vc h^{(k)}}(X) - 2\Cov_{\vc h^{(k)}}(X,X_1) + \Var_{\vc h^{(k)}}(X_1) \\
&\geq \Var_{\vc h^{(k)}}(X) - 2\sqrt{\Var_{\vc h^{(k)}}(X)\Var_{\vc h^{(k)}}(X_1)}.
}
We have the upper bound
\eeq{
\Var_{\vc h^{(k)}}(X) \leq \E_{\vc h^{(k)}}(X^2) \stackrel{\mbox{\footnotesize\eqref{gaussian_bound}}}{\leq} \e^{2\beta^2}, \label{upper_X2}
}
as well as the lower bound
\eeq{ \label{lower_X2}
&\Var_{\vc h^{(k)}}(X) \\
&\stackrel{\phantom{\mbox{\footnotesize\eqref{freq_identity}}}}{=}
\E_{\vc h^{(k)}}\llangle \e^{\frac{\beta}{\sqrt{n}}\sum_i h_i^{(k)}(\vphi_i(\sigma^1) + \vphi_i(\sigma^2))}\rrangle_{k-1} - \big(\E_{\vc h^{(k)}} \llangle \e^{\frac{\beta}{\sqrt{n}}\sum_i h_i^{(k)}\vphi_i}\rrangle_{k-1}\big)^2 \\
&\stackrel{\mbox{\footnotesize\eqref{freq_identity}}}{=}\e^{\beta^2}\big(\llangle\e^{\frac{\beta^2}{n}\sum_i \vphi_i(\sigma^1)\vphi_i(\sigma^2)}\rrangle_{k-1} - 1\big) \\
&\stackrel{\phantom{\mbox{\footnotesize\eqref{freq_identity}}}}{\geq} \e^{\beta^2}\big(\e^{\frac{\beta^2}{n}\sum_i \llangle \vphi_i\rrangle_{k-1}^2}-1\big) \\
&\stackrel{\phantom{\mbox{\footnotesize\eqref{freq_identity}}}}{\geq} \e^{\beta^2}\frac{\beta^2}{n}\sum_i\llangle \vphi_i\rrangle_{k-1}^2. 
\raisetag{4\baselineskip}
}
Meanwhile, we have $\EEE_n\leq\delta_0/2\leq\delta/2$ for all $n\geq n_0$.
Hence Lemma~\ref{h_variance_lemma}(b) implies
\eeq{ \label{upper_X3}
\Var_{\vc h^{(k)}}(X_1) 
&\leq \e^{2\beta^2}\Big(\Big\llangle \one_{\AA_{\delta,k-1}}(\sigma)\frac{1}{n}\sum_i\vphi_i\llangle\vphi_i\rrangle_{k-1}\Big\rrangle_{k-1}+2\EEE_n\Big) \\ 
&\leq 2\e^{2\beta^2}\delta \quad \text{for all $n\geq n_0$.}
}
Using \eqref{upper_X2}--\eqref{upper_X3} in \eqref{lower_X4_prep} yields
\eeq{ \label{lower_X4_almost}
\Var_{\vc h^{(k)}}(X_2) \geq \beta^2\e^{\beta^2}\frac{1}{n}\sum_i\llangle \vphi_i\rrangle_{k-1}^2- 2\e^{2\beta^2}\sqrt{2\delta} \quad \text{for all $n\geq n_0$.}
}
So on the event $B_{\alpha,k-1}^\mathrm{c} = \{\frac{1}{n}\sum_{i} \llangle\vphi_i\rrangle_{k-1}^2 > \alpha\}$,
 \eqref{lower_X4_almost} shows
\eeq{ \label{lower_X4}
\Var_{\vc h^{(k)}}(X_2)\one_{B_{\alpha,k-1}^\mathrm{c}} \geq (\beta^2\e^{\beta^2}\alpha - 2\e^{2\beta^2}\sqrt{2\delta})\one_{B_{\alpha,k-1}^\mathrm{c}}
}
for all $n\geq n_0$.
Given $\alpha$ and $\beta$, we fix $t = t(\alpha,\beta)$ large enough such that 
\begin{subequations} \label{t_choices}
\begin{align}
\label{t_lower}
t > \e^{\frac{\beta^2}{2}} &\geq \max(Y_1,Y_2) \hspace{0.4in}
\intertext{and}
\label{t_upper}
\frac{\e^{8\beta^2}}{(t-\e^{\frac{\beta^2}{2}})^2} &\leq \frac{1}{2}\beta^2\e^{\beta^2}\alpha.
\end{align}
\end{subequations}
Because of \eqref{t_upper}, the inequalities \eqref{upper_X4_truncated} and \eqref{lower_X4} together yield
\eeq{ \label{lower_X4_truncated}
&\E_{\vc h^{(k)}}[(X_2-Y_2)^2\one_{\{X_2\leq t\}}]\one_{B_{\alpha,k-1}^\mathrm{c}} \\
&= \big(\Var_{\vc h^{(k)}}(X_2) - \E[(X_2-Y_2)^2\one_{\{X_2>t\}}]\big)\one_{B_{\alpha,k-1}^\mathrm{c}} \\
&\geq \Big(\frac{1}{2}\beta^2\e^{2\beta^2}\alpha - 2\e^{2\beta^2}\sqrt{2\delta}\Big)\one_{B_{\alpha,k-1}^\mathrm{c}} 
=(C_1(\alpha,\beta) - C_2(\beta)\sqrt{\delta})\one_{B_{\alpha,k-1}^\mathrm{c}} \raisetag{2.25\baselineskip}
}
for all $n\geq n_0$. \\

\noindent {\bf Step 2.} \textit{Since $X_1 \approx Y_1$, obtain an upper bound on the error in the following approximation:}
\eq{
\E_{\vc h^{(k)}}\Big(\frac{X_1}{X_1+X_2}\Big)
\approx \E_{\vc h^{(k)}}\Big(\frac{Y_1}{Y_1+X_2}\Big).
}

Simple algebra gives
\eq{
\frac{X_1}{X_1+X_2} - \frac{Y_1}{Y_1+X_2} 
&= \frac{X_2(X_1-Y_1)}{(X_1+X_2)(Y_1+X_2)} 
= \frac{X_2(X_1-Y_1)}{X(Y_1+X_2)},
}
and
\eeq{ \label{approximation_error_bound}
\Big|\E_{\vc h^{(k)}}\Big(\frac{X_2(X_1-Y_1)}{X(Y_1+X_2)}\Big)\Big|
&\stackrel{\phantom{\mbox{\footnotesize\eqref{upper_X3},\eqref{gaussian_bound}}}}{\leq} \E_{\vc h^{(k)}} \Big(\frac{|X_1-Y_1|}{X}\Big) \\
&\stackrel{\phantom{\mbox{\footnotesize\eqref{upper_X3},\eqref{gaussian_bound}}}}{\leq} \E_{\vc h^{(k)}}(X^{-2})\sqrt{\Var_{\vc h^{(k)}}(X_1)} \\
&\stackrel{\mbox{\footnotesize\eqref{gaussian_bound},\eqref{upper_X3}}}{\leq} C(\beta)\sqrt{\delta} \quad \text{for all $n\geq n_0$.}\\
}

\noindent {\bf Step 3.} \textit{Since $X_2$ is not concentrated at $Y_2$ when $B_{\alpha,k-1}^\cc$ occurs,}
\textit{obtain a lower bound on the gap in the following application of Jensen's inequality:}
\eq{
\E_{\vc h^{(k)}}\Big(\frac{Y_1}{Y_1+X_2}\Big) = \frac{Y_1}{Y_1+Y_2} + (\text{Jensen gap}).
}

We consider the function $f : (-Y_1,\infty) \to [0,1]$ given by
\eq{
f(x) \coloneqq \frac{Y_1}{Y_1+x}, \quad \text{for which} \quad f''(x) = \frac{2Y_1}{(Y_1+x)^3} \geq 0.
}
In particular, we consider its Taylor series approximation about $Y_2$,
\eq{
f(x) = f(Y_2) + (x-Y_2)f'(Y_2) + \frac{(x-Y_2)^2}{2}f''(\xi_x),
}
where $\xi_x$ belongs to the interval between $x$ and $Y_2$.
We note that such an expansion exists because the identity $Y_1 + Y_2 = \e^{\frac{\beta^2}{2}}$ shows $Y_2 > -Y_1$.
Jensen's inequality implies
\eq{
\E_{\vc h^{(k)}}f(X_2) \geq f(\E_{\vc h^{(k)}}X_2) = f(Y_2)
= \frac{Y_1}{Y_1+Y_2} = \llangle \one_{\AA_{\delta,k-1}}\rrangle_{k-1}.
}
We will now produce a lower bound on the Jensen gap.

First observe that $f''$ is decreasing on $(-Y_1,\infty)$.
Consequently, if $x \in [Y_2,t]$, then $f''(\xi_{x}) \geq f''(x) \geq f''(t)$.
Similarly, if $x\leq Y_2$, then $f''(\xi_{x}) \geq f''(Y_2) \geq f''(t)$.
Therefore, for all $n\geq n_0$, we have
\eeq{ \label{jensen_gap}
&\E_{\vc h^{(k)}}f(X_2) - \llangle \one_{\AA_{\delta,k-1}}\rrangle_{k-1} \\
&\stackrel{\phantom{\mbox{\footnotesize\eqref{t_lower},\eqref{t_choices}}}}{=} \E_{\vc h^{(k)}}f(X_2) - f(Y_2) \\
&\stackrel{\phantom{\mbox{\footnotesize\eqref{t_lower},\eqref{t_choices}}}}{=} \frac{\E_{\vc h^{(k)}}[(X_2-Y_2)^2f''(\xi_{X_2})]}{2} \\
&\stackrel{\phantom{\mbox{\footnotesize\eqref{t_lower},\eqref{t_choices}}}}{\geq} \frac{f''(t)}{2}\E_{\vc h^{(k)}}[(X_2-Y_2)^2\one_{\{X_2\leq t\}}] \\
&\stackrel{\phantom{\mbox{\footnotesize\eqref{t_lower},\eqref{t_choices}}}}{\geq} \frac{Y_1}{(Y_1+t)^3} \E_{\vc h^{(k)}}[(X_2-Y_2)^2\one_{\{X_2\leq t\}}]\one_{B_{\alpha,k-1}^\mathrm{c}} \\
&\stackrel{\mbox{\footnotesize\eqref{t_lower},\eqref{lower_X4_truncated}}}{\geq} \frac{Y_1}{8t^3}(C_1(\alpha,\beta) - C_2(\beta)\sqrt{\delta})\one_{B_{\alpha,k-1}^\mathrm{c}}\\
&\stackrel{\phantom{\mbox{\footnotesize\eqref{t_lower},\eqref{t_choices}}}}{\geq}C_1(\alpha,\beta)\llangle\one_{\AA_{\delta,k-1}}\rrangle_{k-1}\one_{B_{\alpha,k-1}^\mathrm{c}} - C_2(\beta)\sqrt{\delta}, \raisetag{5.5\baselineskip}
}
where the second term in the final expression need not depend on $\alpha$ since $Y_1/(8t^3) \leq 1$. \\

\noindent {\bf Step 4.} \textit{Reckon the final bound.}

In summary, for all $n\geq n_0$,
\eq{
&\E_{\vc h^{(k)}}\llangle \one_{\AA_{\delta,k-1}}\rrangle_k 
\stackrel{\mbox{\footnotesize\eqref{k_induction}}}{=} \E_{\vc h^{(k)}}\Big(\frac{X_1}{X_1+X_2}\Big) \\
&\stackrel{\mbox{\footnotesize\eqref{approximation_error_bound}}}{\geq} \E_{\vc h^{(k)}}\Big(\frac{Y_1}{Y_1+X_2}\Big) - C(\beta)\sqrt{\delta} \\
&\stackrel{\phantom{\mbox{\footnotesize\eqref{approximation_error_bound}}}}{=} \E_{\vc h^{(k)}} f(X_2) - C(\beta)\sqrt{\delta} \\
&\stackrel{\mbox{\footnotesize\eqref{jensen_gap}}}{\geq} \llangle \one_{\AA_{\delta,k-1}}\rrangle_{k-1} + C_1(\alpha,\beta)\llangle\one_{\AA_{\delta,k-1}}\rrangle_{k-1}\one_{B_{\alpha,k-1}^\mathrm{c}} - C_2(\beta)\sqrt{\delta}.
}
\end{proof}

\begin{proof}[Proof of Theorem~\ref{expected_overlap_thm}]
Let $\eps> 0$ be given.
From Lemma~\ref{alpha_lemma}, we fix $\alpha = \alpha(\beta,\eps)>0$ so that for any bounded sequence $(k(n))_{n\geq1}$ of nonnegative integers, we have 
\eeq{ \label{alpha_choice_real}
\limsup_{n\to\infty} \P(B_{\alpha,k(n)}) \leq \frac{\eps}{2}.
}
We wish to find $\delta_* > 0$, depending only on $\beta$ and $\eps$, such that $\E\llangle \one_{\AA_{\delta_*}}\rrangle \leq \eps$.

Let $\delta_0 \in (0,1)$, its exact value to be decided later.
From Proposition~\ref{pre_iteration_2}, we know that for all $n \geq n_0 = n_0(\delta_0)$ and $\delta\in[\delta_0,1)$,
\eq{
&\E\llangle \one_{\AA_{\delta,k-1}}\rrangle_k \\
&\geq \E\llangle \one_{\AA_{\delta,k-1}}\rrangle_{k-1} + C_1(\beta,\eps)\E(\llangle\one_{\AA_{\delta,k-1}}\rrangle_{k-1}\one_{B_{\alpha,k-1}^\mathrm{c}}) - C_2(\beta)\sqrt{\delta}.
}
And from Proposition~\ref{pre_iteration_1}, we can assume
\eq{
\E\llangle \one_{\AA_{\delta,k-1}}\rrangle_k
\leq \E\llangle\one_{\AA_{\delta^{1/4},k}}\rrangle_k + C(\beta)\delta \quad \text{for all $n\geq n_0$, $\delta\in[\delta_0,1)$.}
}
Linking the two inequalities, we find that
\eq{
&\E\llangle\one_{\AA_{\delta^{1/4},k}}\rrangle_k \\
&\geq \E\llangle \one_{\AA_{\delta,k-1}}\rrangle_{k-1} + \mathbf C_1(\beta,\eps)\E(\llangle\one_{\AA_{\delta,k-1}}\rrangle_{k-1}\one_{B_{\alpha,k-1}^\mathrm{c}}) - \mathbf C_2(\beta)\sqrt{\delta},
} 
where now we fix the constants $\mathbf C_1(\beta,\eps)$ and $\mathbf C_2(\beta)$.
Note that $\delta_0\leq\delta\leq\delta^{1/4}<1$, and so this reasoning can be iterated.
Iterating $K$ times produces the estimate
\eq{ 
1&\geq\E\llangle \one_{\AA_{\delta^{1/4^K},K}}\rrangle_{K} \\
&\geq \sum_{k=0}^{K-1}\Big[\mathbf C_1(\beta,\eps)\E(\llangle \one_{\AA_{\delta^{1/4^k},k}}\rrangle_k\one_{B_{\alpha,k}^\mathrm{c}}) - \mathbf C_2(\beta)\sqrt{\delta^{1/4^k}}\ \Big]
+ \E\llangle \one_{\AA_{\delta,0}}\rrangle_0,
}
which implies the existence of some $k=k(n)\in\{0,1,\dots,K-1\}$ such that 
\eeq{ \label{k_existence}
\mathbf C_1(\beta,\eps)\E(\llangle \one_{\AA_{\delta^{1/4^k},k}}\rrangle_k\one_{B_{\alpha,k}^\mathrm{c}}) - \mathbf C_2(\beta)\sqrt{\delta^{1/4^k}} &\leq \frac{1}{K}.
}
So we take $K = K(\beta,\eps)$ large enough that 
\eeq{ \label{K_choice}
\frac{1}{\mathbf C_1(\beta,\eps)K} \leq \frac{\eps}{6},
}
and then choose $\delta_0 = \delta_0(\beta,K)$ small enough that 
\eeq{ \label{eps_choice}
\mathbf C_2(\beta)\sqrt{\delta_0^{1/4^K}} \leq \frac{1}{K}.
}
We now have, for all $n\geq n_0$,
\eq{
\E(\llangle \one_{\AA_{\delta_0^{1/4^k},k}}\rrangle_k\one_{B_{\alpha,k}^\mathrm{c}}) &\stackrel{\mbox{\footnotesize\eqref{k_existence}}}{\leq} \frac{1}{\mathbf C_1(\beta,\eps)}\Big(\frac{1}{K} + \mathbf C_2(\beta)\sqrt{\delta_0^{1/4^K}}\Big)\\
&\stackrel{\mbox{\footnotesize\eqref{eps_choice}}}{\leq} \frac{2}{\mathbf C_1(\beta,\eps)K} 
\stackrel{\mbox{\footnotesize\eqref{K_choice}}}{\leq}\frac{\eps}{3}.
}
Combining this bound with \eqref{alpha_choice_real}, we see that
\eeq{ \label{eps_0_bound}
\E \llangle \one_{\AA_{\delta_0^{1/4^k},k}}\rrangle_k 
\leq \E(\llangle \one_{\AA_{\delta_0^{1/4^k},k}}\rrangle_k\one_{B_{\alpha,k}^\mathrm{c}}) + \P(B_{\alpha,k}) \leq \eps \quad \forall\text{ large $n$.}
}
To now complete the proof, we must obtain from this result an analogous one with $k=0$. 

As in the proof of Lemma~\ref{alpha_lemma}, we will write $\beta_n \coloneqq \beta\sqrt{1+\frac{k}{n}}$.
For $\eta>0$, define the set
\eq{
\wt{\AA}_{\eta,k} \coloneqq \Big\{\sigma^1\in\Sigma_n : \sum_i \vphi_i(\sigma^1)\langle \vphi_i(\sigma^2)\rangle_{\beta_n}\leq \eta \Big\}.
}
It follows from \eqref{temperature_equivalence} that
\eeq{ \label{expectations_same}
\llangle\one_{\AA_{\eta,k}}\rrangle_k \stackrel{\text{d}}{=} \langle\one_{\wt{\AA}_{\eta,k}}\rangle_{\beta_n} \quad \text{for any $\eta>0$,}
}
Since $0\leq \beta_n-\beta \leq \frac{\beta K}{n}$, 
Lemma~\ref{connecting_betas}(b) implies
\eq{
\Big|\frac{1}{n}\sum_{i} \vphi_i\langle\vphi_i\rangle_{\beta_n} - \frac{1}{n}\sum_i\vphi_i\langle\vphi_i\rangle_\beta\Big|
&\leq \sqrt{\beta K}\sqrt{F_n'(\beta_n) - F_n'(\beta)}.
}
Denote the right-hand side above by $\Delta_n$.
Take $\delta_* \coloneqq \frac{1}{2}{\delta_0} \leq \frac{1}{2}\delta_0^{1/4^k}$.
From the above display,
$\AA_{\delta_*,0} \subset \wt{\AA}_{\delta_*+\Delta_n,k}.$
Hence
\eq{
\E\langle\one_{\AA_{\delta_*,0}}\rangle_\beta
&\stackrel{\phantom{\mbox{\footnotesize\eqref{expectations_same}}}}{\leq}  \E\langle\one_{\wt{\AA}_{\delta_*+\Delta_n,k}}\rangle_\beta \\
& \stackrel{\phantom{\mbox{\footnotesize\eqref{expectations_same}}}}{\leq}   \P(\Delta_n > \delta_*) + \E \langle\one_{\wt{\AA}_{2\delta_*,k}} \rangle_\beta \\
&\stackrel{\mbox{\footnotesize\eqref{expectations_same}}}{=}  \P(\Delta_n > \delta_*) + \E \langle\one_{\wt{\AA}_{2\delta_*,k}} \rangle_\beta -
\E \langle\one_{\wt{\AA}_{2\delta_*,k}} \rangle_{\beta_n} +\E\llangle\one_{\AA_{2\delta_*,k}}\rrangle_k \\
& \stackrel{\phantom{\mbox{\footnotesize\eqref{expectations_same}}}}{\leq}  \P(\Delta_n > \delta_*) + \E \langle\one_{\wt{\AA}_{2\delta_*,k}} \rangle_\beta -
\E \langle\one_{\wt{\AA}_{2\delta_*,k}} \rangle_{\beta_n} +\E\llangle\one_{\AA_{\delta_0^{1/4^k},k}}\rrangle_k.
}
And by Lemma~\ref{connecting_betas}(a),
\eq{
 | \langle\one_{\wt{\AA}_{2\delta_*,k}} \rangle_{\beta_n}
 -  \langle\one_{\wt{\AA}_{2\delta_*,k}} \rangle_\beta |
  \leq \Delta_n.
}
From the previous two displays and \eqref{eps_0_bound}, we have
\eq{
\E\langle\one_{\AA_{\delta_*,0}}\rangle_\beta \leq \P(\Delta_n > \delta_*) + \E(\Delta_n) + \eps \quad \text{for all large $n$}.
}
Finally, Lemma~\ref{betas_converging} shows that $\Delta_n \to 0$ almost surely and in $L^1$ as $n\to\infty$.
Consequently,
$\limsup_{n\to\infty} \E\langle\one_{\AA_{\delta_*,0}}\rangle_\beta \leq \eps$.
\end{proof}

\section{Proof of equivalence of Theorems~\ref{easy_cor} and~\ref{expected_overlap_thm}} \label{cor_proof}
Theorem~\ref{easy_cor} is implied by Theorem~\ref{expected_overlap_thm} once we establish the following result.
Recall the definitions \eqref{ball_def} and \eqref{A_def}.

\begin{prop}
Suppose $H_n$ is defined by \eqref{field_decomposition},  where $(g_i)_{i=1}^\infty$ are i.i.d. random variables with zero mean and unit variance (not necessarily Gaussian).
Assume \eqref{free_energy_assumption}--\eqref{positive_overlap}. 
Then the following two statements are equivalent:
\begin{itemize}
\item[$\mathrm{(S1)}$] For every $\eps > 0$, there exist integers $k = k(\beta,\eps)$ and $n_0 = n_0(\beta,\eps)$ and a number $\delta = \delta(\beta,\eps)>0$ such that the following is true for all $n\geq n_0$.
With $\P$-probability at least $1-\eps$, there exist $\sigma^1,\dots,\sigma^k\in\Sigma_n$ such that 
\eq{
\mu_{n}^\beta\Big(\bigcup_{j=1}^k \BB(\sigma^j, \delta)\Big) \geq 1 - \eps.
}
\item[$\mathrm{(S2)}$] For every $\eps>0$, there exists $\delta = \delta(\beta,\eps) > 0$ sufficiently small that
\eq{ 
\limsup_{n\to\infty} \E\langle\one_{\AA_{n,\delta}}\rangle \leq \eps.
}
\end{itemize}
\end{prop}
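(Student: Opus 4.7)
My plan is to prove the two implications separately, noting that only $(\mathrm{S2})\Rightarrow(\mathrm{S1})$ is needed to derive Theorem~\ref{easy_cor} from Theorem~\ref{expected_overlap_thm}; the converse is recorded for completeness.

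For $(\mathrm{S2})\Rightarrow(\mathrm{S1})$, I would follow the law-of-large-numbers strategy sketched in Section~\ref{sketchsec}. Fix $\eps>0$ and, using $(\mathrm{S2})$, choose $\delta_0>0$ with $\limsup_n \E\langle\one_{\AA_{n,2\delta_0}}\rangle \leq \eps^2/4$, so the set $\{\sigma:\RR(\sigma)>2\delta_0\}$ carries $\P$-averaged Gibbs mass at least $1-\eps^2/4$ for large $n$. Draw $\sigma^1,\dots,\sigma^k$ as i.i.d.\ samples from $\mu_n^\beta$ together with a further independent draw $\sigma^{k+1}$; conditional on $\sigma^{k+1}$ and the environment, the overlaps $\RR_{j,k+1}$ for $j\leq k$ are i.i.d.\ with conditional mean $\RR(\sigma^{k+1})$ and values in $[-\EEE_n,1]$ by \eqref{variance_assumption}--\eqref{positive_overlap}. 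On the event $\{\RR(\sigma^{k+1})>2\delta_0\}$, and once $n$ is large enough that $\EEE_n\leq \delta_0/2$, the reverse-Markov inequality $\RR(\sigma^{k+1}) \leq \delta_0 + \P(\RR_{j,k+1}\geq \delta_0\mid\sigma^{k+1})$ forces $\P(\RR_{j,k+1}\geq \delta_0\mid\sigma^{k+1})\geq \delta_0$, and hence $\P(\sigma^{k+1}\notin\bigcup_j\BB(\sigma^j,\delta_0)\mid\sigma^{k+1}) \leq (1-\delta_0)^k$. Choosing $k=\lceil \delta_0^{-1}\log(4/\eps^2)\rceil$ and integrating by Fubini would yield $\E\mu_n^\beta(\Sigma_n\setminus\bigcup_j\BB(\sigma^j,\delta_0)) \leq \eps^2/2$ for large $n$, and a final Markov step in $\P$ converts this into $(\mathrm{S1})$ with total error of order $\eps$.

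For $(\mathrm{S1})\Rightarrow(\mathrm{S2})$, I would work with the unit vectors $\vec v(\sigma) = (\vphi_{i,n}(\sigma)/\sqrt{n})_i$ in $\ell^2$; by \eqref{variance_assumption} each $\vec v(\sigma)$ has unit norm, $\RR(\sigma,\sigma') = \langle\vec v(\sigma),\vec v(\sigma')\rangle$, and $\RR(\sigma) = \langle\vec v(\sigma),u\rangle$ with $u = \langle\vec v(\sigma)\rangle$. Starting from $(\mathrm{S1})$ applied with a very small parameter $\eps'$, I would partition the cover into disjoint clusters $C_j = \BB(\sigma^j,\delta)\setminus\bigcup_{\ell<j}\BB(\sigma^\ell,\delta)$ with Gibbs masses $p_j$ and discard those with $p_j$ below a threshold of order $\eps/k$, so the retained clusters still carry Gibbs mass at least $1-O(\eps)$. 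On each retained cluster the centroid $\bar v_j = \langle\vec v(\sigma)\mid \sigma \in C_j\rangle$ satisfies $\langle\bar v_j,\vec v(\sigma^j)\rangle\geq \delta$ and thus $\|\bar v_j\|\geq \delta$; decomposing $\RR(\sigma) = \sum_\ell p_\ell\langle\vec v(\sigma),\bar v_\ell\rangle$ and lower-bounding the off-diagonal contributions by $-\EEE_n$ via \eqref{positive_overlap} would give $\RR(\sigma) \geq p_j\langle\vec v(\sigma),\bar v_j\rangle - \EEE_n$ for $\sigma\in C_j$. Since the cluster-conditional average of $\langle\vec v(\sigma),\bar v_j\rangle$ equals $\|\bar v_j\|^2\geq \delta^2$, a Markov bound would then control the Gibbs mass of $\{\sigma\in C_j : \RR(\sigma)\leq \delta'\}$ in terms of $\delta'/(p_j\delta^2)$.

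The main obstacle is in the last step of $(\mathrm{S1})\Rightarrow(\mathrm{S2})$: individual values of $\langle\vec v(\sigma),\bar v_j\rangle$ need not concentrate on their cluster average $\|\bar v_j\|^2$, so a naive Markov inequality leaves open the possibility that a positive fraction of $\sigma\in C_j$ has $\langle\vec v(\sigma),\bar v_j\rangle$ much smaller than $\delta^2$. I would overcome this by fully exploiting \eqref{positive_overlap}, which forbids the orthogonal parts of the $\vec v(\sigma)$ for $\sigma\in C_j$ from being too anti-aligned and therefore prevents $\langle\vec v(\sigma),\bar v_j\rangle$ from being small on most of the cluster. As a backup, one could iterate $(\mathrm{S1})$ recursively at successively finer scales inside each $C_j$, producing a finite hierarchy of sub-clusters whose total depth depends only on $\eps$, and then argue pointwise control at the finest level before summing back up to conclude $(\mathrm{S2})$.
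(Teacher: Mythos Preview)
Your argument for $(\mathrm{S2})\Rightarrow(\mathrm{S1})$ is essentially the paper's, with one pleasant variation: where the paper invokes Paley--Zygmund to obtain $\givena{\one_{\{\RR_{j,k+1}\geq\delta\}}}{\sigma^{k+1}}\geq\delta^2$ on $\{\RR(\sigma^{k+1})>2\delta\}$, you use the elementary bound $\RR(\sigma^{k+1})\leq\delta_0+\givena{\one_{\{\RR_{j,k+1}\geq\delta_0\}}}{\sigma^{k+1}}$ to get a lower bound of order $\delta_0$ instead, yielding a slightly smaller $k$. Either works.

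For $(\mathrm{S1})\Rightarrow(\mathrm{S2})$ there is a genuine gap, and neither of your proposed fixes closes it. The obstacle you flag is real: knowing only that the cluster-conditional average of $\langle\vec v(\sigma),\bar v_j\rangle$ is at least $\delta^2$, together with the pointwise lower bound $-\EEE_n$ from \eqref{positive_overlap}, gives at best $\mu_n^\beta\big(\langle\vec v(\sigma),\bar v_j\rangle\leq s\ \big|\ C_j\big)\leq 1-\delta^2+O(s)$, which does not become small. Your appeal to \eqref{positive_overlap} for the ``orthogonal parts'' adds nothing beyond this, since $\RR(\sigma,\sigma')\geq-\EEE_n$ is perfectly compatible with $C_j$ supporting many mutually orthogonal unit vectors, each of which has $\langle\vec v(\sigma),\bar v_j\rangle$ small. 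What actually limits this is not \eqref{positive_overlap} but the constraint $C_j\subset\BB(\sigma^j,\delta)$ combined with positive semi-definiteness of the overlap matrix, which you never use. The recursive backup is also not well-posed: $(\mathrm{S1})$ is a hypothesis about $\mu_n^\beta$, not about its restriction to a cluster, so there is nothing to iterate.

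The paper's route is different and short. It proves two lemmas: (i) if $N=N(\delta)$ points all lie in a single ball $\BB(\sigma^0,\delta)$, then by positive semi-definiteness of the $(N{+}1)\times(N{+}1)$ overlap matrix some pair among them has overlap at least $\delta^2/2$; and (ii) any $\AA\subset\AA_{n,\delta'}$ with $\langle\one_\AA\rangle\geq\eps_1$ contains $N$ points with all pairwise overlaps below any prescribed $\eps_2$, provided $\delta'$ is small enough, because $\sigma\in\AA_{n,\delta'}$ forces $\langle\one_{\BB(\sigma,\eps_2)}\rangle\leq\delta'/\eps_2$ and one can greedily pick far-apart points. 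If $\E\langle\one_{\AA_{n,\delta'}}\rangle$ stayed bounded below along a subsequence, then with positive probability both $\langle\one_{\AA_{n,\delta'}}\rangle\geq 2\eps$ and the $(\mathrm{S1})$-covering would hold, so some $\AA_{n,\delta'}\cap\BB(\sigma^j,\delta)$ would have mass at least $\eps/k$; applying (ii) with $\eps_2=\delta^2/2$ then yields $N$ nearly orthogonal points inside a single $\delta$-ball, contradicting (i). The Gram-matrix step (i) is precisely the missing ingredient in your outline.
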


\subsection{Proof of {$\mathrm{(S2)} \Rightarrow \mathrm{(S1)}$}}
Let  $\eps > 0$ be given.
By $\mathrm{(S2)}$, we can choose $\delta > 0$ small enough and $n_0$ large enough  so that for all $n\geq n_0$,
\eq{
\E\langle\one_{\AA_{n,2\delta}}\rangle \leq \frac{\eps^2}{2}.
}
It follows from Markov's inequality that
\eeq{ \label{Markov_for_cor}
\P\Big(\langle\one_{\AA_{n,2\delta}}\rangle > \frac{\eps}{2}\Big) \leq \eps.
}
Now, by the Paley--Zygmund inequality, for any $j\neq k+1$,
\eq{
\givena{\one_{\{\RR_{j,k+1}\geq\delta\}}}{\sigma^{k+1}}\one_{\{\RR(\sigma^{k+1}) > 2\delta\}} 
&\geq\frac{1}{4}\frac{\RR(\sigma^{k+1})^2}{\givena{\RR_{j,k+1}^2}{\sigma^{k+1}}}\one_{\{\RR(\sigma^{k+1}) > 2\delta\}} \\
&\geq\delta^2\one_{\{\RR(\sigma^{k+1})>2\delta\}}.
}
Therefore,
\eq{
\givena{\one_{\bigcap_{j=1}^k\{\RR_{j,k+1}<\delta\}}}{\sigma^{k+1}}\one_{\{\RR(\sigma^{k+1}) > 2\delta\}}  \leq (1 - {\delta^2})^k \leq \e^{-\delta^2 k}.
}
Choosing $k = \ceil{-\delta^{-2}\log(\eps/2)} \vee 0$, we have
\eq{
\langle\one_{\bigcap_{j=1}^k\{\RR_{j,k+1}<\delta\}}\rangle 
\leq \frac{\eps}{2} + \langle\one_{\{\RR(\sigma^{k+1})\leq2\delta\}}\rangle
= \frac{\eps}{2} + \langle \one_{\AA_{n,2\delta}}\rangle.
}
Therefore, 
\eq{
\P\Big(\langle \one_{\bigcup_{j=1}^k\{\RR_{j,k+1}\geq\delta\}}\rangle \geq 1-\eps\Big)
&= \P\Big(\langle \one_{\bigcap_{j=1}^k\{\RR_{j,k+1}<\delta\}}\rangle \leq \eps\Big) \\
&\geq \P\Big(\langle \one_{\AA_{n,2\delta}}\rangle \leq \frac{\eps}{2}\Big) 
\stackrel{\mbox{\footnotesize\eqref{Markov_for_cor}}}{\geq} 1 - \eps.
}
This completes the proof, since
\eq{
\mu_{n}^\beta\Big(\bigcup_{j=1}^k \BB(\sigma^j,\delta)\Big) = \langle \one_{\bigcup_{j=1}^k\{\RR_{j,k+1}\geq\delta\}}\rangle.
}

\subsection{Proof of {$\mathrm{(S1)} \Rightarrow \mathrm{(S2)}$}}

We begin with a lemma that roughly states the following. 
If many 
random variables each have non-negligible positive correlation with a distinguished variable, then at least one pair of these variables has non-negligible positive correlation.

\begin{lemma} \label{one_to_pair}
For any $\delta\in(0,1]$, there exists $N_0 = N_0(\delta)$ such that the following holds for any integer $N\geq N_0$ and any $\sigma^0\in\Sigma_n$.
If $\sigma^1,\dots,\sigma^N\in\BB(\sigma^0,\delta)\subset\Sigma_n$, then
\eeq{ \label{one_to_pair_ineq}
\RR_{j,k} \geq \frac{\delta^2}{2} \quad \text{for some $1\leq j<k\leq N$}.
}
\end{lemma}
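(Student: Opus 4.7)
The plan is to use the Hilbert-space structure underlying the overlap. For each $j\in\{0,1,\dots,N\}$, define the vector $\vc v_j \coloneqq \bigl(\vphi_i(\sigma^j)/\sqrt{n}\bigr)_{i=1}^\infty \in \ell^2$. By \eqref{variance_assumption}, $\|\vc v_j\|^2 = \frac{1}{n}\sum_i \vphi_i(\sigma^j)^2 = 1$, and by definition $\langle \vc v_j, \vc v_k\rangle = \RR_{j,k}$. The hypothesis $\sigma^j \in \BB(\sigma^0,\delta)$ becomes $\langle \vc v_j,\vc v_0\rangle \geq \delta$ for every $j=1,\dots,N$.

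The key step is to consider the average $\vc w \coloneqq \frac{1}{N}\sum_{j=1}^N \vc v_j$. Linearity and Cauchy--Schwarz give
\[
\delta \leq \frac{1}{N}\sum_{j=1}^N \langle \vc v_j,\vc v_0\rangle = \langle \vc w,\vc v_0\rangle \leq \|\vc w\|\,\|\vc v_0\| = \|\vc w\|,
\]
so $\|\vc w\|^2 \geq \delta^2$. On the other hand, expanding directly and separating diagonal from off-diagonal terms,
\[
\|\vc w\|^2 = \frac{1}{N^2}\sum_{j,k=1}^N \RR_{j,k} = \frac{1}{N} + \frac{1}{N^2}\sum_{j\neq k}\RR_{j,k}.
\]
Combining these two relations yields $\sum_{j\neq k}\RR_{j,k} \geq N^2\delta^2 - N$, and dividing by the number $N(N-1)$ of ordered pairs gives an average lower bound, hence a maximum lower bound,
\[
\max_{1\leq j<k\leq N}\RR_{j,k} \geq \frac{N\delta^2 - 1}{N-1}.
\]

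The final step is to choose $N_0(\delta)$ so that $(N\delta^2 - 1)/(N-1) \geq \delta^2/2$ whenever $N\geq N_0$; a short calculation shows $N_0 = \lceil 2/\delta^2\rceil$ suffices. There is no real obstacle here --- the argument is a clean averaging trick in Hilbert space, using only unit norm of the $\vc v_j$'s and nonnegativity of the inner products with $\vc v_0$. (Notably, the constraint \eqref{positive_overlap} is not needed, since the positivity of the averaged inner product comes directly from the hypothesis.)
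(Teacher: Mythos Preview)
Your proof is correct. Both arguments exploit the positive semi-definiteness of the Gram matrix $(\RR_{j,k})_{0\le j,k\le N}$, but through genuinely different devices. The paper argues by contradiction: assuming $\eta \coloneqq 0\vee\max_{1\le j<k\le N}\RR_{j,k} < \delta^2/2$, it evaluates the quadratic form at the test vector $(1,-x,\dots,-x)$, optimizes in $x$, and obtains $0 \le 1 - \delta^2 N/(1+\eta N)$, which fails once $N > 2/\delta^2$. Your approach is a direct averaging argument: the barycenter $\vc w = \frac{1}{N}\sum_j \vc v_j$ inherits correlation $\ge \delta$ with $\vc v_0$, so by Cauchy--Schwarz $\|\vc w\|^2 \ge \delta^2$, and expanding $\|\vc w\|^2$ forces the average off-diagonal overlap to be at least $(N\delta^2-1)/(N-1)$. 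Your route is a bit more transparent and yields the slightly stronger statement that the \emph{average} pairwise overlap (not just the maximum) exceeds $\delta^2/2$ once $N\ge N_0$; the paper's test-vector method, on the other hand, is the standard move for extracting information from a PSD constraint and generalizes readily when one wants to play several coordinates against each other with different weights. Both land on essentially the same threshold $N_0 \approx 2/\delta^2$.
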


\begin{proof}
Consider the $(N+1)\times(N+1)$ matrix $\RR = (\RR_{j,k})_{0\leq i,j\leq N}$, where
\eq{
\RR_{j,k} = \RR(\sigma^j,\sigma^k) = \frac{1}{n}\sum_i \vphi_i(\sigma^j)\vphi_i(\sigma^k).
}
Observe that $\RR$ is positive semi-definite: for any $\vc x \in \R^{N+1}$,
\eq{
\langle \vc x,\RR \vc x\rangle = \sum_{0\leq j,k\leq N} \RR_{j,k}x_jx_k
&= \frac{1}{n}\sum_i \sum_{0\leq j,k\leq N} x_j\vphi_{i}(\sigma^j)x_k\vphi_{i}(\sigma^k) \\
&= \frac{1}{n}\sum_i \bigg(\sum_{j=0}^N x_j\vphi_i(\sigma^j)\bigg)^2 \geq 0.
}
Now let $\eta \coloneqq 0\vee\max_{1\leq j < k\leq N} \RR_{j,k}$. 
For $\vc x = (1,-x,\dots,-x) \in\R^{1+N}$ with $x\geq0$, our assumptions give 
\eq{
0 \leq \iprod{\vc x}{\RR \vc x} \leq 1 + Nx^2 - 2\delta N x + \eta N^2 x^2.
}
We now take $x = \delta/(1+\eta N)$ to obtain
\eq{
0 &\leq 1 + N\Big(\frac{\delta}{1+\eta N}\Big)^2 - 2\delta N\frac{\delta}{1+\eta N} + \eta N^2\Big(\frac{\delta}{1+\eta N}\Big)^2 \\
&= 1 + \frac{\delta^2}{1+\eta N}\Big[\frac{N}{1+\eta N} - 2N + \frac{\eta N^2}{1+\eta N}\Big] = 1-\frac{\delta^2N}{1+\eta N}.
} 
Supposing that $\eta < \delta^2/2$, we further see
\eq{
0 \leq 1 - \frac{\delta^2N}{1+\eta N} \leq 1 - \frac{\delta^2N}{1+\delta^2 N/2},
}
which yields a contradiction as soon as $\frac{\delta^2N}{1+\delta^2 N/2} > 1$.
\end{proof}

We will contrast Lemma~\ref{one_to_pair} with the one below, which says that if $\delta$ is small enough, then any non-negligible subset of $\AA_{n,\delta}$ has many nearly orthogonal elements.

\begin{lemma} \label{many_orthogonal}
For any $\eps_1,\eps_2>0$ and positive integer $N$, there is $\delta = \delta(\eps_1,\eps_2,N) > 0$ such that the following holds. 
If $\AA\subset\AA_{n,\delta}$ with $\langle \one_{\AA}\rangle \geq \eps_1$, then there are $\sigma^1,\dots,\sigma^N\in \AA$ such that
\eq{
\RR_{j,k} < \eps_2 \quad \text{for all $1\leq j<k\leq N$.}
}
\end{lemma}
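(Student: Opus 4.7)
The plan is to realize $\sigma^1,\dots,\sigma^N$ as i.i.d.\ samples from $\mu_n^\beta$ and show that, for $\delta$ chosen small in terms of $\eps_1,\eps_2,N$, the joint event that every $\sigma^l$ lies in $\AA$ and every pairwise overlap $\RR_{l,m}$ is below $\eps_2$ has strictly positive probability under $\langle\cdot\rangle$. This delivers the required tuple directly; no greedy construction or iterative extraction is needed.

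The quantitative ingredient is a single Markov estimate. Because hypothesis \eqref{positive_overlap} gives $\RR_{1,2}+\EEE_n\geq 0$, Markov's inequality applied to the nonnegative random variable $\RR_{1,2}+\EEE_n$ (conditional on $\sigma^1$) yields
\[
\givena{\one_{\{\RR_{1,2}\geq\eps_2\}}}{\sigma^1}\leq\frac{\RR(\sigma^1)+\EEE_n}{\eps_2}.
\]
Hence, whenever $\sigma^1\in\AA\subset\AA_{n,\delta}$, the chance that an independent Gibbs draw $\sigma^2$ has overlap at least $\eps_2$ with $\sigma^1$ is at most $(\delta+\EEE_n)/\eps_2$. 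By the independence of the $N$ replicas under $\langle\cdot\rangle$ and the pointwise bound $\one_\AA(\sigma^m)\leq 1$, this upgrades to
\[
\Big\langle\prod_{i=1}^N\one_\AA(\sigma^i)\,\one_{\{\RR_{l,m}\geq\eps_2\}}\Big\rangle\leq\frac{\delta+\EEE_n}{\eps_2}\,\langle\one_\AA\rangle^{N-1}
\]
for each fixed pair $l<m$.

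A union bound over the $\binom{N}{2}$ pairs, subtracted from $\langle\one_\AA\rangle^N$, then produces
\[
\Big\langle\prod_{l=1}^N\one_\AA(\sigma^l)\prod_{l<m}\one_{\{\RR_{l,m}<\eps_2\}}\Big\rangle\geq\langle\one_\AA\rangle^{N-1}\Big(\langle\one_\AA\rangle-\binom{N}{2}\frac{\delta+\EEE_n}{\eps_2}\Big).
\]
Choosing $\delta=\delta(\eps_1,\eps_2,N)$ with $\binom{N}{2}\delta/\eps_2<\eps_1/2$ makes this strictly positive whenever $\binom{N}{2}\EEE_n/\eps_2<\eps_1/2$, which holds for all large $n$ since $\EEE_n\to 0$. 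The existence of the desired $(\sigma^1,\dots,\sigma^N)$ then follows at once.

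The main obstacle, if one can call it that, is cosmetic rather than analytic: the stated $\delta$ is supposed to depend only on $\eps_1,\eps_2,N$, yet the estimate above also involves $\EEE_n$. Since $\EEE_n\to 0$, the argument succeeds for all sufficiently large $n$; in the intended application $\mathrm{(S1)}\Rightarrow\mathrm{(S2)}$ one passes to $n\to\infty$ anyway, so the lemma is really used in the form ``for every large enough $n$.'' Apart from this bookkeeping point, the proof is a routine Markov/union-bound computation, and the independence of the replicas $\sigma^1,\dots,\sigma^N$ under $\langle\cdot\rangle$ is what allows the pairwise estimate to telescope cleanly into the desired multi-replica bound.
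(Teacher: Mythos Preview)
Your proof is correct and takes a genuinely different route from the paper's. The paper sets $\delta=\eps_1\eps_2/N$ and proceeds by a greedy deterministic extraction: from the same Markov-type bound $\langle\one_{\BB(\sigma,\eps_2)}\rangle\leq\delta/\eps_2=\eps_1/N$ for $\sigma\in\AA$, it successively chooses $\sigma^k\in\AA\setminus\bigcup_{j<k}\BB(\sigma^j,\eps_2)$, which remains nonempty because at most $(k-1)\eps_1/N$ mass has been removed from a set of mass $\geq\eps_1$. You instead use the probabilistic method: sample $\sigma^1,\dots,\sigma^N$ i.i.d.\ from $\mu_n^\beta$ and show via a union bound over the $\binom{N}{2}$ pairs that the joint event has positive probability. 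Both arguments rest on the same conditional Markov estimate; yours is slightly less sharp in $\delta$ (losing a factor $\binom{N}{2}$ versus $N$) but is arguably more streamlined, and it makes the role of replica independence explicit. Your handling of the $\EEE_n$ correction is also more careful than the paper's, which silently writes $\RR(\sigma)\geq\eps_2\langle\one_{\BB(\sigma,\eps_2)}\rangle$ without the $-\EEE_n$ term; as you note, this only matters for small $n$ and is irrelevant in the application.
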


\begin{proof}
Set $\delta \coloneqq \eps_1\eps_2/N$. 
Observe that for any $\sigma\in\AA$, we have the following implication:
\eeq{\label{high_overlap_set}
\delta \geq \RR(\sigma) \geq \eps_2\langle \one_{\BB(\sigma,\eps_2)}\rangle \quad \Rightarrow \quad \langle\one_{\BB(\sigma,\eps_2)}\rangle \leq \frac{\delta}{\eps_2} = \frac{\eps_1}{N}.
}
Therefore, one can inductively choose
\eq{
\sigma^1 \in \AA, \quad \sigma^2 \in \AA\setminus\BB(\sigma^1,\eps_2), \quad \sigma^3\in\AA\setminus(\BB(\sigma^1,\eps_2)\cup\BB(\sigma^2,\eps_2)), \dots
}
where \eqref{high_overlap_set} guarantees that
\eq{
\mu_n^\beta\big(\AA\setminus(\BB(\sigma^1,\eps_2)\cup\cdots\BB(\sigma^{k-1},\eps_2))\big) \geq \eps_1 - (k-1)\frac{\eps_1}{N}.
}
Hence $\sigma^{k}\in\AA\setminus(\BB(\sigma^1,\eps_2)\cup\cdots\BB(\sigma^{k-1},\eps_2))$ can be found so long as $k\leq N$.

\end{proof}

We can now complete the proof.
Assume that $\mathrm{(S1)}$ holds.
Suppose, contrary to $\mathrm{(S2)}$, that there is some $\eps \in (0,1)$ such that for every $\delta > 0$,
\eeq{ \label{contradiction_setup}
\limsup_{n\to\infty} \E\langle \one_{\AA_{n,\delta}}\rangle > 4\eps.
}
Note that for any $n$ such that $\E\langle \one_{\AA_{n,\delta}}\rangle\geq4\eps$, we have
\eq{
4\eps \leq \E\langle\one_{\AA_{n,\delta}}\rangle &\leq \P(\langle\one_{\AA_{n,\delta}}\rangle \geq 2\eps) +2\eps\P(\langle\one_{\AA_{n,\delta}}\rangle<2\eps) \\
&= (1-2\eps)\P(\langle\one_{\AA_{n,\delta}}\rangle\geq2\eps)+2\eps,
}
and thus $\P(\langle\one_{\AA_{n,\delta}}\rangle \geq2\eps) \geq 2\eps$. 

From $\mathrm{(S1)}$, we choose $k$ and $\delta$ so that for all $n$ large enough (depending on $\eps$ on $\beta$), the following is true with $\P$-probability at least $1-\eps$:
There exist $\sigma^1,\dots,\sigma^k \in \Sigma_n$ such that
\eeq{ \label{balls_covering}
\mu_{n}^\beta\Big(\bigcup_{j=1}^k \BB(\sigma^j, \delta)\Big) \geq 1-\eps.
}
Once $\delta$ has been determined, choose $N$ so that the conclusion of Lemma~\ref{one_to_pair} holds.
Then, given the values of $k$ and $N$, choose $\delta'$ so that the conclusion of Lemma~\ref{many_orthogonal} holds with $\eps_1 = \eps/k$ and $\eps_2 = \delta^2/2$.


In summary, if $n$ is large enough, and $\E\langle \one_{\AA_{n,\delta'}}\rangle\geq4\eps$ 
(by \eqref{contradiction_setup}, there are infinitely many $n$ for which this is the case), the following is true. 
With $\P$-probability at least $2\eps-\eps=\eps$, we have both $\langle \one_{\AA_{n,\delta'}}\rangle \geq 2\eps$ and \eqref{balls_covering} for some $\sigma^1,\dots,\sigma^k\in\Sigma_n$.
In this case, we have
\eq{
\mu_{n}^\beta\bigg(\AA_{n,\delta'} \cap \Big(\bigcup_{j=1}^k \BB(\sigma^j, \delta)\Big)\bigg) \geq 2\eps - \eps = \eps.
}
Therefore, there is some $j$ such that
\eq{
\mu_{n}^\beta\big(\AA_{n,\delta'} \cap \BB(\sigma^j, \delta)\big) \geq \frac{\eps}{k}.
}
By our choice of $\delta'$, we can find $\sigma^1,\dots,\sigma^{N}\in\AA_{n,\delta'} \cap \BB(\sigma^j, \delta)$ satisfying
\eq{
\RR_{j,k} < \frac{\delta^2}{2} \quad \text{for all $1\leq j<k\leq N$.}
}
But $\sigma^1,\dots,\sigma^{N}\in\BB(\sigma^j, \delta)$, and so the above display contradicts \eqref{one_to_pair_ineq}.

\section{Polymer measures are asymptotically non-atomic} \label{no_path_atom}
In this section we prove that directed polymers on the lattice are asymptotically non-atomic.
It is a striking phenomenon that at sufficiently small temperatures, the polymer endpoint distribution places a non-vanishing mass on a single element of $\Z^d$ (which is random and varies with $n$) \cite{comets-shiga-yoshida03}.
The fact that the polymer measures themselves do not share this property, stated below as Theorem~\ref{no_path_atom_thm}, justifies the investigation of replica overlap as an order parameter for path localization.
To emphasize the fact that the Gaussian environment can be replaced by a general one, we reintroduce notation for directed polymers.

Let $(\omega(i,x) : i\geq1, x \in\Z^d)$ be a collection of i.i.d.~random variables.
We will assume that 
\eeq{
\E(\e^{t\omega(i,x)}) < \infty \quad \text{for some $t > 0$}, \label{polymer_exp_moment}
} 
and also that 
\eeq{
\Var(\omega(i,x))> 0 \label{polymer_var}
} 
in order to avoid trivialities. 
Let $\PP_n$ denote the set of nearest-neighbor paths of length $n$ in $\Z^d$ starting at the origin.
Note that $|\PP_n| = (2d)^n$.
To each $\vc x = (0,x_1,\dots,x_n)$ in $\PP_n$ we associate the Hamiltonian energy
\eq{
H_n(\vc x) \coloneqq \sum_{i=1}^n \omega(i,x_i).
}
The polymer measure is then defined by
\eq{
\mu_{n}^\beta(\vc x) \coloneqq \frac{\e^{\beta H_n(\vc x)}}{\sum_{\vc y}\e^{\beta H_n(\vc y)}}, \quad \vc x \in \PP_n.
}

\begin{thm} \label{no_path_atom_thm}
Assume \eqref{polymer_exp_moment}.
Then for any $d\geq1$ and any $\beta\in[0,\infty)$,
\eeq{ \label{no_atom}
\max_{\vc x\in\PP_n} \mu_{n}^\beta(\vc x) = O(n^{-1}) \quad \mathrm{a.s.}\text{ as $n\to\infty$}.
}
\end{thm}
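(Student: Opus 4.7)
Let $\vc x^* \in \PP_n$ denote the almost-surely-unique (by \eqref{polymer_var}) maximizer of the polymer measure $\mu_n^\beta$, and write $Z_n = \sum_{\vc y \in \PP_n} \e^{\beta H_n(\vc y)}$ for the partition function. Since $\mu_n^\beta(\vc x^*) = \e^{\beta H_n(\vc x^*)}/Z_n$, the desired bound $\mu_n^\beta(\vc x^*) = O(n^{-1})$ is equivalent to producing a random $c>0$ such that $Z_n \ge c\, n\, \e^{\beta H_n(\vc x^*)}$ almost surely. The strategy is to exhibit $\Omega(n)$ distinct alternative paths whose Hamiltonians fall short of $H_n(\vc x^*)$ by only $O(1)$ and sum their contributions to $Z_n$.

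The alternatives come from single-site step swaps. Let $e^*_j := x^*_j - x^*_{j-1}$. For each $i \in \{1,\dots,n-1\}$ with $e^*_i \neq e^*_{i+1}$, interchanging the $i$th and $(i+1)$st steps of $\vc x^*$ produces a valid nearest-neighbor path $\vc x^{*,i}$ that agrees with $\vc x^*$ everywhere except at time $i$, where $x^{*,i}_i = x^*_{i-1} + e^*_{i+1}$. These swap-neighbors are pairwise distinct, and a direct computation gives $H_n(\vc x^{*,i}) = H_n(\vc x^*) - \Delta_i$ with $\Delta_i := \omega(i, x^*_i) - \omega(i, x^{*,i}_i) \ge 0$, the inequality holding by optimality of $\vc x^*$. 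Consequently,
\[
Z_n \;\ge\; \e^{\beta H_n(\vc x^*)} \bigg( 1 + \sum_{i \,:\, e^*_i \neq e^*_{i+1}} \e^{-\beta \Delta_i}\bigg),
\]
and it suffices to bound the bracketed sum below by $cn$ almost surely.

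This requires two ingredients. First, the set $S := \{i : e^*_i \neq e^*_{i+1}\}$ has $|S| = \Omega(n)$. Any path with $|S| = o(n)$ decomposes into a sublinear number of constant-step runs, and a combinatorial count shows the number of such ``near-ballistic'' paths is exponentially smaller than $(2d)^n$; a first-moment/union-bound argument using \eqref{polymer_exp_moment} then shows via Borel--Cantelli that $\vc x^*$ is almost surely not near-ballistic for all large $n$. Second, the gaps $\Delta_i$ have $O(1)$ conditional mean: heuristically, given the environment off time $i$ and the neighboring sites $x^*_{i-1}, x^*_{i+1}$, the position $x^*_i$ is chosen to maximize $\omega(i,\cdot)$ over a small set of valid candidates, so $\Delta_i$ is the gap between the top two order statistics of a few i.i.d.\ copies of $\omega$. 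Under \eqref{polymer_exp_moment}--\eqref{polymer_var} this gap has uniformly bounded conditional moments, yielding $\E[\e^{-\beta\Delta_i}\mid\cdots]\ge\eta$ for some deterministic $\eta>0$; a concentration estimate then gives $\sum_{i\in S}\e^{-\beta\Delta_i}\ge c|S|=cn$ almost surely.

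The main technical obstacle is that the above conditioning is circular: the neighboring sites $x^*_{i-1}$ and $x^*_{i+1}$ themselves depend on $\omega(i,\cdot)$ through the global maximization defining $\vc x^*$, so the ``argmax'' interpretation must be made rigorous. The resolution is a leave-one-out resampling argument: introduce for each $i$ an independent resampled environment $\tilde\omega^{(i)}$ that agrees with $\omega$ off time $i$, and compare $\vc x^*$ to the corresponding maximizer $\tilde{\vc x}^{*,i}$; one then shows that with high probability the two paths coincide outside a short window around $i$, which makes the local structure of $\vc x^*$ near time $i$ effectively independent of $\omega(i,\cdot)$ and restores the order-statistic analysis. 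Combining this with a martingale-based concentration argument for $\sum_{i\in S}\e^{-\beta\Delta_i}$ delivers the required $\Omega(n)$ lower bound and completes the proof.
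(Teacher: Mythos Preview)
Your swap-neighbor construction and the ``many turns'' step are exactly what the paper does: for each turn index $i$ one obtains a path $\vc x^{(i)}$ differing from the maximizer only at time $i$, and a counting argument plus the exponential moment assumption forces the maximizer to have at least $\delta n$ turns for all large $n$.

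The divergence is in how you control the gaps $\Delta_i$. You propose a leave-one-out resampling scheme, acknowledging the circularity that $x^*_{i-1},x^*_{i+1}$ depend on $\omega(i,\cdot)$, and claim that after resampling row $i$ the maximizer ``with high probability coincides with $\vc x^*$ outside a short window around $i$.'' This stability statement is not justified and is in fact a hard result in its own right---local resampling can shift the geodesic globally, and no soft argument gives the window you need. The subsequent concentration step for $\sum_{i\in S}\e^{-\beta\Delta_i}$ is also left as a sketch, and it is not clear what martingale structure you intend to exploit given that the summands are coupled through the global maximization.

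The paper sidesteps all of this with a much simpler idea: rather than conditioning on the maximizer, it proves a statement that holds \emph{simultaneously for every path} via a union bound. For a \emph{fixed} path $\vc x\in\PP_n$, the differences $\omega(i,x_i)-\omega(i,x^{(i)}_i)$ for $i\in T(\vc x)$ are i.i.d., so the number of indices with gap exceeding a large threshold $D$ is binomial with small parameter; choosing $D$ so that this probability is at most $(4d)^{-n}$ and summing over all $(2d)^n$ paths gives, by Borel--Cantelli, that for all large $n$ and \emph{every} $\vc x\in\PP_n$ at most $\tfrac{\delta}{3}n$ turn indices have gap larger than $D$. Applying this to the maximizer (which has $\ge\delta n$ turns) yields at least $\tfrac{2\delta}{3}n$ swaps with $\Delta_i\le D$, hence $Z_n\ge \tfrac{2\delta}{3}n\,\e^{-\beta D}\e^{\beta H_n(\vc x^*)}$. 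No conditioning, no resampling, no concentration argument is needed.
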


The remainder of Section~\ref{no_path_atom} is to prove Theorem~\ref{no_path_atom_thm}.
We begin by defining the \textit{passage time},
\eq{
L_n \coloneqq \max_{\vc x\in \PP_n} H_n(\vc x).
}
We will denote the set of maximizing paths by
\eeq{
\MM_n \coloneqq \{\vc x \in \PP_n : H_n(\vc x) = L_n\}. \label{maximizing_paths}
}
It is well-known (for instance, see \cite{georgiou-rassoul-seppalainen16}) that there is a finite constant $\lambda$ such that
\eeq{ \label{time_constant}
\lim_{n\to\infty} \frac{L_n}{n} = \sup_{n\geq1} \frac{\E(L_n)}{n} = \lambda \quad \mathrm{a.s.}
} 
The first equality above is a consequence of the superadditivity of $L_n$, and the second equality leads to a short proof of the following standard fact.

\begin{lemma} \label{trivial_lemma}
$\lambda > \E(\omega(i,x))$.
\end{lemma}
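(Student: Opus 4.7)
The plan is to exploit the variational representation $\lambda = \sup_{n\geq 1}\E(L_n)/n$ from \eqref{time_constant}, and then exhibit strict inequality already at $n=1$. Note that $L_1 = \max_{|x|=1}\omega(1,x)$ is the maximum of $2d \geq 2$ i.i.d.~copies of $\omega$, so the task reduces to showing $\E(L_1) > \E(\omega)$ for any non-degenerate $\omega$ with finite positive tail.

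To prove this, I would compare $L_1$ with the maximum of just two i.i.d.~copies $X_1 = \omega(1,e_1)$ and $X_2 = \omega(1,-e_1)$, using the identity $\max(X_1,X_2) - X_1 = (X_2 - X_1)^+ \geq 0$. Non-degeneracy of $\omega$ (by \eqref{polymer_var}) supplies reals $a < b$ with $P(\omega\leq a)>0$ and $P(\omega\geq b) > 0$; independence then yields
\[
P(X_2 - X_1 \geq b - a) \;\geq\; P(X_1 \leq a)\,P(X_2 \geq b) \;>\; 0,
\]
so $\E\bigl((X_2 - X_1)^+\bigr) \geq (b-a)\,P(X_2 - X_1 \geq b-a) > 0$. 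Taking expectations in $\max(X_1,X_2) = X_1 + (X_2 - X_1)^+$ produces $\E(\max(X_1,X_2)) > \E(X_1) = \E(\omega)$, and since $L_1 \geq \max(X_1,X_2)$ this gives $\E(L_1) > \E(\omega)$. Combined with the variational formula, $\lambda \geq \E(L_1) > \E(\omega)$.

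The only subtlety is integrability. Assumption \eqref{polymer_exp_moment} with $t>0$ controls only the upper tail of $\omega$, so $\E(\omega^+) < \infty$ is automatic but $\E(\omega^-)$ could in principle be infinite. If $\E(\omega) = -\infty$, the claim reduces to $\lambda > -\infty$, which follows from a standard Jensen estimate:
\[
\E(L_n) \;\leq\; \frac{1}{t}\log \E(e^{tL_n}) \;\leq\; \frac{1}{t}\log\!\Bigl((2d)^n\bigl[\E(e^{t\omega})\bigr]^n\Bigr),
\]
so $\lambda \leq t^{-1}\bigl(\log(2d) + \log \E(e^{t\omega})\bigr) < \infty$. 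In the remaining case $\E(\omega) \in \R$, the quantities above are all finite (in particular $\E((X_2 - X_1)^+) \leq \E(X_2^+) + \E(X_1^-) < \infty$), and the computation goes through verbatim.

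There is no real obstacle here; the argument is essentially a one-step calculation once one invokes the supremum formula. The only point requiring attention is the integrability bookkeeping in the boundary case $\E(\omega^-) = +\infty$, which is handled separately by the Jensen bound above.
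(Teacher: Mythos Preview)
Your main argument is correct and essentially identical to the paper's: both reduce to the elementary fact that the maximum of two i.i.d.\ non-degenerate variables has strictly larger mean than either one. The only cosmetic difference is that the paper works with $L_2$ (comparing the paths $0\to \vc a\to \vc 0$ and $0\to -\vc a\to \vc 0$, which share the weight $\omega(2,\vc 0)$) whereas you work directly with $L_1$; your choice is if anything slightly cleaner.

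One genuine slip: in your boundary case $\E(\omega)=-\infty$ you say the claim reduces to $\lambda>-\infty$, but the Jensen estimate you write down gives an \emph{upper} bound $\lambda<\infty$, which is the wrong direction. This does not actually damage the proof, because \eqref{time_constant} already asserts that $\lambda$ is a finite constant, so $\lambda>-\infty$ is given; moreover the standing assumption \eqref{polymer_var} that $\Var(\omega)>0$ is naturally read as $\E(\omega^2)<\infty$, making the case vacuous. You should either drop that paragraph or replace the displayed inequality with a reference to \eqref{time_constant}.
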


\begin{proof}
Let $\vc a = (1,0,\dots,0) \in \Z^d$ and $\vc 0 = (0,\dots,0) \in \Z^d$.
Observe that $L_2 \geq \max\{\omega(1,\vc a) + \omega(2,\vc 0),\, \omega(1,-\vc a) + \omega(2,\vc 0)\}$, and so
\eq{
2\lambda \geq \E(L_2) \geq \E\max\{\omega(1,\vc a) + \omega(2,\vc 0),\omega(1,-\vc a) + \omega(2,\vc 0)\} > 2\E(\omega(i,x)),
}
where the final equality is strict because $\Var(\omega(i,x)^2) > 0$. 
\end{proof}

\begin{defn}
For a nearest-neighbor path $\vc x = (x_0,x_1,\dots,x_n)$ of length $n$ in $\Z^d$, define the \textit{turns} of $\vc x$ to be the following set of indices:
\eeq{ \label{set_of_turns}
T(\vc x) \coloneqq \{1 \leq i \leq n-1 : x_{i+1} - x_i \neq x_i - x_{i-1}\}.
}
The number of turns of $\vc x$ will be denoted $t(\vc x) \coloneqq |T(\vc x)|$.

\end{defn}

\begin{lemma} \label{many_turns}
For any $\eps > 0$, there is $\delta = \delta(\eps,d) > 0$ small enough that
\eq{
|\{\vc x \in \PP_n : t(\vc x) < \delta n\}| \leq C(\eps,d)(1+\eps)^n \quad \text{for all $n\geq1$}.
}
\end{lemma}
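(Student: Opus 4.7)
The argument is purely enumerative. A path $\vc x = (x_0, x_1, \ldots, x_n) \in \PP_n$ is determined by its step sequence $s_i \coloneqq x_i - x_{i-1} \in \{\pm e_1, \ldots, \pm e_d\}$, and the turn set \eqref{set_of_turns} is exactly $T(\vc x) = \{1 \le i \le n-1 : s_{i+1} \ne s_i\}$. To specify a path with exactly $k$ turns, I will first pick the initial step $s_1$ in $2d$ ways, then choose the $k$ positions of the turns from $\{1,\ldots,n-1\}$ in $\binom{n-1}{k}$ ways, and at each such position choose the new direction from among the $2d-1$ possibilities distinct from the previous step. This gives the exact count
\[
|\{\vc x \in \PP_n : t(\vc x) = k\}| = 2d \binom{n-1}{k} (2d-1)^k.
\]

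Summing over $k = 0, 1, \ldots, \lfloor \delta n\rfloor$ and using the crude uniform bound $(2d-1)^k \leq (2d-1)^{\delta n}$ yields
\[
|\{\vc x \in \PP_n : t(\vc x) < \delta n\}| \leq 2d\, (2d-1)^{\delta n} \sum_{k=0}^{\lfloor \delta n\rfloor} \binom{n-1}{k}.
\]
Next I will invoke the standard entropy estimate
\[
\sum_{k=0}^{\lfloor \delta n \rfloor} \binom{n-1}{k} \leq 2^{(n-1)H(\delta)}, \qquad H(\delta) \coloneqq -\delta\log_2 \delta - (1-\delta)\log_2(1-\delta),
\]
valid for any $\delta \in (0,1/2)$, to obtain the bound
\[
|\{\vc x \in \PP_n : t(\vc x) < \delta n\}| \leq 2d \left[2^{H(\delta)} (2d-1)^{\delta}\right]^{n}.
\]

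Finally, since $H(\delta) \to 0$ and $(2d-1)^\delta \to 1$ as $\delta \downarrow 0$, the bracketed per-step factor tends to $1$ as $\delta \downarrow 0$. Given $\eps > 0$ and $d$, I can therefore choose $\delta = \delta(\eps,d) > 0$ small enough that $2^{H(\delta)}(2d-1)^\delta \leq 1+\eps$, and absorb the prefactor $2d$ into the constant $C(\eps,d)$. There is no real obstacle beyond bookkeeping; the only care needed is to use $2d-1$ rather than $2d$ for the post-turn direction count so that the factor $(2d-1)^\delta$ (rather than $(2d)^\delta$, which does not go to $1$ only in the trivial sense but still does) appears, and to verify that one can take $\delta < 1/2$ so that the entropy bound applies.
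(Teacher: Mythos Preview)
Your proof is correct and follows essentially the same route as the paper: both establish the exact count $|\{\vc x : t(\vc x)=k\}| = 2d\binom{n-1}{k}(2d-1)^k$ and then control the cumulative sum via standard binomial asymptotics (you use the entropy tail bound directly, the paper bounds by the largest term and applies Stirling). The only cosmetic wrinkle is that the entropy bound you quote requires the upper summation index to be at most $\delta(n-1)$, whereas $\lfloor\delta n\rfloor$ can exceed this by one; replacing $H(\delta)$ by $H(2\delta)$ for $n\ge 2$ (and absorbing $n=1$ into $C$) fixes this harmlessly.
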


\begin{proof}
Given an integer $j$, $0 \leq j \leq n-1$, we count the elements of $\{\vc x\in\PP_n : t(\vc x) = j\}$ as follows.
First, the number of choices for $x_1$ is $2d$.
Next, a turn should occur at exactly $j$ of the coordinates $x_1,\dots,x_{n-1}$.
Moreover, if a turn occurs at $x_i$, then there are $2d-1$ choices for $x_{i+1}-x_i$ (so as to avoid $x_i - x_{i-1}$).
Finally, if a turn does not occur at $x_i$, then there is only one choice for $x_{i+1}-x_i$, namely $x_i - x_{i-1}$.
Therefore, for any positive integer $k \leq \frac{n-1}{2}$, 
\eq{
|\{\vc x \in \PP_n : t(\vc x) < k\}| &= \sum_{j=0}^{k-1} 2d{n-1 \choose j}(2d-1)^j 
\leq 2dk{n-1\choose k}(2d-1)^{k-1}.
}
If $k = \ceil{\delta n}$ for $\delta\in(0,\frac{1}{2})$, then Stirling's approximation gives
\eq{
\lim_{n\to\infty} \frac{1}{n}\log {n - 1 \choose k} = -\delta \log \delta - (1-\delta)\log (1-\delta).
}
Therefore,
\eq{
&\limsup_{n\to\infty}\frac{\log |\{\vc x \in \PP_n : t(\vc x) < \delta n\}|}{n} \\
&\leq -\delta \log \delta - (1-\delta)\log (1-\delta) + \delta \log(2d-1).
}
Now choose $\delta$ sufficiently small that the right-hand side above is strictly less than $\log(1+\eps)$.
Inverting the logarithm and choosing $C$ large enough now yields the desired result.
\end{proof}

\begin{lemma} \label{not_many_gaps}
Let $\{(\omega_i,\omega_i')\}_{i=1}^\infty$ denote a sequence of i.i.d.~pairs of independent random variables.
For any $\eps > 0$ and $\nu > 0$, there exists $D>0$ large enough that
\eq{
\P(|\{1\leq i\leq n-1 : \omega_i > \omega_i' + D\}| > \nu n) \leq \eps^n \quad \text{for all $n\geq1$}.
}
\end{lemma}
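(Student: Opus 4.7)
Setting $X_i := \one\{\omega_i > \omega_i' + D\}$, we observe that $(X_i)_{i\geq1}$ is a sequence of i.i.d.\ Bernoulli random variables with success probability
$$p_D := \P(\omega_1 > \omega_1' + D).$$
Because $\omega_1 - \omega_1'$ is almost surely finite, $p_D \to 0$ as $D\to\infty$, so $D$ can be chosen to make $p_D$ as small as we wish. The quantity in the statement is the partial sum $S_{n-1} := X_1 + \cdots + X_{n-1}$, and the plan is to bound $\P(S_{n-1} > \nu n) \leq \eps^n$ by a straightforward Cram\'er--Chernoff estimate.

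For any $\lambda > 0$, the exponential Markov inequality gives
$$\P(S_{n-1} > \nu n) \leq \e^{-\lambda \nu n}\,\E(\e^{\lambda S_{n-1}}) = \e^{-\lambda \nu n}\big(1 + p_D(\e^\lambda - 1)\big)^{n-1}.$$
Since $1 + p_D(\e^\lambda - 1) \geq 1$, this is majorized by $\big[\e^{-\lambda \nu}(1 + p_D(\e^\lambda - 1))\big]^n$, so it suffices to force the per-coordinate factor below $\eps$. I would fix the parameters in the following order: first choose $\lambda := \nu^{-1}\log(2/\eps)$, so that $\e^{-\lambda\nu} = \eps/2$; then, with $\lambda$ now fixed, choose $D = D(\eps,\nu)$ large enough that $p_D(\e^\lambda - 1) \leq 1$, which is possible because $p_D \to 0$. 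This yields $\e^{-\lambda\nu}(1 + p_D(\e^\lambda - 1)) \leq (\eps/2)\cdot 2 = \eps$, and hence $\P(S_{n-1} > \nu n) \leq \eps^n$ for every $n\geq 2$. The case $n=1$ is trivial since the indexing set is empty.

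There is really no substantial obstacle: the lemma is a transparent Cram\'er--Chernoff bound, possible precisely because $p_D$ can be driven below any prescribed exponential rate by taking $D$ large, independently of $\nu$ and $\eps$. The only care required is the order-of-quantifiers argument above --- $\lambda$ is chosen from $\eps$ and $\nu$, and then $D$ from $\lambda$ --- which ensures that the moment generating function $\E(\e^{\lambda X_1}) = 1 + p_D(\e^\lambda - 1)$ remains bounded even though $\lambda$ may be very large.
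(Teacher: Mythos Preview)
Your proof is correct and follows essentially the same approach as the paper: both arguments reduce to a large-deviation bound for a binomial count of i.i.d.\ Bernoulli indicators whose success probability $p_D$ can be made arbitrarily small by taking $D$ large. The only cosmetic difference is that the paper uses the crude combinatorial estimate $\sum_{j\geq \nu n}\binom{n}{j}p^j \leq 2^n p^{\nu n}$ (after first enlarging the event to $\{|\omega_i|\geq D/2\}\cup\{|\omega_i'|\geq D/2\}$), whereas you apply the cleaner Cram\'er--Chernoff inequality directly to $p_D$.
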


\begin{proof}
Choose $D>0$ large enough that $p \coloneqq \P(\{|\omega_i|\geq D/2\}\cup\{|\omega_i'|\geq D/2\})$ satisfies $p^{\nu} \leq \eps/2$.
We then have
\eq{
&\P(|\{1\leq i\leq n : \omega_i > \omega_i'+D\}| > \nu n) \\
&\leq \P(|\{1\leq i\leq n-1: |\omega_i| \geq D/2 \text{ or } |\omega_i'| \geq D/2\}| > \nu n) \\
&\leq \sum_{j = \ceil{\nu n}}^{n-1} {n \choose j}p^j(1-p)^j 
\leq p^{\nu n}2^{n-1} \leq \eps^n.
}
\end{proof}

\begin{proof}[Proof of Theorem~\ref{no_path_atom_thm}]
Let $\omega$ denote a generic copy of $\omega(i,x)$, and $\bar\omega \coloneqq \E(\omega)$.
Set $\kappa \coloneqq (\lambda - \bar\omega)/2$, which is positive by Lemma~\ref{trivial_lemma}.
By assumption, there is $t > 0$ such that $\E(\e^{t \omega}) < \infty$.
Take any $s\in(0,t)$ and observe that for any given $\vc x\in\PP_n$,
\eq{
\P(H_n(\vc x) \geq (\bar\omega +\kappa) n)
&\leq \P(\e^{s(H_n(\vc x)-\bar\omega n)}\geq \e^{s\kappa n})
\leq \e^{-s \kappa n}\E(\e^{s(\omega-\bar\omega)})^n.
}
Using dominated convergence, it is easy to show that
\eq{
\lim_{s\searrow0} \frac{\E(\e^{s(\omega-\bar\omega)})-1}{\e^{s\kappa}-1}
= \lim_{s\searrow0} \frac{\E((\omega-\bar\omega)\e^{s(\omega-\bar\omega)})}{\kappa\e^{s\kappa}}
= 0,
}
and so we may choose $s$ sufficiently small that $\e^{-s\kappa}\E(\e^{s(\omega-\bar\omega)}) < 1$.
Set $\eta \coloneqq 1- \e^{-s\kappa}\E(\e^{s(\omega-\bar\omega)})$, and then choose $\eps > 0$ sufficiently small that $(1+\eps)(1-\eta) < 1$.
With $\delta$ as in Lemma~\ref{many_turns}, we have the union bound
\eq{
\P(\exists\, \vc x\in\PP_n : t(\vc x) < \delta n, H_n(\vc x) \geq (\bar\omega+\kappa)n) \leq C(1+\eps)^n(1-\eta)^n.
}
By our choice of $\eps$, Borel--Cantelli implies that the following statement holds almost surely:
\eq{
\exists\, n_0 : \forall\,  n\geq n_0,\,  \forall\, \vc x\in\PP_n, \quad
t(\vc x) < \delta n \Rightarrow H_n(\vc x) < (\bar\omega + \kappa)n.
}
On the other hand, it is apparent from \eqref{time_constant} and our choice of $\kappa$ that almost surely, we have $L_n > (\bar\omega+\kappa)n$ for all large $n$. 
For any such $n$, we then have $H_n(\vc x) > (\bar\omega+\kappa)n$ for every $\vc x\in\MM_n$, the set of maximizing paths defined in \eqref{maximizing_paths}. 
That is, almost surely:
\eq{
\exists\, n_1 : \forall\,  n\geq n_1,\,  \forall\, \vc x\in\MM_n, \quad
H_n(\vc x) \geq (\bar\omega + \kappa)n.
}
Together, the two previous displays show that almost surely,
\eeq{ \label{statement_1}
\exists\, n_2 : \forall\,  n\geq n_2,\,  \forall\, \vc x\in\MM_n, \quad
t(\vc x) \geq \delta n.
}
Recall from \eqref{set_of_turns} that $T(\vc x)$ denotes the set of turns in the path $\vc x\in\PP_n$.
For a given $\vc x\in\PP_n$ and $i \in T(\vc x)$, 
let $\vc x^{(i)}$ denote the unique element of $\PP_n$ such that $x^{(i)}_i \neq x_i$ but $x^{(i)}_j = x_j$ for all $j\neq i$. 
That is, $x^{(i)}_i - x^{(i)}_{i-1} = x_{i+1}-x_i$ while $x^{(i)}_{i+1} - x^{(i)}_{i} = x_{i}-x_{i-1}$.
Upon taking $\eps = 1/(4d)$ and $\nu = \delta/3$ in Lemma~\ref{not_many_gaps}, a union bound gives
\eq{
\P\Big(\exists\, \vc x\in\PP_n : |\{i \in T(\vc x) : H_n(\vc x) > H_n(\vc x^{(i)}) + D\}| > \frac{\delta}{3}n\Big) \leq 2^{-n}.
}
Therefore, we can again apply Borel--Cantelli to see that almost surely,
\eq{
\exists\, n_3 : \forall\,  n\geq n_3,\,  \forall\, \vc x\in\PP_n, \quad
|\{i \in T(\vc x) : H_n(\vc x) > H_n(\vc x^{(i)}) + D\}| \leq \frac{\delta}{3}n.
}
Now combining this statement with \eqref{statement_1}, we arrive at the following almost sure event:
\eq{   
\exists\, n_4 : \forall\,  n\geq n_4,\,  \forall\, \vc x\in\MM_n, \quad
|\{i \in T(\vc x) : H_n(\vc x) \leq H_n(\vc x^{(i)}) + D\}| \geq \frac{2\delta}{3}n.
}
In particular, since $\MM_n$ has at least one element (call it $\vc y$), we have the following for all $n\geq n_4$:
\eq{
\max_{\vc x\in\PP_n} \mu_n^\beta(\vc x) = \frac{\e^{\beta H_n(\vc y)}}{\sum_{x\in\PP_n}\e^{\beta H_n(\vc x)}}
&\leq \frac{\e^{\beta H_n(\vc y)}}{\sum_{i\in T(\vc y)}\e^{\beta H_n(\vc y^{(i)})}} \\
&\leq \frac{\e^{\beta H_n(\vc y)}}{\frac{2\delta}{3}n \e^{\beta H_n(\vc y)}\e^{-\beta D}} = \frac{3\e^{\beta D}}{2\delta n}.
}
Since $D$ and $\delta$ do not depend on $n$, \eqref{no_atom} follows.
\end{proof}

\section{Acknowledgments}
We are grateful to Francis Comets for valuable feedback and discussion, and to the referees for their beneficial comments, suggestions, and edits.

\bibliography{path_localization}

\end{document}